\documentclass[reqno,a4paper]{amsart}

\usepackage[cmyk,usenames,dvipsnames,svgnames,table]{xcolor}

\definecolor{myblue}{cmyk}{0.77, 0.23, 0.0, 0.37}
\definecolor{mycyan}{cmyk}{0.46, 0.0, 0.0, 0.22}
\definecolor{myyellow}{cmyk}{0.0, 0.12, 0.6, 0.0}
\definecolor{mydarkgray}{cmyk}{0.1, 0.0, 0.2, 0.78}
\definecolor{myred}{cmyk}{0.0, 0.6, 0.61, 0.05}
\definecolor{myviolet}{cmyk}{0.0, 0.76, 0.15, 0.5}

\usepackage{titletoc}
\setcounter{tocdepth}{3}
\contentsmargin{2.55em}
\dottedcontents{section}[1.5em]{}{2.8em}{1pc}
\dottedcontents{subsection}[3.7em]{}{3.4em}{1pc}
\dottedcontents{subsubsection}[7.6em]{}{3.2em}{1pc}
\dottedcontents{paragraph}[10.3em]{}{3.2em}{1pc}

\usepackage{enumitem}
\setenumerate{label=\textnormal{(\arabic*)}}
\newcommand{\ra}[1]{\renewcommand{\arraystretch}{#1}}

\allowdisplaybreaks[3]

\usepackage{amsmath,amssymb,dsfont,mathrsfs,verbatim,bm,geometry}
\usepackage{mathtools, longtable, tabu, booktabs}
\usepackage[latin1]{inputenc}
\usepackage{array,hhline}
\usepackage[raggedright]{titlesec}
\usepackage{tikz}
\usepackage{float,subfig}
\usetikzlibrary{arrows, backgrounds, fit, matrix, positioning}
\usepackage{amsrefs}

\titleformat{\section}
{\normalfont\Large\bfseries\center}{\thesection}{1em}{}
\titleformat{\subsection}
{\normalfont\large\bfseries}{\thesubsection}{1em}{}
\titleformat{\subsubsection}
{\normalfont\normalsize\bfseries}{\thesubsubsection}{1em}{}
\titleformat{\paragraph}[runin]
{\normalfont\normalsize\bfseries}{\theparagraph}{1em}{}
\titleformat{\subparagraph}[runin]
{\normalfont\normalsize\bfseries}{\thesubparagraph}{1em}{}

\titlespacing*{\chapter} {0pt}{50pt}{40pt}
\titlespacing*{\section} {0pt}{3.5ex plus 1ex minus .2ex}{2.3ex plus .2ex}
\titlespacing*{\subsection} {0pt}{3.25ex plus 1ex minus .2ex}{1.5ex plus .2ex}
\titlespacing*{\subsubsection}{0pt}{3.25ex plus 1ex minus .2ex}{1.5ex plus .2ex}
\titlespacing*{\paragraph} {0pt}{3.25ex plus 1ex minus .2ex}{1em}
\titlespacing*{\subparagraph} {\parindent}{3.25ex plus 1ex minus .2ex}{1em}

\newtheorem{theorem}{Theorem}[section]
\newtheorem{lemma}[theorem]{Lemma}
\newtheorem{proposition}[theorem]{Proposition}
\newtheorem{corollary}[theorem]{Corollary}

\theoremstyle{definition}
\newtheorem{definition}[theorem]{Definition}

\newtheorem{notation}[theorem]{Notation}
\newtheorem{example}[theorem]{Example}

\theoremstyle{remark}
\newtheorem{remark}[theorem]{Remark}

\DeclareMathOperator{\Diag}{Diag}

\DeclareMathOperator{\ide}{id}
\DeclareMathOperator{\ima}{Im}
\DeclareMathOperator{\In}{In}

\DeclareMathOperator{\J}{J}

\DeclareMathOperator{\Supp}{Supp}

\DeclareMathOperator{\rk}{rk}
\DeclareMathOperator{\rrank}{rrank}

\DeclareMathOperator{\Trans}{T}
\DeclareMathOperator{\Tr}{Tr}

\newcommand{\ov}{\overline}
\newcommand{\ot}{\otimes}

\newcommand{\hs}{\hspace{-0.5pt}}

\newcommand{\xcirc}{\hs \circ \hs}

\begin{document}

\title[Twisted tensor products of $K^n$ with $K^m$]{Twisted tensor products of $\bm{K^n}$ with $\bm{K^m}$}

\author[J. Arce]{Jack Arce}
\address{Pontificia Universidad Cat\'olica del Per\'u,  Secci\'on Matem\'aticas, PUCP, Av. Universitaria 1801, San Miguel, Lima 32, Per\'u.}
\email{jarcef@pucp.edu.pe}

\author{Jorge A. Guccione}
\address{Departamento de Matem\'atica\\ Facultad de Ciencias Exactas y Naturales-UBA, Pabell\'on~1-Ciudad Universitaria\\ Intendente Guiraldes 2160
(C1428EGA) Buenos Aires, Argentina.}
\address{Instituto de Investigaciones Matem\'aticas ``Luis A. Santal\'o"\\ Facultad de Ciencias Exactas y Natu\-ra\-les-UBA, Pabell\'on~1-Ciudad
Universitaria\\ Intendente Guiraldes 2160 (C1428EGA) Buenos Aires, Argentina.}
\email{vander@dm.uba.ar}

\author{Juan J. Guccione}
\address{Departamento de Matem\'atica\\ Facultad de Ciencias Exactas y Naturales-UBA\\ Pabell\'on~1-Ciudad Universitaria\\ Intendente Guiraldes 2160
(C1428EGA) Buenos Aires, Argentina.}
\address{Instituto Argentino de Matem\'atica-CONICET\\ Savedra 15 3er piso\\ (C1083ACA) Buenos Aires, Argentina.}
\email{jjgucci@dm.uba.ar}

\author[C. Valqui]{Christian Valqui}
\address{Pontificia Universidad Cat\'olica del Per\'u - Instituto de Matem\'atica y Ciencias Afi\-nes, Secci\'on Matem\'aticas, PUCP, Av. Universitaria 1801,
San Miguel, Lima 32, Per\'u.}
\email{cvalqui@pucp.edu.pe}

\subjclass[2010]{}
\keywords{}

\begin{abstract}
We find three families of twisting maps of $K^m$ with $K^n$. One of them is related to truncated quiver algebras, the second one consists of
deformations of the first and the third one requires $m=n$ and yields algebras isomorphic to $M_n(K)$. Using these families and some exceptional
cases we construct all twisting maps of $K^3$ with $K^3$.
\end{abstract}

\maketitle

\tableofcontents

\section*{Introduction}
Let $A$, $C$ be unitary $K$-algebras, where $K$ is a field. By definition, a  twisted tensor product of $A$ with $C$ over $K$, is an algebra
structure defined on $A\ot_K C$, with unit $1\ot 1$, such that the canonical maps $i_A\colon A\to A\ot_K C$ and $i_C\colon C\to A\ot_K C$ are
algebra maps satisfying $a\ot c = i_A(a)i_C(c)$. When $K$ is a commutative ring this structure was introduced independently in \cite{Ma1} and
\cite{Tam}, and it has been formerly studied by many people with different motivations (In addition to the previous references see also \cite{B-M1},
\cite{B-M2}, \cite{Ca}, \cite{C-S-V}, \cite{C-I-M-Z}, \cite{G-G}, \cite{Ma2}, \cite{J-L-P-V}, \cite{VD-VK}).  A number of examples of classical and
recently defined constructions in ring theory fits into this construction. For instance, Ore extensions, skew group algebras, smash products,
etcetera (for the definitions and properties of these structures we refer to \cite{Mo} and \cite{Ka}). On the other hand, it has been applied to
braided geometry and it arises as a natural representative for the product of noncommutative spaces, this being based on the existing duality
between the categories of algebraic affine spaces and commutative algebras, under which the cartesian product of spaces corresponds to the tensor
product of algebras. And last, but not least, twisted tensor products arise as a tool for building algebras starting with simpler ones.

Given algebras $A$ and $C$, a basic problem is to determine all the twisted tensor products of $A$ with $C$. To our knowledge, the first paper in
which this problem was attacked in a systematic way was \cite{C}, in which C. Cibils studied and completely solved the case $C\coloneqq K\times K$.
In \cite{JLNS},the case $C\coloneqq K^n$ is analysed and some partial classification result were achieved.

In this paper we consider the case $A=K^m$ and $C=K^n$. It is well known that there is a canonical bijection between the twisted tensor products of
$A$ with $C$ and the so called twisting maps $\chi\colon C\ot_k A \to A\ot_k C$. So each twisting map $\chi$ is associated with a twisted tensor
product of $A$ with $C$ over $K$, which will be denoted by $A\ot_\chi C$.

It is evident that each $K$-linear map $\chi\colon K^n\ot K^m \to K^m\ot K^n$ determines and is determined by unique scalars $\lambda_{ij}^{kl}$,
such that
\begin{equation*}
\chi(e_i\ot f_j)=\sum_{k,l} \lambda_{ij}^{kl} f_k\ot e_l\qquad\text{for all $e_i$ and $f_j$.}
\end{equation*}
Given such a map $\chi$, for all $i,l\in \mathds{N}^*_m$ and $j,k\in \mathds{N}^*_n$, we let $A(i,l)\in M_n(K)$ and $B(j,k)\in M_m(K)$ denote the
matrices defined by
\begin{equation*}
A(i,l)_{kj}\coloneqq  \lambda_{ij}^{kl} \eqqcolon B(j,k)_{li}.
\end{equation*}

In Proposition~\ref{caracterizacion} we show that $\chi$ is a twisting map if and only if these matrices satisfy certain (easily verifiable)
conditions. This transforms the problem of finding all twisting maps into a problem of linear algebra. When one tries to find all twisting maps of
$K^3$ with $K^3$ using this linear algebra approach, one encounters that nearly all cases of twisting maps have a very special form. We call these
twisting maps standard and prove that the resulting twisted tensor product algebras are isomorphic to certain square zero radical truncated quiver
algebras. Moreover, there arises a second type of twisting maps, which we call quasi-standard twisting maps, which yield algebras which corresponds
to a formal deformation of the latter case, whenever the corresponding quiver has a triangle which is not a cycle. We also construct a third family
of twisting maps when $n=m$, and we show that the resulting algebras are isomorphic to $M_n(K)$.

These three families cover nearly all twisting maps of $K^3$ with $K^3$. We find additionally some extensions of the algebras corresponding to the
third family in the case $K^2$ with $K^2$, and one additional case.

The paper is organized as follows: in section~1 we make a quick review of twisting maps and the $n-1$-ary cross product of vectors. In section~2 we
present the characterization in terms of matrices of the twisting maps of $K^n$ with $K^m$ and some basic results, specially on isomorphism of
twisting maps and a basic representation on $M_n(K)$.

In section~3 we reprove the results of \cite{C} in our language. In section~4 we prove some basic results on the idempotent matrices $A(i,l)$, and
pay special attention to the case of $\rk=1$, where a family arises with algebras isomorphic to $M_n(K)$. In section~5 we define standard and
quasi-standard twisting columns and twisting maps and prove several results about them. In section~6 we classify completely the case of reduced
rank~$1$. In section~7 we explore the relation of standard twisting maps and quiver algebras, and also the case of quasi-standard twisting maps. In
section~8 we use all results in order to classify the twisting maps in low dimensional cases, including all the twisting maps which are not
quasi-standard in the case $K^3$ with $K^3$. In the appendix we list all standard and quasi-standard twisting maps of $K^3$ with $K^3$.

\section{Preliminaries}
Let $K$ be a field. From now on we assume implicitly that all the maps whose domain and codomain are $K$-vector spaces are $K$-linear maps and that
all the algebras are over $K$. Next we introduce some notations and make some comments.

\begin{itemize}

\smallskip

\item[-] $K^{\times}\coloneqq K\setminus \{0\}$.

\smallskip

\item[-] For each natural number $i$, we set $\mathds{N}^*_i\coloneqq \{1,\dots, i\}$.

\smallskip

\item[-] The tensor product over $K$ is denoted by $\ot$, without any subscript.

\smallskip

\item[-] Given a matrix $X$ we let $X^{\Trans}$ denote the transpose matrix of $X$. Moreover, we denote with a juxtaposition the multiplication
of two matrices and with a bullet the multiplication in $K^n$. So, $(a_1,\dots, a_n) \centerdot  (b_1,\dots, b_n)= (a_1b_1,\dots, a_nb_n)$. Note
that $\mathbf{a}=(a_1,\dots, a_n)$ is invertible respect to the multiplication map $\centerdot$ if and only if $\mu_n(\mathbf{a})\coloneqq
a_1\cdots a_n\ne 0$. In this case we let $\mathbf{a}^{\centerdot}$ denote the inverse $(a_1^{-1},\dots, a_n^{-1})$ of $\mathbf{a}$.

\smallskip

\item[-] We let $E^{ij}\in M_n(K)$ denote the matrix with~$1$ in the $i,j$-entry and $0$ otherwise. So, $\{E^{ij}: 1 \le i,j \le n\}$ is the
canonical basis of $M_n(K)$.

\smallskip

\item[-] For the sake of simplicity we write $\mathds{1}=\mathds{1}_n=\mathds{1}_{K^n}\coloneqq (1,\dots,1)^{T}$.

\smallskip

\item[-] The symbol $\tau_{nm}$ denotes the flip $K^n\ot K^m \longrightarrow K^n\ot K^m$.

\end{itemize}

\subsection{Twisting maps}
Let $A$, $C$ be unitary algebras. Let $\mu_A$, $\eta_A$, $\mu_C$ and $\eta_C$ be the multiplication and unit maps of $A$ and $C$, respectively. A
{\em twisted tensor product} of $A$ with $C$ is an algebra structure on the $K$-vector space $A\ot C$, such that the canonical maps
$$
i_A\colon A\longrightarrow A\ot C\quad\text{and}\quad i_C\colon C \longrightarrow A\ot C
$$
are algebra homomorphisms and $\mu\xcirc (i_A\ot i_C) = \ide_{A\ot C}$, where $\mu$ denotes the multiplication map of the twisted tensor product.

\smallskip

Assume we have a twisted tensor product of $A$ with $C$. Then, the map
$$
\chi\colon C\ot A\longrightarrow A\ot C,
$$
defined by $\chi \coloneqq \mu \xcirc (i_C\ot i_A)$, satisfies:

\begin{enumerate}

\smallskip

\item $\chi\xcirc (\eta_C\ot A) = A\ot \eta_C$,

\smallskip

\item $\chi \xcirc (C\ot \eta_A) = \eta_A\ot C$,

\smallskip

\item $\chi \xcirc (\mu_C\ot A) = (A\ot \mu_C)\xcirc (\chi\ot C)\xcirc (C\ot \chi)$,

\smallskip

\item $\chi\xcirc (C\ot \mu_A) = (\mu_A\ot C)\xcirc (A\ot \chi)\xcirc (\chi\ot A)$.

\smallskip

\end{enumerate}
A map satisfying these conditions is called a {\em twisting map} of $C$ with $A$. Conversely, if
$$
\chi\colon C\ot A\longrightarrow A\ot C
$$
is a twisting map, then $A\ot C$ becomes a twisted tensor product via
$$
\mu_{\chi} \coloneqq (\mu_A\ot\mu_C)\xcirc (A\ot \chi \ot C).
$$
This algebra will be denoted by $A\ot_{\chi} C$. Furthermore, these constructions are inverse one to each other.

\begin{definition}\label{morfismo de twisting map} Let $\chi \colon C\ot A \longrightarrow A\ot C$ and $\chi' \colon C'\ot A' \longrightarrow A'\ot
C'$ be twisting maps. A {\em morphism} $F_{gh}\colon \chi \to \chi'$, from $\chi$ to $\chi'$, is a pair $(g,h)$ of algebra maps $g\colon C\to C'$
and $h\colon A\to A'$ such that $\chi'\xcirc (g\ot h) = (h\ot g)\xcirc \chi$.
\end{definition}

\begin{remark}\label{morfismos de twisting maps a morfiamos de algebras} Let $\chi$ and $\chi'$ be as above. If $F_{gh}\colon \chi \to \chi'$ is a
morphism of twisting maps, then the map $h\ot g\colon A\ot_{\chi} C \longrightarrow A'\ot_{\chi'} C'$ is a morphism of algebras. Moreover this
correspondence is functorial in an evident sense.
\end{remark}

\begin{remark}\label{twisting map modificado por isomorfismos}  Let $h\colon A\to A'$ and $g\colon C\to C'$ be isomorphisms of algebras. If
$$
\chi' \colon C'\ot A' \longrightarrow A'\ot C'
$$
is a twisting map, then $\chi\coloneqq (h^{-1}\ot g^{-1}) \xcirc \chi'\xcirc (g\ot h)$ is also. Moreover $F_{gh}\colon \chi\to \chi'$ is an
isomorphism.
\end{remark}

\begin{proposition}\label{extension de twistings} Let $\chi\colon (B\times C)\otimes A\longrightarrow A\otimes (B\times C)$ be a twisting map.
Denote by $\iota_B$, $\iota_C$, $p_B$, $p_C$ be the evident inclusions and projections. The map $\chi_B\colon B\otimes A\longrightarrow A\otimes
B$, defined by
$$
\chi_B\coloneqq (A\otimes p_B)\circ \chi \circ (\iota_B\otimes A),
$$
is a twisting map if and only if $(A\otimes p_B)\circ \chi\circ (\iota_C\otimes A)=0$. Moreover in this case $F_{p_B,\ide_A}$ is a morphism of
twisting maps from $\chi$ to $\chi_B$. We say that $p_B(\chi)\coloneqq \chi_B$ is \emph{the twisting map of $B$ with $A$ induced by $\chi$} and
that $\chi$ is an {\em extension} of $\chi_B$.
\end{proposition}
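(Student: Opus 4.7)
The plan is to interpret the hypothesis $(A\otimes p_B)\circ\chi\circ(\iota_C\otimes A)=0$ as the geometric statement that $\chi$ sends $\iota_C(C)\otimes A$ into $A\otimes\iota_C(C)$; equivalently, $\chi_B$ should be viewed as the twisting map induced on the quotient $(B\times C)/\iota_C(C)\cong B$. The structural ingredients I will use are the vector-space splitting $B\times C=\iota_B(B)\oplus\iota_C(C)$ with $1_{B\times C}=\iota_B(1_B)+\iota_C(1_C)$, the multiplicativity (though not the unitality) of $\iota_B$, the unital algebra map $p_B$, and the crucial fact that $\iota_C(C)=0\times C$ is a two-sided ideal of $B\times C$.

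For the ``if'' direction, I would verify the four twisting axioms for $\chi_B$ in turn. Axioms~(2) and~(3) turn out to be automatic, independent of the hypothesis: axiom~(2) is immediate from axiom~(2) for $\chi$ together with $p_B\circ\iota_B=\ide_B$, while axiom~(3) follows by applying axiom~(3) of $\chi$ to $\chi(\iota_B(b_1)\iota_B(b_2)\otimes a)$ and using that $p_B$ is a ring homomorphism, so that both sides of axiom~(3) for $\chi_B$ collapse to the same expression. Axiom~(1) uses $\iota_B(1_B)=1_{B\times C}-\iota_C(1_C)$ together with axiom~(1) for $\chi$ and the hypothesis at $c=1_C$. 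Axiom~(4) is where the hypothesis really enters: after invoking axiom~(4) of $\chi$, the inner occurrence of $\chi$ is evaluated at some $w\in B\times C$, whereas on the $\chi_B$ side it is evaluated at $\iota_B(p_B(w))$, and the discrepancy $\chi(\iota_C(p_C(w))\otimes a_2)$ is killed by $A\otimes p_B$ precisely by the hypothesis.

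For the ``only if'' direction, axiom~(1) for $\chi_B$ immediately forces $(A\otimes p_B)\chi(\iota_C(1_C)\otimes a)=0$ for every $a\in A$, again using $\iota_B(1_B)=1_{B\times C}-\iota_C(1_C)$. To upgrade this from $c=1_C$ to arbitrary $c\in C$, I would write $\iota_C(c)=\iota_C(c)\cdot\iota_C(1_C)$ and apply axiom~(3) of $\chi$: expanding $\chi(\iota_C(1_C)\otimes a)=\sum_k\alpha_k\otimes w_k$, the special case forces each $w_k\in\iota_C(C)$; axiom~(3) then expresses $\chi(\iota_C(c)\otimes a)$ as a sum of terms whose second tensor factor is of the form $x\cdot w_k$ with $x\in B\times C$, which lies in $\iota_C(C)$ because $\iota_C(C)$ is a two-sided ideal, and hence vanishes under $A\otimes p_B$. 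The morphism claim $F_{p_B,\ide_A}\colon\chi\to\chi_B$ then reduces to decomposing any $u\in B\times C$ as $\iota_B(p_B(u))+\iota_C(p_C(u))$ and using the hypothesis to discard the $\iota_C$-summand. The main obstacle is axiom~(4) of the ``if'' direction, where the $\iota_C$-components produced by two successive applications of $\chi$ must be tracked carefully; the crucial structural input making everything work is the two-sidedness of the ideal $\iota_C(C)$.
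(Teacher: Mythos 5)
Your proposal is correct and takes essentially the same route as the paper: for the necessity direction you, like the paper, first extract $(A\otimes p_B)\circ\chi\circ(\iota_C(1_C)\otimes A)=0$ from the unit axiom via $\iota_B(1_B)=1_{B\times C}-\iota_C(1_C)$, and then upgrade to arbitrary $c\in C$ using the multiplicativity axiom for $\chi$ (the paper factors $\iota_C(c)=\iota_C(1_C)\cdot\iota_C(c)$ and pushes $p_B$ through the product, while you factor $\iota_C(c)=\iota_C(c)\cdot\iota_C(1_C)$ and invoke that $0\times C$ is a two-sided ideal — a cosmetic variant of the same idea). The sufficiency direction and the morphism claim, which the paper explicitly leaves to the reader, are filled in correctly in your outline (with the routine proviso that ``each $w_k\in\iota_C(C)$'' requires choosing the first tensor factors linearly independent, i.e.\ reading the hypothesis as $\chi(\iota_C(1_C)\otimes a)\in A\otimes\ker p_B$).
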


\begin{proof} Since $\chi$ is a twisting map
\begin{align*}
\chi\bigl((1_B,0)\ot a\bigr) &= \chi\bigl((1_B,1_C)\ot a\bigr) - \chi\bigl((0,1_C)\ot a\bigr)\\
& = a\ot (1_B,1_C) - \chi\bigl((0,1_C)\ot a\bigr)\\
& = a\ot (1_B,1_C) + a\ot (0,1_C) - \chi\bigl((0,1_C)\ot a\bigr).
\end{align*}
Consequently, if $\chi_B$ is also a twisting map, then
$$
a\ot 1_B = \chi_B(1_B\ot a) = a\ot 1_B - (A\ot p_B)\circ \chi \bigl((0,1_C)\ot a\bigr),
$$
or, equivalently, $(A\ot p_B)\circ \chi \bigl((0,1n_C)\ot a\bigr) =0$. Evaluating now the equalities
$$
\begin{tikzpicture}[scale=0.5]
\def\mult(#1,#2)[#3]{\draw (#1,#2) .. controls (#1,#2-0.555*#3/2) and (#1+0.445*#3/2,#2-#3/2) .. (#1+#3/2,#2-#3/2) .. controls (#1+1.555*#3/2,#2-#3/2) and (#1+2*#3/2,#2-0.555*#3/2) .. (#1+2*#3/2,#2) (#1+#3/2,#2-#3/2) -- (#1+#3/2,#2-2*#3/2)}
\def\map(#1,#2)[#3]{\draw (#1,#2-0.5)  node[name=nodemap,inner sep=0pt,  minimum size=9.5pt, shape=circle,draw]{$#3$} (#1,#2)-- (nodemap)  (nodemap)--(#1,#2-1)}
\def\twisting(#1,#2)[#3]{\draw (#1+#3,#2) .. controls (#1+#3,#2-0.05*#3) and (#1+0.96*#3,#2-0.15*#3).. (#1+0.9*#3,#2-0.2*#3) (#1,#2-1*#3) .. controls (#1,#2-0.95*#3) and (#1+0.04*#3,#2-0.85*#3).. (#1+0.1*#3,#2-0.8*#3) (#1+0.1*#3,#2-0.8*#3) ..controls (#1+0.25*#3,#2-0.8*#3) and (#1+0.45*#3,#2-0.69*#3) .. (#1+0.50*#3,#2-0.66*#3) (#1+0.1*#3,#2-0.8*#3) ..controls (#1+0.1*#3,#2-0.65*#3) and (#1+0.22*#3,#2-0.54*#3) .. (#1+0.275*#3,#2-0.505*#3) (#1+0.72*#3,#2-0.50*#3) .. controls (#1+0.75*#3,#2-0.47*#3) and (#1+0.9*#3,#2-0.4*#3).. (#1+0.9*#3,#2-0.2*#3) (#1,#2) .. controls (#1,#2-0.05*#3) and (#1+0.04*#3,#2-0.15*#3).. (#1+0.1*#3,#2-0.2*#3) (#1+0.5*#3,#2-0.34*#3) .. controls (#1+0.6*#3,#2-0.27*#3) and (#1+0.65*#3,#2-0.2*#3).. (#1+0.9*#3,#2-0.2*#3) (#1+#3,#2-#3) .. controls (#1+#3,#2-0.95*#3) and (#1+0.96*#3,#2-0.85*#3).. (#1+0.9*#3,#2-0.8*#3) (#1+#3,#2) .. controls (#1+#3,#2-0.05*#3) and (#1+0.96*#3,#2-0.15*#3).. (#1+0.9*#3,#2-0.2*#3) (#1+0.1*#3,#2-0.2*#3) .. controls  (#1+0.3*#3,#2-0.2*#3) and (#1+0.46*#3,#2-0.31*#3) .. (#1+0.5*#3,#2-0.34*#3) (#1+0.1*#3,#2-0.2*#3) .. controls  (#1+0.1*#3,#2-0.38*#3) and (#1+0.256*#3,#2-0.49*#3) .. (#1+0.275*#3,#2-0.505*#3) (#1+0.50*#3,#2-0.66*#3) .. controls (#1+0.548*#3,#2-0.686*#3) and (#1+0.70*#3,#2-0.8*#3)..(#1+0.9*#3,#2-0.8*#3) (#1+#3,#2-1*#3) .. controls (#1+#3,#2-0.95*#3) and (#1+0.96*#3,#2-0.85*#3).. (#1+0.9*#3,#2-0.8*#3) (#1+0.72*#3,#2-0.50*#3) .. controls (#1+0.80*#3,#2-0.56*#3) and (#1+0.9*#3,#2-0.73*#3)..(#1+0.9*#3,#2-0.8*#3)(#1+0.72*#3,#2-0.50*#3) -- (#1+0.50*#3,#2-0.66*#3) -- (#1+0.275*#3,#2-0.505*#3) -- (#1+0.5*#3,#2-0.34*#3) -- (#1+0.72*#3,#2-0.50*#3)}
\begin{scope}[xshift=0cm, yshift=-0.5cm]
\mult(0,0)[1]; \draw (2,0) .. controls (2,-0.5) and (1.5,-0.5)..(1.5,-1); \twisting(0.5,-1)[1]; \draw (0.5,-2) -- (0.5,-3); \map(1.5,-2)[\scriptstyle p_{\!\scriptscriptstyle B}];
\end{scope}
\begin{scope}[xshift=2.5cm, yshift=-0.5cm]
\node at (0,-1.5){=};
\end{scope}
\begin{scope}[xshift=3.3cm, yshift=0cm]
\draw (0,0) -- (0,-1);  \twisting(1,-0)[1]; \twisting(0,-1)[1];\draw (2,-1) -- (2,-2); \mult(1,-2)[1]; \draw (0,-2) -- (0,-4); \map(1.5,-3)[\scriptstyle p_{\!\scriptscriptstyle B}];
\end{scope}
\begin{scope}[xshift=5.8cm, yshift=-0.5cm]
\node at (0,-1.5){=};
\end{scope}
\begin{scope}[xshift=6.6cm, yshift=0cm]
\draw (0,0) -- (0,-1);  \twisting(1,-0)[1]; \twisting(0,-1)[1];\draw (2,-1) -- (2,-2); \draw (0,-2) -- (0,-4);  \map(1,-2)[\scriptstyle p_{\!\scriptscriptstyle B}]; \map(2,-2)[\scriptstyle p_{\!\scriptscriptstyle B}]; \mult(1,-3)[1];
\end{scope}
\end{tikzpicture}
$$
in $(0,1_C)\ot (0,c) \ot a$ for all $c\in C$ and $a\in A$, we conclude that $(A\otimes p_B)\circ \chi\circ (\iota_C\otimes A)=0$. We leave to the reader the task to check the other assertions.
\end{proof}

\subsection{Cross product}
We recall that the {\em cross product} is the $(n\!-\!1)$-ary operation
$$
(\mathbf{v}_1,\dots, \mathbf{v}_{n-1})  \mapsto \mathbf{v}_1 \times \cdots \times \mathbf{v}_{n-1}
$$
on $K^n$, determined by
$$
(\mathbf{v}_1 \times \cdots \times \mathbf{v}_{n-1})\mathbf{x}^{\Trans} = \det \begin{pmatrix} \mathbf{x} \\ \mathbf{v}_1\\ \vdots \\
\mathbf{v}_{n-1}\end{pmatrix}
$$
for all $\mathbf{x}\in K^n$. From this definition it follows immediately that $\mathbf{v}_1 \times \cdots \times \mathbf{v}_{n-1}$ is orthogonal to
the subspace $\langle \mathbf{v}_1,\dots, \mathbf{v}_{n-1} \rangle$ generated by $\mathbf{v}_1,\dots, \mathbf{v}_{n-1}$, and that $\mathbf{v}_1
\times \cdots \times \mathbf{v}_{n-1}=0$ if $\mathbf{v}_1,\dots, \mathbf{v}_{n-1}$ are not linearly independent. It is well known (and very easy to
check) that
$$
\mathbf{v}_1 \times \cdots \times \mathbf{v}_{n-1} = \det \begin{pmatrix} e_1 & \dots & e_n \\ v_{11} & \cdots & v_{1n}\\ \vdots & \ddots & \vdots
\\ v_{n-1,1} & \cdots & v_{n-1,n} \end{pmatrix},
$$
where $\{e_1,\dots,e_n\}$ is the standard basis of $K^n$, $\mathbf{v}_i = (v_{i1},\dots,v_{in})$ and the determinant is computed by the Laplace
expansion along the first row. From this it follows immediately that if $X$ is the matrix with rows $\mathbf{x}_1, \dots, \mathbf{x}_n$ and columns
$\mathbf{y}_1, \dots, \mathbf{y}_n$, then

\begin{equation}\label{eq cross prod y transpuesta}
(\mathbf{y}_1^{\Trans} \times\cdots\times\widehat{\mathbf{y}_j^{\Trans}}\times \cdots \times \mathbf{y}_n^{\Trans})\centerdot e_j = (\mathbf{x}_1
\times\cdots \times \widehat{\mathbf{x}_j} \times\cdots\times \mathbf{x}_n)\centerdot e_j \qquad\text{for all $j$.}
\end{equation}

\begin{proposition}\label{relacion entre el producto cruzado y el producto en K^n} If $\mathbf{x}\in K^n$ is invertible, then
$$
\mathbf{x}\centerdot (\mathbf{v}_1 \times \cdots \times \mathbf{v}_{n-1})= \mu_n(\mathbf{x}) \, (\mathbf{x}^{\centerdot} \centerdot \mathbf{v}_1)
\times \cdots \times (\mathbf{x}^{\centerdot} \centerdot \mathbf{v}_{n-1}),
$$
for all $\mathbf{v}_1,\dots, \mathbf{v}_{n-1}\in K^n$, where $\mu_n(\mathbf{x})=x_1\cdots x_n$, as in the introduction.
\end{proposition}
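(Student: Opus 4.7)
The plan is to prove the identity componentwise, using the explicit determinantal formula
$$
\mathbf{v}_1 \times \cdots \times \mathbf{v}_{n-1} = \sum_{j=1}^n (-1)^{j+1} \det(V_{\hat{\jmath}}) \, e_j,
$$
where $V_{\hat{\jmath}}$ denotes the $(n-1)\times(n-1)$ submatrix obtained by removing the $j$-th column from the matrix whose rows are $\mathbf{v}_1, \dots, \mathbf{v}_{n-1}$. This formula is precisely the Laplace expansion already written in the excerpt.

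First I would compute the $j$-th coordinate of the left-hand side: it is simply $x_j \cdot (-1)^{j+1} \det(V_{\hat{\jmath}})$. Then I would compute the $j$-th coordinate of the right-hand side applied to the rows $\mathbf{x}^{\centerdot} \centerdot \mathbf{v}_i$; this gives $\mu_n(\mathbf{x}) \cdot (-1)^{j+1} \det(W_{\hat{\jmath}})$, where the $(i,k)$-entry of $W_{\hat{\jmath}}$ (for $k \ne j$) equals $x_k^{-1} v_{ik}$. Since each column of $W_{\hat{\jmath}}$ has a common scalar factor $x_k^{-1}$, multilinearity of the determinant in the columns yields
$$
\det(W_{\hat{\jmath}}) = \Bigl(\prod_{k\ne j} x_k^{-1}\Bigr) \det(V_{\hat{\jmath}}) = \frac{x_j}{\mu_n(\mathbf{x})} \, \det(V_{\hat{\jmath}}).
$$

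Substituting this into the expression for the $j$-th coordinate of the RHS, the factor $\mu_n(\mathbf{x})$ cancels and we recover exactly $x_j \cdot (-1)^{j+1} \det(V_{\hat{\jmath}})$, matching the LHS. Since this equality holds for every $j \in \mathds{N}^*_n$, the vector identity follows.

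There is no serious obstacle here; the only point requiring care is the bookkeeping of the invertibility hypothesis, which is used precisely to ensure that $\prod_{k\ne j} x_k^{-1}$ makes sense and that $\mu_n(\mathbf{x})$ is a legitimate scalar by which to multiply. The argument goes through uniformly for all $j$, so no separate cases arise.
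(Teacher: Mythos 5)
Your argument is correct. It differs from the paper's proof mainly in how the determinant is brought into play: you work coordinatewise, reading off the $j$-th coefficient of the cross product from the Laplace expansion as $(-1)^{j+1}\det(V_{\hat{\jmath}})$, and then pull the common factor $x_k^{-1}$ out of each of the $n-1$ columns of the minor, using the cancellation $\mu_n(\mathbf{x})\prod_{k\ne j}x_k^{-1}=x_j$. The paper instead never touches cofactors: it pairs both sides with an arbitrary test vector $\mathbf{y}\in K^n$, invokes the defining property $(\mathbf{v}_1\times\cdots\times\mathbf{v}_{n-1})\mathbf{y}^{\Trans}=\det(\mathbf{y};\mathbf{v}_1;\dots;\mathbf{v}_{n-1})$, and factors one $x_i$ out of each of the $n$ rows of the full $n\times n$ determinant, producing $\mu_n(\mathbf{x})$ in a single step; nondegeneracy of the pairing then gives the vector identity. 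Your route is more elementary and explicit, at the cost of sign and index bookkeeping for the minors and of the slightly less transparent cancellation of $\mu_n(\mathbf{x})$; the paper's route avoids all signs and handles every coordinate at once, but relies on the dual characterization of the cross product. Both uses of multilinearity are legitimate, and your use of the invertibility of $\mathbf{x}$ (to form $\prod_{k\ne j}x_k^{-1}$) is exactly where the hypothesis enters in the paper's proof as well.
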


\begin{proof} This assertion is an immediate consequence of the fact that
\begin{align*}
\mathbf{y} \bigl(\mathbf{x}\centerdot (\mathbf{v}_1 \times \cdots \times \mathbf{v}_{n-1})\bigr)^{\Trans} &= (\mathbf{x}\centerdot \mathbf{y})
(\mathbf{v}_1 \times \cdots \times \mathbf{v}_{n-1})^{\Trans}\\
&= \det \begin{pmatrix} (\mathbf{x}\centerdot \mathbf{y})^{\Trans} & \mathbf{v}_1^{\Trans} & \cdots &  \mathbf{v}_{n-1}^{\Trans}\end{pmatrix}\\
&= \tau(\mathbf{x}) \det \begin{pmatrix} \mathbf{y}^{\Trans} & (\mathbf{x}^{\centerdot}\centerdot \mathbf{v}_1)^{\Trans} & \cdots &
(\mathbf{x}^{\centerdot}\centerdot \mathbf{v}_{n-1})^{\Trans}\end{pmatrix}\\
&= \tau(\mathbf{x}) \, \mathbf{y} \bigl((\mathbf{x}^{\centerdot}\centerdot \mathbf{v}_1) \times \cdots \times (\mathbf{x}^{\centerdot} \centerdot
\mathbf{v}_{n-1})\bigr)^{\Trans}
\end{align*}
for all $\mathbf{y}\in K^n$.
\end{proof}

\section[Twisted tensor products of $K^n$ with $K^m$]{Twisted tensor products of $\bm{K^n}$ with $\bm{K^m}$}\label{Twisted tensor products of K^n with K^m}
Let $\chi\colon K^m\ot K^n \longrightarrow K^n \ot K^m$ be a map and let $\{e_1,\dots, e_m\}$ and $\{f_1,\dots, f_n\}$ be the canonical bases of
$K^m$ and $K^n$, respectively. There exist unique scalars $\lambda_{ij}^{kl}$, such that
\begin{equation}\label{formula para chi}
\chi(e_i\ot f_j)=\sum_{k,l} \lambda_{ij}^{kl} f_k\ot e_l\qquad\text{for all $e_i$ and $f_j$.}
\end{equation}
Given such a map $\chi$, for all $i,l\in \mathds{N}^*_m$ and $j,k\in \mathds{N}^*_n$, we let $A(i,l)\in M_n(K)$ and $B(j,k)\in M_m(K)$ denote the matrices defined by
\begin{equation}
A(i,l)_{kj}\coloneqq  \lambda_{ij}^{kl} \eqqcolon B(j,k)_{li}.\label{eq:rel entre A(i,l) y B(j,k)}
\end{equation}
If necessary we will specify these map with a subscript, writing $A_{\chi}(i,l)$ and $B_{\chi}(j,k)$. Moreover, we let
$\mathcal{A}=\mathcal{A}_{\chi}$ denote the family $(A(i,l))_{i,l\in \mathds{N}^*_m}$ and $\mathcal{B}=\mathcal{B}_{\chi}$ denote the family
$(B(j,k))_{j,k\in \mathds{N}^*_n}$.

\begin{notation}\label{j sub i(l)} For each $i,l\in \mathds{N}^*_m$ we set $J_i(l)\coloneqq \{j\in \mathds{N}^*_n: A(i,l)_{jj}=1\}$. If there is no
danger of confusion (as is the case, for example, when we work with the matrices $A(1,l),\dots,A(m,l)$ of a fixed column of $\mathcal{A}$), we
write $J_i$ instead of $J_i(l)$. Similarly, for each $i,l\in \mathds{N}^*_n$ we set $\tilde{J}_u(k)\coloneqq \{i\in \mathds{N}_m^* :
B_{\chi}(u,k)_{ii}=1\}$, and we write $\tilde{J}_u$ instead of $\tilde{J}_u(k)$ whenever there is no danger of confusion.
\end{notation}

\begin{remark}\label{A sub Chi es B sub tau chi tau} Let $\tilde{\chi}\coloneqq \tau_{mn}\circ \chi \circ \tau_{nm}$ An immediate computation shows
that
$$
A_{\tilde{\chi}}(i,l)_{kj} = B_{\chi}(i,l)_{kj}\qquad\text{and}\qquad B_{\tilde{\chi}}(j,k)_{li} = A_{\chi}(j,k)_{li}
$$
for each map $\chi\colon K^m\ot K^n\longrightarrow K^n \ot K^m$. Moreover $\tilde{\chi}$ is a twisting map if and only if $\chi$ is. In this case we
say that $\chi$ and $\tilde{\chi}$ are {\em dual} of each other.
\end{remark}

\begin{proposition}\label{caracterizacion} The map $\chi$ is a twisting map if and only if the following facts hold:

\begin{enumerate}

\smallskip

\item $\delta_{ii'} A(i,l) = A(i,l)A(i',l)$ for all $i$, $i'$ and $l$,

\smallskip

\item $\delta_{jj'} B(j,k) = B(j,k)B(j',k)$ for all $j$, $j'$ and $k$,

\smallskip

\item $A(i,l)\mathds{1}=\delta_{il}\mathds{1}$ for all $i$ and $l$,

\smallskip

\item $B(j,k)\mathds{1}=\delta_{jk}\mathds{1}$ for all $j$ and $k$.

\end{enumerate}

\end{proposition}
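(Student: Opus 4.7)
The plan is to translate each of the four twisting-map axioms, evaluated on the canonical bases of $K^m$ and $K^n$, directly into the proposed matrix identities. Everything is transparent because both $K^m$ and $K^n$ carry orthogonal idempotent bases: $e_ie_{i'}=\delta_{ii'}e_i$, $f_jf_{j'}=\delta_{jj'}f_j$, $\mathds{1}_m=\sum_i e_i$, and $\mathds{1}_n=\sum_j f_j$. I would begin with the two unit axioms: inserting \eqref{formula para chi} into $\chi\xcirc(C\ot\eta_A)=\eta_A\ot C$ evaluated at $e_i\ot 1$ gives $\chi(e_i\ot \mathds{1}_n)=\mathds{1}_n\ot e_i$, and matching coefficients of $f_k\ot e_l$ yields $\sum_j\lambda_{ij}^{kl}=\delta_{il}$, which via \eqref{eq:rel entre A(i,l) y B(j,k)} is precisely $A(i,l)\mathds{1}=\delta_{il}\mathds{1}$, i.e.~(3). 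The dual axiom $\chi\xcirc(\eta_C\ot A)=A\ot\eta_C$ produces $\sum_i\lambda_{ij}^{kl}=\delta_{jk}$, which is~(4).

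For the multiplicativity axioms I would apply $\chi\xcirc(\mu_C\ot A)=(A\ot\mu_C)\xcirc(\chi\ot C)\xcirc(C\ot\chi)$ to $e_i\ot e_{i'}\ot f_j$: the left side collapses to $\delta_{ii'}\sum_{k,l}\lambda_{ij}^{kl}f_k\ot e_l$, while on the right side one first applies $\chi$ to $e_{i'}\ot f_j$, then $\chi$ to $e_i\ot f_{k'}$, and finally $\mu_{K^m}$ to the resulting pair of factors in $K^m$; the coefficient of $f_k\ot e_l$ becomes $\sum_{k'}\lambda_{ik'}^{kl}\lambda_{i'j}^{k'l}=(A(i,l)A(i',l))_{kj}$, so the axiom is equivalent to~(1). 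An analogous computation applied to $e_i\ot f_j\ot f_{j'}$, repackaged using the matrices $B(j,k)$, yields~(2); the only subtlety is that the two applications of $\chi$ act on positions $(1,2)$ and $(2,3)$ in that order, so the direct outcome is $B(j',k)B(j,k)=\delta_{jj'}B(j,k)$, which becomes the stated form after swapping $j\leftrightarrow j'$.

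Since each of these expansions is reversible on basis elements, the four twisting-map axioms together are equivalent to conditions (1)--(4). I do not anticipate any conceptual obstacle; the only place where care is required is in the multiplicativity step, where one must track which summation index corresponds to the inner versus the outer application of $\chi$ in order for the order of matrix multiplication in (1) and~(2) to come out correctly.
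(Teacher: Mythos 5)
Your proposal is correct and follows essentially the same route as the paper: evaluate the four twisting-map axioms on the canonical bases, extract the coefficient identities $\sum_j\lambda_{ij}^{kl}=\delta_{il}$, $\sum_i\lambda_{ij}^{kl}=\delta_{jk}$, $\delta_{ii'}\lambda_{ij}^{kl}=\sum_u\lambda_{iu}^{kl}\lambda_{i'j}^{ul}$ and $\delta_{jj'}\lambda_{ij}^{kl}=\sum_u\lambda_{ij}^{ku}\lambda_{uj'}^{kl}$, and repackage them via \eqref{eq:rel entre A(i,l) y B(j,k)} as the matrix conditions (1)--(4). Your bookkeeping, including the index swap needed to bring the $B$-condition into the stated order of multiplication, matches the paper's (more tersely stated) computation.
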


\begin{proof} A direct computation shows that
$$
\chi \xcirc (\mu_{K^m}\ot K^n) = (K^n\ot \mu_{K^m})\xcirc (\chi\ot K^m)\xcirc (K^m\ot \chi)
$$
if and only if
$$
\delta_{ii'}\lambda_{ij}^{kl}=\sum_{u=1}^n \lambda_{iu}^{kl}\lambda_{i'j}^{ul}\quad\text{for all $i,i',j,k,l$,}
$$
which is equivalent to condition~(1), and that
$$
\chi\xcirc (K^m\ot \mu_{K^n}) = (\mu_{K^n}\ot K^m)\xcirc (K^n\ot \chi)\xcirc (\chi\ot K^n)
$$
if and only if
$$
\delta_{jj'}\lambda_{ij}^{kl}=\sum_{u=1}^m \lambda_{ij}^{ku} \lambda_{uj'}^{kl}\quad\text{for all $i,j,j',k,l$,}
$$
which is equivalent to condition~(2). Finally it is easy to check that
$$
\chi \xcirc (K^m\ot \eta_{K^n}) = \eta_{K^n}\ot K^m\qquad\text{and}\qquad \chi\xcirc (\eta_{K^m}\ot K^n) = K^n\ot \eta_{K^m}
$$
if and only if conditions~(3) and~(4) are fulfilled.
\end{proof}

\begin{remark} Statement~(1) says that for each $l\in \mathds{N}_m^*$, the matrices $A(1,l),\dots, A(m,l)$ are a family of orthogonal idempotents,
and Statement~(2) says that for each $k\in \mathds{N}_n^*$, the matrices $B(1,k),\dots, B(n,k)$ are also a family of orthogonal idempotents.
Statement~(1) implies that Statement~(3) holds if and only if  $\mathds{1}_{K^n}$ belongs to the image of $A(i,i)$ for all $i$. Similarly, if
Statement~(2) is fulfilled, then Statement~(4) is true if and only if $\mathds{1}_{K^m}\in \ima B(j,j)$ for all $j$.
\end{remark}

\begin{corollary}\label{caracterizacion en terminos de las A(i,j)s} The map $\chi$ is a twisting map if and only if the following conditions are
fulfilled:

\begin{enumerate}

\smallskip

\item $\delta_{ii'} A(i,l) = A(i,l)A(i',l)$ for all $i$, $i'$ and $l$,

\smallskip

\item $A(i,l)\mathds{1}=\delta_{il}\mathds{1}$ for all $i$ and $l$,

\smallskip

\item $\sum_{i=1}^m A(i,l)=\ide$ for all $l$,

\smallskip

\item $\sum_{h=1}^m A(i,h)_{kj} A(h,l)_{kj'}=\delta_{jj'}A(i,l)_{kj}$ for all $i$, $j$, $j'$, $k$ and $l$.

\end{enumerate}
\end{corollary}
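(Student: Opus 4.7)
The plan is to derive this corollary directly from Proposition~\ref{caracterizacion} by translating the two conditions that involve the matrices $B(j,k)$ into conditions on the $A(i,l)$, using the symmetry $A(i,l)_{kj} = B(j,k)_{li}$ from~\eqref{eq:rel entre A(i,l) y B(j,k)}. Conditions~(1) and~(2) of the corollary are literally conditions~(1) and~(3) of the proposition, so it remains to show that condition~(3) here is equivalent to condition~(4) there, and that condition~(4) here is equivalent to condition~(2) there.

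For the first equivalence, I would rewrite condition~(3) entry-wise as $\sum_{i=1}^m A(i,l)_{kj} = \delta_{kj}$ for all $j,k,l$. Applying $A(i,l)_{kj} = B(j,k)_{li}$ turns this into $\sum_{i=1}^m B(j,k)_{li} = \delta_{jk}$, and since the left-hand side is the $l$-th coordinate of $B(j,k)\mathds{1}$, requiring this for all $l$ is precisely $B(j,k)\mathds{1} = \delta_{jk}\mathds{1}$, i.e., condition~(4) of Proposition~\ref{caracterizacion}.

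For the second equivalence, substituting $A(i,h)_{kj} = B(j,k)_{hi}$ and $A(h,l)_{kj'} = B(j',k)_{lh}$ into condition~(4) yields, after reordering the product,
\[
\bigl(B(j',k) B(j,k)\bigr)_{li} = \sum_{h=1}^m B(j',k)_{lh} B(j,k)_{hi} = \delta_{jj'} B(j,k)_{li},
\]
for all $i,l$, so condition~(4) amounts to $B(j',k) B(j,k) = \delta_{jj'} B(j,k)$ for all $j,j',k$. Relabeling $j \leftrightarrow j'$ and using that $\delta_{jj'} B(j',k) = \delta_{jj'} B(j,k)$ (they coincide when $j=j'$ and both vanish otherwise), this is exactly condition~(2) of Proposition~\ref{caracterizacion}.

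The argument is pure bookkeeping and presents no substantial obstacle; the only point requiring a moment of care is that condition~(4) places the $B$-factors in reversed order compared to condition~(2) of the proposition, so the equivalence only surfaces after the swap $j \leftrightarrow j'$.
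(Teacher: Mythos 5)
Your proposal is correct and follows essentially the same route as the paper: conditions (1)--(2) are read off verbatim from Proposition~\ref{caracterizacion}, and conditions (3)--(4) are translated into conditions (4) and (2) of that proposition via the index relation $A(i,l)_{kj}=B(j,k)_{li}$ of~\eqref{eq:rel entre A(i,l) y B(j,k)}. Your bookkeeping (including the order reversal of the $B$-factors and the harmless swap $j\leftrightarrow j'$) is, if anything, spelled out more carefully than in the paper's own proof.
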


\begin{proof} Conditions~(1) and~(2) are conditions~(1) and~(3) of Proposition~\ref{caracterizacion}. Since by~\eqref{eq:rel entre A(i,l) y B(j,k)},
$$
\sum_{i=1}^m A(l,i)_{kj}=\sum_{i=1}^m B(j,k)_{li},
$$
condition~(3) is equivalent to condition~(4) of that proposition, and since, again by~\eqref{eq:rel entre A(i,l) y B(j,k)},
$$
\sum_{u=1}^m A(u,l)_{kj}A(i,u)_{kj'} = \sum_{u=1}^m B(j,k)_{lu}B(j',k)_{ui},
$$
condition~(4) is equivalent to condition~(2) of the same proposition.
\end{proof}

\begin{remark}\label{corolario con las B} By Remark~\ref{A sub Chi es B sub tau chi tau} and the fact that $\chi$ is a twisting map if and only if
$\tilde{\chi}$ is, there is  a similar corollary with the matrices $A(i,l)$ replaced by the matrices  $B(j,k)$.
\end{remark}

\begin{remark} Corollary~\ref{caracterizacion en terminos de las A(i,j)s}(4) says in particular that the vector $\bigl(A(i,1)_{kj},\dots,A(i,m)_{kj}
\bigr)$ is orthogonal to the vector $\bigl(A(1,l)_{kj'},\dots, A(m,l)_{kj'}\bigr)$ for each $i$, $j$, $j'$, $k$ and $l$ with $j\ne j'$.
\end{remark}

\begin{remark}\label{cuando son idempotentes} Let $X_1,\dots,X_k\in M_n(K)$ such that $\sum_{j=1}^k X_j = \ide_n$. A straightforward computation
shows that if $\sum_{j=1}^k \rk(X_j) \le n$, then the $X_i$'s are orthogonal idempotents, which means that $X_iX_j = \delta_{ij} X_i$ for all $i$
and $j$.
\end{remark}

\begin{remark}\label{cuando son idempotentes ortogonales} Let $X_1,\dots,X_k\in M_n(K)$ be idempotent matrices such that $\sum_{i=1}^k X_i=\ide_n$.
Then the $X_i$'s are orthogonal idempotents. In fact, since $\rk(X_i)=\Tr(X_i)$ and
$$
\sum_{i}\Tr(X_i)=\Tr\bigg(\sum_{i}X_{i}\bigg)=\Tr(\ide)=n,
$$
this follows from the Remark~\ref{cuando son idempotentes}.
\end{remark}

\begin{remark}\label{Simplificacion Corolario 2.4(4)} Fix $k\in \mathds{N}_n^*$ and assume that $\sum_j A(i,l)_{kj} = \delta_{il}$ for all $i$ and
$l$ (which is Corollary~\ref{caracterizacion en terminos de las A(i,j)s}~(2) for this $k$). If the equality in Corollary~\ref{caracterizacion en
terminos de las A(i,j)s}(4) holds for all $i$, $l$ and $j=j'$, then it holds for all $i$, $j$, $j'$ and $l$. In fact, the assumptions guarantee
that $B(j,k)$ is idempotent for each $j\in \mathds{N}^*_n$ and that $\sum_{j=1}^n  B(j,k) = \ide$. So, by  Remark~\ref{cuando son idempotentes
ortogonales}, the family of idempotent matrices $(B(j,k))_{j\in \mathds{N}^*_n}$ is orthogonal, which is equivalent to
Corollary~\ref{caracterizacion en terminos de las A(i,j)s}(4) for this fixed $k$.
\end{remark}

\begin{definition}\label{matriz de rangos} The matrices $\Gamma_{\chi}\in M_m(K)$, of $\mathcal{A}$-ranks, and $\tilde\Gamma_{\chi}\in M_n(K)$,
of~$\mathcal{B}$-ranks, are~def\-ined by
$$
\Gamma_{\chi}\coloneqq\begin{pmatrix} \gamma_{11} & \dots & \gamma_{1m} \\ \vdots & \ddots & \vdots \\ \gamma_{m1} & \cdots & \gamma_{mm}
\end{pmatrix}\quad\text{and}\quad \tilde{\Gamma}_{\chi}\coloneqq\begin{pmatrix} \tilde\gamma_{11} & \dots & \tilde\gamma_{1n} \\ \vdots & \ddots &
\vdots \\ \tilde\gamma_{n1} & \cdots & \tilde\gamma_{nn} \end{pmatrix},
$$
where $\gamma_{il}\coloneqq \rk(A(i,l))$ and $\tilde\gamma_{jk}\coloneqq \rk(B(j,k))$.
\end{definition}

\begin{corollary}\label{propiedades de la matriz de rangos} If $\chi$ is a twisting map, then the rank matrices have the following properties:
\begin{enumerate}

\item $\delta_{il}\le \gamma_{il} \le n$ for all $i$ and $l$.

\smallskip

\item $\sum_{i=1}^m \gamma_{il}=n$ for all $l$.

\smallskip

\item $\delta_{jk}\le \tilde\gamma_{jk} \le m$ for all $j$ and $k$.

\smallskip

\item $\sum_{j=1}^n \tilde\gamma_{jk}=m$ for all $k$.

\end{enumerate}
\end{corollary}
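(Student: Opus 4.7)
The plan is to read off each of the four statements directly from Proposition~\ref{caracterizacion} and Corollary~\ref{caracterizacion en terminos de las A(i,j)s}, using the standard fact that for an idempotent matrix $X$ one has $\rk(X)=\Tr(X)$.

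For statement~(1), the upper bound $\gamma_{il}\le n$ is immediate because $A(i,l)\in M_n(K)$. For the lower bound, I only need to check the case $i=l$: by Proposition~\ref{caracterizacion}(3) we have $A(i,i)\mathds{1}=\mathds{1}$, so $\mathds{1}$ lies in the image of $A(i,i)$, and hence $\gamma_{ii}\ge 1$. For $i\ne l$ the bound $\delta_{il}=0\le\gamma_{il}$ is vacuous.

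For statement~(2), fix $l$. By Proposition~\ref{caracterizacion}(1) the matrices $A(1,l),\dots,A(m,l)$ are pairwise orthogonal idempotents (in particular, each is idempotent, so $\gamma_{il}=\rk(A(i,l))=\Tr(A(i,l))$), and by Corollary~\ref{caracterizacion en terminos de las A(i,j)s}(3) they sum to $\ide_n$. Taking traces gives
\[
\sum_{i=1}^m \gamma_{il}=\sum_{i=1}^m \Tr(A(i,l))=\Tr(\ide_n)=n,
\]
which is exactly~(2).

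Statements~(3) and~(4) follow by the same argument applied to the dual twisting map $\tilde\chi=\tau_{mn}\xcirc\chi\xcirc\tau_{nm}$. Indeed, Remark~\ref{A sub Chi es B sub tau chi tau} gives $A_{\tilde\chi}(j,k)_{li}=B_\chi(j,k)_{li}$, so $\rk(B_\chi(j,k))=\rk(A_{\tilde\chi}(j,k))$, and $\tilde\chi$ is a twisting map; thus Remark~\ref{corolario con las B} (or equivalently, applying (1) and (2) to $\tilde\chi$) yields $\delta_{jk}\le\tilde\gamma_{jk}\le m$ and $\sum_{j=1}^n\tilde\gamma_{jk}=m$. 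There is no real obstacle here; the only point worth stressing is that one really must invoke the orthogonality of the idempotents (or equivalently the trace identity) to promote the trivial bound $\sum_i\gamma_{il}\ge n$ (which would follow just from $\sum_i A(i,l)=\ide_n$) to an equality.
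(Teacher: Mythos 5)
Your proof is correct and follows essentially the same route as the paper: its one-line proof just cites Corollary~\ref{caracterizacion en terminos de las A(i,j)s} for items~(1)--(2) and the analogous statements for the $B(j,k)$'s (Remark~\ref{corolario con las B}) for items~(3)--(4), which is exactly what you have spelled out, including the use of $A(i,i)\mathds{1}=\mathds{1}$ for the lower bound and the dual map $\tilde{\chi}$ for the $\tilde\gamma_{jk}$. One cosmetic remark: the equality $\sum_i\gamma_{il}=n$ is obtained most cleanly from the decomposition $K^n=\bigoplus_i \ima\bigl(A(i,l)\bigr)$ given by orthogonality, which avoids reading $\rk=\Tr$ as an integer identity (a point that matters only in positive characteristic); but the paper itself uses the same trace convention, e.g.\ in Remark~\ref{cuando son idempotentes ortogonales}, so your argument is at the paper's level of rigor.
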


\begin{proof} Items~(1) and~(2) follow from Corollary~\ref{caracterizacion en terminos de las A(i,j)s} and~(3) and~(4) from the corresponding properties of the $B(j,k)$'s.
\end{proof}

\subsection{Isomorphisms of twisting maps}

\begin{proposition}\label{matrices de rango equivalentes} Two twisting maps $\chi,\chi' \colon K^m\ot K^n\longrightarrow K^n\ot K^m$ are isomorphic
if and only if there exists $\sigma\in S_m$ and $\varsigma\in S_n$ such that
$$
A_{\chi'}(i,l)_{kj}=A_{\chi}(\sigma(i),\sigma(l))_{\varsigma(k)\varsigma(j)}
$$
or, equivalently,
$$
B_{\chi'}(j,k)_{li}=B_{\chi}(\varsigma(j),\varsigma(k))_{\sigma(l)\sigma(i)}.
$$
\end{proposition}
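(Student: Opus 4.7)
The plan is to exploit the rigidity of $K$-algebra automorphisms of $K^r$: the set $\{e_1,\dots,e_r\}$ is exactly the set of primitive idempotents, so every $K$-algebra automorphism of $K^r$ permutes this basis. Consequently, for any morphism $F_{gh}\colon\chi\to\chi'$ (with $\chi,\chi'$ acting on the same spaces), the algebra isomorphisms $g\colon K^m\to K^m$ and $h\colon K^n\to K^n$ are of the form $g(e_i)=e_{\sigma^{-1}(i)}$, $h(f_j)=f_{\varsigma^{-1}(j)}$ for unique $\sigma\in S_m$, $\varsigma\in S_n$ (the inverses are a harmless bookkeeping choice that will make the output formula match). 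Since $g$ and $h$ are automatically isomorphisms, by Remark~\ref{morfismos de twisting maps a morfiamos de algebras} this reduces the proposition to translating the morphism identity $\chi'\xcirc(g\ot h)=(h\ot g)\xcirc\chi$ into coordinates and matching it with the displayed formula.

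Next I would evaluate both sides of this identity on $e_i\ot f_j$ using~\eqref{formula para chi}. The left-hand side is
$$\chi'(e_{\sigma^{-1}(i)}\ot f_{\varsigma^{-1}(j)})=\sum_{k,l}\lambda^{kl}_{\sigma^{-1}(i),\varsigma^{-1}(j)}(\chi')\,f_k\ot e_l,$$
while the right-hand side, after applying $h\ot g$ to~\eqref{formula para chi} and reindexing $k\mapsto\varsigma(k)$, $l\mapsto\sigma(l)$, is
$$\sum_{k,l}\lambda^{\varsigma(k),\sigma(l)}_{ij}(\chi)\,f_k\ot e_l.$$
Comparing coefficients yields $\lambda^{\varsigma(k),\sigma(l)}_{ij}(\chi)=\lambda^{kl}_{\sigma^{-1}(i),\varsigma^{-1}(j)}(\chi')$, and after substituting $i\mapsto\sigma(i)$, $j\mapsto\varsigma(j)$ and using $A(i,l)_{kj}=\lambda^{kl}_{ij}$ from~\eqref{eq:rel entre A(i,l) y B(j,k)}, this becomes the stated identity $A_{\chi'}(i,l)_{kj}=A_\chi(\sigma(i),\sigma(l))_{\varsigma(k)\varsigma(j)}$. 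Reading the same chain of equivalences backwards handles the converse: given $\sigma$ and $\varsigma$ satisfying the identity, define $g$, $h$ by the formulas above, and the coefficient comparison shows that $F_{gh}\colon\chi\to\chi'$ is a morphism, hence an isomorphism.

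The equivalence with the $B$-formula is immediate from the symmetric relation $A(i,l)_{kj}=B(j,k)_{li}$ recorded in~\eqref{eq:rel entre A(i,l) y B(j,k)}: substituting into the $A$-formula and relabeling produces $B_{\chi'}(j,k)_{li}=B_\chi(\varsigma(j),\varsigma(k))_{\sigma(l)\sigma(i)}$ directly, with no further computation.

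The only real obstacle is clerical: keeping the inverses $\sigma^{-1},\varsigma^{-1}$ consistently placed so that the reindexing of dummy summation variables lines up the final formula with the one stated in the proposition. There is no conceptual difficulty beyond the classification of algebra automorphisms of $K^r$ and a careful reindexing.
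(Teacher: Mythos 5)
Your proposal is correct and follows essentially the same route as the paper: identify algebra automorphisms of $K^m$ and $K^n$ with permutations of the canonical bases of primitive idempotents, write out the compatibility condition in the coordinates $\lambda_{ij}^{kl}$ using~\eqref{formula para chi}, and translate via~\eqref{eq:rel entre A(i,l) y B(j,k)}; the only difference is the harmless bookkeeping choice of where the inverse permutations sit.
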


\begin{proof} By definition $\chi$ and $\chi'$ are isomorphic if and only if there are algebra automorphisms $g\colon K^m\to K^m$ and $h\colon K^n
\to K^n$ such that $\chi'=(h^{-1}\otimes g^{-1})\circ \chi\circ (g\otimes h)$. Since the automorphisms of $K^n$ and $K^m$ are given by permutation
of the entries, there exist $\varsigma\in S_n$ and $\sigma\in S_m$ such that $g(e_i)=e_{\sigma(i)}$ and $h(f_j)=f_{\varsigma(j)}$ for all $i\in
\mathds{N}^*_m$ and $j\in \mathds{N}^*_n$, and so
\begin{align*}
\chi'(e_i\otimes f_j) & = (h^{-1}\otimes g^{-1})\chi(e_{\sigma(i)}\otimes f_{\varsigma(j)})\\
& = \sum_{k,l}\lambda_{\sigma(i)\varsigma(j)}^{\varsigma(k)\sigma(l)} (h^{-1}\otimes g^{-1})( f_{\varsigma(k)}\otimes e_{\sigma(l)})\\
& = \sum_{k,l}\lambda_{\sigma(i)\varsigma(j)}^{\varsigma(k)\sigma(l)}f_k\otimes e_l .
\end{align*}
Now the result follows immediately from~\eqref{formula para chi} and~\eqref{eq:rel entre A(i,l) y B(j,k)}.
\end{proof}

\subsection{Representations in matrix algebras}

In this subsection $\chi\colon K^m\ot K^n \longrightarrow K^n \ot K^m$ denotes a twisting map and $\lambda_{ij}^{kl}$, $A(i,l)$ and $B(j,k)$ are as
at the beginning of Section~\ref{Twisted tensor products of K^n with K^m}.

\begin{proposition}\label{representacion} For each $1\le u\le m$ the formulas
$$
\rho_u(f_j\otimes 1)\coloneqq E^{jj} \quad\text{and}\quad \rho_u (1\otimes e_i)\coloneqq A(i,u)
$$
define a representation $\rho_u\colon K^n\otimes_{\chi} K^m \longrightarrow M_n(K)$. Similarly, for each $1\le v\le n$ the formulas
$$
\tilde\rho_v( 1\otimes e_i)\coloneqq E^{ii} \quad\text{and}\quad\tilde\rho_v (f_j\otimes 1)\coloneqq B(j,v)
$$
define a representation $\tilde\rho_v\colon K^n\otimes_{\chi} K^m \longrightarrow M_m(K)$.
\end{proposition}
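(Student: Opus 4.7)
The plan is to realize each $\rho_u$ (and $\tilde\rho_v$) via the universal property of the twisted tensor product: to give an algebra homomorphism $\phi\colon K^n\otimes_\chi K^m\to D$ is equivalent to giving unital algebra homomorphisms $\alpha\colon K^n\to D$ and $\beta\colon K^m\to D$ satisfying the twisting compatibility
\begin{equation*}
\beta(c)\alpha(a)\;=\;\mu_D\xcirc(\alpha\ot\beta)\xcirc\chi(c\ot a)\qquad\text{for all $c\in K^m$ and $a\in K^n$.}
\end{equation*}
For $\rho_u$ I would take $\alpha(f_j)\coloneqq E^{jj}$ and $\beta(e_i)\coloneqq A(i,u)$; for $\tilde\rho_v$ the dual choice $\alpha(f_j)\coloneqq B(j,v)$ and $\beta(e_i)\coloneqq E^{ii}$.

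First I would verify that $\alpha$ and $\beta$ are unital algebra maps. For the assignment $f_j\mapsto E^{jj}$ (resp.\ $e_i\mapsto E^{ii}$) this follows from $E^{aa}E^{bb}=\delta_{ab}E^{aa}$ and $\sum_a E^{aa}=\ide$. For $e_i\mapsto A(i,u)$, orthogonality of the idempotents comes from Proposition~\ref{caracterizacion}(1) and the unit condition $\sum_i A(i,u)=\ide$ from Corollary~\ref{caracterizacion en terminos de las A(i,j)s}(3); unitality of $\rho_u$ then follows automatically, since $\rho_u(1\ot 1)$ equals $\ide$ evaluated through either factor. The dual check that $f_j\mapsto B(j,v)$ is a unital algebra map uses Proposition~\ref{caracterizacion}(2) together with the $B$-analogues from Remark~\ref{corolario con las B}.

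The key step is the twisting compatibility, which for $\rho_u$, with $c=e_i$ and $a=f_j$, reads
\begin{equation*}
A(i,u)E^{jj}\;=\;\sum_{k,l}\lambda_{ij}^{kl}E^{kk}A(l,u).
\end{equation*}
Comparing $(r,s)$-entries and substituting $\lambda_{ij}^{kl}=A(i,l)_{kj}$ reduces this to the equality $A(i,u)_{rj}\delta_{js}=\sum_l A(i,l)_{rj}A(l,u)_{rs}$, which is precisely Corollary~\ref{caracterizacion en terminos de las A(i,j)s}(4) specialised at $k=r$ and $j'=s$.

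For $\tilde\rho_v$ the argument is formally dual. One translates the corresponding twisting compatibility entry-wise using $\lambda_{ij}^{kl}=B(j,k)_{li}$, so that it becomes the $B$-version of Corollary~\ref{caracterizacion en terminos de las A(i,j)s}(4) supplied by Remark~\ref{corolario con las B}; equivalently, one can apply the first part of the proposition to the dual twisting $\tilde\chi$ of Remark~\ref{A sub Chi es B sub tau chi tau}, whose $A$-family is precisely $B_\chi$. The main obstacle is purely bookkeeping: keeping the quadruples of indices straight while moving between the $\lambda$-, $A$- and $B$-formulations, and observing that the $B$-version of the corollary is exactly the identity the compatibility requires.
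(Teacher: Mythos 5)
Your treatment of $\rho_u$ is correct and is essentially the paper's own argument: verifying that the two restrictions are algebra maps and that the single mixed relation $A(i,u)E^{jj}=\sum_{k,l}\lambda_{ij}^{kl}E^{kk}A(l,u)$ holds is exactly what the paper does, and your entrywise reduction of that relation, via $\lambda_{ij}^{kl}=A(i,l)_{kj}$, to Corollary~\ref{caracterizacion en terminos de las A(i,j)s}(4) is the same computation (the appeal to the universal property is just a repackaging of these checks).

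The second half is where the real content lies, and your claim that it is ``formally dual'' bookkeeping reducing to the $B$-version of Corollary~\ref{caracterizacion en terminos de las A(i,j)s}(4) is precisely the step that does not survive the bookkeeping. With $\alpha(f_j)=B(j,v)$ and $\beta(e_i)=E^{ii}$ the compatibility is $E^{ii}B(j,v)=\sum_{k,l}\lambda_{ij}^{kl}B(k,v)E^{ll}$, and its $(r,s)$ entry, after substituting \eqref{eq:rel entre A(i,l) y B(j,k)}, is $\delta_{ri}B(j,v)_{is}=\sum_k B(j,k)_{si}B(k,v)_{rs}$: the two factors carry the \emph{different} row indices $s$ and $r$, so this is not the $B$-analogue of the corollary (there both factors share the row index), and it is equivalent to $A(s,r)A(i,s)=\delta_{ri}A(s,i)$, which is not among the twisting conditions and can fail. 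Concretely, for the standard twisting map of $K^2$ with $K^2$ given by $A(1,1)=\ide$, $A(2,1)=0$, $A(2,2)=\begin{psmallmatrix}1&0\\1&0\end{psmallmatrix}$, $A(1,2)=\begin{psmallmatrix}0&0\\-1&1\end{psmallmatrix}$, the element $f_1\ot e_1=(f_1\ot 1)(1\ot e_1)$ is idempotent, while $B(1,2)E^{11}=\begin{psmallmatrix}0&0\\-1&0\end{psmallmatrix}$ is nonzero of square zero, so no algebra homomorphism can take the prescribed values when $v=2$. Your alternative route hits the same obstruction: the flip $f_j\ot e_i\mapsto e_i\ot f_j$ is an \emph{anti}-isomorphism from $K^n\ot_{\chi}K^m$ onto $K^m\ot_{\tilde{\chi}}K^n$, so applying the first half to $\tilde{\chi}$ yields an anti-homomorphism, not a homomorphism. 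What the dual argument does prove is multiplicativity of $f_j\ot 1\mapsto B(j,v)^{\Trans}$, $1\ot e_i\mapsto E^{ii}$ (equivalently, the stated formulas define a representation of the opposite algebra): with the transpose inserted, the $(r,s)$ entry of the compatibility becomes $\sum_k B(j,k)_{si}B(k,v)_{sr}=\delta_{ri}B(j,v)_{si}$, where both factors now share the row index $s$, and this \emph{is} the $B$-version of Corollary~\ref{caracterizacion en terminos de las A(i,j)s}(4) supplied by Remark~\ref{corolario con las B}. So the dual half needs this extra transpose/opposite-algebra step; your proposal (like the paper's own ``the proof for $\tilde\rho_v$ is similar'') omits it, and as written the reduction you assert is exactly the point at which the argument breaks.
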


\begin{proof} Clearly the restriction of $\rho_u$ to $K^n\otimes K\cdot\mathds{1}$ is a morphism of algebras. Moreover, by items~(1) and~(3) of
Corollary~\ref{caracterizacion en terminos de las A(i,j)s}, the restriction of $\rho_u$ to $K\cdot\mathds{1}\otimes K^m$ is also a morphism of
algebras. So, in order to prove that $\rho_u$ defines a representation, it only remains to check that
$$
\rho_u\bigl((1\otimes e_i)(f_j\otimes 1)\bigr)=\rho_u(1\otimes e_i)\rho_u(f_j\otimes 1).
$$
But this is true, since, on one hand,
$$
(1\otimes e_i)(f_j\otimes 1)=\sum_{k,l}\lambda_{ij}^{kl}f_k\otimes e_l,
$$
and, on the other hand,
$$
A(i,u) E^{jj}=\sum_{k,l} \lambda_{ij}^{kl}E^{kk}A(l,u),
$$
because
\begin{align*}
\sum_{k,l} \lambda_{ij}^{kl}E^{kk}A(l,u)&= \sum_{k,l,s} \lambda_{ij}^{kl} A(l,u)_{ks}E^{ks}\\
&=  \sum_{k,l,s}  A(i,l)_{kj} A(l,u)_{ks}E^{ks}\\
&= \sum_{k} A(i,u)_{kj}E^{kj}\\
&= A(i,u) E^{jj},
\end{align*}
where the first and the last equality are straightforward, the second equality is true by~\eqref{eq:rel entre A(i,l) y B(j,k)} and the third one by
Corollary~\ref{caracterizacion en terminos de las A(i,j)s}(4). The proof for $\tilde\rho_v$ is similar.
\end{proof}

\begin{remark}\label{imagen de rho} We can give a complete description of the image of $\rho_u$ and $\tilde\rho_v$. For this, note that if
$A(i,u)_{kj}\ne 0$ for some $i$, $j$ and $k$, then $E^{kj}\in \ima(\rho_u)$. In fact,
$$
E^{kj}A(i,u)_{kj}=E^{kk}A(i,u)E^{jj}=\rho_u((f_k\otimes1)(1\otimes e_i)(f_j\otimes 1)).
$$
Hence
$$
E^{kj}=\rho_u\left(\frac{(f_k\otimes1)(1\otimes e_i)(f_j\otimes 1)}{A(i,u)_{kj}}\right).
$$
so, the image of $\rho_u$ is the matrix incidence algebra of the preorder on $\{1,\dots,n\}$ given by $k\le j$ if and only if $E^{kj}\in\ima
(\rho_u)$. In particular, if for all $k,j$ there exists a $i$ with $A(i,u)_{kj}\ne 0$, then $\rho_u$ is surjective. Similarly, the image of
$\tilde\rho_v$ is the  matrix incidence algebra of the preorder on $\{1,\dots,m\}$ given by $l\le i$ if and only if $E^{li}\in\ima (\tilde\rho_u)$.
\end{remark}

\begin{remark}\label{tabla e ideales} Set $x_{ji}\coloneqq f_j\otimes e_i$. A straightforward computation shows that in $K^n\otimes_{\chi} K^m$
$$
x_{ki}x_{jl}=\lambda_{ij}^{kl}x_{kl}=A(i,l)_{kj} x_{kl}= B(j,k)_{li} x_{kl}.
$$
We also can prove that all two sided ideals of the algebra $K^n\otimes_{\chi} K^m$ are generated by monomials. In fact, let $I$ be an ideal and let
$x=\sum_{r,s}\alpha_{rs} x_{rs}$. Then
$$
(f_j\otimes 1)\left(\sum_{r,s}\alpha_{rs} x_{rs}\right)(1\otimes e_i)=\sum_{r,s}\alpha_{rs}(f_j\otimes 1)(f_r\otimes 1)(1\otimes e_s)(1\otimes e_i)=\alpha_{ji} x_{ji},
$$
and so, if $\alpha_{ji}\ne 0$ for some element $x\in I$, then $x_{ji}\in I$. This shows that the ideal $I$ is linearly generated by a set of elements $x_{ji}$.
\end{remark}

\section[Twisting maps of $K^m$ with $K^2$]{Twisting maps of $\bm{K^m}$ with $\bm{K^2}$}\label{section twisting maps m2}
The proofs given in this section could be lightly simplified using some of the results given in Section 4, but we prefer to use the least machinery
possible in order to give a flavour of how our methods work, reproducing the beautiful result of Cibils in \cite{C}. Therefore we restrict
ourselves to use the results established in the previous sections and the following remark:

\begin{remark}\label{rango uno} Let $A\in M_2(K)$ be such that $A^2=A$, $A\mathds{1}=\mathds{1}$ and $\rk(A)=1$. There exists $a\in K$ such that
$$
A=\left(\begin{array}{cc}
a&1-a\\
a&1-a
\end{array}\right).
$$
\end{remark}

\smallskip

The twisting maps of $K^m$ with $K^2$ have been classified completely by Cibils, who shows that they correspond to colored quivers $Q_{f,\delta}$.
The first step is to describe the quiver $Q_f$. We can obtain this quiver directly from our $\mathcal{A}$-rank matrix. Given an algebra map $f
\colon C\to C$, where $C\coloneqq K^m$, we let ${}^f\! C$ denote that $C$-bimodule structure on $C$ given by $c\cdot c'\cdot c''\coloneqq
f(c)c'c''$. Let $(e_i)_{i\in \mathds{N}^*_m}$ be the canonical basis of~$C$.

Consider a map
$$
\chi\colon C \otimes \frac{K[X]}{\langle X(1-X)\rangle}\longrightarrow  \frac{K[X]}{\langle X(1-X)\rangle} \otimes  C.
$$
In~\cite{C}*{Section 3} it was proved that $\chi$ is a twisting map if and only if there exists an algebra morphism $f\colon C\to C$ and an
idempotent derivation $\delta \colon C\to {^f \!C}$, satisfying $f=f^2+\delta f+f\delta$, such that
$$
\chi(e_i \otimes X) = X \otimes f(e_i) + 1 \otimes \delta(e_i) = X \otimes (f+\delta)(e_i) + (1-X) \otimes \delta(e_i).
$$
With our notations, we have
$$
\chi(e_i\otimes f_1) =\sum_l\bigl(\lambda_{i1}^{1l}f_1 \otimes e_l+\lambda_{i1}^{2l} f_2 \otimes e_l\bigr)=\sum_l \bigl(A(i,l)_{11} f_1\otimes e_l +
A(i,l)_{21} f_2 \otimes e_l\bigr),
$$
where $f_1$ is the class of $X$ in $k[X]/\langle X(1-X) \rangle$ and $f_2$ is the class of $1-X$ in $k[X]/\langle X(1-X) \rangle$. Hence
\begin{equation}\label{ecuacion de f}
f(e_i)=\sum_l \bigl(A(i,l)_{11} - A(i,l)_{21}\bigr)e_l \quad \text{and} \quad \delta(e_i)=\sum_l A(i,l)_{21} e_l.
\end{equation}

\smallskip

The quiver $Q_f$ in~\cite{C} is constructed in the following way. Since $f$ is an algebra map, there exists a unique set map $\varphi\colon
\mathds{N}^*_m\to  \mathds{N}^*_m$, such that
\begin{equation}\label{formula de f}
f(e_l)=\sum_{\{i : \varphi(i)=l\}} e_i.
\end{equation}
By definition, the quiver $Q_f$ of $f$ has set of vertices $\mathds{N}^*_m$ and an arrow from $i$ to $\varphi(i)$ for each $i\in \mathds{N}^*_m$.

\begin{proposition}\label{quiver de f} Let $\chi$ be a twisting map and let $f$ be as above. The adjacency matrix of the quiver $Q_f$ is
$M(\chi)\coloneqq (\Gamma_{\chi}-\ide)^T$, where $\Gamma_{\chi}$ is as in Definition~\ref{matriz de rangos}.
\end{proposition}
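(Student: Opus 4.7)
The plan is to read the map $f$ off entry-by-entry from the rank matrix $\Gamma_\chi$, by analyzing each column of $\mathcal{A}_\chi$ separately, and then to compare with the definition of $\varphi$.

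Extracting the coefficient of $e_l$ in $f(e_i)$ from (\ref{ecuacion de f}) gives $A(i,l)_{11}-A(i,l)_{21}$, while doing the same in (\ref{formula de f}) gives $[\varphi(l)=i]$. Hence $A(i,l)_{11}-A(i,l)_{21}=[\varphi(l)=i]$, so the $(i,j)$-entry of the adjacency matrix of $Q_f$ is $[\varphi(i)=j]=A(j,i)_{11}-A(j,i)_{21}$. Since the $(i,j)$-entry of $(\Gamma_\chi-\ide)^T$ is $\gamma_{ji}-\delta_{ji}$, the proposition reduces to proving the identity
\begin{equation*}
A(i,l)_{11}-A(i,l)_{21}=\gamma_{il}-\delta_{il}\qquad\text{for all } i,l\in\mathds{N}^*_m.
\end{equation*}

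To establish this, fix a column $l$. Corollary~\ref{propiedades de la matriz de rangos} forces $\gamma_{ll}\ge 1$ and $\sum_i\gamma_{il}=2$, leaving two cases. If $\gamma_{ll}=2$, then $A(l,l)$ is a rank-two idempotent in $M_2(K)$ and so equals $\ide$; by Corollary~\ref{caracterizacion en terminos de las A(i,j)s}(3) all other $A(i,l)$ in this column vanish, and the identity becomes the trivial $\delta_{il}=\delta_{il}$. If instead $\gamma_{ll}=1$, then exactly one $i_0\ne l$ satisfies $\gamma_{i_0,l}=1$, while $A(i,l)=0$ for $i\notin\{l,i_0\}$. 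Remark~\ref{rango uno} yields $a\in K$ with
\begin{equation*}
A(l,l)=\begin{pmatrix}a & 1-a\\ a & 1-a\end{pmatrix},
\end{equation*}
so that $A(l,l)_{11}-A(l,l)_{21}=0=\gamma_{ll}-1$, and Corollary~\ref{caracterizacion en terminos de las A(i,j)s}(3) forces
\begin{equation*}
A(i_0,l)=\ide-A(l,l)=\begin{pmatrix}1-a & a-1\\ -a & a\end{pmatrix},
\end{equation*}
whence $A(i_0,l)_{11}-A(i_0,l)_{21}=(1-a)-(-a)=1=\gamma_{i_0,l}$. In every subcase the identity holds, which completes the proof.

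The main obstacle is not really an obstacle: the rank-two case is immediate, and in the rank-one case the normal form of Remark~\ref{rango uno} together with the sum-to-identity condition in Corollary~\ref{caracterizacion en terminos de las A(i,j)s}(3) pin down both relevant matrices, after which the equality is a one-line computation.
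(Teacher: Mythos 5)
Your proof is correct and follows essentially the same route as the paper's: a column-by-column case analysis on $\rk(A(l,l))$ using Corollary~\ref{propiedades de la matriz de rangos}, Remark~\ref{rango uno} and the sum condition $\sum_i A(i,l)=\ide$ to show $A(i,l)_{11}-A(i,l)_{21}=\gamma_{il}-\delta_{il}$, then translating via \eqref{ecuacion de f} and \eqref{formula de f}. The only differences are cosmetic: you make the final indexing/transpose bookkeeping explicit, and in the rank-two case the vanishing of the other $A(i,l)$ really comes from the rank identity $\sum_i\gamma_{il}=2$ (or orthogonality in Corollary~\ref{caracterizacion en terminos de las A(i,j)s}(1)) rather than from item~(3) alone, a point you had already covered by invoking the rank sum.
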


\begin{proof} Let $l\in \mathds{N}_m^*$. By Corollary~\ref{propiedades de la matriz de rangos} we know that $\rk(A(l,l)) = 2$ and $A(i,l) = 0$ for
all $i\ne l$, or $\rk(A(l,l)) = 1$ and there exists a unique $i\ne l$ such that $\rk(A(i,l) = 1$ and $A(j,l) = 0$ for all $j\notin\{i,l\}$. Thus,
if $\rk(A(l,l)) = 2$ then $A(l,l) = \ide$, and so $A(l,l)_{11}-A(l,l)_{21}=1$. On the other hand if $\rk(A(l,l)) = 1$, then by
Proposition~\ref{caracterizacion} and Remark~\ref{rango uno} there exists $a_l\in K$ such that $A(l,l)=\begin{psmallmatrix}  a_l & 1-a_l\\a_l &
1-a_l\end{psmallmatrix}$, and hence $A(l,l)_{11} - A(l,l)_{21}=0$. Moreover, since $A(i,l) + A(l,l) = \ide$, we have $A(i,l)=\begin{psmallmatrix}
1-a_l & a_l-1\\-a_l & a_l \end{psmallmatrix}$, and so $A(i,l)_{11}-A(i,l)_{21}=1$. Finally, if $\rk(A(j,l)) = 0$, then (of course)
$A(j,l)_{11}-A(j,l)_{21}=0$. Consequently, by the first equality in~\eqref{ecuacion de f} and equality~\eqref{formula de f},
$$
M(\chi)_{il} = \begin{cases}1 & \text{if $\varphi(i)=l$,} \\ 0 & \text{otherwise,}\end{cases}
$$
which finishes the proof.
\end{proof}

\begin{corollary} A vertex $i$ of $Q_f$ is a loop vertex if and only if $\rk (A(i,i)) = 2$.
\end{corollary}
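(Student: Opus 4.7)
The proof is essentially a direct readoff from Proposition~\ref{quiver de f}. The plan is to use that result to translate the condition ``$i$ is a loop vertex of $Q_f$'' into a statement about the diagonal entry $\Gamma_\chi(i,i) = \rk(A(i,i))$.

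First, I would recall that a vertex $i$ is a loop vertex of $Q_f$ precisely when $\varphi(i) = i$, which by the definition of $Q_f$ as given after~\eqref{formula de f} is equivalent to the existence of an arrow from $i$ to $i$; i.e., to $M(\chi)_{ii} = 1$.

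Next, by Proposition~\ref{quiver de f},
\begin{equation*}
M(\chi)_{ii} = (\Gamma_\chi - \ide)^{\Trans}_{ii} = \gamma_{ii} - 1 = \rk(A(i,i)) - 1.
\end{equation*}
By Corollary~\ref{propiedades de la matriz de rangos}(1) with $n=2$, we have $1 \le \gamma_{ii} \le 2$, so $M(\chi)_{ii} \in \{0,1\}$. Hence $M(\chi)_{ii} = 1$ if and only if $\rk(A(i,i)) = 2$, which finishes the proof.

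There is no real obstacle here; the statement is a one-line corollary of Proposition~\ref{quiver de f} once one observes that at a vertex the only possible values for $\gamma_{ii}$ are $1$ and $2$, so the diagonal entry of the adjacency matrix faithfully records whether $\rk(A(i,i))$ equals $2$ or not.
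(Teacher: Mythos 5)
Your proof is correct and is exactly the intended argument: the paper states this corollary without proof as an immediate consequence of Proposition~\ref{quiver de f}, and your reading off $M(\chi)_{ii}=\gamma_{ii}-1$ together with $1\le\gamma_{ii}\le 2$ from Corollary~\ref{propiedades de la matriz de rangos}(1) is precisely the implicit reasoning.
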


In the rest of this section, for each $i\in \mathds{N}^*_m$ we let $a_i$ denote $A(i,i)_{11}$. We want to determine the possible matrices $A(i,l)$
which can occur in a twisting map of $K^m$ with~$K^2$:

\begin{enumerate}

\smallskip

\item If $\rk(A(l,l))=2$, then $A(l,l)=\ide$ and $A(i,l)=0$ for all $i\ne l$.

\smallskip

\item If $\rk(A(l,l))=1$, then there exists $i\ne l$ such that
$$
\quad\qquad A(l,l)=\begin{pmatrix}  a_l & 1-a_l\\ a_l & 1-a_l\end{pmatrix},\quad A(i,l)=\begin{pmatrix}  1-a_l & a_l-1\\ -a_l & a_l \end{pmatrix}
\quad\text{and}\quad  A(h,l)=0 \text{ for $h\notin\{i,l\}$.}
$$
Now we have several possibilities:

\begin{itemize}

\smallskip

\item[-] If $\rk(A(i,i))=2$, then $A(l,i)=0$, and so, by~\eqref{eq:rel entre A(i,l) y B(j,k)} and Proposition~\ref{caracterizacion}(2),
\begin{equation}\label{al es 0 o 1}
\qquad\qquad a_l-a_l^2 = B(1,1)_{ll}-(B(1,1)^2)_{ll}=0,
\end{equation}
which implies that $a_l\in\{0,1\}$.

\smallskip

\item[-] If $\rk(A(i,i))=1$, then we have $A(i,i)=\begin{psmallmatrix}  a_i & 1-a_i\\ a_i & 1-a_i \end{psmallmatrix}$, and, again
by~\eqref{eq:rel entre A(i,l) y B(j,k)} and Proposition~\ref{caracterizacion}(2),
\begin{align}
\qquad\qquad  &(1-a_l)(1-a_i-a_l)  = B(1,1)_{li} - (B(1,1)^2)_{li} = 0\label{conectar bk y bj}
\shortintertext{and}
\qquad\qquad  &a_l(a_i+a_l-1) = B(2,2)_{li} - (B(2,2)^2)_{li} = 0.\label{conectar bk y bj 2}
\end{align}
Hence $a_i + a_l = 1$. If $A(l,i)\ne 0$, then we do not obtain additional conditions on $a_l$, while if $A(l,i)=0$, then, by~\eqref{al es 0 o 1}, we have $a_l\in\{0,1\}$, and so there are only two cases: $a_l=0$ and $a_i=1$ or $a_l=1$ and $a_i=0$.
\end{itemize}
\end{enumerate}

Next we recall the definition of a coloration on $Q_f$ in \cite{C}*{Definition 3.12}, but we take the opposite coloration.

\begin{definition}\label{coloracion} A coloration of $Q_f$ is an element $c =\sum_i c_i e_i\in C$ such that:

\begin{enumerate}

\smallskip

\item For a connected component reduced to the round trip quiver with vertices $i$ and $j$ the coefficients $c_i$ and $c_j$ satisfy $c_i + c_j=1$.

\smallskip

\item For other connected components:

\smallskip

\begin{enumerate}[label=(\alph*)]

\item In case $i$ is a non loop vertex $c_i\in\{0,1\}$.

\smallskip

\item For each arrow having no loop vertex target, one extremity value is $0$ and the other is $1$.

\smallskip

\item At a loop vertex $i$ we have $c_i=0$.
\end{enumerate}
\end{enumerate}
\end{definition}

Given a twisting map $\chi\colon K^m\otimes K^2\longrightarrow K^2\otimes  K^m$ consider the matrices $A(i,l)\coloneqq A_{\chi}(i,l)$. By
Proposition~\ref{quiver de f} and the discussion above Definition~\ref{coloracion}, the element $c\coloneqq (c_1,\dots,c_m)\in C$ given by
$c_l\coloneqq A(l,l)_{21}$ is a coloration. Conversely, given a coloration $c=(c_1,\dots,c_m)\in C$ on a one-valued quiver $Q_f$ with set of
vertices $\mathds{N}_m^*$, we can construct matrices $A(i,l)\in M_2(K)$ in the following way: if $l$ is a loop vertex, then $A(l,l)\coloneqq \ide$
and  $A(i,l)\coloneqq 0$ for $i\ne l$. Otherwise
\begin{itemize}

\smallskip

\item[-] we set $A(l,l)\coloneqq \begin{psmallmatrix}  a_l & 1-a_l\\a_l & 1-a_l\end{psmallmatrix}$, where $a_l\coloneqq c_l$,

\smallskip

\item[-] for the target $t(l)$ of the arrow starting at $l$, we set $A(t(l),l)\coloneqq \begin{psmallmatrix}  1-a_l & a_l-1\\ -a_l & a_l
\end{psmallmatrix}$,

\smallskip

\item[-] for all $i\notin\{t(l),l\}$, we set $A(h,l)\coloneqq 0$.

\end{itemize}
In order to verify that these matrices define a twisting map, we must check the conditions of Proposition~\ref{caracterizacion}, where the matrices
$B(j,k)$ are defined by~\eqref{eq:rel entre A(i,l) y B(j,k)}. Conditions~(1) and~(3) are satisfied by construction. Condition~(2) is equivalent to
$$
\sum_i A(i,l)_{kj}=\delta_{jk}\qquad \text{for all $l$, $j$ and $k$,}
$$
which holds, because
$$
\sum_i A(i,l)_{kj}=\begin{cases} A(l,l)_{kj} = \delta_{jk}, & \text{if $\rk(A(l,l))=2$}\\ A(l,l)_{kj} + A(t(l),l)_{kj}=\delta_{jk}, & \text{if
$\rk(A(l,l))=1$.}\end{cases}
$$
Finally we check condition~(4), which is equivalent to
\begin{equation}\label{la condicion 2}
\delta_{jj'}A(i,l)_{kj} = \sum_u A(i,u)_{kj'} A(u,l)_{kj} \qquad\text{for all $i$, $j$ , $j'$, $k$ and $l$}.
\end{equation}
When  $t(l) = l$, then $A(u,l) = \delta_{ul}\ide$ for all $u$, which implies that equality~\eqref{la condicion 2} holds. Assume that $t(l)\ne l$.
We consider three cases: $i=l$, $i=t(l)$ and $i\notin\{l,t(l)\}$. If $i=l$, then equality~\eqref{la condicion 2} reads
$$
\delta_{jj'}A(l,l)_{kj} = A(l,l)_{kj'} A(l,l)_{kj} + A(l,t(l))_{kj'} A(t(l),l)_{kj}\qquad\text{for all $j$, $j'$ and $k$;}
$$
if $i=t(l)$, then equality~\eqref{la condicion 2} reads
$$
\delta_{jj'}A(t(l),l)_{kj} = A(t(l),l)_{kj'}A(l,l)_{kj} + A(t(l),t(l))_{kj'}A(t(l),l)_{kj} \qquad\text{for all $j$, $j'$ and $k$;}
$$
and finally, if $i\notin \{l,t(l)\}$, then equality~\eqref{la condicion 2} reads
$$
0 = A(i,t(l))_{kj'}A(t(l),l)_{kj}\qquad\text{for all $j$, $j'$ and $k$.}
$$
All these equalities are easily verified using that, since $(c_1,\dots,c_m)$ is a coloration,
$$
A(l,l)=\begin{pmatrix}  a_l & 1-a_l\\ a_l & 1-a_l\end{pmatrix},\qquad A(t(l),l)=\begin{pmatrix}  1-a_l & a_l-1\\-a_l & a_l \end{pmatrix},
$$
and that

\begin{itemize}

\smallskip

\item[-] if $t(t(l))=t(l)$, then $A(t(l),t(l))=\ide$;

\smallskip

\item[-] if $t(t(l))\ne t(l)$, then $A(t(l),t(l))=\begin{psmallmatrix} 1-a_l & a_l\\1-a_l & a_l\end{psmallmatrix}$;

\smallskip

\item[-] if $t(t(l))=l$, then $A(l,t(l))= \begin{psmallmatrix} a_l & -a_l\\a_l-1 & 1-a_l\end{psmallmatrix}$ and $A(u,t(l))=0$ for all
$u\notin\{l,t(l)\}$;

\smallskip

\item[-] if $t(t(l))\ne l$, then $a_l\in \{0,1\}$, $A(t(t(l)),t(l)) = \begin{psmallmatrix} a_l & -a_l\\a_l-1 & 1-a_l\end{psmallmatrix}$ and
$A(u,t(l)) = 0$ for all $u\notin \{t(l),t(t(l))\}$.

\end{itemize}

\section{Miscellaneous results}\label{seccion resultados generales}
Throughout this section $\chi\colon K^m\ot K^n \longrightarrow K^n \ot K^m$ denotes a map and $\lambda_{ij}^{kl}$, $A(i,l)$ and $B(j,k)$ are as at
the beginning of Section~\ref{Twisted tensor products of K^n with K^m}. We also assume that $A(i,l)$ and $B(j,k)$ are idempotent matrices for all
$i,l\in \mathds{N}^*_m$ and $j,k\in \mathds{N}^*_n$.  The following results are useful in our quest of classifying the twisted tensor products
$K^n\otimes_{\chi} K^m$.

\subsection{General properties}
\begin{remark}\label{suma de rangos} The rank matrices $\Gamma_{\chi}$ and $\tilde{\Gamma}_{\chi}$, introduced in Definition~\ref{matriz de rangos},
have the same trace. In fact,
$$
\Tr(\Gamma_{\chi})=\sum_i\rk(A(i,i))=\sum_{i,j}\lambda_{ij}^{ji}=\sum_j\rk(B(j,j))=\Tr(\tilde{\Gamma}_{\chi}).
$$
\end{remark}

\begin{remark} Since the matrices $A(i,l)$ are idempotent, we know that $\rk(A(i,l))=\Tr(A(i,l))$. Consequently,
$$
\rk(A(i,l)) = \sum_j A(i,l)_{jj} = \sum_j B(j,j)_{li}.
$$
Similarly, $\rk(B(j,k))=\sum_i A(i,i)_{kj}$.
\end{remark}

\subsection[Standard idempotent $0,1$-matrices]{Standard idempotent $\bm{0,1}$-matrices}
\begin{definition}\label{definicion matriz estandar} A $0,1$-matrix $A\in M_n(K)$ is called a \emph{standard idempotent $0,1$-matrix} if there
exist $r\in \mathds{N}_n^*$ and a matrix $C\in M_{n-r\times r}(K)$ that has exactly one non-zero entry in each row, such that
\begin{equation}\label{forma de matriz}
A=\begin{pmatrix}\ide_r&0\\ C&0 \end{pmatrix},
\end{equation}
where $\ide_r$ is the identity of $M_r(K)$.
\end{definition}

\begin{definition} Two matrices $A,A'\in M_n(K)$ are \emph{equivalent via identical permutations in rows and columns} if there exists a permutation
$\sigma\in S_n$ such that $A_{\sigma(k)\sigma(j)}=A'_{kj}$ for all $k,j$.
\end{definition}

\begin{remark}\label{matrices equivalentes a estandars} A matrix $A\in M_n(K)$ is equivalent via identical permutations in rows and columns to a
standard idempotent $0,1$-matrix if and only if it is a $0,1$-matrix with exactly one nonzero entry in every row, that satisfies the following
condition: for each $j$, if $A_{jj}=0$, then $A_{kj}=0$ for all $k$.
\end{remark}

\begin{notation} Let $A\in M_n(K)$ be a $0,1$-matrix such that $A\mathds{1}=\mathds{1}$. For each $k$ such that~$A_{kk}=0$, we let $c_k=c_k(A)$
denote the unique index such that $A_{kc_k}$ is non-zero.
\end{notation}

\begin{proposition}\label{01 matrix} Let $A\in M_n(K)$ be a $0,1$-matrix. If $A$ is idempotent and $A\mathds{1}=\mathds{1}$, then $A$ is equivalent
via identical permutations in rows and columns to a standard idempotent $0,1$-matrix.
\end{proposition}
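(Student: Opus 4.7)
The plan is to reduce the claim to combinatorial facts about the function encoded by the nonzero entries of $A$, and then invoke Remark~\ref{matrices equivalentes a estandars}.

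First I would observe that the hypothesis $A\mathds{1}=\mathds{1}$, combined with the fact that $A$ is a $0,1$-matrix, forces each row of $A$ to contain exactly one entry equal to $1$. This lets me define a set map $\sigma\colon \mathds{N}_n^*\to \mathds{N}_n^*$ by the rule $A_{i\sigma(i)}=1$, so that $A_{ij}=\delta_{j,\sigma(i)}$ for all $i,j$.

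Next I would translate idempotency into a statement about $\sigma$. A direct computation gives
\begin{equation*}
(A^2)_{ik}=\sum_{j}A_{ij}A_{jk}=A_{\sigma(i)k}=\delta_{k,\sigma(\sigma(i))},
\end{equation*}
so the equation $A^2=A$ is equivalent to $\sigma\circ\sigma=\sigma$. Setting $F\coloneqq\{i\in\mathds{N}_n^*:\sigma(i)=i\}$, this identity shows that $\sigma(i)\in F$ for every $i$, and that $F$ coincides with the image of $\sigma$.

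Finally, I would appeal to Remark~\ref{matrices equivalentes a estandars}: we already have a $0,1$-matrix with exactly one nonzero entry per row, so it remains to check that $A_{jj}=0$ implies $A_{kj}=0$ for all $k$. But $A_{jj}=0$ means $j\notin F$, hence $j$ is not in the image of $\sigma$, hence no $k$ satisfies $\sigma(k)=j$, i.e., $A_{kj}=0$ for all $k$. This verifies the criterion of Remark~\ref{matrices equivalentes a estandars} and concludes the proof.

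There is no real obstacle here; the only thing one must be careful about is keeping track of two equivalent descriptions of the set $F$ (as fixed points and as image of $\sigma$), which is precisely the content of the identity $\sigma\circ\sigma=\sigma$. Alternatively, one may bypass Remark~\ref{matrices equivalentes a estandars} entirely by explicitly choosing a permutation $\tau\in S_n$ with $\tau(\{1,\dots,r\})=F$ where $r=|F|$, and checking that the conjugated matrix is exactly of the form \eqref{forma de matriz} with $C_{ij}=\delta_{j,\sigma(i+r)}$ for $i\in\mathds{N}_{n-r}^*$, $j\in\mathds{N}_r^*$.
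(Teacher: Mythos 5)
Your proof is correct, and it reaches the conclusion by a route that differs from the paper's in how idempotency is exploited. You encode the matrix by the map $\sigma$ with $A_{ij}=\delta_{j,\sigma(i)}$, observe that $A^2=A$ is equivalent to $\sigma\circ\sigma=\sigma$, deduce that the image of $\sigma$ coincides with its fixed-point set, and then conclude via the criterion of Remark~\ref{matrices equivalentes a estandars} (which the paper states, without proof, just before the proposition and does not actually invoke in its own argument). The paper instead uses the linear-algebraic fact $\rk(A)=\Tr(A)$ for idempotents to count the diagonal ones, permutes them into the first $r$ positions, and then uses a rank count to force the lower-right block in \eqref{forma de matriz} to vanish. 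Your combinatorial translation buys two things: it never needs the rank--trace identity (so there is no issue of reading the trace, an element of $K$, as the integer $\rk(A)$, a point the paper's one-line justification glosses over in positive characteristic), and it isolates the only use of idempotency in the single identity $\sigma\circ\sigma=\sigma$. What the paper's route buys is that it produces the block form \eqref{forma de matriz} directly, without passing through the remark. One minor point about your optional alternative at the end: the formula $C_{ij}=\delta_{j,\sigma(i+r)}$ for the conjugated matrix is not literally correct as written, since the indices must be relabelled through the chosen permutation $\tau$ (it should be something like $C_{ij}=\delta_{\tau(j),\sigma(\tau(i+r))}$); this does not affect your main argument, which is complete as it stands.
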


\begin{proof} Since $r\coloneqq \rk(A)=\Tr(A)$, we have $r$ times the entry~$1$ and $n-r$ times the entry~$0$ on the diagonal of $A$. Applying an
identical permutations in rows and columns we can assume that the~$1$'s are in the first $r$ entries. Since $A\mathds{1}=\mathds{1}$, each row of
this matrix has only one~$1$, and the other entries are zero. Thus, the first~$r$ rows of $A$ are as in~\eqref{forma de matriz}. Now the fact that
$\rk(A)=r$ implies that, again as in~\eqref{forma de matriz}, the right lower block of~$A$ is the zero matrix and its left lower block is a matrix
$C$ that satisfies the required properties.
\end{proof}

\begin{remark}\label{c sub k esta en J sub k} By Proposition~\ref{01 matrix} we have $A_{c_kc_k} = 1$ for each $k$ such that $A_{kk} = 0$.
\end{remark}

\begin{corollary}\label{lema 01 matrix previo} Assume that $\chi$ is a twisting map. If $A(l,l)$ is a $0,1$-matrix, then $A(l,l)$ is equivalent via
identical permutations in rows and columns to a standard idempotent $0,1$-matrix.
\end{corollary}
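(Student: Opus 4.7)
The plan is to derive the corollary directly from Proposition~\ref{01 matrix} by verifying its two hypotheses for the matrix $A(l,l)$. The hypothesis that $A(l,l)$ is a $0,1$-matrix is given, so the only work is to check that $A(l,l)$ is idempotent and satisfies $A(l,l)\mathds{1}=\mathds{1}$.

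First I would invoke Proposition~\ref{caracterizacion}, which characterizes twisting maps $\chi\colon K^m\ot K^n\to K^n\ot K^m$ in terms of the matrices $A(i,l)$ and $B(j,k)$. Specialising condition~(1) of that proposition to $i=i'=l$ yields $A(l,l)=A(l,l)^2$, so $A(l,l)$ is idempotent. Specialising condition~(3) to $i=l$ yields $A(l,l)\mathds{1}=\mathds{1}$.

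With these two properties in hand, Proposition~\ref{01 matrix} applies directly to $A(l,l)$ and produces the required permutation $\sigma\in S_n$ such that $A(l,l)$ is equivalent via identical permutations in rows and columns to a standard idempotent $0,1$-matrix, which is exactly what needs to be shown.

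There is no serious obstacle: the corollary is essentially a repackaging of Proposition~\ref{01 matrix} obtained by noting that the twisting map axioms force the diagonal matrices $A(l,l)$ to be idempotent and to fix $\mathds{1}$. The only care needed is to cite the correct items of Proposition~\ref{caracterizacion} (items~(1) and~(3)) rather than those referring to the $B(j,k)$'s.
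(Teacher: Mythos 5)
Your proof is correct and is exactly the argument the paper leaves implicit: specialising items~(1) and~(3) of Proposition~\ref{caracterizacion} to $i=i'=l$ gives that $A(l,l)$ is idempotent with $A(l,l)\mathds{1}=\mathds{1}$, after which Proposition~\ref{01 matrix} applies directly. Nothing further is needed.
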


\begin{proposition}\label{forma general casi triangular} Assume that $\chi$ is a twisting map and let $l\in \mathds{N}^*_m$. If
$$
\rk(A(i,l))\rk(A(l,i))=0\quad\text{for all $i\ne l$,}
$$
then $A(l,l)$ is a $0,1$-matrix.
\end{proposition}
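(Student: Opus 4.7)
The plan is to translate the hypothesis on $\mathcal{A}$ into a statement about the diagonal of the matrices $B(j,k)$, and use idempotency of those matrices. The key algebraic identity I will exploit is
$$
A(l,l)_{kj} = \lambda_{lj}^{kl} = B(j,k)_{ll},
$$
which comes straight from~\eqref{eq:rel entre A(i,l) y B(j,k)}. So showing $A(l,l)$ is a $0,1$-matrix amounts to showing $B(j,k)_{ll} \in \{0,1\}$ for every $j \in \mathds{N}^*_n$ and $k \in \mathds{N}^*_n$.

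First I would record that $B(j,k)$ is idempotent: this follows from Proposition~\ref{caracterizacion}(2) applied with $j' = j$. Writing out $(B(j,k)^2)_{ll} = B(j,k)_{ll}$ and isolating the diagonal term gives
$$
B(j,k)_{ll}^{\,2} \;+\; \sum_{i \ne l} B(j,k)_{li}\, B(j,k)_{il} \;=\; B(j,k)_{ll}.
$$
So the whole proposition reduces to proving that every cross-term in the sum vanishes.

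Next I would translate each cross-term back to the $\mathcal{A}$ side using~\eqref{eq:rel entre A(i,l) y B(j,k)}: for $i \ne l$,
$$
B(j,k)_{li} = \lambda_{ij}^{kl} = A(i,l)_{kj}, \qquad B(j,k)_{il} = \lambda_{lj}^{ki} = A(l,i)_{kj},
$$
so the cross-term equals $A(i,l)_{kj}\, A(l,i)_{kj}$. Here is where the hypothesis enters: $\rk(A(i,l))\rk(A(l,i)) = 0$ forces at least one of the matrices $A(i,l)$, $A(l,i)$ to be the zero matrix, hence at least one of the entries $A(i,l)_{kj}$, $A(l,i)_{kj}$ is zero, and the product vanishes.

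Combining these, $B(j,k)_{ll}^{\,2} = B(j,k)_{ll}$, so $B(j,k)_{ll} \in \{0,1\}$, and therefore $A(l,l)_{kj} \in \{0,1\}$ for all $k$ and $j$. No step looks like a real obstacle; the only thing to be careful with is the bookkeeping between the two index conventions in~\eqref{eq:rel entre A(i,l) y B(j,k)}, which is why I would write the correspondence $B(j,k)_{li} \leftrightarrow A(i,l)_{kj}$ explicitly before computing.
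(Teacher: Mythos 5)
Your proof is correct and is essentially the paper's argument: the paper applies Corollary~\ref{caracterizacion en terminos de las A(i,j)s}(4) with $i=l$ and $j'=j$ to get $A(l,l)_{kj}=\sum_i A(l,i)_{kj}A(i,l)_{kj}=A(l,l)_{kj}^2$, and that identity is exactly the $(l,l)$-entry of $B(j,k)^2=B(j,k)$ that you compute, since Corollary~\ref{caracterizacion en terminos de las A(i,j)s}(4) is the translation via~\eqref{eq:rel entre A(i,l) y B(j,k)} of Proposition~\ref{caracterizacion}(2). The only cosmetic difference is that you phrase the computation on the $\mathcal{B}$ side; the hypothesis is used identically to kill the cross-terms.
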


\begin{proof} By Corollary~\ref{caracterizacion en terminos de las A(i,j)s}(4) and the fact that $A(i,l)A(l,i)=0$ for all $i\ne l$,
$$
A(l,l)_{kj}=\sum_{i=1}^m A(l,i)_{kj} A(i,l)_{kj}= A(l,l)_{kj}^2.
$$
So, $A(l,l)_{kj}\in\{0,1\}$ for all $k,j$.
\end{proof}

\begin{corollary}\label{forma general triangular} If $\chi$ is a twisting map and $\Gamma_{\chi}$ is upper or lower triangular, then each of the
matrices $A(l,l)$ is a $0,1$-matrix.
\end{corollary}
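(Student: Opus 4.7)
The plan is to reduce the corollary to a direct application of Proposition~\ref{forma general casi triangular}. That proposition already tells us that, for a given $l\in\mathds{N}^*_m$, the matrix $A(l,l)$ is a $0,1$-matrix as soon as the product $\rk(A(i,l))\rk(A(l,i))$ vanishes for every $i\ne l$. So it is enough to verify this rank condition for every $l$ under the hypothesis that $\Gamma_{\chi}$ is triangular.

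Fix $l\in \mathds{N}^*_m$ and $i\ne l$. By definition of $\Gamma_{\chi}$ we have $\gamma_{il}=\rk(A(i,l))$ and $\gamma_{li}=\rk(A(l,i))$. If $\Gamma_{\chi}$ is upper triangular, then $\gamma_{ab}=0$ whenever $a>b$. Since $i\ne l$, one of the pairs $(i,l)$ or $(l,i)$ has its first index strictly greater than its second; in both cases this forces $\gamma_{il}\gamma_{li}=0$. The argument for a lower triangular $\Gamma_{\chi}$ is identical, swapping the roles of the two indices. Thus for each $l\in\mathds{N}^*_m$ the hypothesis of Proposition~\ref{forma general casi triangular} is fulfilled, and the conclusion follows.

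There is no real obstacle here; the content of the corollary is just the observation that ``triangular rank matrix'' is a uniform way to guarantee the pairwise vanishing hypothesis of the previous proposition at every diagonal index $l$ simultaneously.
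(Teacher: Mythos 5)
Your proof is correct and is exactly the intended argument: the paper states this as an immediate consequence of Proposition~\ref{forma general casi triangular}, since triangularity of $\Gamma_{\chi}$ forces $\gamma_{il}\gamma_{li}=\rk(A(i,l))\rk(A(l,i))=0$ for every $i\ne l$. Nothing is missing.
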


\begin{remark}\label{lo mismo para B} Proposition~\ref{forma general casi triangular} and Corollaries~\ref{lema 01 matrix previo} and~\ref{forma
general triangular} are valid for the matrices $B(j,j)$ (in the second corollary we replace $\Gamma_{\chi}$ by $\tilde\Gamma_{\chi}$).
\end{remark}

\subsection[Rank $1$ idempotent matrices]{Rank $\bm{1}$ idempotent matrices}
\begin{remark}\label{rango uno generalizado} At the beginning of Section~\ref{section twisting maps m2} we noted that if $A\in M_2(K)$ satisfies
$A^2=A$, $A\mathds{1}=\mathds{1}$ and $\rk(A)=1$, then there exists $a\in K$ such that
$$
A=\left(\begin{array}{cc} a&1-a\\ a&1-a \end{array}\right).
$$
More generally, if $A\in M_n(K)$ such that $A^2=A$, $A\mathds{1}=\mathds{1}$ and $\rk(A)=1$, then there exists $a_1,\dots,a_n\in K$ with $\sum
a_j=1$, such that
$$
A=\left(\begin{array}{ccc} a_1&\dots&a_n\\ \vdots&\ddots&\vdots\\ a_1&\dots&a_n \end{array}\right).
$$
\end{remark}

\begin{proposition}\label{Gamma en caso de rango de Aii igual a 1} If $\rk(A(i,i))=1$ for some $i\in\mathds{N}_m^*$, then there exists
$j\in\mathds{N}_n^*$ such that $\tilde\Gamma_{jk}\ne 0$ for all $k$. Moreover, if such $j$ is unique, then $A(i,i)_{st}=\delta_{tj}$ for all $s,t$.
A similar statement holds for $B(j,j)$ and $\Gamma$.
\end{proposition}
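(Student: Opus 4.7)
The plan is to reduce everything to the explicit shape of rank one idempotent row-stochastic matrices given by Remark~\ref{rango uno generalizado}, and then to exploit the defining identity~\eqref{eq:rel entre A(i,l) y B(j,k)} in the special case $l=i$.

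First I would note that $A(i,i)$ is idempotent by Corollary~\ref{caracterizacion en terminos de las A(i,j)s}(1) (applied with $i'=i$ and $l=i$) and satisfies $A(i,i)\mathds{1}=\mathds{1}$ by item~(2) of the same corollary. Combined with the hypothesis $\rk(A(i,i))=1$, Remark~\ref{rango uno generalizado} produces scalars $a_1,\dots,a_n\in K$ with $\sum_t a_t=1$ such that every row of $A(i,i)$ equals $(a_1,\dots,a_n)$; in particular $A(i,i)_{st}=a_t$ for all $s,t$.

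The key observation is then that setting $l=i$ in~\eqref{eq:rel entre A(i,l) y B(j,k)} gives
\[
B(j,k)_{ii}=A(i,i)_{kj}=a_j\qquad\text{for all $j$ and $k$,}
\]
so the $(i,i)$-entry of $B(j,k)$ is independent of $k$. Consequently, whenever $a_j\ne 0$ the matrix $B(j,k)$ is nonzero for every $k$, yielding $\tilde\gamma_{jk}\ge 1$ for every $k$. The existence of such an index $j$ is immediate from the equality $\sum_t a_t=1$, proving the first assertion.

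For the uniqueness clause I would argue by contrapositive: if two distinct indices had nonzero $a$-coefficient, both would satisfy $\tilde\gamma_{jk}\ne 0$ for all $k$ by the previous paragraph, contradicting uniqueness. Hence at most one $a_t$ is nonzero, and the relation $\sum_t a_t=1$ pins this value down to be exactly $1$, so $A(i,i)_{st}=a_t=\delta_{tj}$ for all $s,t$. The parallel statement for $B(j,j)$ and $\Gamma$ then follows at once by applying the whole argument to the dual twisting map $\tilde\chi$ of Remark~\ref{A sub Chi es B sub tau chi tau}, which interchanges the roles of the $A$'s and the $B$'s. There is essentially no obstacle here: the entire argument rests on the pigeonhole observation that $B(j,k)_{ii}$ does not depend on $k$. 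The only place one must be careful is to verify the index conventions in~\eqref{eq:rel entre A(i,l) y B(j,k)}, so that the specialization $l=i$ is correctly interpreted and it is the $j$-entry (not the $k$-entry) of the row $(a_1,\dots,a_n)$ that appears.
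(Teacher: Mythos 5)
Your proof is correct and follows essentially the same route as the paper: both arguments rest on Remark~\ref{rango uno generalizado} (all rows of the rank-one idempotent $A(i,i)$ coincide) combined with the identity $B(j,k)_{ii}=A(i,i)_{kj}$ coming from~\eqref{eq:rel entre A(i,l) y B(j,k)}, which makes $B(j,k)_{ii}$ independent of $k$. Your uniqueness step is just the contrapositive of the paper's (the paper uses, for each $l\ne j$, an index $k$ with $\tilde\Gamma_{lk}=0$ to kill the $l$-th column of $A(i,i)$), so the two proofs coincide in substance.
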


\begin{proof} Since $\Tr(A(i,i))=\rk(A(i,i))=1$, there exists $j$ such that $A(i,i)_{jj}\ne 0$. By Remark~\ref{rango uno generalizado}
$$
B(j,k)_{ii}=A(i,i)_{kj}=A(i,i)_{jj}\ne 0,\quad\text{for all $k$.}
$$
This implies that $B(j,k)\ne 0$ for all $k$, and so $\tilde\Gamma_{jk}\ne 0$ for all $k$.  If $j$ is unique, then for each $l \ne j$ there
exists~$k$ such that $\tilde{\Gamma}_{lk}=0$, and so, again by Remark~\ref{rango uno generalizado}, we have
$$
A(i,i)_{hl}=A(i,i)_{kl}=B(l,k)_{ii}=0 \quad\text{for all $h$.}
$$
The argument for $B(j,j)$ and $\Gamma$ is the same.
\end{proof}

\subsection[Columns of $1$'s in $\Gamma_{\chi}$]{Columns of $\bm{1}$'s in $\bm{\Gamma_{\chi}}$}

\begin{proposition}\label{diagonales uno} Assume that $\chi$ is a twisting map and that $n=m$. If $\Diag\bigl(\Gamma_{\chi}\bigr)=(1,1,\dots,1)$,
then $\Gamma_{\chi}=\tilde{\Gamma}_{\chi}$ is the matrix $\mathfrak{J}_n$ whose entries are all $1$.
\end{proposition}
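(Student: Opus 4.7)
The plan is a trace computation. Once I have the rigid structure of $A(l,l)$ and $B(j,j)$ forced by Remark~\ref{rango uno generalizado}, the diagonal entries of an arbitrary $A(i,l)$ will be controlled, via the index swap $A(i,l)_{kj} = B(j,k)_{li}$, by a single row of $A(i,i)$. Summing those diagonal entries gives trace $1$ for every $A(i,l)$, so by idempotency $\gamma_{il}=1$, and the symmetric argument handles $\tilde\Gamma_{\chi}$.

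First I would observe that the hypothesis also yields $\Diag(\tilde\Gamma_{\chi}) = (1,\dots,1)$. By Remark~\ref{suma de rangos}, $\Tr(\tilde\Gamma_{\chi}) = \Tr(\Gamma_{\chi}) = n$, while Corollary~\ref{propiedades de la matriz de rangos}(3) gives $\tilde\gamma_{jj}\ge 1$ for each of the $n$ values of $j$, so equality must hold termwise. Thus each $A(l,l)$ and each $B(j,j)$ is a rank-$1$ idempotent fixing $\mathds{1}$, and Remark~\ref{rango uno generalizado} furnishes scalars $\alpha_{lj}$ and $\beta_{ji}$ with
\[
A(l,l)_{kj} = \alpha_{lj}, \qquad B(j,j)_{li} = \beta_{ji},
\]
independent of $k$ and $l$ respectively, and with $\sum_j \alpha_{lj} = \sum_i \beta_{ji} = 1$. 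Specializing~\eqref{eq:rel entre A(i,l) y B(j,k)} to $(i,l)=(l,l)$, $(k,j)=(j,j)$ gives the key symmetry $\beta_{jl} = \alpha_{lj}$.

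The heart of the proof is then a one-line computation: for arbitrary $i,l,k$,
\[
A(i,l)_{kk} \;=\; B(k,k)_{li} \;=\; \beta_{k,i} \;=\; \alpha_{i,k},
\]
so the whole diagonal of $A(i,l)$ depends only on $i$, and
\[
\Tr(A(i,l)) \;=\; \sum_{k}\alpha_{i,k} \;=\; 1.
\]
Since $A(i,l)$ is idempotent, $\gamma_{il} = \rk(A(i,l)) = \Tr(A(i,l)) = 1$ for all $i,l$, giving $\Gamma_{\chi}=\mathfrak{J}_n$. An identical calculation (or the duality of Remark~\ref{A sub Chi es B sub tau chi tau} applied to $\tilde\chi$) produces $\Tr(B(j,k)) = \sum_l A(l,l)_{kj} = \sum_l \alpha_{l,j} = \sum_l \beta_{j,l} = 1$, so $\tilde\gamma_{jk}=1$ and $\tilde\Gamma_{\chi}=\mathfrak{J}_n$.

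I do not foresee any genuine obstacle; the argument is essentially bookkeeping around the correspondence $A(i,l)_{kj}=B(j,k)_{li}$. The only point that requires care is aligning the diagonal position of $B(k,k)$ with the diagonal of $A(i,l)$, which is precisely what makes the dependence on $l$ evaporate and pins the trace of every $A(i,l)$ to $1$ uniformly.
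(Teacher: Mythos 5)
Your proof is correct, and it runs through the same two preliminary facts as the paper -- Remark~\ref{suma de rangos} plus Corollary~\ref{propiedades de la matriz de rangos}(3) to get $\Diag(\tilde{\Gamma}_{\chi})=(1,\dots,1)$, and Remark~\ref{rango uno generalizado} for the constant-row form of the rank-one idempotents $A(l,l)$ and $B(j,j)$ -- but it then departs from the paper's route: the paper argues by contradiction, extracting from $\Gamma_{\chi}\ne\mathfrak{J}_n$ some $A(i,l)=0$, deducing via~\eqref{eq:rel entre A(i,l) y B(j,k)} that the $i$-th column of every $B(j,j)$ vanishes, hence $\Diag(A(i,i))=0$ and $A(i,i)=0$, contradicting $\gamma_{ii}=1$; whereas you give a direct computation, using the symmetry $\alpha_{lj}=\beta_{jl}$ to show $A(i,l)_{kk}=A(i,i)_{kk}$ for every $l$, so that $\Tr(A(i,l))=1$ for all $i,l$, and likewise $\Tr(B(j,k))=1$. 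Your version buys a slightly stronger intermediate statement (the diagonal of $A(i,l)$ is independent of $l$) and treats $\tilde{\Gamma}_{\chi}$ explicitly rather than by analogy. One small remark: the final step $\rk(A(i,l))=\Tr(A(i,l))=1$ uses rank equals trace for idempotents, a convention the paper itself invokes repeatedly (it is only delicate in positive characteristic); if you want to avoid it entirely, note that $\Tr(A(i,l))=1\ne 0$ already gives $\gamma_{il}\ge 1$, and then Corollary~\ref{propiedades de la matriz de rangos}(2), namely $\sum_i \gamma_{il}=n$, forces $\gamma_{il}=1$ for all $i,l$, and dually for $\tilde\gamma_{jk}$.
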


\begin{proof} By Remark~\ref{suma de rangos} and Proposition~\ref{propiedades de la matriz de rangos}(3), we know that
$\Diag\bigl(\tilde{\Gamma}_{\chi})=(1,\dots,1)$. In other words, $\rk(B(j,j))=1$ for all $j$. Assume by contradiction that $\Gamma_{\chi}\ne
\mathfrak{J}_n$. Then by items~(1) and~(3) of Corollary~\ref{caracterizacion en terminos de las A(i,j)s} there exist $i,l$ such that $A(i,l)=0$.
Hence, by Remark~\ref{rango uno generalizado} the $i$-th column of $B(j,j)$ is zero for all $j$. But then $\Diag(A(i,i))=(0,\dots,0)$, which, since
$A(i,i)$ is idempotent, implies that $A(i,i)=0$, a contradiction. For $\tilde{\Gamma}_{\chi}$ proceed in a similar way.
\end{proof}

\begin{proposition}\label{proposicion determinante} Let $l\in \mathds{N}^*_m$. Assume that $\chi$ is a twisting map, that
$\Gamma_{\chi}=\tilde{\Gamma}_{\chi}$ is the matrix $\mathfrak{J}_n$ whose entries are all $1$'s, and that there exists $k$ such that $A(l,l)_{kj}
\ne 0$ for all $j$. Let $\mathbf{v}_i=(v_{i1},\dots,v_{in})\in K^n\setminus\{0\}$.  If $\mathbf{v}_i^{\bot} \in \ima (A(i,l))$, then $v_{ik}\ne 0$
for all $k$.
\end{proposition}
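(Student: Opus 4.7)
The plan is to reduce the statement to the purely combinatorial claim that the image-generator of $A(i,l)$ has no zero entry, and then to derive this by contradicting the decomposition of the identity as a sum of the matrices $B(j,k')$. First I would note that the assumption $\Gamma_{\chi}=\tilde{\Gamma}_{\chi}=\mathfrak{J}_n$ forces $m=n$ and that every $A(i,l)$ and every $B(j,k)$ is a rank-one idempotent. By Remark \ref{rango uno generalizado} applied to $A(l,l)$ (which is rank one with $A(l,l)\mathds{1}=\mathds{1}$), all its rows coincide with a common vector $\mathbf{a}=(a_1,\dots,a_n)$, so the hypothesis $A(l,l)_{kj}\ne 0$ for every $j$ is equivalent to $a_j\ne 0$ for every $j$. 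Writing $A(i,l)=\mathbf{u}\mathbf{b}^{\Trans}$ as an outer product of its rank-one factors, one has $\ima(A(i,l))=\langle\mathbf{u}\rangle$; hence $\mathbf{v}_i^{\bot}=c\mathbf{u}$ for some $c\ne 0$, and the conclusion reduces to showing that $\mathbf{u}$ has no zero component.

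Assume for contradiction that $u_{k'}=0$ for some $k'$. Then row $k'$ of $A(i,l)$ vanishes, and by \eqref{eq:rel entre A(i,l) y B(j,k)} we obtain $B(j,k')_{li}=A(i,l)_{k'j}=0$ for every $j\in\mathds{N}^*_n$. I would analyse each $B(j,k')$ separately. For $j=k'$, the matrix $B(k',k')$ is rank one with $B(k',k')\mathds{1}=\mathds{1}$, so Remark \ref{rango uno generalizado} gives $B(k',k')=\mathds{1}\mathbf{c}^{\Trans}$; then $c_i=B(k',k')_{li}=0$, and the $i$-th column of $B(k',k')$ is zero. For $j\ne k'$, the matrix $B(j,k')$ is rank one with $B(j,k')\mathds{1}=0$, so we may factor $B(j,k')=\mathbf{x}\mathbf{y}^{\Trans}$. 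The crucial input from the hypothesis appears through
\[
B(j,k')_{ll}=A(l,l)_{k'j}=a_j\ne 0,
\]
which forces $x_l\ne 0$; combined with $B(j,k')_{li}=x_l y_i=0$ this yields $y_i=0$, so again the $i$-th column of $B(j,k')$ vanishes.

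Summing over $j$, the $i$-th column of $\sum_j B(j,k')$ is zero, which contradicts the analog of Corollary \ref{caracterizacion en terminos de las A(i,j)s}(3) for the matrices $B(j,k)$ (see Remark \ref{corolario con las B}), asserting that $\sum_j B(j,k')=\ide_n$, whose $i$-th column equals $e_i\ne 0$. Therefore $u_{k'}\ne 0$ for every $k'$, which is precisely $v_{ik}\ne 0$ for every $k$. The main obstacle is spotting the right bridge between the two sides of the twisting map: the hypothesis $a_j\ne 0$ enters solely through the identity $B(j,k')_{ll}=a_j$ to guarantee $x_l\ne 0$, and it is exactly this non-vanishing that allows one to pin down the $i$-th column of each $B(j,k')$ as zero; once this step is recognised, the contradiction with the identity decomposition $\sum_j B(j,k')=\ide_n$ is immediate.
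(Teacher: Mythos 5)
Your proof is correct, and it reaches the contradiction by a somewhat different mechanism than the paper. Both arguments share the same core: write $A(i,l)$ as a rank-one outer product, note that a zero entry $v_{ik'}=0$ kills the $k'$-th row of $A(i,l)$, hence via \eqref{eq:rel entre A(i,l) y B(j,k)} gives $B(j,k')_{li}=0$ for all $j$, while the hypothesis enters only through $B(j,k')_{ll}=A(l,l)_{k'j}\ne 0$ (using that all rows of the rank-one matrix $A(l,l)$ coincide, which you make explicit and the paper leaves implicit). The difference is the finishing move. You factor each rank-one $B(j,k')$ as $\mathbf{x}\mathbf{y}^{\Trans}$, deduce $y_i=0$ from $x_ly_i=0$ together with $x_ly_l\ne 0$, conclude that the entire $i$-th column of every $B(j,k')$ vanishes, and contradict $\sum_j B(j,k')=\ide$ (Remark~\ref{corolario con las B}); incidentally, your separate treatment of $j=k'$ is unnecessary, since $B(k',k')_{ll}=a_{k'}\ne 0$ lets the same outer-product argument apply uniformly. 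The paper instead forms the $n\times n$ matrix whose $j$-th row is the $l$-th row of $B(j,k')$: the vanishing $i$-th column makes its determinant zero, while $B(j,k')_{ll}\ne 0$ shows each such row generates $\ima\bigl(B(j,k')^{\Trans}\bigr)$, and since the transposed matrices form a complete family of orthogonal rank-one idempotents these rows are linearly independent, so the determinant cannot vanish. Your route is slightly more elementary (no determinant, no passage to the transposed family) and uses only the decomposition of the identity; the paper's route packages the same non-degeneracy as a linear-independence/determinant statement. Either way the conclusion follows.
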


\begin{proof} Since $\rk(A(i,l))=1$ there exists $\mathbf{w}_i=(w_{i1},\dots,w_{in})\in K^n$ such that $A(i,l)= \mathbf{v}_i^{\Trans}\mathbf{w}_i$.
Assume by contradiction that there exists $k$ such that $v_{ik}=0$. Then  $A(i,l)_{kj}=v_{ik}w_{ij}=0$ for all $j$. By~\eqref{eq:rel entre A(i,l) y
B(j,k)} this means that $B(j,k)_{li}=0$ for all $j$, and so
\begin{equation}\label{determinante}
\det\begin{pmatrix}B(1,k)_{l1}&\dots & B(1,k)_{li}&\dots & B(1,k)_{ln}\\ \vdots &\ddots&\vdots &\ddots& \vdots \\ B(n,k)_{l1}&\dots &
B(n,k)_{li}&\dots & B(n,k)_{ln}
\end{pmatrix}=0.
\end{equation}
On the other hand, By Remark~\ref{corolario con las B} we know that $(B(1,k),\dots,B(n,k))$ is a complete family of orthogonal idempotent matrices
of rank~$1$. But then, also $(B(1,k)^{\Trans},\dots,B(n,k)^{\Trans})$ is. Since $B(j,k)_{ll}=A(l,l)_{kj}\ne 0$ implies that the vector
$(B(j,k)_{l1},\dots,B(j,k)_{ln})$ generates $\ima(B(j,k)^{\Trans})$, the determinant of~\eqref{determinante} cannot be zero, a contradiction which
concludes the proof.
\end{proof}

\begin{theorem}\label{existencia, mas manejable} Let $\mathbf{v}_1, \dots, \mathbf{v}_n$ be $n$ invertible elements of $K^n$ with
$\mathbf{v}_1=1_{K^n}$, such that
$$
\det \begin{pmatrix} \mathbf{v}_1^{\Trans} & \dots & \mathbf{v}_n^{\Trans} \end{pmatrix} = 1.
$$
There exists a unique twisting map $\xi\colon K^n \ot K^n \longrightarrow K^n \ot K^n$ with
$$
A_{\xi}(i,l)\coloneqq (-1)^{i-1} (\mathbf{v}_l^{\centerdot} \centerdot \mathbf{v}_i)^{\Trans} (\mathbf{v}_l \centerdot (\mathbf{v}_1 \times \cdots
\times \widehat{\mathbf{v}_i} \times\cdots \times \mathbf{v}_n))\quad\text{for all $i,l$,}
$$
where, as usual, $\widehat{\mathbf{v}_i}$ means that the term $\mathbf{v}_i$ is omitted. Moreover, the twisted tensor product algebra $K^n\ot_\xi
K^n$ is isomorphic to $M_n(K)$.
\end{theorem}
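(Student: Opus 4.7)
The plan is to reduce both the existence and uniqueness of $\xi$ to a verification of the four conditions of Corollary~\ref{caracterizacion en terminos de las A(i,j)s}, and then to realize the isomorphism $K^n\ot_\xi K^n\cong M_n(K)$ through one of the representations $\rho_u$ of Proposition~\ref{representacion}. The central observation driving everything is that, after applying Proposition~\ref{relacion entre el producto cruzado y el producto en K^n} together with the Laplace expansion, the matrices in the statement can be rewritten in the extraordinarily compact form
\begin{equation*}
A_\xi(i,l)_{kj} \;=\; \frac{v_{ik}}{v_{lk}}\, v_{lj}\, (V^{-1})_{ji},
\end{equation*}
where $V$ is the matrix with rows $\mathbf{v}_1,\dots,\mathbf{v}_n$ (so $\det V=1$ and $V^{-1}$ coincides with its adjugate). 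The sign $(-1)^{i-1}$ in the definition of $A_\xi(i,l)$ precisely cancels the sign appearing when the $j$-th component of $\mathbf{v}_1\times\cdots\times\widehat{\mathbf{v}_i}\times\cdots\times\mathbf{v}_n$ is expressed as the $(i,j)$-cofactor of $V$.

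With that closed form in hand, each axiom of Corollary~\ref{caracterizacion en terminos de las A(i,j)s} reduces to a single invocation of $VV^{-1}=\ide$ or $V^{-1}V=\ide$. Item~(2) follows from $\sum_j A(i,l)_{kj}=(v_{ik}/v_{lk})(VV^{-1})_{li}$; item~(3) from $\sum_i A(i,l)_{kj}=(v_{lj}/v_{lk})(V^{-1}V)_{jk}$; the orthogonal-idempotent relation in item~(1) factors through $\sum_s v_{i's}(V^{-1})_{si}=\delta_{i'i}$ after cancellation of the intermediate factors $v_{ls}$; and the crossing relation in item~(4) factors through $\sum_h v_{hj}(V^{-1})_{j'h}=\delta_{j'j}$ after cancellation of $v_{hk}$. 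Uniqueness of $\xi$ is automatic, since the family $\mathcal{A}_\xi$ determines every scalar $\lambda_{ij}^{kl}$ by~\eqref{eq:rel entre A(i,l) y B(j,k)} and hence determines $\xi$ itself.

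For the algebra isomorphism, I would apply the representation $\rho_1\colon K^n\ot_\xi K^n\to M_n(K)$ of Proposition~\ref{representacion}. Because the entries $v_{ik}/v_{lk}$ and $v_{lj}$ are all nonzero (the $\mathbf{v}_i$ being invertible in $K^n$), one has $A_\xi(i,1)_{kj}\ne 0$ if and only if $(V^{-1})_{ji}\ne 0$; and since $V^{-1}$ is invertible, its $j$-th row is nonzero for every $j$, so for every pair $(k,j)$ some index $i$ satisfies $A_\xi(i,1)_{kj}\ne 0$. Remark~\ref{imagen de rho} then forces $\rho_1$ to be surjective, and since $\dim K^n\ot_\xi K^n=n^2=\dim M_n(K)$, surjectivity upgrades to bijectivity.

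The principal obstacle lies in recognizing the closed-form expression $A_\xi(i,l)_{kj}=(v_{ik}/v_{lk})\,v_{lj}\,(V^{-1})_{ji}$; it requires carefully tracking the sign coming from the Laplace expansion of the cross product together with the factor $\mu_n(\mathbf{v}_l)$ contributed by Proposition~\ref{relacion entre el producto cruzado y el producto en K^n}. Everything downstream is then a one-line computation.
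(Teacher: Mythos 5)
Your proposal is correct, and I verified the closed form: writing $V$ for the matrix with rows $\mathbf{v}_1,\dots,\mathbf{v}_n$, the $j$-th component of $\mathbf{v}_1\times\cdots\times\widehat{\mathbf{v}_i}\times\cdots\times\mathbf{v}_n$ is $(-1)^{1+j}\det(V_{\hat\imath\hat\jmath})$, so the prefactor $(-1)^{i-1}$ indeed turns it into the $(i,j)$-cofactor and $A_\xi(i,l)_{kj}=\frac{v_{ik}}{v_{lk}}\,v_{lj}\,(V^{-1})_{ji}$; from there each of the four conditions of Corollary~\ref{caracterizacion en terminos de las A(i,j)s} collapses to $VV^{-1}=\ide$ or $V^{-1}V=\ide$ exactly as you indicate, and the surjectivity of $\rho_1$ plus the dimension count is the same argument as in the paper, via Remark~\ref{imagen de rho}. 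The route differs from the paper's in how condition~(4) is handled: the paper never writes an entrywise formula; it shows via the determinant pairing that each $A_\xi(i,l)$ is the rank-one idempotent with image $K(\mathbf{v}_l^{\centerdot}\centerdot\mathbf{v}_i)^{\Trans}$ and kernel spanned by the other $(\mathbf{v}_l^{\centerdot}\centerdot\mathbf{v}_j)^{\Trans}$ (giving (1)--(3)), and then introduces the dual family $B_\xi(j,k)$ built from the columns $\mathbf{w}_u$, checks $A_\xi(i,l)_{kj}=B_\xi(j,k)_{li}$, and deduces (4) from orthogonality of the $B$'s together with Remark~\ref{cuando son idempotentes ortogonales}. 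Your explicit formula short-circuits that dual family and makes (4) a one-line cancellation, at the modest cost of the cofactor bookkeeping; the paper's version keeps the geometric image/kernel picture and makes the $A$--$B$ symmetry visible. A small stylistic remark: the closed form needs only the Laplace expansion of the cross product, so Proposition~\ref{relacion entre el producto cruzado y el producto en K^n} and the factor $\mu_n(\mathbf{v}_l)$ you mention are not actually required (though using them leads to the same formula).
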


\begin{proof}  We assert that the $A_{\xi}(i,j)$'s are idempotent matrices of rank~$1$ satisfying:

\begin{enumerate}

\smallskip

\item $A_{\xi}(i,o)A_{\xi}(j,o)=\delta_{ij}A_{\xi}(i,o)$,

\smallskip

\item $A_{\xi}(i,j) 1_{K^n}^{\Trans} = \delta_{ij}1_{K^n}^{\Trans}$,

\smallskip

\item $\sum_{i=1}^n A_{\xi}(i,o)= \ide$.

\smallskip

\end{enumerate}
In fact, since by Proposition~\ref{relacion entre el producto cruzado y el producto en K^n}
$$
\mathbf{v}_l \centerdot (\mathbf{v}_1 \times \cdots \times \widehat{\mathbf{v}_i} \times\cdots \times \mathbf{v}_n) = \tau(\mathbf{v}_l) \,
(\mathbf{v}_l^{\centerdot} \centerdot \mathbf{v}_1) \times \cdots \times  \widehat{(\mathbf{v}_l^{\centerdot} \centerdot \mathbf{v}_i)} \times
\cdots \times (\mathbf{v}_l^{\centerdot} \centerdot \mathbf{v}_n),
$$
we have
\begin{align*}
\bigl(\mathbf{v}_l \centerdot (\mathbf{v}_1 \times \cdots \times \widehat{\mathbf{v}_i} \times\cdots \times \mathbf{v}_n) \bigr)
(\mathbf{v}_l^{\centerdot} \centerdot \mathbf{v}_j)^{\Trans} &= \tau(\mathbf{v}_l)\det \begin{pmatrix} \mathbf{v}_l^{\centerdot} \centerdot
\mathbf{v}_j \\ \mathbf{v}_l^{\centerdot} \centerdot \mathbf{v}_1\\ \vdots  \\ \mathbf{v}_l^{\centerdot} \centerdot \mathbf{v}_{i-1}\\
\mathbf{v}_l^{\centerdot} \centerdot \mathbf{v}_{i+1} \\ \vdots \\ \mathbf{v}_l^{\centerdot} \centerdot \mathbf{v}_n\end{pmatrix}\\
&=(-1)^{i-1}\tau(\mathbf{v}_l)\delta_{ij}\det \begin{pmatrix} \mathbf{v}_l^{\centerdot} \centerdot \mathbf{v}_1\\ \vdots  \\
\mathbf{v}_l^{\centerdot} \centerdot \mathbf{v}_n \end{pmatrix}\\
&= (-1)^{i-1}\delta_{ij}\det \begin{pmatrix} \mathbf{v}_1\\ \vdots \\  \mathbf{v}_n\end{pmatrix}\\
&= (-1)^{i-1}\delta_{ij}.
\end{align*}
This implies that $A(i,l)$ is the idempotent with image $K(\mathbf{v}_l^{\centerdot} \centerdot \mathbf{v}_i)^{\Trans}$ and kernel $\langle
(\mathbf{v}_l^{\centerdot} \centerdot \mathbf{v}_j)^{\Trans}: j\ne i\rangle$, which implies items~(1), (2) y (3) (for (2) use that
$\mathbf{v}_l^{\centerdot}\centerdot \mathbf{v}_l=1_{K^n}$).

Now we consider the vectors $\mathbf{w_u}$ ($1\le i \le n$) determined by the equality
$$
\begin{pmatrix}\mathbf{w}_1\\  \vdots \\ \mathbf{w}_n \end{pmatrix} \coloneqq \begin{pmatrix} \mathbf{v}_1^{\Trans} & \dots &
\mathbf{v}_n^{\Trans}\end{pmatrix},
$$
and we define the matrices
$$
B_{\xi}(j,k)\coloneqq (-1)^{j-1} (\mathbf{w}_k^{\centerdot} \centerdot \mathbf{w}_j)^{\Trans} (\mathbf{w}_k\centerdot (\mathbf{w}_1\times \cdots
\times\widehat{\mathbf{w}_j} \times \dots \times \mathbf{w}_n))
$$
One checks that $A_{\xi}(i,l)_{kj}=B_{\xi}(j,k)_{li}$. Moreover, arguing as above for the $A_{\xi}(i,j)$'s, it can be proven that
$$
B_{\xi}(i,o)B_{\xi}(j,o)=\delta_{ij}B_{\xi}(i,o)\quad\text{for all $i,j,o$.}
$$
From this it follows immediately that the matrices $A_{\xi}(i,l)$ satisfy condition~(4) in Corollary~\ref{caracterizacion en terminos de las
A(i,j)s}, which finishes the proof of the existence of $\chi$. The uniqueness is clear, so it remains to prove that $K^n\ot_\xi K^n$ is isomorphic
to $M_n(K)$. By Remark~\ref{imagen de rho} for this it suffices to prove that for any $l$ and all $k,j$ there exists $i$ such that $A(i,l)_{jk}\ne
0$, since then the representation $\rho_l$ is a surjective morphism between two algebras of the same dimension, and hence is an isomorphism. So fix
$l$, $k$, $j$. From $\sum_i A(i,l)=\ide$ it follows that there exists $i$ such that $A(i,l)_{kk}\ne 0$. But then
$$
A(i,l)_{jk}=\frac{(v_i)_j}{(v_i)_k}A(i,l)_{kk}\ne 0,
$$
as desired.
\end{proof}

\begin{remark}\label{unicidad mejorada} The uniqueness part in Theorem~\ref{existencia, mas manejable} can be improved. If two twisting maps $\chi$
and $\check{\chi}$ with $\Gamma_{\chi}=\Gamma_{\check{\chi}}=\mathfrak{J}_n$ satisfy $A_{\chi}(i,l)=A_{\check{\chi}}(i,l)$ for a fixed~$l$ and
all~$i$, and  all the entries of $A_{\chi}(i,l)$ are non null, then $\chi=\check{\chi}$. The proof is left to the reader (use~\eqref{eq:rel entre
A(i,l) y B(j,k)}, Proposition~\ref{diagonales uno} and Remark~\ref{rango uno generalizado}).
\end{remark}

\section{Standard and quasi-standard columns}

\begin{definition} The {\em support} of a matrix $A\in M_n(K)$ is the set
$$
\Supp(A)\coloneqq \{(i,j)\in \mathds{N}_{n}^*\times\mathds{N}_n^* : a_{ij}\ne 0\},
$$
and the support of the $k$-th row of $A$ is the set $\Supp(A_{k*})\coloneqq \{j\in \mathds{N} : a_{kj}\ne 0\}$.
\end{definition}

\begin{definition}\label{pretwisting} A family $(A(i,l))_{i,l\in \mathds{N}_m^*}$ of matrices $A(i,l)\in M_n(K)$, is called a {\em pre-twisting} of
$K^m$ with $K^n$ if it satisfies conditions~(1), (2) and~(3) of Corollary~\ref{caracterizacion en terminos de las A(i,j)s}.
\end{definition}

Throughout this section $\mathcal{A}=\left(A(i,l)\right)_{i,l\in \mathds{N}^*_m}$ denotes a pre-twisting of $K^m$ with $K^n$.

\begin{definition}\label{definicion columna estandar} We say that the $l_0$-th column of $\mathcal{A}$ is a {\em standard column} if
\begin{enumerate}

\smallskip

\item $A(l_0,l_0)$ is a $0,1$-matrix,

\smallskip

\item $\Supp(A(i,l_0))\subseteq\Supp(A(l_0,l_0))\cup \Supp(\ide)$ for all $i$.

\end{enumerate}
\end{definition}

\begin{remark}\label{Como son las columnas estandar} Assume that $\left(A(i,l_0)\right)_{i\in \mathds{N}^*_m}$ is a standard column of
$\mathcal{A}$ and let $k\in \mathds{N}_n^*$. The following facts hold:
\begin{enumerate}

\smallskip

\item For each index $i$, we have $A(i,l_0)_{kk}\in\{0,1\}$.

\smallskip

\item $A(i,l_0)_{kk}=1$ for exactly one $i$. We let $i(k) = i(k,l_0)$ denote this index.

\smallskip

\item If $i\ne i(k)$ and $i\ne l_0$, then $A(i,l_0)_{kj}=0$ for all $j$.

\smallskip

\item $A(i,l_0)_{kj} = -1$ if and only if $i= i(k) \ne l_0$ and $j=c_k(A(l_0,l_0))$. Moreover $A(i,l_0)_{kj'}=0$ for all
$j'\notin\{k,c_k(A(l_0,l_0))\}$.

\smallskip

\item $A(i,l_0)_{kj}\in \{1,0,-1\}$ for all $i$, $k$, $j$, and $A(i,l_0)_{kj}=1$ implies $i=l_0$ or $j=k$.

\end{enumerate}
\end{remark}

\begin{remark} From Remark~\ref{Como son las columnas estandar} it follows that each standard column $A(i,l_0)_{i\in \mathds{N}_m^*}$ of a
pre-twisting of $K^m$ with $K^n$ can be obtained in the following way:
\begin{enumerate}

\smallskip

\item Take a matrix $A\in M_n(K)$, which is equivalent via identical permutations in rows and columns to a standard idempotent $0,1$-matrix, and
set $A(l_0,l_0)\coloneqq A$.

\smallskip

\item Set $J_{l_0}\coloneqq \{k\in \mathds{N}_n^*: A(l_0,l_0)_{kk}=1\}$.

\smallskip

\item For all $i\in\mathds{N}_m^*\setminus\{l_0\}$ choose $J_i\subseteq\mathds{N}_n^*\setminus J_{l_0}$ such that
$$
\bigcup_{i=1}^m J_i=\mathds{N}_n^*\quad\text{and}\quad J_i\cap J_{i'}=\emptyset\quad\text{if $i\ne i'$}.
$$

\smallskip

\item For $i\ne l_0$ define $A(i,l_0)\in M_n(K)$ by
$$
A(i,l_0)_{kj}\coloneqq \begin{cases} \phantom{-} 1  & \text{if $k\in J_i$ and $j=k$,}\\-1  & \text{if $k\in J_i$ and $j=c_k$,}\\ \phantom{-} 0&
\text{otherwise.}
\end{cases}
$$
\end{enumerate}
\end{remark}

Next we generalize the notation introduced in Remark~\ref{Como son las columnas estandar}(2).

\begin{remark}\label{suma en la diagonal} Let $l_0\in\mathds{N}_m^*$ and $k\in \mathds{N}_n^*$. If $A(i,l_0)_{kk}\in\{0,1\}$ for all~$i$, then there is a unique index $i_0$, which is denoted $i(k)=i(k,l_0)=i(k,l_0,\mathcal{A})$, such that $A(i_0,l_0)_{kk}=1$. So, $A(i,l_0)_{kk}=\delta_{ii_0}$.
\end{remark}

\begin{definition}\label{twisting standard} We say that a twisting map $\chi\colon K^m\ot K^n\longrightarrow K^n\ot K^m$ is {\em standard} if the
columns of $\mathcal{A}_{\chi}$ are standard columns. In this case we also say that the twisted tensor product $K^n\ot_{\chi} K^m$ is {\em
standard}.
\end{definition}

\begin{proposition}\label{A standard equivale a B standard} A twisting map $\chi$ is a standard twisting map if and only if the map $\tilde{\chi}$,
in\-tro\-duced in Remark~\ref{A sub Chi es B sub tau chi tau}, is.
\end{proposition}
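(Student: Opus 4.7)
I would exploit the duality $A_{\tilde\chi}(i,l)_{kj} = B_\chi(i,l)_{kj}$ from Remark~\ref{A sub Chi es B sub tau chi tau}, which reduces the claim to the following internal statement about a single twisting map $\chi$: all columns of $\mathcal{A}_\chi$ are standard if and only if all columns of $\mathcal{B}_\chi$ are standard. Once this is proved, applying it to $\tilde\chi$ (together with $\tilde{\tilde\chi}=\chi$) yields the proposition.

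By symmetry, it suffices to assume all columns of $\mathcal{A}_\chi$ are standard and deduce that every column of $\mathcal{B}_\chi$ is standard. Fix $k_0\in \mathds{N}_n^*$. Using the defining identity $B(j,k)_{li}=A(i,l)_{kj}$ from~\eqref{eq:rel entre A(i,l) y B(j,k)}, the two conditions of Definition~\ref{definicion columna estandar} for the $k_0$-th column of $\mathcal{B}_\chi$ translate into statements about the entries $A(i,l)_{k_0 j}$ as $i$, $l$, $j$ vary.

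For condition~(1), that $B(k_0,k_0)$ is a $0,1$-matrix: we have $B(k_0,k_0)_{li}=A(i,l)_{k_0 k_0}$, and Remark~\ref{Como son las columnas estandar}(1) guarantees $A(i,l)_{k_0 k_0}\in\{0,1\}$ for every standard column $l$, so the whole matrix $B(k_0,k_0)$ has entries in $\{0,1\}$. For condition~(2), suppose $B(j,k_0)_{li}\ne 0$ for some $j$, $l$, $i$; I must show that either $l=i$ (so $(l,i)\in\Supp(\ide)$) or $B(k_0,k_0)_{li}\ne 0$. This unfolds to: if $A(i,l)_{k_0 j}\ne 0$ then $l=i$ or $A(i,l)_{k_0 k_0}\ne 0$. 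If $i=l$ we are done; otherwise Remark~\ref{Como son las columnas estandar}(3) applied to the standard $l$-th column of $\mathcal{A}_\chi$ forces $i=i(k_0,l)$, which by the definition of $i(k_0,l)$ means $A(i,l)_{k_0 k_0}=1$, as required.

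The setup is really just linear bookkeeping, so the only mildly delicate point is being careful with the role of the diagonal: one must check that $A(i,l)_{k_0 k_0}$ cannot equal $-1$ (which would spoil condition~(1) for $B(k_0,k_0)$). This is handled by Remark~\ref{Como son las columnas estandar}(4), where a $-1$ entry occurs only in column $c_k(A(l,l))\ne k$, never on the diagonal. No other genuine obstacle arises, and the converse implication follows by applying exactly the same argument to $\tilde\chi$ in place of $\chi$ and invoking Remark~\ref{A sub Chi es B sub tau chi tau} once more.
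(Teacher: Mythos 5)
Your proposal is correct, and its skeleton is the same as the paper's: reduce via $\mathcal{A}_{\tilde{\chi}}=\mathcal{B}_{\chi}$ to showing that standardness of every column of $\mathcal{A}_{\chi}$ forces standardness of every column of $\mathcal{B}_{\chi}$, get condition~(1) of Definition~\ref{definicion columna estandar} directly from Remark~\ref{Como son las columnas estandar}(1), and obtain the converse by running the same argument for $\tilde{\chi}$. Where you genuinely diverge is in the verification of condition~(2). The paper works row by row with the matrices $B_{\chi}(i,l_0)$ themselves: it observes that their entries lie in $\{1,0,-1\}$ with no $1$ off the diagonal, uses the zero row sums to conclude that each row is either zero or has a single $-1$, and then locates that $-1$ in column $c_k(B_{\chi}(l_0,l_0))$ by a computation exploiting the idempotency of $B_{\chi}(i,l_0)$ and the orthogonality $B_{\chi}(i,l_0)B_{\chi}(i_0,l_0)=0$. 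You instead unfold the support condition through~\eqref{eq:rel entre A(i,l) y B(j,k)}, so that it reads: $A_{\chi}(i,l)_{k_0j}\ne 0$ with $i\ne l$ forces $A_{\chi}(i,l)_{k_0k_0}\ne 0$; this is immediate from items~(2)--(3) of Remark~\ref{Como son las columnas estandar}, since a nonvanishing $k_0$-th row of $A_{\chi}(i,l)$ with $i\ne l$ forces $i=i(k_0,l)$ and hence $A_{\chi}(i,l)_{k_0k_0}=1$. Your route is shorter and uses no identities among the $B$-matrices beyond what the Remark already encodes about standard $\mathcal{A}$-columns; what it does not deliver (and the paper's computation does) is the explicit position of the $-1$ entries in the $\mathcal{B}$-columns, but that extra information is not needed for the support containment required by Definition~\ref{definicion columna estandar}(2). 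Your parenthetical worry about a $-1$ on the diagonal is already excluded by Remark~\ref{Como son las columnas estandar}(1), so that extra check is redundant but harmless.
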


\begin{proof} By Remark~\ref{A sub Chi es B sub tau chi tau} we know that $\mathcal{A}_{\tilde{\chi}} = \mathcal{B}_{\chi}$. Thus, since
$\tilde{\chi}$ is a twisting map, we only must check that the $l_0$-th column of $\mathcal{B}_{\chi}$ is a standard column for all $l_0\in
\mathds{N}^*_n$. Item~(1) of Definition~\ref{definicion columna estandar} is an immediate consequence of Remark~\ref{Como son las columnas
estandar}(1). For item~(2) it suffices to consider the case $i\ne l_0$. By Remark~\ref{Como son las columnas estandar}(4), we know that
$B_{\chi}(i,l_0)_{kj}\in\{1,0,-1\}$ for all $j,k$ and that $B_{\chi}(i,l_0)_{kj}\ne 1$ for $j\ne k$. Since $\sum_{j=1}^m B_{\chi}(i,l_0)_{kj}=0$,
this implies that if $B_{\chi}(i,l_0)_{kk}=0$, then the $k$-th row vanishes. Else $B_{\chi}(i,l_0)_{kk}=1$ and there exists exactly one index $j'$
such that $B_{\chi}(i,l_0)_{kj'}=-1$. It remains to check that $j'=c_k(B_{\chi}(l_0,l_0))$. Using that $B_{\chi}(i,l_0)$ is idempotent, we obtain
that
$$
-1 = B_{\chi}(i,l_0)_{kj'}=\sum_{j=1}^m B_{\chi}(i,l_0)_{kj}B_{\chi}(i,l_0)_{jj'}=B_{\chi}(i,l_0)_{kj'}-B_{\chi}(i,l_0)_{j'j'}= -1-B_{\chi}(i,l_0)_{j'j'}.
$$
Set $i_0\coloneqq i\bigl(j',l_0,\mathcal{A}_{\tilde{\chi}}\bigr)$. Since $B_{\chi}(i_0,l_0)_{j'j'}=1$ the above equality implies that $i\ne i_0$. Thus,
$$
0=\sum_{j=1}^m B_{\chi}(i,l_0)_{kj}B_{\chi}(i_0,l_0)_{jj'}=B_{\chi}(i_0,l_0)_{kj'}-B_{\chi}(i_0,l_0)_{j'j'}=B_{\chi}(i_0,l_0)_{kj'}-1,
$$
where the first equality holds because $B_{\chi}(i,l_0)B_{\chi}(i_0,l_0)=0$. Therefore $B_{\chi}(i_0,l_0)_{kj'}=1$, and so $i_0=l_0$, because $j'\ne k$. Hence, $j'=c_k(B_{\chi}(l_0,l_0))$, as desired.
\end{proof}

\begin{remark}\label{condicion para twisting standard}
Let $\chi$ be a standard twisting map and let $i\ne l$ and $k\ne j$. Then $A_{\chi}(i,l)_{kj} = -1$ if and only if $B_{\chi}(k,k)_{li} = 1$ and
$A_{\chi}(l,l)_{kj} = 1$. In fact, by Remark~\ref{Como son las columnas estandar}(4),
$$
A_{\chi}(i,l)_{kj} = -1 \Rightarrow B_{\chi}(k,k)_{li} = A_{\chi}(i,l)_{kk} = 1.
$$
Since, by Proposition~\ref{A standard equivale a B standard} and Remark~\ref{A sub Chi es B sub tau chi tau} we know that the map $\tilde{\chi}$ is
a standard twisting map and $\mathcal{A}_{\tilde{\chi}} = (B_{\chi}(i,l))_{i,l\in \mathds{N}_n^*}$, we also have $A_{\chi}(l,l)_{kj} = 1$.
Conversely,
$$
1 = B_{\chi}(k,k)_{li} = A_{\chi}(i,l)_{kk} \Rightarrow \exists ! j \text{ such that } A_{\chi}(i,l)_{kj} = -1.
$$
So $j = c_k(A_{\chi}(l,l))$.
\end{remark}

\begin{theorem}\label{A y B determina el stm} Let $(A(i))_{i\in N^*_m}$ and $(B(k))_{k\in N^*_n}$ be two families of idempotent $0,1$-matrices
$A(i)\in M_n(K)$ and $B(k)\in M_m(K)$, such that, for all $i$ and $k$,

\begin{enumerate}

\smallskip

\item $A(i)\mathds{1} = \mathds{1}$ and $B(k)\mathds{1} = \mathds{1}$,

\smallskip

\item $A(i)_{kk} = B(k)_{ii}$.

\smallskip

\end{enumerate}
The family $\mathcal{A}_{\chi}=(A_{\chi}(i,l))_{i,l\in \mathds{N}^*_m}$, of matrices $A_{\chi}(i,l)\in M_n(K)$ defined by
$$
A_{\chi}(i,l)_{kj}\coloneqq \begin{cases} A(l)_{kj} & \text{if $i=l$,}\\ B(k)_{li} & \text{if $k=j$,}\\ -1 & \text{if $i\ne l$, $k\ne j$ and
$A(l)_{kj}=B(k)_{li}= 1$,}\\ \phantom{-} 0 &\text{otherwise,}\end{cases}
$$
gives the unique standard twisting map
$$
\chi\colon K^m\ot K^n\longrightarrow K^n\ot K^m,
$$
such that $A_{\chi}(i,i)=A(i)$ and $B_{\chi}(k,k)=B(k)$.
\end{theorem}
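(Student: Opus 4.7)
The plan is to verify directly that the family defined by the displayed formula satisfies the four conditions of Corollary~\ref{caracterizacion en terminos de las A(i,j)s}, and then read off standardness and uniqueness. First I would check consistency of the formula: the overlapping cases $i=l$ and $k=j$ agree because $A(l)_{kk}=B(k)_{ll}$ by hypothesis~(2). The identifications $A_{\chi}(i,i)=A(i)$ and $B_{\chi}(k,k)_{li}=A_{\chi}(i,l)_{kk}=B(k)_{li}$ are then immediate from the defining formula together with~\eqref{eq:rel entre A(i,l) y B(j,k)}.

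Conditions (2) and (3) of the Corollary are row-sum identities. For (3), $\sum_i A_{\chi}(i,l)_{kj}=\delta_{jk}$: at $k=j$ this reduces to $\sum_i B(k)_{li}=1$, and at $k\ne j$ the contribution $A(l)_{kj}$ from $i=l$ cancels against the $-1$ coming from the unique $i$ with $B(k)_{li}=1$, once one observes that this index must differ from $l$ when $A(l)_{kj}=1$ (since then $A(l)_{kk}=0$, hence $B(k)_{ll}=0$). Condition~(2) is handled by a symmetric argument. For condition~(1), $A_{\chi}(i,l)A_{\chi}(i',l)=\delta_{ii'}A_{\chi}(i,l)$, I would use the explicit form that emerges from the formula: for $i\ne l$,
$$
A_{\chi}(i,l)=\sum_{k\in J_i(l)}\bigl(E^{kk}-E^{k,c_k(A(l))}\bigr),\qquad J_i(l)\coloneqq\{k:B(k)_{li}=1\}.
$$
The key observation is that $c_k(A(l))\notin J_{i'}(l)$ for any $i'\ne l$, since $A(l)_{c_kc_k}=1$ forces $B(c_k)_{ll}=1$ by hypothesis~(2); consequently the products either reproduce a factor or collapse to zero, and orthogonality follows directly.

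The main obstacle is condition~(4). Using Remark~\ref{Simplificacion Corolario 2.4(4)} I would first reduce to $j=j'$, leaving the identity $\sum_h A_{\chi}(i,h)_{kj}A_{\chi}(h,l)_{kj}=A_{\chi}(i,l)_{kj}$. The subcase $k=j$ collapses to $(B(k)^2)_{li}=B(k)_{li}$, true by idempotency. For $k\ne j$ I would split on the value of $A(l)_{kj}$. When $A(l)_{kj}=0$ every right factor vanishes and both sides are zero. When $A(l)_{kj}=1$ one has $j=c_k(A(l))$ and $B(k)_{ll}=0$, so the right factor $A_{\chi}(h,l)_{kj}$ is supported only at $h=l$ (value $1$) and at the unique $h^*\ne l$ with $B(k)_{lh^*}=1$ (value $-1$). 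Evaluating the two corresponding left factors, and exploiting the fact that $B(k)_{h^*h^*}=1$ forces $A(h^*)_{kk}=1$ and thus $A(h^*)_{kj}=0$ for $j\ne k$, one finds that the $h^*$ term vanishes and the sum reduces to $A_{\chi}(i,l)_{kj}$, as required.

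Standardness of each column is built into the construction via the explicit form of $A_{\chi}(i,l)$ above, compared with Remark~\ref{Como son las columnas estandar}. For uniqueness, any standard twisting map $\chi'$ with $A_{\chi'}(i,i)=A(i)$ and $B_{\chi'}(k,k)=B(k)$ satisfies $A_{\chi'}(i,l)_{kk}=B_{\chi'}(k,k)_{li}=B(k)_{li}$, which determines the sets $J_i(l)$; Remark~\ref{Como son las columnas estandar} then determines every remaining entry of $A_{\chi'}(i,l)$, forcing $\chi'=\chi$.
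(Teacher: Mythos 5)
Your proposal is correct, but it handles the heart of the theorem by a different mechanism than the paper. You and the paper treat the row-sum conditions identically (the cancellation of the $+1$ from $i=l$ against the $-1$ at the unique $i$ with $B(k)_{li}=1$, and its analogue for the rows of the $A_{\chi}(i,l)$'s), and the uniqueness argument is the same. The difference is in how the orthogonality conditions are obtained: the paper never multiplies two matrices. Having established $\sum_i A_{\chi}(i,l)=\ide$ and $\sum_j B_{\chi}(j,k)=\ide$, it invokes Remark~\ref{cuando son idempotentes} and only has to bound $\sum_i \rk\bigl(A_{\chi}(i,l)\bigr)\le n$ (and similarly on the $\mathcal{B}$ side), which it does by counting diagonal $1$'s versus nonzero rows; this gives conditions (1) and (2) of Proposition~\ref{caracterizacion} simultaneously and cheaply. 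You instead verify Corollary~\ref{caracterizacion en terminos de las A(i,j)s}(1) by an explicit product computation based on the decomposition $A_{\chi}(i,l)=\sum_{k\in J_i(l)}\bigl(E^{kk}-E^{k c_k}\bigr)$ together with the observation $c_k\notin J_{i'}(l)$, and you verify Corollary condition (4) via the reduction of Remark~\ref{Simplificacion Corolario 2.4(4)} plus a direct check of the $j=j'$ case (i.e., idempotency of each $B_{\chi}(j,k)$). Your route is more computational but makes the structure of the matrices completely transparent; the paper's rank-count trick is slicker and avoids case analysis. Two small points you should make explicit to be complete: your orthogonality computation as written only covers products of the matrices with $i,i'\ne l$, and you must also check $A(l)A_{\chi}(i,l)=A_{\chi}(i,l)A(l)=0$ (which follows because, $A(l)$ being equivalent to a standard idempotent $0,1$-matrix by Proposition~\ref{01 matrix}, its columns indexed by $k$ with $A(l)_{kk}=0$ vanish, and its rows $k$ and $c_k$ coincide); likewise your assertion $B(k)_{h^*h^*}=1$ is not automatic from $B(k)_{lh^*}=1$ alone but follows from the same standard-form property (Remark~\ref{c sub k esta en J sub k}). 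With these details filled in, the argument is sound.
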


\begin{proof} The uniqueness holds since the definition of $\mathcal{A}_{\chi}$ is forced. Set $B_{\chi}(j,k)_{li}\coloneqq A_{\chi}(i,l)_{kj}$.
Note that $B_{\chi}(k,k)= B(k)$. We must check that conditions (1)--(4) of Proposition~\ref{caracterizacion} are fulfilled and that $\chi$ is
standard. For condition~(3) we must verify that

\begin{equation}\label{A cumple condicion 3}
\delta_{il} = \sum_j A_{\chi}(i,l)_{kj}\qquad\text{for all $i$, $l$ and $k$.}
\end{equation}
When $i=l$ this is true by assumption. When $i\ne l$ and $B(k)_{li} = 0$, we have $A_{\chi}(i,l)_{kj} = 0$ for all~$j$, and thus equality~\eqref{A
cumple condicion 3} is true. Finally, when~$i\ne l$ and~$B(k)_{li} = 1$, we have~$A_{\chi}(i,l)_{kk} = 1$, $A_{\chi}(i,l)_{kc_k} = -1$ (where $c_k
= c_k(A(l))$) and $A_{\chi}(i,l)_{kj}=0$ for $j\notin\{k,c_k\}$, and again equality~\eqref{A cumple condicion 3} is true. The proof of
condition~(4) is similar. Since $B_{\chi}(j,k)_{li} = A_{\chi}(i,l)_{kj}$, conditions~(3) and~(4) say that $\sum_i A_{\chi}(i,l) = \ide$ and
$\sum_j B_{\chi}(j,k) = \ide$ for all $l$ and for all $k$. Hence, by Remark~\ref{cuando son idempotentes}, in order to check condition~(1) it
suffices to prove that
\begin{equation}\label{es menor o igual que n}
\sum_i \rk\bigl(A_{\chi}(i,l)\bigr) \le n\qquad\text{for all $l$.}
\end{equation}
Fix $l\in \mathds{N}_m^*$. Since the $B(k)$'s are equivalent, via identical permutations in rows and columns, to a standard idempotent
$0,1$-matrices, we know that for each $k$ there exists a unique $i$ such that $A_{\chi}(i,l)_{kk} = B(k)_{li} = 1$. Thus $\sum_i \# \{k:
A_{\chi}(i,l)_{kk} = 1\} = n$. Consequently, to conclude that inequality~\eqref{es menor o igual que n} holds it is enough to show that
$$
\rk\bigl(A_{\chi}(i,l)\bigr)\le \# \bigl\{k: A_{\chi}(i,l)_{kk} = 1\bigr\}\qquad\text{for all $i$.}
$$
But, for $i=l$ we know that $\rk\bigl(A_{\chi}(l,l)\bigr)= \# \bigl\{k: A_{\chi}(l,l)_{kk} = 1\bigr\}$, because $A(l)$ is an idempotent
$0,1$-matrix, while, for $i\ne l$, from the fact that
$$
A_{\chi}(i,l)_{kk}\in \{0,1\}\quad\text{and}\quad A_{\chi}(i,l)_{kk} = 0 \text{ implies that } A_{\chi}(i,l)_{kj} = 0\text{ for all $j$,}
$$
it follows that $\# \bigl\{k: A_{\chi}(i,l)_{kk} = 1\bigr\}$ is the number of non zero rows of $A_{\chi}(i,l)$, which is greater than or equal to
$\rk\bigl(A_{\chi}(i,l)\bigr)$. This concludes the proof of condition~(1) of Proposition~\ref{caracterizacion}. The proof of condition~(2) is
similar.
\end{proof}

\begin{notation} For all $l\in \mathds{N}_m^*$, we set
$$
F_0(\mathcal{A},l)\coloneqq \{k\in\mathds{N}^*_n : A(i,l)_{kj}=\delta_{il}\delta_{kj}, \text{ for all $i$ and $j$}\}.
$$
and for all $i,l\in \mathds{N}_m^*$, we set $F(A(i,l))\coloneqq \{j\in\mathds{N}^*_n  : A(i,l)_{jj}=1\}$.
\end{notation}

\begin{remark} The set $F(A(i,l))$ was introduced in Notation~\ref{j sub i(l)}, where was denoted $J_i(l)$, but in some places we prefer to use the
longer but more precise notation $F(A(i,l))$.
\end{remark}

\begin{definition} We will say that Corollary~\ref{caracterizacion en terminos de las A(i,j)s}(4) is satisfied in the $l_0$-th column of
$\mathcal{A}$ if
\begin{equation}\label{condicion 4 rr2}
\sum_{h=1}^m A(i,h)_{kj}A(h,l_0)_{kj'}=\delta_{jj'}A(i,l_0)_{kj}\qquad\text{for all $i$, $j$, $j'$ and $k$.}
\end{equation}
\end{definition}

\begin{proposition}\label{influencia de columna estandar} If the $l_0$-th column of $\mathcal{A}$ is a standard column, then
Corollary~\ref{caracterizacion en terminos de las A(i,j)s}(4) is satisfied in the $l_0$-th column of $\mathcal{A}$ if and only if
$F(A(v,l_0))\subseteq F_0(\mathcal{A},v)$ for all~$v\in\mathds{N}^*_m$.
\end{proposition}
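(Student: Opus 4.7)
The plan is to fix $l_0$ and analyze equation~\eqref{condicion 4 rr2} row-by-row in the index $k\in\mathds{N}_n^*$. Standardness of the $l_0$-th column, via Remark~\ref{Como son las columnas estandar}, produces a unique index $v=i(k,l_0)$ with $A(v,l_0)_{kk}=1$, and it locates all nonzero entries of the $k$-th rows of the matrices $A(h,l_0)$: explicitly $A(v,l_0)_{kk}=1$, and when $v\ne l_0$ also $A(v,l_0)_{kc}=-1$ and $A(l_0,l_0)_{kc}=1$ with $c\coloneqq c_k(A(l_0,l_0))\ne k$; all other entries in these rows vanish.

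I would first dispose of the case $v=l_0$. Here the $k$-th row of $A(h,l_0)$ is zero for $h\ne l_0$ and equals $e_k^{\Trans}$ for $h=l_0$, so both sides of~\eqref{condicion 4 rr2} collapse to $\delta_{il_0}\delta_{kj}\delta_{kj'}$ and the equation holds unconditionally. Moreover $k\in F_0(\mathcal{A},l_0)$ is automatic, so the required inclusion $F(A(l_0,l_0))\subseteq F_0(\mathcal{A},l_0)$ imposes no constraint.

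The substantive case is $v\ne l_0$. For the forward implication I specialize~\eqref{condicion 4 rr2} at $j'=k$: since $A(h,l_0)_{kk}=\delta_{hv}$, the left-hand side collapses to $A(i,v)_{kj}$, while the right-hand side equals $\delta_{jk}A(i,l_0)_{kj}$, which is $A(i,l_0)_{kk}=\delta_{iv}$ when $j=k$ and vanishes otherwise. Therefore $A(i,v)_{kj}=\delta_{iv}\delta_{kj}$ for all $i,j$, i.e., $k\in F_0(\mathcal{A},v)$. For the converse, assuming $k\in F_0(\mathcal{A},v)$, I would check~\eqref{condicion 4 rr2} by case analysis on $j'$: only $j'\in\{k,c\}$ can make the sum over $h$ nonzero, and in each sub-case substituting $A(i,v)_{kj}=\delta_{iv}\delta_{kj}$ together with the explicit values listed above recovers $\delta_{jj'}A(i,l_0)_{kj}$, where the fact that $c\ne k$ is used to avoid a spurious sign.

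The main obstacle is purely bookkeeping: one must keep the two cases $v=l_0$ and $v\ne l_0$ cleanly separated, and inside the latter enumerate $j'\in\{k,c\}$ against $j\in\{k,c,\text{other}\}$ without conflating the roles of the two distinguished columns. Once standardness of the $l_0$-th column has been used to collapse the sum over $h$ to at most the two terms $h=v$ and $h=l_0$, no real algebra remains beyond a short finite verification.
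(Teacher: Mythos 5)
Your proposal is correct and follows essentially the same route as the paper's proof: the forward implication specializes \eqref{condicion 4 rr2} at $j'=k$ exactly as in the paper, and the converse uses the standard-column description of the $k$-th rows to reduce the sum to the terms $h=v$ and $h=l_0$ and then verifies the handful of cases (the paper enumerates over $j\in\{k,c_k\}$ rather than $j'$, a purely cosmetic difference). Your observation that $F(A(l_0,l_0))\subseteq F_0(\mathcal{A},l_0)$ is automatic also matches the paper's treatment of the case $i(k,l_0)=l_0$.
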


\begin{proof} \noindent $\Rightarrow$)\enspace Let $v\in\mathds{N}^*_m$ and $k\in\mathds{N}^*_n$. If $k\in F(A(v,l_0))$, then
$A(u,l_0)_{kk}=\delta_{uv}$ for all $u\in\mathds{N}^*_m$ (see Remark~\ref{Como son las columnas estandar}). So, from~\eqref{condicion 4 rr2} with
$j=k$, we obtain that
$$
A(i,v)_{kj}=\sum_{u=1}^m A(i,u)_{kj}A(u,l_0)_{kk}=\delta_{jk}A(i,l_0)_{kj}=\delta_{jk}A(i,l_0)_{kk}=\delta_{jk}\delta_{iv}
$$
for all $i,j$, which says that $k\in F_0(\mathcal{A},v)$, as desired.

\smallskip

\noindent $\Leftarrow$)\enspace Fix $k\in \mathds{N}_n^*$. If $i(k,l_0)=l_0$, then $k \in F(A(l_0,l_0)) \subseteq F_0(\mathcal{A},l_0)$, and so
condition~\eqref{condicion 4 rr2} holds if and only~if
$$
A(i,l_0)_{kj} \delta_{kj'} = \delta_{jj'}A(i,l_0)_{kj}\qquad\text{for all $i$, $j$ and $j'$.}
$$
But this is true for $i\ne l_0$, since then $A(i,l_0)_{kj}=0$, and also for $i=l_0$, since $A(l_0,l_0)_{kj}=\delta_{kj}$.

If $h_0\coloneqq i(k,l_0)\ne l_0$, then equality~\eqref{condicion 4 rr2} holds if and only if
\begin{equation}\label{35 para l distinto a i0}
A(i,h_0)_{kj}A(h_0,l_0)_{kj'}+A(i,l_0)_{kj}A(l_0,l_0)_{kj'}=\delta_{jj'}A(i,l_0)_{kj}\qquad\text{for all $i$, $j$ and $j'$,}
\end{equation}
since for $h\notin\{h_0,l_0\}$ we have $A(h,l_0)_{kj'}=0$ for all $j'$. In order to prove that~\eqref{35 para l distinto a i0} is true, we
con\-sider the cases $j=k$, $j=c_k=c_k(A(l_0,l_0))$ and $j\notin \{k,c_k\}$. We will use that $A(i,h_0)_{kj}=\delta_{ih_0}\delta_{kj}$ for all
$i,j$, which is true, because $k\in F(A(h_0,l_0))\subseteq F_0(\mathcal{A},h_0)$.
\begin{itemize}

\smallskip

\item[-] If $j=k$, then we must prove that
$$
\qquad\qquad A(i,h_0)_{kk}A(h_0,l_0)_{kj'}+A(i,l_0)_{kk}A(l_0,l_0)_{kj'}=\delta_{kj'}A(i,l_0)_{kk}\quad\text{for all $i$ and all $j'$.}
$$
But this is true, since by the above discussion, Remark~\ref{Como son las columnas estandar} and Proposition~\ref{01 matrix},
$$
\qquad\qquad A(i,h_0)_{kk}\!=\delta_{ih_0},\!\!\quad A(h_0,l_0)_{kj'}\!=\delta_{kj'}-\delta_{j'c_k},\!\!\quad A(i,l_0)_{kk}\!
=\delta_{ih_0}\!\!\quad \text{and}\!\!\quad A(l_0,l_0)_{kj'}\!=\delta_{j'c_k}.
$$

\smallskip

\item[-] Since $A(i,h_0)_{kc_k}=0$ for all $i$, when $j=c_k$ we are reduced to prove that
$$
A(i,l_0)_{kc_k}A(l_0,l_0)_{kj'}=\delta_{c_kj'}A(i,l_0)_{kc_k}\quad\text{for all $i$ and all $j'$.}
$$
But this is true, since $A(l_0,l_0)_{kj'}=\delta_{j'c_k}$.

\smallskip

\item[-] If $j\notin\{k,c_k\}$, then both sides of~\eqref{35 para l distinto a i0} vanish.

\smallskip

\end{itemize}
Thus,~\eqref{condicion 4 rr2} holds in all the cases.
\end{proof}

\begin{corollary} Let $\chi\colon K^m\ot K^n\longrightarrow K^n\ot K^m$ be a $k$-linear map such that $\mathcal{A}_{\chi}$ is a pre-twisting. If
each column of $\mathcal{A}_{\chi}$ is standard, then $\chi$ is a twisting map if and only if $F(A(i,l))\subseteq F_0(A,i)$ for all $i,l\in
\mathds{N}_m^*$.
\end{corollary}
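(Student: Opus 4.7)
The plan is to reduce the corollary directly to the preceding Proposition~\ref{influencia de columna estandar} by decoupling Corollary~\ref{caracterizacion en terminos de las A(i,j)s}(4) column-by-column. Since $\mathcal{A}_{\chi}$ is assumed to be a pre-twisting, conditions~(1),~(2), and~(3) of Corollary~\ref{caracterizacion en terminos de las A(i,j)s} already hold by Definition~\ref{pretwisting}. Consequently, $\chi$ is a twisting map if and only if condition~(4) of that corollary also holds, that is,
$$
\sum_{h=1}^m A(i,h)_{kj}\, A(h,l)_{kj'}=\delta_{jj'}A(i,l)_{kj}\quad\text{for all $i$, $j$, $j'$, $k$ and $l$.}
$$

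The key observation is that the index $l$ appears in this identity only through the column $A(\cdot,l)$, so fixing $l=l_0$ yields an independent constraint for each $l_0\in \mathds{N}_m^*$. Therefore, condition~(4) holds globally if and only if, for every $l_0\in\mathds{N}_m^*$, Corollary~\ref{caracterizacion en terminos de las A(i,j)s}(4) is satisfied in the $l_0$-th column of $\mathcal{A}$, in the sense defined just before Proposition~\ref{influencia de columna estandar}.

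Now I would invoke Proposition~\ref{influencia de columna estandar}: since by hypothesis every column of $\mathcal{A}_{\chi}$ is standard, the proposition applies to each $l_0$ and tells us that condition~(4) holds in the $l_0$-th column if and only if $F(A(v,l_0))\subseteq F_0(\mathcal{A},v)$ for all $v\in\mathds{N}_m^*$. Quantifying over $l_0\in\mathds{N}_m^*$ and relabelling $v=i$, $l_0=l$, this is exactly the condition $F(A(i,l))\subseteq F_0(\mathcal{A},i)$ for all $i,l\in\mathds{N}_m^*$, yielding the stated equivalence.

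There is no real obstacle: the proof is essentially a packaging of Proposition~\ref{influencia de columna estandar} across all columns. The only point that deserves explicit verification is the column-wise decoupling of condition~(4), which is immediate from the form of the identity. One small notational check is to confirm that $F_0(A,i)$ in the statement means $F_0(\mathcal{A}_{\chi},i)$, matching the notation of the preceding proposition, so that the hypothesis and conclusion line up verbatim.
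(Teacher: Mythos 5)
Your proof is correct and is exactly the argument the paper intends (it leaves the corollary unproved as an immediate consequence of Proposition~\ref{influencia de columna estandar}): a pre-twisting gives conditions (1)--(3) of Corollary~\ref{caracterizacion en terminos de las A(i,j)s}, condition (4) decouples column by column, and the proposition handles each standard column. Your notational remark that $F_0(A,i)$ means $F_0(\mathcal{A}_{\chi},i)$ is also right.
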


Given sets $X,Y$, in the sequel we let $M_{X,Y}(K)$ denote the set of functions from $X\times Y$ to $K$. We also denote by $\ide_X$ the identity
matrix in $M_X(K)\coloneqq M_{X,X}(K)$.

\begin{proposition}\label{reduccion al cuadrado inferior} Let $l\in\mathds{N}_k^*$ and  let $A(1),\dots, A(k)\in M_n(K)$ be matrices such that
$A(l)$ is an idempotent $0,1$-matrix with $A(l)\mathds{1}=\mathds{1}$. Set $J_l\coloneqq \{k : A(l)_{kk}=1\}$ and  $J_l^c\coloneqq
\mathds{N}_n^*\setminus J_l$. For each $i$ set
$$
X_i\coloneqq A(i)|_{J_l\times J_l},\quad Y_i\coloneqq A(i)|_{J_l\times J_l^c},\quad U_i\coloneqq A(i)|_{J_l^c\times J_l} \quad\text{and}\quad W_i\coloneqq A(i)|_{J_l^c\times J_l^c}.
$$
The matrices $A(i)$'s are orthogonal idempotents satisfying $\sum_i A(i)=\ide$ if and only if the following facts hold:
\begin{enumerate}

\smallskip

\item $X_i=0$ for all $i\ne l$,

\smallskip

\item  $Y_i=0$ for all $i$,

\smallskip

\item $W_iW_j=\delta_{ij}W_i$ for all $i$,

\smallskip

\item $U_i=-W_i U_l$ for all $i\ne l$,

\smallskip

\item $\sum_i W_i=\ide_{J_l^c}$.

\smallskip

\end{enumerate}
Moreover, if the $A(i)$'s satisfy the required conditions, then $A(i)\mathds{1}=\delta_{il}\mathds{1}$.
\end{proposition}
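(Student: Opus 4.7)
The plan is to pass to the block decomposition induced by the partition $\mathds{N}_n^*=J_l\sqcup J_l^c$ and translate every scalar identity into a $2\times 2$ block identity. The first step is to pin down the blocks of the distinguished matrix $A(l)$: since $A(l)$ is a $0,1$-idempotent satisfying $A(l)\mathds{1}=\mathds{1}$, each of its rows contains exactly one $1$, and by Proposition~\ref{01 matrix} (applied after an identical permutation of rows and columns, which does not affect the statement) this $1$ sits on the diagonal for $k\in J_l$ and in some column $c_k\in J_l$ for $k\in J_l^c$. In blocks this reads
\[
A(l)=\begin{pmatrix}\ide_{J_l} & 0 \\ U_l & 0\end{pmatrix},\qquad\text{i.e.\ }X_l=\ide_{J_l},\ Y_l=0,\ W_l=0,
\]
and moreover the rows of $U_l$ sum to $1$, so $U_l\mathds{1}_{J_l}=\mathds{1}_{J_l^c}$. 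These four facts are the workhorse identities used throughout.

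For the direction $(\Rightarrow)$, I would expand the three families of equations $\sum_i A(i)=\ide$, $A(l)A(i)=0$ for $i\ne l$, and $A(i)A(l)=0$ for $i\ne l$, blockwise. The sum equation gives $\sum X_i=\ide_{J_l}$, $\sum Y_i=0$, $\sum U_i=0$, $\sum W_i=\ide_{J_l^c}$, yielding (5). The product $A(l)A(i)$ equals $\bigl(\begin{smallmatrix}X_i & Y_i \\ U_l X_i & U_l Y_i\end{smallmatrix}\bigr)$, so vanishing forces $X_i=0$ and $Y_i=0$ for $i\ne l$, which together with $Y_l=0$ gives (1) and (2). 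The product $A(i)A(l)$ equals $\bigl(\begin{smallmatrix}X_i+Y_iU_l & 0 \\ U_i+W_iU_l & 0\end{smallmatrix}\bigr)$; the top-left entry is already zero and the bottom-left yields $U_i=-W_iU_l$, which is (4). Finally the $J_l^c\times J_l^c$ block of $A(i)A(j)=\delta_{ij}A(i)$ is $U_iY_j+W_iW_j=\delta_{ij}W_i$, and since $Y_j=0$ this collapses to (3).

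For $(\Leftarrow)$, assuming (1)--(5) I would check $\sum_i A(i)=\ide$ block by block: the $X$-block uses $X_l=\ide_{J_l}$ and (1); the $Y$- and $W$-blocks are (2) and (5); the $U$-block is
\[
U_l+\sum_{i\ne l}U_i=U_l-\Bigl(\sum_{i\ne l}W_i\Bigr)U_l=U_l-\ide_{J_l^c}U_l=0,
\]
using (4), $W_l=0$ and (5). Then I would verify $A(i)A(j)=\delta_{ij}A(i)$ one block at a time: the $Y$-block is trivially zero by (2), the $X$-block is $X_iX_j=\delta_{ij}X_i$ because $X_i=\delta_{il}\ide_{J_l}$, and the $W$-block is (3). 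The only block requiring a short case split is the $U$-block $U_iX_j+W_iU_j$: for $j=l$ it equals $U_i+W_iU_l$, which is $U_l$ if $i=l$ (since $W_l=0$) and $0$ if $i\ne l$ by (4); for $j\ne l$ it equals $W_iU_j=-W_iW_jU_l=-\delta_{ij}W_iU_l=\delta_{ij}U_i$ by (3) and (4). The supplementary assertion $A(i)\mathds{1}=\delta_{il}\mathds{1}$ for $i\ne l$ follows immediately: the $J_l$-part vanishes by (1) and (2), and the $J_l^c$-part is $U_i\mathds{1}_{J_l}+W_i\mathds{1}_{J_l^c}=-W_iU_l\mathds{1}_{J_l}+W_i\mathds{1}_{J_l^c}=0$, where the cancellation rests on the structural identity $U_l\mathds{1}_{J_l}=\mathds{1}_{J_l^c}$ from the first paragraph.

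The main obstacle is really only the initial structural analysis of $A(l)$; once the four block identities $X_l=\ide$, $Y_l=W_l=0$, $U_l\mathds{1}_{J_l}=\mathds{1}_{J_l^c}$ are in hand, everything else is a mechanical block computation with no subtle case to hide.
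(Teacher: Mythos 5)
Your proposal is correct and follows essentially the same route as the paper: reduce (after an identical permutation, or equivalently working with the partition $J_l\sqcup J_l^c$ directly) to the block form $A(l)=\bigl(\begin{smallmatrix}\ide&0\\ U_l&0\end{smallmatrix}\bigr)$ with $U_l\mathds{1}_{J_l}=\mathds{1}_{J_l^c}$, and then translate orthogonality, idempotency, the sum condition and the final assertion into blockwise computations. The paper's proof states these computations more tersely ("a direct computation shows\dots"), while you have written them out in full, including the $U$-block case split in the converse, but the underlying argument is identical.
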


\begin{proof} Without loss of generality we can assume that $J_l=\mathds{N}_r^*$, where $r\coloneqq \rk(A(l))$. Then
$A(l)=\begin{psmallmatrix}\ide_r&0\\ U_l& 0 \end{psmallmatrix}$ and $A(i)=\begin{psmallmatrix}X_i&Y_i\\ U_i& W_i \end{psmallmatrix}$. Let $i\ne l$.
A direct computation shows that $A(l)A(i)=0$ if and only if $X_i=0$ and $Y_i=0$. Under this condition, $A(i)A(l)=0$ if and only if $U_i=-W_i U_l$.
Assuming all the previous conditions for all $i\ne l$, we have $A(i)A(j)=\delta_{ij}A(i)$ if and only if $W_iW_{j}=\delta_{ij}W_i$, and, under the
same conditions, $\sum_i A(i)=\ide_{n}$ if and only if $\sum_i W_i=\ide_{J_l^c}$. The last assertion follows from the fact that $U_i=-W_i U_l$ and
$U_l\mathds{1}_{J_l}=\mathds{1}_{J_l^c}$.
\end{proof}

\begin{definition}\label{quasi standard} Let $l_0\in\mathds{N}_m^*$. For all $i,u,v\in \mathds{N}_m^*$, set $D_{(i,l_0)}^{uv}=D_{(i)}^{uv}\coloneqq
A(i,l_0)|_{J_u\times J_v}$, where $J_i\coloneqq J_i(l_0)$. We say that $(A(i,l_0))_{i\in \mathds{N}_m^*}$ is a {\em quasi-standard column} of
$\mathcal{A}$ if

\begin{enumerate}

\smallskip

\item $A(l_0,l_0)$ is a $0,1$-matrix,

\smallskip

\item $A(i,l_0)_{kk}\in \{0,1\}$ for all $i$ and $k$,

\smallskip

\item $D_{(i)}^{uv} = 0$ if $u\ne i$ and $v\notin \{i,l_0\}$,

\smallskip

\item For $u,i\in\mathds{N}_m^*$, $v\in \mathds{N}_m^*\setminus\{l_0\}$ and $k\in J_u$, we have $\#\Supp\bigl(\bigl(D_{(i)}^{uv}\bigr)_{k*}\bigr)
\le 1$. Moreover if $d\in \Supp\bigl((D_{(i)}^{uv})_{k*}\bigr)$, then $c_d=c_k$, where $c_d\coloneqq c_d(A(l_0,l_0))$ and $c_k\coloneqq
c_k(A(l_0,l_0))$\footnote{Note that $c_k$ exists since necessarily $u\ne l_0$ (see Remark~\ref{columnas que se anulan en l_0} and the beginning
of Remark~\ref{simplificacion})}. If necessary we will write $d^{(v)}$ or $d_k^{(v)}$ instead of $d$.

\end{enumerate}
\end{definition}

\begin{remark}\label{columnas que se anulan en l_0} Let $k\in J_{l_0}$ and let $i\ne l_0$. By items~(1) and~(2) of Proposition~\ref{reduccion al
cuadrado inferior} we know that $A(i,l_0)_{kj} = 0$ for all~$j$. Consequently $D^{l_0v}_{(i)} = 0$ for all $v\in \mathds{N}_m^*$. Note that this
implies that $F(A(l_0,l_0)) = F_0(\mathcal{A},l_0)$.
\end{remark}

\begin{remark}\label{identidad en la diagonal} Since $\sum_i A(i,l_0) = \ide$, we have $\sum_i D^{uu}_{(i)} = \ide$ for all $u\in \mathds{N}_m^*$,
which by condition~(3) implies that $D^{uu}_{(u)} = \ide$ for all $u\ne l_0$ (by Proposition~\ref{01 matrix}, also $D^{l_0l_0}_{(l_0)} = \ide$).
\end{remark}

\begin{remark}\label{cero fuera} Since $\sum_i A(i,l_0)\! =\! \ide$, we have $\sum_i D^{uv}_{(i)}\! =\! 0$ for all $u\ne v$ in $\mathds{N}_m^*$,
which by con\-dition~(3) implies that $D^{uv}_{(u)} = - D^{uv}_{(v)}$ for all $u\in\mathds{N}_m^*$ and $v\in\mathds{N}_m^*\setminus\{u,l_0\}$.
\end{remark}

Remark~\ref{columnas que se anulan en l_0} is valid for pre-twistings that satisfy condition~(1) of Definition~\ref{quasi standard}, while
Remarks~\ref{identidad en la diagonal} and~\ref{cero fuera} are true for pre-twistings that satisfy conditions (1) and (3) of the same definition.

\begin{remark}\label{simplificacion} From the fact that $A(l_0,l_0)$ is a $0,1$-matrix it follows immediately that $D_{(l_0)}^{uv} = 0$ for all $u
\in \mathds{N}_m^*$ and $v\in \mathds{N}_m^*\setminus\{l_0\}$. Combining this with Remarks~\ref{columnas que se anulan en l_0} and~\ref{cero fuera}
we obtain that Conditions~(3) and~(4) in Definition~\ref{quasi standard} could be replaced by

\begin{enumerate}

\item[(3')] $D_{(i)}^{uv} = 0$ if $i\ne l_0$ and $u,v\notin\{i,l_0\}$,

\smallskip

\item[(4')] $\#\Supp\bigl(\bigl(D_{(v)}^{uv}\bigr)_{k*}\bigr) \le 1$ for $u,v\in \mathds{N}_m^*\setminus \{l_0\}$ and $k\in J_u$. Moreover if
$d\!\in \Supp\bigl((D_{(v)}^{uv})_{k*}\bigr)$, then $c_d=c_k$, where $c_d\coloneqq c_d(A(l_0,l_0))$ and $c_k\coloneqq c_k(A(l_0,l_0))$,

\end{enumerate}
respectively.
\end{remark}

\begin{remark}\label{estandar implica cuasi estandar} Each standard column of $\mathcal{A}$ is a quasi-standard column of $\mathcal{A}$.
\end{remark}

\begin{example} Assume for example that $n=10$, $J_{l_0}=\{1,2\}$ and $J_i=\{5,6,7\}$. If the $l_0$-th column of $\mathcal{A}$ is a quasi-standard
column, then the only entries where the matrix $A (i, l_0)$ may have nonzero values are the entries indicated by stars. In this example and in
Example~\ref{ejemplo de columna cuasiestandar} below, the elements of each family $J_u$ are consecutive, but of course this need not be the case.
\begin{center}
\begin{tikzpicture}
\begin{scope}
\draw (-2.8,-0.2)  node {$J_i$}; \draw (-2.8,1.65)  node {$J_{l_0}$}; \draw (0.1,2.4)  node {$J_i$}; \draw (-1.7,2.4)  node {$J_{l_0}$}; \draw (-4,0)  node {$A(i,l_0) =$};
\end{scope}
\begin{scope}
\matrix [matrix of math nodes,left delimiter=(, right delimiter=),nodes in empty cells,every node/.style={anchor=base,text depth=.1ex,text height=1ex,text width=0.5em}] (m)
{
0 & 0 & 0 & 0 & 0 & 0 & 0 & 0 & 0 & 0\\
0 & 0 & 0 & 0 & 0 & 0 & 0 & 0 & 0 & 0\\
$\star$ & $\star$ & 0 & 0 & $\star$ & $\star$ & $\star$ & 0 & 0 & 0\\
$\star$ & $\star$ & 0 & 0 & $\star$ & $\star$ & $\star$ & 0 & 0 & 0\\
$\star$ & $\star$ & $\star$ & $\star$ & $\star$ & $\star$ & $\star$ & $\star$ & $\star$ & $\star$\\
$\star$ & $\star$ & $\star$ & $\star$ & $\star$ & $\star$ & $\star$ & $\star$ & $\star$ & $\star$\\
$\star$ & $\star$ & $\star$ & $\star$ & $\star$ & $\star$ & $\star$ & $\star$ & $\star$ & $\star$\\
$\star$ & $\star$ & 0 & 0 & $\star$ & $\star$ & $\star$ & 0 & 0 & 0\\
$\star$ & $\star$ & 0 & 0 & $\star$ & $\star$ & $\star$ & 0 & 0 & 0\\
$\star$ & $\star$ & 0 & 0 & $\star$ & $\star$ & $\star$ & 0 & 0 & 0\\
};
%
%
%
%
\end{scope}
     \end{tikzpicture}
\end{center}
\end{example}

\begin{lemma}\label{D distinto de cero} Assume that the $l_0$-th column of $\mathcal{A}$ satisfies conditions~(1)--(3) of Definition~\ref{quasi standard}. Take $i,u \in \mathds{N}_m^*\setminus\{l_0\}$ and $k\in J_u$. If $A(i,l_0)_{kc_k} \ne 0$, then there exist indices $v\in \mathds{N}_m^*\setminus \{l_0\}$ and $j\in J_v$ such that $(D_{(i)}^{uv})_{kj}\ne 0$. Moreover, if $u\ne i$, then necessarily~$v=i$.
\end{lemma}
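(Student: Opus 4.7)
The plan is to exploit the block decomposition from Proposition~\ref{reduccion al cuadrado inferior} applied to the family $(A(i,l_0))_{i\in\mathds{N}_m^*}$. Since $\mathcal{A}$ is a pre-twisting, conditions~(1)--(3) of Corollary~\ref{caracterizacion en terminos de las A(i,j)s} ensure that these are orthogonal idempotents summing to $\ide$, so that proposition applies with $l=l_0$. Moreover, $A(l_0,l_0)$ is a $0,1$-matrix by hypothesis, hence is equivalent via identical permutations to a standard idempotent $0,1$-matrix by Proposition~\ref{01 matrix}, and the index $c_k\in J_{l_0}$ exists by Remark~\ref{c sub k esta en J sub k}, since $k\in J_u\subseteq J_{l_0}^c$ (because $u\ne l_0$).

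The decisive identity comes from item~(4) of Proposition~\ref{reduccion al cuadrado inferior}: for $i\ne l_0$, $U_i=-W_i U_{l_0}$. Evaluated in the $(k,c_k)$-entry, and using that $A(l_0,l_0)_{k'',c_k}=\delta_{c_{k''},c_k}$ for $k''\in J_{l_0}^c$, this reads
\begin{equation*}
A(i,l_0)_{k,c_k}=-\sum_{k''\in J_{l_0}^c:\, c_{k''}=c_k} A(i,l_0)_{k,k''}.
\end{equation*}
Since by hypothesis the left hand side is non zero, there must exist some $k'\in J_{l_0}^c$ appearing in the sum on the right with $A(i,l_0)_{k,k'}\ne 0$.

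Now take $v$ to be the (unique) index in $\mathds{N}_m^*\setminus\{l_0\}$ such that $k'\in J_v$: uniqueness holds because condition~(2) of Definition~\ref{quasi standard} together with $\sum_i A(i,l_0)=\ide$ implies that the sets $J_1,\dots,J_m$ form a partition of $\mathds{N}_n^*$. Setting $j\coloneqq k'$ yields $(D_{(i)}^{uv})_{kj}=A(i,l_0)_{k,j}\ne 0$, which is the desired existence statement.

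For the moreover part, suppose $u\ne i$. Then condition~(3) of Definition~\ref{quasi standard} forces $D_{(i)}^{uw}=0$ for every $w\notin\{i,l_0\}$; since we already know $v\ne l_0$ and $(D_{(i)}^{uv})_{kj}\ne 0$, the only remaining possibility is $v=i$. I do not anticipate any serious obstacle: the only mildly delicate point is that one must resist the temptation to conclude directly from $A(i,l_0)\mathds{1}=0$, as this does not by itself rule out that the compensating non zero entry lies inside $J_{l_0}$; the identity $U_i=-W_i U_{l_0}$ is precisely what pushes such an entry back into $J_{l_0}^c$.
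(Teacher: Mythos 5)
Your proof is correct, and every citation you invoke does what you need; but it runs on a different identity than the paper's. The paper evaluates the idempotency relation $A(i,l_0)^2=A(i,l_0)$ at the entry $(k,c_k)$ and discards the terms indexed by $j\in J_{l_0}$ via Remark~\ref{columnas que se anulan en l_0} (the rows of $A(i,l_0)$ indexed by $J_{l_0}$ vanish because $i\ne l_0$), so that $0\ne A(i,l_0)_{kc_k}=\sum_{v\ne l_0}\sum_{j\in J_v}A(i,l_0)_{kj}A(i,l_0)_{jc_k}$ directly produces the required $v$ and $j$. You instead use the orthogonality relation $A(i,l_0)A(l_0,l_0)=0$ in the block form of Proposition~\ref{reduccion al cuadrado inferior}(4), namely $U_i=-W_iU_{l_0}$, evaluated at the same entry $(k,c_k)$. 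Both are one-line consequences of the pre-twisting axioms for the $l_0$-th column, and your treatment of the ``moreover'' part (forcing $v=i$ from Definition~\ref{quasi standard}(3) when $u\ne i$) coincides with the paper's. Your identity actually yields slightly more than the lemma asserts: the witness $j=k'$ you construct automatically satisfies $c_j=c_k$, information the paper only extracts later (it is essentially what is proved in Proposition~\ref{cuando es cuasiestandard}(a) and exploited in Lemma~\ref{condicion}), whereas the paper's idempotency computation avoids the block bookkeeping altogether. One minor simplification: for the existence claim you only need that \emph{some} $v\ne l_0$ has $k'\in J_v$, which follows at once from $\sum_i A(i,l_0)_{k'k'}=1$, $A(l_0,l_0)_{k'k'}=0$ and Definition~\ref{quasi standard}(2); the uniqueness of such $v$ (Remark~\ref{suma en la diagonal}) is not needed for this lemma.
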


\begin{proof} By Remark~\ref{columnas que se anulan en l_0} we know that $A(i,l_0)_{jc_k}=0$ for all $j\in J_{l_0}$. So
$$
\sum_{v\in \mathds{N}_m^*\setminus \{l_0\} } \sum_{j\in J_v}  A(i,l_0)_{kj}  A(i,l_0)_{jc_k}= \sum_{j\in \mathds{N}_n^*}  A(i,l_0)_{kj}  A(i,l_0)_{jc_k} = A(i,l_0)_{kc_k} \ne 0.
$$
Consequently, there exists $v\in \mathds{N}_m^*\setminus \{l_0\}$ and $j\in J_v$ such that $(D_{(i)}^{uv})_{kj}=A(i,l_0)_{kj}\ne 0$. The last
assertion is true by item~(3) of Definition~\ref{quasi standard}.
\end{proof}

For $u\in \mathds{N}_m^*\setminus\{l_0\}$ and $k\in J_u=J_u(l_0)$, we set
$$
\mathscr{X}_k\coloneqq \bigl\{v\in \mathds{N}_m^*\setminus\{u,l_0\} :\Supp\bigl((D_{(u)}^{uv})_{k*}\bigr)\ne\emptyset\bigr\}\qquad\text{and}\qquad
d^{(\mathscr{X}_k)}\coloneqq \{d^{(v)}: v\in \mathscr{X}_k\}.
$$

\begin{lemma}\label{condicion} Assume that the $l_0$-th column of $\mathcal{A}$ is quasi-standard. For each $k\in \mathds{N}_n^*\setminus J_{l_0}$
and $v\in \mathds{N}_m^*$, we have $\Supp\bigl(A(v,l_0)_{k*}\bigr)\subseteq \{k,c_k\}\cup d^{(\mathscr{X}_k)}$.

\end{lemma}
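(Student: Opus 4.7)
The plan is to fix $k\in\mathds{N}_n^*\setminus J_{l_0}$ and let $u\in\mathds{N}_m^*\setminus\{l_0\}$ be the unique index with $k\in J_u$, which exists and is unique because items~(1)--(2) of Definition~\ref{quasi standard} together with $\sum_i A(i,l_0)=\ide$ force the $J_i$'s to partition $\mathds{N}_n^*$. Given $v\in\mathds{N}_m^*$ and any $j\in\Supp(A(v,l_0)_{k*})$, I would pick the unique $w\in\mathds{N}_m^*$ with $j\in J_w$ and split according to the pair $(v,w)$. The case $v=l_0$ is immediate from Proposition~\ref{01 matrix} applied to the $0,1$-matrix $A(l_0,l_0)$ together with Remark~\ref{c sub k esta en J sub k}: the only non-zero entry of row $k$ is $A(l_0,l_0)_{kc_k}$, so $j=c_k$.

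Assume now $v\ne l_0$, and first treat the case $w\ne l_0$, where $A(v,l_0)_{kj}$ sits in the block $D^{uw}_{(v)}$. Condition~(3$'$) of Definition~\ref{quasi standard} (see Remark~\ref{simplificacion}) forces $v\in\{u,w\}$: if $w=u$ then $v=u$, Remark~\ref{identidad en la diagonal} gives $D^{uu}_{(u)}=\ide$, and so $j=k$; if $w\ne u$, then either $v=u$ and condition~(4$'$) applied to $D^{uw}_{(u)}$ yields $w\in\mathscr{X}_k$ with $j=d^{(w)}$, or $v=w$ and Remark~\ref{cero fuera} identifies the support of $D^{uw}_{(w)}$ with that of $D^{uw}_{(u)}$, again giving $w=v\in\mathscr{X}_k$ and $j=d^{(v)}$. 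In every subcase $j\in\{k\}\cup d^{(\mathscr{X}_k)}$.

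The remaining case $v\ne l_0$, $j\in J_{l_0}$ is the delicate one and constitutes the main obstacle, because the quasi-standard conditions place no direct restriction on the $J_{l_0}^c\times J_{l_0}$ block of $A(v,l_0)$. To handle it I would invoke Proposition~\ref{reduccion al cuadrado inferior} applied to the family $(A(i,l_0))_{i\in\mathds{N}_m^*}$, whose hypotheses are satisfied thanks to items~(1)--(2) of Definition~\ref{quasi standard}, the pre-twisting hypothesis, and Remark~\ref{cuando son idempotentes ortogonales}, which together make it a complete family of orthogonal idempotents. This yields $U_v=-W_v U_{l_0}$; since $A(l_0,l_0)$ is a standard idempotent $0,1$-matrix one also has $(U_{l_0})_{hj}=\delta_{j,c_h}$ for $h\in J_{l_0}^c$, so reading off row $k$ gives
\[
A(v,l_0)_{kj}=-\sum_{h\in J_{l_0}^c}A(v,l_0)_{kh}\,\delta_{j,c_h}.
\]
Applying the previous paragraph to each $A(v,l_0)_{kh}$ with $h\in J_{l_0}^c$, only $h=k$ (which forces $v=u$) and $h=d^{(w)}$ with $w\in\mathscr{X}_k$ (which forces $v\in\{u,w\}$) can contribute non-zero terms, and item~(4) of Definition~\ref{quasi standard} guarantees $c_{d^{(w)}}=c_k$. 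Every surviving Kronecker delta therefore collapses to $\delta_{j,c_k}$, giving $j=c_k$; the essential leverage here comes precisely from the built-in matching $c_{d^{(w)}}=c_k$ forced by item~(4), which ensures that all contributing column indices in the block identity coincide.
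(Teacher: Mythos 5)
Your proof is correct and follows essentially the same route as the paper's: conditions (3)--(4) of Definition~\ref{quasi standard} together with Remarks~\ref{identidad en la diagonal} and~\ref{cero fuera} control the columns outside $J_{l_0}$, while the orthogonality $A(v,l_0)A(l_0,l_0)=0$ (which you package as $U_v=-W_vU_{l_0}$ via Proposition~\ref{reduccion al cuadrado inferior}) combined with the matching $c_{d^{(w)}}=c_k$ from item~(4) pins the $J_{l_0}$-columns to $\{c_k\}$. The only organizational difference is that you handle the case $v=u$ uniformly through that same block identity, whereas the paper treats it separately by writing $A(u,l_0)=\ide-\sum_{i\ne u}A(i,l_0)$ and reducing to the case $v\ne u$.
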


\begin{proof} When $v=l_0$ this is clear. So, we can assume that $v\ne l_0$. Let $u\coloneqq i(k,l_0)$. We consider two cases:

\smallskip

\noindent $u\ne v$)\enspace By the very definition of quasi-standard column and Remark~\ref{cero fuera}
$$
\Supp\bigl(A(v,l_0)_{k*}\bigr) \subseteq \Supp\bigl((D_{(v)}^{ul_0})_{k*}\bigr)\cup \Supp\bigl((D_{(v)}^{uv})_{k*}\bigr)\qquad\text{and}\qquad
\Supp\bigl((D_{(v)}^{uv})_{k*}\bigr)\subseteq d^{(\mathscr{X}_k)}.
$$
Hence it suffice to prove that $\Supp\bigl((D_{(v)}^{ul_0})_{k*}\bigr) \subseteq \{c_k\}$. Since $D^{ui}_{(v)} = 0$ for $i\notin\{v,l_0\}$,
$$
D^{ul_0}_{(v)}D^{l_0l_0}_{(l_0)}+D^{uv}_{(v)}D^{vl_0}_{(l_0)}= A(v,l_0)A(l_0,l_0)|_{J_u\times J_{l_0}} = 0.
$$
Since $D^{l_0l_0}_{(l_0)} = \ide$, this yields
$$
D^{ul_0}_{(v)}= - D^{uv}_{(v)}D^{vl_0}_{(l_0)}.
$$
Thus, if $\Supp((D^{uv}_{(v)})_{k*})=\emptyset$, then $\Supp((D^{ul_0}_{(v)})_{k*})=\emptyset$. Else $\Supp((D^{uv}_{(v)})_{k*})=\{d^{(v)}\}$ and so
$$
\bigl(D^{ul_0}_{(v)}\bigr)_{k*} = - \bigl(D^{uv}_{(v)}\bigr)_{kd^{(v)}}\bigl(D^{vl_0}_{(l_0)}\bigr)_{d^{(v)}*}.
$$
Combining this with the fact that
$$
\Supp \bigl(\bigl(D^{vl_0}_{(l_0)}\bigr)_{d^{(v)}*}\bigr) = \Supp \bigl(A(l_0,l_0)_{d^{(v)}*}\bigr) = \{c_{d^{(v)}}\} = \{c_k\},
$$
we obtain that $\Supp \bigl(\bigl(D^{ul_0}_{(v)}\bigr)_{k*}\bigr) = \{c_k\}$, as desired.

\smallskip

\noindent $u=v$)\enspace Using that $A(u,l_0)_{k*} = \delta_{k*}-\sum_{i\ne u} A(i,l_0)_{k*}$  we obtain that
$$
\Supp\bigl(A(u,l_0)_{k*}\bigr)\subseteq \{k\}\cup \bigcup_{i\ne u} \Supp\bigl(A(i,l_0)_{k*}\bigr),
$$
which finishes the proof.
\end{proof}

\begin{example}\label{ejemplo de columna cuasiestandar}
The matrices
\begin{equation*}
A(1,1)\coloneqq \begin{pmatrix}
\textcolor{myblue}{1} & \textcolor{myblue}{0} & \textcolor{myviolet}{0} & \textcolor{myviolet}{0} & \textcolor{myviolet}{0} & \textcolor{green}{0} &
\textcolor{green}{0} & \textcolor{green}{0} \\
\textcolor{myblue}{0} & \textcolor{myblue}{1} & \textcolor{myviolet}{0} & \textcolor{myviolet}{0} & \textcolor{myviolet}{0} & \textcolor{green}{0} &
\textcolor{green}{0} & \textcolor{green}{0} \\
\textcolor{brown}{1} & \textcolor{brown}{0} & \textcolor{magenta}{0} & \textcolor{magenta}{0} & \textcolor{magenta}{0} & \textcolor{myred}{0} &
\textcolor{myred}{0} & \textcolor{myred}{0} \\
\textcolor{brown}{1} & \textcolor{brown}{0} & \textcolor{magenta}{0} & \textcolor{magenta}{0} & \textcolor{magenta}{0} & \textcolor{myred}{0} &
\textcolor{myred}{0} & \textcolor{myred}{0} \\
\textcolor{brown}{0} & \textcolor{brown}{1} & \textcolor{magenta}{0} & \textcolor{magenta}{0} & \textcolor{magenta}{0} & \textcolor{myred}{0} &
\textcolor{myred}{0} & \textcolor{myred}{0} \\
\textcolor{mydarkgray}{1} & \textcolor{mydarkgray}{0} & \textcolor{mycyan}{0} & \textcolor{mycyan}{0} & \textcolor{mycyan}{0} & \textcolor{myyellow}{0} &
\textcolor{myyellow}{0} & \textcolor{myyellow}{0} \\
\textcolor{mydarkgray}{0} & \textcolor{mydarkgray}{1} & \textcolor{mycyan}{0} & \textcolor{mycyan}{0} & \textcolor{mycyan}{0} & \textcolor{myyellow}{0} &
\textcolor{myyellow}{0} & \textcolor{myyellow}{0} \\
\textcolor{mydarkgray}{0} & \textcolor{mydarkgray}{1} & \textcolor{mycyan}{0} & \textcolor{mycyan}{0} & \textcolor{mycyan}{0} & \textcolor{myyellow}{0} &
\textcolor{myyellow}{0} & \textcolor{myyellow}{0}
 \end{pmatrix},
\quad
A(2,1)\coloneqq %
 \begin{pmatrix}
\textcolor{myblue}{0} & \textcolor{myblue}{0} & \textcolor{myviolet}{0} & \textcolor{myviolet}{0} & \textcolor{myviolet}{0} & \textcolor{green}{0} &
\textcolor{green}{0} & \textcolor{green}{0} \\
\textcolor{myblue}{0} & \textcolor{myblue}{0} & \textcolor{myviolet}{0} & \textcolor{myviolet}{0} & \textcolor{myviolet}{0} & \textcolor{green}{0} &
\textcolor{green}{0} & \textcolor{green}{0} \\
\mathbin{\textcolor{brown}{-}} \textcolor{brown}{1} \mathbin{\textcolor{brown}{-}} \textcolor{brown}{\lambda_1}& \textcolor{brown}{0} &
\textcolor{magenta}{1} & \textcolor{magenta}{0} & \textcolor{magenta}{0} & \textcolor{myred}{\lambda_1} & \textcolor{myred}{0} & \textcolor{myred}{0} \\
\mathbin{\textcolor{brown}{-}} \textcolor{brown}{1} \mathbin{\textcolor{brown}{-}} \textcolor{brown}{\lambda_2}& \textcolor{brown}{0} &
\textcolor{magenta}{0} & \textcolor{magenta}{1} & \textcolor{magenta}{0} & \textcolor{myred}{\lambda_2} & \textcolor{myred}{0} & \textcolor{myred}{0} \\
\textcolor{brown}{0} & \!\!\!\textcolor{brown}{-1} & \textcolor{magenta}{0} & \textcolor{magenta}{0} & \textcolor{magenta}{1} & \textcolor{myred}{0} &
\textcolor{myred}{0} & \textcolor{myred}{0} \\
\textcolor{mydarkgray}{0} & \textcolor{mydarkgray}{0} & \textcolor{mycyan}{0} & \textcolor{mycyan}{0} & \textcolor{mycyan}{0} &
\textcolor{myyellow}{0} &
\textcolor{myyellow}{0} & \textcolor{myyellow}{0} \\
\textcolor{mydarkgray}{0} &  \!\!\!\!\mathbin{\textcolor{mydarkgray}{-}} \textcolor{mydarkgray}{\lambda_3} & \textcolor{mycyan}{0} &
\textcolor{mycyan}{0} &
\textcolor{mycyan}{\lambda_3} & \textcolor{myyellow}{0} & \textcolor{myyellow}{0} & \textcolor{myyellow}{0} \\
\textcolor{mydarkgray}{0} &  \!\!\!\!\mathbin{\textcolor{mydarkgray}{-}} \textcolor{mydarkgray}{\lambda_4} & \textcolor{mycyan}{0} &
\textcolor{mycyan}{0} &
\textcolor{mycyan}{\lambda_4} & \textcolor{myyellow}{0} & \textcolor{myyellow}{0} & \textcolor{myyellow}{0}
\end{pmatrix}
\end{equation*}
and $A(3,1)\coloneqq \ide - A(1,1) - A(2,1)$ form a quasi-standard column of each pre-twisting of~$K^3$ with~$K^8$ that include them (for instance
we can take  $A(1,2)=A(3,2)=A(1,3)=A(2,3)=0$ and $A(2,2)=A(3,3)=\ide$). In this example $J_1=\{1,2\}$, $J_2=\{3,4,5\}$ and $J_3=\{6,7,8\}$.
\end{example}

\begin{theorem}\label{proto quasi estandar} Assume that the $l_0$-th column of $\mathcal{A}$ is quasi-standard. Then Corollary~\ref{caracterizacion
en terminos de las A(i,j)s}(4) is satisfied in the $l_0$-th column of $\mathcal{A}$ (that is, condition~\eqref{condicion 4 rr2} is fulfilled) if
and only if the following conditions hold:

\begin{enumerate}

\smallskip

\item $J_i=F(A(i,l_0))\subseteq F_0(\mathcal{A},i)$ for all $i\in \mathds{N}_m^*$.

\smallskip

\item If $\bigl(D_{(u)}^{uv}\bigr)_{kd}\ne 0$ and $u\ne v\ne l_0$, then

\begin{enumerate}[label=\emph{(}\alph*{\emph{)}}]

\smallskip

\item $A(u,v)_{kj} = \delta_{kj}-\delta_{jd}$ for all $j$,

\smallskip

\item $A(v,v)_{kj} = \delta_{jd}$ for all $j$,

\smallskip

\item $A(i,v)_{kj}=0$ for $i\notin\{u,v\}$ and for all $j$.
\end{enumerate}
\end{enumerate}
\end{theorem}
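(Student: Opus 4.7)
The plan is to prove the biconditional by fixing $k$ and analysing \eqref{condicion 4 rr2} case by case, exploiting the structural description of the rows $A(h,l_0)_{k*}$. If $k\in J_{l_0}$, then Proposition~\ref{reduccion al cuadrado inferior}(1)--(2) forces $A(h,l_0)_{kj}=\delta_{hl_0}\delta_{kj}$, whereupon \eqref{condicion 4 rr2} reduces tautologically. So from now on, fix $k\in J_u$ with $u\ne l_0$. By Lemma~\ref{condicion} and its proof, the non-zero entries of $A(h,l_0)_{k*}$ occur only for $h\in\{u,l_0\}\cup\mathscr{X}_k$, and lie in columns $\{k,c_k\}\cup d^{(\mathscr{X}_k)}$; moreover, writing $\alpha_v\coloneqq A(v,l_0)_{k,d^{(v)}}=(D_{(v)}^{uv})_{k,d^{(v)}}\ne 0$ for $v\in\mathscr{X}_k$, one has $A(v,l_0)_{kc_k}=-\alpha_v$, $A(u,l_0)_{kk}=1$, $A(u,l_0)_{k,d^{(v)}}=-\alpha_v$ (by Remark~\ref{cero fuera}), and $A(u,l_0)_{kc_k}=-1+\sum_{v\in\mathscr{X}_k}\alpha_v$ (forced by the column-sum identity $\sum_h A(h,l_0)_{kc_k}=0$).

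For sufficiency ($\Leftarrow$), assume (1) and (2). Substitute into the sum $\sum_h A(s,h)_{kj}A(h,l_0)_{kj'}$ the values supplied by the hypotheses: (1) gives $A(s,u)_{kj}=\delta_{su}\delta_{kj}$, while (2)(a)--(c) describe $A(s,v)_{kj}$ for each $v\in\mathscr{X}_k$. A straightforward case analysis on $s\in\{l_0,u\}\cup\mathscr{X}_k$ (the remaining $s$ being trivial, as then every relevant $A(s,h)_{kj}$ and $A(s,l_0)_{kj}$ vanishes), combined with the possible values $j\in\{k,c_k\}\cup d^{(\mathscr{X}_k)}$ and $j'$ ranging freely, then verifies \eqref{condicion 4 rr2} in every instance.

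For necessity ($\Rightarrow$), assume \eqref{condicion 4 rr2}. The case $i=l_0$ of (1) is contained in Remark~\ref{columnas que se anulan en l_0}; for $i\ne l_0$ with $k\in F(A(i,l_0))=J_u$ (so $i=u$), set $j'=k$ in \eqref{condicion 4 rr2}. Since $A(h,l_0)_{kk}=\delta_{hu}$, the left-hand side collapses to $A(s,u)_{kj}$, and the identity $A(s,u)_{kj}=\delta_{jk}A(s,l_0)_{kj}$ combined with the explicit values of $A(s,l_0)_{kj}$ yields $A(s,u)_{kj}=\delta_{su}\delta_{kj}$, i.e.\ $k\in F_0(\mathcal{A},u)$. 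To obtain (2), suppose $(D_{(u)}^{uv})_{kd}=\alpha\ne 0$; then $A(u,l_0)_{kd}=\alpha$, $A(v,l_0)_{kd}=-\alpha$ (Remark~\ref{cero fuera}), and $A(h,l_0)_{kd}=0$ for $h\notin\{u,v\}$ (by item (3) of Definition~\ref{quasi standard} and the fact that $d\notin J_{l_0}$). Setting $j'=d$ in \eqref{condicion 4 rr2} reduces it to $\alpha A(s,u)_{kj}-\alpha A(s,v)_{kj}=\delta_{jd}A(s,l_0)_{kj}$; using (1) to substitute $A(s,u)_{kj}$ and specialising to $s=u$, $s=v$, and $s\notin\{u,v\}$ yields (a), (b), and (c) respectively.

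The main subtlety is the $s=u$ case of the sufficiency direction, where the verification relies on the precise value $A(u,l_0)_{kc_k}=-1+\sum_v\alpha_v$, so that the cancellation between the $l_0$-term and the $\mathscr{X}_k$-terms of the sum works out. Once the structural table of non-zero values of $A(h,l_0)_{k*}$ is in place, every other case reduces to routine arithmetic.
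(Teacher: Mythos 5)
Your proposal is correct, and the necessity direction is essentially the paper's argument (specialize \eqref{condicion 4 rr2} at $j'=k$ to get $k\in F_0(\mathcal{A},u)$, then at $j'=d$ and split into $s=u$, $s=v$, $s\notin\{u,v\}$ to get (2)(a)--(c)). The sufficiency direction, however, takes a genuinely different route. The paper first invokes Remark~\ref{Simplificacion Corolario 2.4(4)} to reduce to the diagonal case $j=j'$ and then only checks $j\in\{k,c_k\}\cup d^{(\mathscr{X}_k)}$; you instead verify the full bilinear identity for all pairs $(j,j')$ directly from an explicit table of the row $A(\cdot,l_0)_{k*}$, in particular using the value $A(u,l_0)_{kc_k}=-1+\sum_{v\in\mathscr{X}_k}\alpha_v$, which the paper never needs. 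What your route buys is self-containedness at the level of the single column $l_0$: the orthogonal-idempotents reduction, as stated, presupposes the diagonal identity in \emph{every} column at the fixed $k$, which a mere pre-twisting need not supply, whereas your computation uses only the quasi-standard structure of column $l_0$ together with conditions (1) and (2); the price is a longer (though finite and routine) case check, of which the $s=u$, $j=k$, $j'=c_k$ cancellation you flag is indeed the only delicate instance -- I checked all cases and they close. Two small points you should make explicit when writing it up: (i) Lemma~\ref{condicion} constrains the \emph{second} index $j'$ (the one sitting in the $l_0$-column factor), so restricting attention to $j\in\{k,c_k\}\cup d^{(\mathscr{X}_k)}$ as you do requires the additional (easy) observation that for $j$ outside this set the first factors $A(s,u)_{kj}$, $A(s,v)_{kj}$ ($v\in\mathscr{X}_k$) and $A(s,l_0)_{kj}$ all vanish by (1), (2)(a)--(c) and the support restriction, so both sides are zero; and (ii) the vanishing of $A(h,l_0)_{k*}$ for $h\notin\{u,l_0\}\cup\mathscr{X}_k$ uses the identity $D^{ul_0}_{(v)}=-D^{uv}_{(v)}D^{vl_0}_{(l_0)}$ from the proof of Lemma~\ref{condicion}, as you correctly indicate, and Remark~\ref{cero fuera} for the sign relations.
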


\begin{proof} $\Rightarrow$)\enspace The arguments given in the proof of Proposition~\ref{influencia de columna estandar} show that condition~(1)
is fulfilled. So we only must prove condition~(2). By Definition~\ref{quasi standard}(3)
\begin{equation}\label{eq2}
A(i,l_0)_{kd} = 0\qquad\text{for $i\notin \{u,v\}$,}
\end{equation}
which, since $\sum_i A(i,l_0)=\ide$ and $k\ne d$, implies that
\begin{equation}\label{opuestos}
A(v,l_0)_{kd} = -A(u,l_0)_{kd} = -\bigl(D_{(u)}^{uv}\bigr)_{kd}\ne 0.
\end{equation}
Moreover, by condition~(1) we know that $k\in J_u \subseteq F_0(\mathcal{A},u)$, and so
\begin{equation}\label{eq3}
A(i,u)_{kj} = \delta_{iu}\delta_{kj}\qquad\text{for all $i$ and $j$.}
\end{equation}
By~\eqref{eq2} and~\eqref{eq3}, the equality~\eqref{condicion 4 rr2} with $j'=d$ reads
\begin{equation}\label{eq4}
\delta_{iu}\delta_{kj}A(u,l_0)_{kd} + A(i,v)_{kj}A(v,l_0)_{kd}=\delta_{jd}A(i,l_0)_{kd}\qquad\text{for all $i$ and $j$.}
\end{equation}
When $i=u$, from~\eqref{opuestos} and~\eqref{eq4}, we obtain that
\begin{equation*}
\delta_{kj}A(u,l_0)_{kd} - A(u,v)_{kj}A(u,l_0)_{kd}=\delta_{jd} A(u,l_0)_{kd}\qquad\text{for all $j$,}
\end{equation*}
which gives~(a) since $A(u,l_0)_{kd}\ne 0$. On the other hand, when $i\ne u$, equality~\eqref{eq4} reduces to
\begin{equation*}
A(i,v)_{kj} A(v,l_0)_{kd}=\delta_{jd}A(i,l_0)_{kd}\qquad\text{for all $j$,}
\end{equation*}
which, combined with \eqref{eq2} and \eqref{opuestos}, gives items~(b) and~(c).

\smallskip

\noindent $\Leftarrow$)\enspace By Remark~\ref{Simplificacion Corolario 2.4(4)} it suffices to prove that
\begin{equation}\label{eq5}
\sum_{h\in \mathds{N}_m^*} A(i,h)_{kj} A(h,l_0)_{kj} = A(i,l_0)_{kj}\qquad\text{for all $i$, $k$ and $j$.}
\end{equation}
Fix $k\in \mathds{N}_n^*$ and set $u\coloneqq i(k,l_0)$. If $k\in F(A(l_0,l_0))=F_0(\mathcal{A},l_0)$, then $A(i,l_0)_{kj}=\delta_{il_0}
\delta_{kj}$ for all $i$ and $j$, and equality~\eqref{eq5} is trivially true. Consequently we can assume that $u\ne l_0$. So, by
Lemma~\ref{condicion},
$$
\Supp\bigl(A(i,l_0)_{k*}\bigr)\subseteq \{k,c_k\}\cup d^{(\mathscr{X}_k)}\qquad\text{for all $i$.}
$$
Thus, if $j\notin \{k,c_k\}\cup d^{(\mathscr{X}_k)}$ both sides of the equality~\eqref{eq5} are zero.

\smallskip

Assume that $j=k$. By Remark~\ref{suma en la diagonal} equality~\eqref{eq5} reads
$$
A(i,u)_{kk} = \delta_{iu}\qquad\text{for all $i$.}
$$
But this is true since, by condition~(1), we have $k\in J_u\subseteq F(\mathcal{A},u)$.

\smallskip

Suppose now that $j=c_k$. By Remark~\ref{cero fuera}, Lemma~\ref{D distinto de cero} and conditions~(1) and~(2), equality~\eqref{eq5} reduces to
$$
A(i,l_0)_{kc_k}A(l_0,l_0)_{kc_k}= A(i,l_0)_{kc_k}\quad \text{for all $i$},
$$
which is true.

\smallskip

If $j=d^{(v)}$ for $v\notin\{u,l_0\}$, then $0\ne A(h,l_0)_{kd^{(v)}}= (D_{(h)}^{uv})_{kd^{(v)}}$ implies that $h\in\{u,v\}$, by item~(3) of
Definition~\ref{quasi standard}. But by condition~(1) we know that $A(i,u)_{k d^{(v)}}=\delta_{iu}\delta_{kd^{(v)}}=0$. So equality~\eqref{eq5}
reduces to
$$
A(i,v)_{kd^{(v)}}A(v,l_0)_{kd^{(v)}}= A(i,l_0)_{kd^{(v)}},
$$
which can be verified easily using  that
$$
A(u,l_0)_{kd^{(v)}}=\bigl(D_{(u)}^{uv}\bigr)_{kd^{(v)}} = -\bigl(D_{(v)}^{uv}\bigr)_{kd^{(d)}}=-A(v,l_0)_{kd^{(v)}}\ne 0
$$
and condition~(2).
\end{proof}

\begin{definition}\label{columna de rango reducido r} We say that the $l_0$-th column of $\mathcal{A}$ has {\em reduced rank} $r$ if there are
exactly $r$ indices $i\ne l_0$ such that $A(i,l_0)\ne 0$. In this case we write~$\rrank_{\mathcal{A}}(l_0)=r$. If $\mathcal{A}$ is associated with
a map $\chi$ as at the beginning of Section~\ref{Twisted tensor products of K^n with K^m}, then we use $\rrank_{\chi}(l_0)$ as a synonym of
$\rrank_{\mathcal{A}}(l_0)$.
\end{definition}

\begin{remark} Let $l_0,u\in \mathds{N}_m^*$ and let and $k\in J_u$. Assume that $\mathcal{A}$ is a family of matrices asso\-ciated with a twisting
map of $K^m$ with $K^n$ and that conditions~(1) and~(2) of Definition~\ref{quasi standard} are fulfilled for the $l_0$-th column of $\mathcal{A}$.
By Remark~\ref{suma en la diagonal} we have $A(v,l_0)_{kk}=\delta_{uv}$ for all $v$.~Con\-se\-quent\-ly, from Corollary~\ref{caracterizacion en
terminos de las A(i,j)s}(4) with $j'=k$, it follows that
\begin{equation}\label{Aiu es delta}
A(i,u)_{kj} = \delta_{iu}\delta_{kj}\quad\text{for all $i$ and $j$.}
\end{equation}
\end{remark}

\begin{proposition}\label{cuando es cuasiestandard} Let $l_0\in \mathds{N}_m^*$. Assume that $\mathcal{A}$ is a family of matrices associated with
a~twist\-ing map of $K^m$ with $K^n$ and that conditions~(1) and~(2) of Definition~\ref{quasi standard} are fulfilled for the $l_0$-th column of
$\mathcal{A}$.
\begin{enumerate}[label=\emph{(}\alph*\emph{)}]

\smallskip

\item If condition~(3) is also fulfilled, then the $l_0$-th column of $\mathcal{A}$ is quasi-standard.

\smallskip

\item If the reduced rank of the $l_0$-th column  of $\mathcal{A}$ is lower than or equal to~$2$, then the $l_0$-th column of $\mathcal{A}$ is
quasi-standard.

\smallskip

\end{enumerate}
\end{proposition}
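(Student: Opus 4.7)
The plan is to verify conditions~(3) and~(4) of Definition~\ref{quasi standard} under the respective hypotheses, leveraging equation~\eqref{Aiu es delta}, Corollary~\ref{caracterizacion en terminos de las A(i,j)s}(4), Proposition~\ref{reduccion al cuadrado inferior}, and the orthogonality of the $A(i,l_0)$'s.

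For part~(a), I first reduce to the only substantive case. Since $A(l_0,l_0)$ is a $0,1$-matrix whose row $k\notin J_{l_0}$ has its unique nonzero entry at $c_k\in J_{l_0}$, one has $D^{uv}_{(l_0)}=0$ whenever $u,v\ne l_0$; combined with~(3), this yields $D^{uv}_{(i)}=0$ for $i\notin\{u,v,l_0\}$ and $D^{uv}_{(v)}=-D^{uv}_{(u)}$ when $v\ne u,l_0$. The cases $u=l_0$ and $v=u\ne l_0$ are trivial (using~\eqref{Aiu es delta} and $D^{uu}_{(u)}=\ide$ from Proposition~\ref{reduccion al cuadrado inferior}), and $i=v$ reduces to $i=u$ via the sign relation. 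The remaining case requires, for $u,v\in\mathds{N}_m^*\setminus\{l_0\}$, $u\ne v$, and $k\in J_u$, that row~$k$ of $D^{uv}_{(u)}$ have at most one nonzero entry and that its column $d$ (if any) satisfies $c_d=c_k$.

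Assuming $A(u,l_0)_{kd}\ne 0$ for some $d\in J_v$, I would prove uniqueness of $d$ by specialising Corollary~\ref{caracterizacion en terminos de las A(i,j)s}(4) in column $l_0$ with $i=u$, $j'=d$, and arbitrary $j$: using $A(u,u)_{kj}=\delta_{kj}$ from~\eqref{Aiu es delta} and $A(v,l_0)_{kd}=-A(u,l_0)_{kd}$, and then dividing by the nonzero factor $A(u,l_0)_{kd}$, one finds that row~$k$ of $A(u,v)$ must equal $e_k-e_d$; applying the same derivation to a second candidate $d'\in J_v$ would determine this row inconsistently. For the identity $c_d=c_k$, I would combine the derived form of $A(u,v)$ with the orthogonalities $A(u,v)A(l_0,v)=0$ and $A(l_0,v)A(v,v)=0$, together with~\eqref{Aiu es delta} at row $d$ of column $v$, to show that row~$k$ of $A(l_0,v)$ vanishes. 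The orthogonality $A(v,l_0)A(l_0,l_0)=0$ combined with~(3) and the uniqueness of $d$ then yields $A(v,l_0)_{kc_k}=A(u,l_0)_{kd}$ when $c_d=c_k$ and $A(v,l_0)_{kc_k}=0$ otherwise; a parallel application of Corollary~\ref{caracterizacion en terminos de las A(i,j)s}(4) in column $l_0$ with $i=v$, $j=d$, $j'=c_k$, using the computed form of $A(v,v)$ and $A(u,v)$, produces the independent evaluation $A(v,l_0)_{kc_k}=A(u,l_0)_{kd}$, and comparing the two expressions forces $c_d=c_k$. The main technical obstacle is controlling the cross-column error terms that appear when $m>3$, which requires careful bookkeeping through the block decomposition.

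For part~(b), reduced ranks~$0$ and~$1$ are immediate: either all $A(i,l_0)$ with $i\ne l_0$ vanish, or $A(u_0,l_0)=\ide-A(l_0,l_0)$, and both cases satisfy~(3) and~(4) by direct inspection using the explicit form of $A(l_0,l_0)$. For reduced rank~$2$ with nontrivial indices $u_0,u_1$, Proposition~\ref{reduccion al cuadrado inferior} identifies $W_{u_0}$ and $W_{u_1}$ as complementary orthogonal idempotents on $K^{J_{l_0}^c}$; condition~(3) then amounts to the vanishing of $W_{u_\varepsilon}|_{J_{u_{1-\varepsilon}}\times J_{u_{1-\varepsilon}}}$, which I would establish by applying Corollary~\ref{caracterizacion en terminos de las A(i,j)s}(4) in column $l_0$ with $i=u_\varepsilon$, $j=k$, at rows $k\in J_{u_{1-\varepsilon}}$, and invoking~\eqref{Aiu es delta} in columns $u_\varepsilon$ and $u_{1-\varepsilon}$ to obtain relations of the form $A(u_\varepsilon,u_\varepsilon)_{kk}\cdot A(u_\varepsilon,l_0)_{kj'}=0$; a secondary application of Corollary~\ref{caracterizacion en terminos de las A(i,j)s}(4) in column $u_\varepsilon$ rules out $A(u_\varepsilon,u_\varepsilon)_{kk}=0$, forcing the required vanishing. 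Condition~(4) in the reduced rank~$2$ case then follows from the argument of part~(a).
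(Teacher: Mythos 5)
Your part (b) breaks down at the step where you claim that a ``secondary application of Corollary~\ref{caracterizacion en terminos de las A(i,j)s}(4) in column $u_\varepsilon$ rules out $A(u_\varepsilon,u_\varepsilon)_{kk}=0$''. Write $i_a\coloneqq u_\varepsilon$, $i_b\coloneqq u_{1-\varepsilon}$ and take $k\in J_{i_b}(l_0)$. Your relation $A(i_a,i_a)_{kk}\,A(i_a,l_0)_{kj'}=0$ is correct, but $A(i_a,i_a)_{kk}=0$ genuinely occurs for twisting maps satisfying all the hypotheses of the proposition: whenever the $l_0$-th column is quasi-standard but not standard (such maps exist with reduced rank~$2$, e.g.\ the deformations $\chi_{_1}$ in items~20 and~51 of the appendix with $\lambda\ne 0$), Theorem~\ref{proto quasi estandar}(2)(b) gives $A(i_a,i_a)_{kj}=\delta_{jd}$ with $d\in J_{i_a}$, $d\ne k$, so $A(i_a,i_a)_{kk}=0$, while row $k$ of $A(i_a,l_0)$ is \emph{not} zero (it carries the entries $\pm\lambda$ at the positions $d$ and $c_k$). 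Consequently the conclusion your mechanism would deliver --- the vanishing of the whole row $k$ of $A(i_a,l_0)$ --- is also false in general; only the $J_{i_b}\times J_{i_b}$ block has to vanish, and no application of Corollary~\ref{caracterizacion en terminos de las A(i,j)s}(4) in column $i_a$ can force $A(i_a,i_a)_{kk}\ne 0$. The paper argues differently: two applications of Corollary~\ref{caracterizacion en terminos de las A(i,j)s}(4) (with $(j,j')=(d,d)$ and $(d,e)$) show that row $k$ of $A(i_b,l_0)$ has at most one nonzero entry $d$ outside $\{k,c_k\}$, and then idempotency of $A(i_a,l_0)$ together with Remark~\ref{columnas que se anulan en l_0} shows $A(i_a,l_0)_{dd}=1$, i.e.\ $d\in J_{i_a}$; hence $\Supp(A(i_a,l_0)_{k*})\subseteq J_{l_0}\cup J_{i_a}$ and $D_{(i_a)}^{i_bi_b}=0$.

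Part (a) follows the paper's route, and the individual computations you indicate (row $k$ of $A(u,v)$ equal to $e_k-e_d$, hence at most one $d$ per block; the vanishing of row $k$ of $A(l_0,v)$; the two possible values of $A(v,l_0)_{kc_k}$ extracted from $A(v,l_0)A(l_0,l_0)=0$) are correct. But what you set aside as ``controlling the cross-column error terms when $m>3$'' is exactly the substantive point, not bookkeeping: in the decisive identity $\sum_h A(v,h)_{kd}A(h,l_0)_{kc_k}=0$ you must dispose of the terms with $h\notin\{u,v,l_0\}$, and nothing you have established bounds $A(v,h)_{kd}$ or $A(h,l_0)_{kc_k}$ for such $h$. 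This is the role of Lemma~\ref{D distinto de cero} in the paper's proof: it guarantees that any $h\notin\{u,l_0\}$ with $A(h,l_0)_{kc_k}\ne 0$ satisfies $A(h,l_0)_{ke}\ne 0$ for some $e\in J_h$, after which one more application of Corollary~\ref{caracterizacion en terminos de las A(i,j)s}(4), with $(i,j,j')=(v,d,e)$, gives $A(v,h)_{kd}A(h,l_0)_{ke}=0$ and hence $A(v,h)_{kd}=0$. Without an argument of this type your verification of condition~(4) of Definition~\ref{quasi standard} is only valid for $m=3$; together with the failure in part (b), the proposal does not prove the proposition as stated.
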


\begin{proof} As in Definition~\ref{quasi standard}, for all $i,u,v\in \mathds{N}_m^*$ we set $D_{(i)}^{uv}\coloneqq A(i,l_0)|_{J_u\times J_v}$,
where $J_i\coloneqq J_i(l)$.

\smallskip

\noindent (a)\enspace We must prove that if condition~(3) of Definition~\ref{quasi standard} is fulfilled, then condition~(4') of
Remark~\ref{simplificacion} is also. We begin by proving that
\begin{equation}\label{pepe}
\# \Supp\bigl(\bigl(D^{uv}_{(v)}\bigr)_{k*}\bigr)\le 1\quad\text{for all $u,v\in \mathds{N}_m^*\setminus \{l_0\}$ and $k\in J_u$.}
\end{equation}
For $u=v=i$ it is true by Remark~\ref{identidad en la diagonal}. So, we can assume that $u\ne v$. Assume on the contrary that there exist $d_1\ne
d_2$ in $\Supp\bigl(\bigl(D^{uv}_{(v)}\bigr)_{k*}\bigr)$. Since $A(i,l_0)_{kd_1} = 0$ for $i\notin\{u,v\}$ and $A(v,l_0)_{kd_1} = -A(u,l_0)_{kd_1}
\ne 0$,~Corol\-lary~\ref{caracterizacion en terminos de las A(i,j)s}(4) with $i = u$, $l=l_0$ and $j=j'=d_1$ gives
$$
A(v,v)_{kd_1} - A(v,u)_{kd_1} = 1.
$$
A similar argument shows that Corollary~\ref{caracterizacion en terminos de las A(i,j)s}(4) with $i=u$, $l=l_0$, $j=d_1$ and $j'=d_2$, gives
$$
A(v,v)_{kd_1} - A(v,u)_{kd_1} = 0,
$$
a contradiction.

\smallskip

It remains to check that if $d\in \Supp\bigl(\bigl(D^{uv}_{(v)}\bigr)_{k*}\bigr)$, then $c_d = c_k$. When $v=u$ this follows again from
Remark~\ref{identidad en la diagonal}. Assume that $v\ne u$. We assert that
\begin{equation}\label{zazaza}
\Supp\bigl(A(v,l_0)_{k*}\bigr) = \{d,c_d\}.
\end{equation}
Since $\Supp\bigl(\bigl(D^{uv}_{(v)}\bigr)_{k*}\bigr)=\{d\}$ and $D^{ui}_{(v)} = 0$ for $i\notin\{v,l_0\}$, this is true if and only if
$$
\Supp \bigl(\bigl(D^{ul_0}_{(v)}\bigr)_{k*}\bigr) = \{c_d\}.
$$
In order to check this, note that $A(v,l_0)A(l_0,l_0) = 0$  imply
$$
D^{ul_0}_{(v)}D^{l_0l_0}_{(l_0)}+D^{uv}_{(v)}D^{vl_0}_{(l_0)}=\sum_i D^{ui}_{(v)}D^{il_0}_{(l_0)} = 0.
$$
Since $D^{l_0l_0}_{(l_0)} = \ide$ and $\Supp\bigl(\bigl(D^{uv}_{(v)}\bigr)_{k*}\bigr) = \{d\}$, this yields
$$
\bigl(D^{ul_0}_{(v)}\bigr)_{k*} = - \bigl(D^{uv}_{(v)}\bigr)_{kd}\bigl(D^{vl_0}_{(l_0)}\bigr)_{d*}.
$$
Combining this with the fact that
$$
\Supp \bigl(\bigl(D^{vl_0}_{(l_0)}\bigr)_{d*}\bigr) = \Supp \bigl(A(l_0,l_0)_{d*}\bigr) = \{c_d\},
$$
we obtain that $\Supp \bigl(\bigl(D^{ul_0}_{(v)}\bigr)_{k*}\bigr) = \{c_d\}$, as we need.

By Lemma~\ref{D distinto de cero}, if $A(h,l_0)_{kc_k} \ne 0$, then $h\in\{u,v,l_0\}$. So Corollary~\ref{caracterizacion en terminos de las
A(i,j)s}(4) with $j=d$, $j'=c_k$ and $i=v$ gives
$$
A(v,l_0)_{kd} + A(v,v)_{kd} A(v,l_0)_{kc_k} + A(v,u)_{k} A(u,l_0)_{kc_k} = 0,
$$
where we use that $A(l_0,l_0)_{kc_k}=1$. But by~\eqref{Aiu es delta} we have $A(v,u)_{kd} = \delta_{vu}\delta_{kd} = 0$, and so, necessarily
$A(v,l_0)_{kc_k}\ne 0$, which, by equality~\eqref{zazaza}, implies $c_k=c_d$.

\smallskip

\noindent (b)\enspace If the reduced rank of the $l_0$-th column of $\mathcal{A}$ is lower than~$2$, then that column is standard and the result is
trivial (see Remark~\ref{estandar implica cuasi estandar}). So we can assume that its reduced rank is~$2$. By item~(a) and
Remark~\ref{simplificacion} it suffices to prove that $D_{(i)}^{uv}=0$, if $i\ne l_0$ and $u,v\notin\{i,l_0\}$. Since the reduced rank of the
$l_0$-th column is~$2$, there exist two indices $i_0,i_1\ne l_0$ such that $A(i,l_0)\ne 0$ if and only if $i\in\{l_0,i_0,i_1\}$. So we must prove
that $D_{(i_a)}^{i_bi_b}=0$ for $a\in \{0,1\}$ and $b\coloneqq 1-a$. Take $k\in J_{i_b}$. We first prove that either
\begin{equation}\label{unico indice}
\Supp(A(i_b,l_0)_{k*})\subseteq\{k,c_k\}\quad\text{or}\quad \exists!\ d\ \text{such that $d\in\Supp(A(i_b,l_0)_{k*})\setminus\{k,c_k\}$}.
\end{equation}
Assume by contradiction that there exist $d\ne e\in \Supp(A(i_b,l_0)_{k*})\setminus\{k,c_k\}$. First note that since
$A(l_0,l_0)+A(i_a,l_0)+A(i_b,l_0)=\ide$ and $\Supp(A(l_0,l_0)_{k*})=\{c_k\}$, if $f\notin\{k,c_k\}$, then
\begin{equation}\label{opuestos'}
A(i_a,l_0)_{kf}=-A(i_b,l_0)_{kf}.
\end{equation}
By equation~\eqref{Aiu es delta} we know that $A(i_b,i_b)_{kd}=0$. Moreover, since $\Supp(A(l_0,l_0)_{k*})=\{c_k\}$, we have
$$
A(l_0,l_0)_{kd}= A(l_0,l_0)_{ke} = 0.
$$
Consequently from Corollary~\ref{caracterizacion en terminos de las A(i,j)s}(4) with $j=j'=d$ and $i=i_b$, we obtain that
$$
A(i_b,i_a)_{kd} A(i_a,l_0)_{kd}=\sum_u A(i_b,u)_{kd}A(u,l_0)_{kd} = A(i_b,l_0)_{kd}\ne 0,
$$
which implies $A(i_b,i_a)_{kd} \ne 0$. On the other hand from Corollary~\ref{caracterizacion en terminos de las A(i,j)s}(4) with $j=d$, $j'=e$ and
$i=i_b$, we obtain that
$$
A(i_b,i_a)_{kd} A(i_a,l_0)_{ke} = \sum_u A(i_b,u)_{kd}A(u,l_0)_{ke}=0,
$$
and so, necessarily $A(i_a,l_0)_{ke}=0$. But this is impossible since $A(i_a,l_0)_{ke}=-A(i_b,l_0)_{ke}\ne 0$. Hence condition~\eqref{unico indice}
is satisfied. We claim that if it exists, then $d\in J_{i_a}$. In fact, since $k\in J_{i_b}$ we have $A(i_a,l_0)_{kk}=0$, and thus, by
equality~\eqref{opuestos'}, if $d\in \Supp(A(i_b,l_0)_{k*})\setminus\{k,c_k\}$, then $\Supp(A(i_a,l_0)_{k*})=\{c_k,d\}$. Using now that
$A(i_a,l_0)$ is idempotent, we obtain that
\begin{align*}
A(i_a,l_0)_{kd}&=A(i_a,l_0)_{k*}A(i_a,l_0)_{*d} \\
& =A(i_a,l_0)_{kc_k}A(i_a,l_0)_{c_kd}+A(i_a,l_0)_{kd}A(i_a,l_0)_{dd}\\
& =A(i_a,l_0)_{kd}A(i_a,l_0)_{dd},
\end{align*}
since $A(i_a,l_0)_{c_kd}=0$ by Remark~\ref{columnas que se anulan en l_0}. But then $A(i_a,l_0)_{dd}=1$, which means that $d\in J_{i_a}$, as we
claim. Thus
$$
\Supp(A(i_a,l_0)_{k*})\subseteq J_{l_0}\cup J_{i_a},
$$
which implies that $\Supp((D_{(i_a)}^{i_bi_b})_{k*})=\Supp(A(i_a,l_0)_{k*})\cap J_{i_b}=\emptyset$ for all $k\in J_{i_b}$, as desired.
\end{proof}

\begin{definition} We say that a twisting map $\chi \colon K^m\ot K^n\longrightarrow K^n\ot K^m$ is {\em quasi-standard} if the columns of
$\mathcal{A}_{\chi}$ are quasi-standard.
\end{definition}

\begin{proposition}\label{A quasi standard equivale a B quasi standard} A twisting map $\chi \colon K^m\ot K^n\longrightarrow K^n\ot K^m$ is
quasi-standard if and only if the map $\tilde{\chi}$, introduced in Remark~\ref{A sub Chi es B sub tau chi tau}, is a quasi-standard twisting map.
\end{proposition}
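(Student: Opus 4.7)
By Remark~\ref{A sub Chi es B sub tau chi tau} we have $\mathcal{A}_{\tilde\chi}=\mathcal{B}_\chi$ and $\tilde{\tilde\chi}=\chi$, so by symmetry it suffices to prove that if $\chi$ is quasi-standard, then so is $\tilde\chi$. Assume $\chi$ is quasi-standard and fix $k_0\in\mathds{N}_n^*$; the goal is to verify that the $k_0$-th column $(B_\chi(i,k_0))_{i\in \mathds{N}_n^*}$ of $\mathcal{A}_{\tilde\chi}$ is quasi-standard. Since $\tilde\chi$ is a twisting map, Proposition~\ref{cuando es cuasiestandard}(a) reduces the task to verifying conditions~(1), (2), (3) of Definition~\ref{quasi standard}. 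Using the duality $A_\chi(i,l)_{kj}=B_\chi(j,k)_{li}$, condition~(1) -- that $B_\chi(k_0,k_0)$ is a $0,1$-matrix -- follows entry by entry from condition~(2) of Definition~\ref{quasi standard} applied to every column of $\mathcal{A}_\chi$; and condition~(2) -- that $B_\chi(i,k_0)_{kk}\in\{0,1\}$ -- follows from condition~(1) of Definition~\ref{quasi standard} applied to the $k$-th column of $\mathcal{A}_\chi$, which states that $A_\chi(k,k)$ is a $0,1$-matrix.

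The substantive step is condition~(3): we must show $B_\chi(i,k_0)_{kk'}=A_\chi(k',k)_{k_0 i}=0$ whenever $A_\chi(k,k)_{k_0 u}=1$, $A_\chi(k',k')_{k_0 v}=1$, $u\ne i$ and $v\notin\{i,k_0\}$. Two simple reductions handle the boundary: if $k=k'$ then $u=v$ (by uniqueness of the $1$ in row $k_0$ of the $0,1$-matrix $A_\chi(k,k)$) and $A_\chi(k,k)_{k_0 i}=\delta_{u i}=0$; if $u=k_0$ then $k_0\in F(A_\chi(k,k))$ and Theorem~\ref{proto quasi estandar}(1) applied to column $k$ forces $A_\chi(k',k)_{k_0 i}=\delta_{k' k}\delta_{k_0 i}=0$. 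The case $v=k_0$ is excluded by hypothesis, so we may henceforth assume $k\ne k'$, $u\ne k_0$ and $v\ne k_0$.

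For the generic case, the key structural observation is that $A_\chi(k,k)$ and $A_\chi(k',k')$ are idempotent $0,1$-matrices whose row $k_0$ is $e_u^{\Trans}$ and $e_v^{\Trans}$ respectively, so by Proposition~\ref{01 matrix} one has $u\in F(A_\chi(k,k))$ and $v\in F(A_\chi(k',k'))$. Applying Theorem~\ref{proto quasi estandar}(1) at these non-trivial diagonal positions yields both the auxiliary vanishings $A_\chi(k',k)_{u i}=0$ and $A_\chi(k,k')_{v i}=0$, and also rules out $k'=h_0$ and $k=h_0'$, where $h_0$, $h_0'$ are the unique indices satisfying $A_\chi(h_0,k)_{k_0 k_0}=A_\chi(h_0',k')_{k_0 k_0}=1$; indeed, if for instance $k'=h_0$, a further application of Theorem~\ref{proto quasi estandar}(1) to column $k$ would give $A_\chi(k',k')_{k_0 v}=\delta_{k_0 v}=0$, contradicting the hypothesis. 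Condition~(3') of Remark~\ref{simplificacion} applied to column $k$ then confines $\Supp(A_\chi(k',k)_{k_0 *})\subseteq J_{k'}(k)\cup J_k(k)$, and a case analysis on whether $i$ lies in $J_{k'}(k)$, in $J_k(k)$, or outside this union, combined with repeated application of Corollary~\ref{caracterizacion en terminos de las A(i,j)s}(4) at carefully chosen indices (exploiting that row $u$ of $A_\chi(k,k)$ is $e_u^{\Trans}$ and row $v$ of $A_\chi(k',k')$ is $e_v^{\Trans}$), forces $A_\chi(k',k)_{k_0 i}=0$. The main obstacle is precisely organising this case analysis: the hypotheses provide row-$k_0$ information about the diagonal matrices, while the quasi-standard structure is a column-level phenomenon, and the rigidity supplied by Theorem~\ref{proto quasi estandar}(1) at the diagonal indices $u$ and $v$ is the bridge between the two.
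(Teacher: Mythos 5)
Your reductions are sound and coincide with the paper's: the symmetry $\tilde{\tilde{\chi}}=\chi$, the use of Proposition~\ref{cuando es cuasiestandard}(a) to reduce to conditions~(1)--(3) of Definition~\ref{quasi standard} for the columns of $\mathcal{B}_{\chi}$, the translation of (1) and (2) through $A_{\chi}(i,l)_{kj}=B_{\chi}(j,k)_{li}$, the boundary cases $k=k'$ and $u=k_0$, the fact that $u\in F(A_{\chi}(k,k))$, $v\in F(A_{\chi}(k',k'))$, the exclusion $k'\ne h_0$ via Theorem~\ref{proto quasi estandar}(1), and the confinement $\Supp\bigl(A_{\chi}(k',k)_{k_0*}\bigr)\subseteq J_{k'}(k)\cup J_k(k)$ are all correct and parallel the paper's proof.

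However, the proof stops exactly where the real work begins. The claim that ``repeated application of Corollary~\ref{caracterizacion en terminos de las A(i,j)s}(4) at carefully chosen indices forces $A_{\chi}(k',k)_{k_0 i}=0$'' is not an argument, and you yourself concede that organising this case analysis is ``the main obstacle''; that obstacle is the content of the statement. What is missing is the paper's bridge from row-$k_0$ data of $A_{\chi}(k',k)$ to row-$k_0$ data of $A_{\chi}(k',k')$, namely Theorem~\ref{proto quasi estandar}(2)(b). Concretely, suppose $A_{\chi}(k',k)_{k_0 i}\ne 0$. Since $i\ne k_0$ (the diagonal $1$ in row $k_0$ sits in $A_{\chi}(h_0,k)$ with $h_0\ne k'$) and $i\ne c_{k_0}(A_{\chi}(k,k))=u$ (because $u\ne i$), Lemma~\ref{condicion} applied to the quasi-standard column $k$ forces $i=d^{(k')}\in J_{k'}(k)$ and, via Remark~\ref{cero fuera}, $\bigl(D_{(h_0,k)}^{h_0 k'}\bigr)_{k_0 i}=-\bigl(D_{(k',k)}^{h_0 k'}\bigr)_{k_0 i}\ne 0$; this also disposes of the $J_k(k)$ part of your support confinement, which you left untreated (entries of $A_{\chi}(k',k)$ in that block can be nonzero in general, but only in column $u$). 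Then Theorem~\ref{proto quasi estandar}(2)(b), applied to the triple $(h_0,k',k)$ with $h_0\ne k'\ne k$, gives $A_{\chi}(k',k')_{k_0 i}=1$, contradicting the fact that the unique nonzero entry of row $k_0$ of the $0,1$-matrix $A_{\chi}(k',k')$ lies in column $v\ne i$. Without this (or an equivalent re-proof of Theorem~\ref{proto quasi estandar}(2) from Corollary~\ref{caracterizacion en terminos de las A(i,j)s}(4)), the verification of Definition~\ref{quasi standard}(3) for $\mathcal{B}_{\chi}$ is not established, so the proposal has a genuine gap at its decisive step.
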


\begin{proof} By Proposition~\ref{cuando es cuasiestandard}, Remark~\ref{A sub Chi es B sub tau chi tau} and the fact that $\chi$ is a twisting map
if and only if~$\widetilde{\chi}$ is, in order to prove the proposition it suffices to check that if every column of $\mathcal{A}_{\chi}$ is
quasi-standard, then each column of~$\mathcal{A}_{\widetilde{\chi}}= \mathcal{B}_{\chi}$ satisfies items~(1), (2) and~(3) of Definition~\ref{quasi
standard}. Assume that each column of $\mathcal{A}_{\chi}$ is quasi-standard. Using equality~(2.3) it is easy to check that items~(1) and~(2) are
satisfied by the columns of $\mathcal{B}_{\chi}$. Consequently, by Remark~\ref{simplificacion} we only must prove that $\widetilde
D^{uv}_{(j)}\coloneqq B_{\chi}(j,k)|_{\tilde J_u\times \tilde J_v}$ (where $\tilde{J}_u\coloneqq \tilde{J}_u(k)$) are null matrices for $j\ne k$
and $u,v\notin\{j,k\}$. So we are reduced to prove that
$$
B_{\chi}(j,k)_{ls}=A_{\chi}(s,l)_{kj}=0\quad\text{for all $k\notin\{j,u,v\}$, $j\notin\{u,v\}$, $l\in \tilde J_u$ and $s\in \tilde J_v$.}
$$
But, since
$$
l\in \tilde J_u \text{ and }s\in \tilde J_v\quad\text{ if and only if }\quad A_{\chi}(l,l)_{ku}=1 \text{ and } A_{\chi}(s,s)_{kv}=1,
$$
and, in that case,
$$
j\notin\{u,v\} \quad\text{ if and only if }\quad  A_{\chi}(l,l)_{kj}=0\text{ and } A_{\chi}(s,s)_{kj}=0,
$$
for this it suffices to check that if $k\notin\{j,u,v\}$ and the $l$-th column of $\mathcal{A}_{\chi}$ is quasi-standard, then
$$
\setlength{\tabcolsep}{1pt} \left. \begin{tabular}{>{$}c<{$}  >{$}c<{$}}  A_{\chi}(l,l)_{ku}&=1\\ A_{\chi}(s,s)_{kv}&=1 \\ A_{\chi}(l,l)_{kj}&=0 \\
A_{\chi}(s,l)_{kj}& \ne 0 \end{tabular} \right\} \Rightarrow A_{\chi}(s,s)_{kj}\ne 0.
$$
Clearly $s\ne l$.  Moreover $k\in J_w(l)$ with $w\ne s$ since, otherwise $A_{\chi}(s,l)_{kj}=0$ by Theorem~\ref{proto quasi estandar}(1). Suppose
that $k\notin J_l(l)$ and $j\notin\bigl\{k, c_k(A_{\chi}(l,l))\bigr\}$. Then, by Lemma~\ref{condicion} we have $j\notin J_l(l)\cup J_w(l)$.
Consequently, by Definition~\ref{quasi standard}(3) and Remark~\ref{cero fuera},
$$
j\in \Supp((D^{ws}_{(s)})_{k*})= \Supp((D^{ws}_{(w)})_{k*}) \qquad\text{and} \qquad w\ne s\ne l.
$$
Thus, from Theorem~\ref{proto quasi estandar}(2b) we obtain that $A_{\chi}(s,s)_{kj}=\delta_{jj}\ne 0$, as desired. So, in order to finish the
proof we must check that $k\notin J_l(l)$ and $j\notin\bigl\{k, c_k(A_{\chi}(l,l))\bigr\}$. But $k \notin J_l$, because $A_{\chi}(l,l)_{ku}=1$
implies that $A_{\chi}(l,l)_{kk}=0$; $j\ne k$, because, by Theorem~\ref{proto quasi estandar}(1), if $j=k$, then
$A_{\chi}(s,s)_{kv}=\delta_{ss}\delta_{kv}=0$; and $j\ne c_k\bigl(A_{\chi}(l,l)\bigr)$, since $A_{\chi}(l,l)_{kj}=0$.
\end{proof}

\begin{proposition}\label{construccion de columnas cuasiestandars} Each quasi-standard column $A(i,l_0)_{i\in \mathds{N}^*_m}$ of a pre-twisting of
$K^m$ with $K^n$ can be obtained in the following way:

\begin{enumerate}

\smallskip

\item Take a matrix $A\in M_n(K)$, which is equivalent via identical permutations in rows and columns to a standard idempotent $0,1$-matrix, and
set $A(l_0,l_0)\coloneqq A$.

\smallskip

\item Set $J_{l_0}\coloneqq \bigl\{k\in \mathds{N}_n^*: A(l_0,l_0)_{kk}=1\bigr\}$ and $J_{l_0}^c\coloneqq \mathds{N}^*_n \setminus J_{l_0}$.

\smallskip

\item For all $i\in\mathds{N}_m^*\setminus\{l_0\}$ choose $J_i\subseteq\mathds{N}_n^*\setminus J_{l_0}$ such that
$$
\bigcup_{i=1}^m J_i=\mathds{N}_n^*\quad\text{and}\quad J_i\cap J_{i'}=\emptyset\quad\text{if $i\ne i'$}.
$$

\smallskip

\item Set $\digamma\coloneqq \{i\in\mathds{N}_m^*:J_i\ne \emptyset\}$ and choose $D_{(i)}^{ij}\!\in M_{J_i\times J_j}(K)$ for $i\ne j$ in
$\digamma\setminus\{l_0\}$, such that

\begin{enumerate}[label=\emph{(}\alph*\emph{)}]

\smallskip

\item $D_{(r)}^{ri}D_{(i)}^{ij} = 0$ for all $r\ne i\ne j$,

\smallskip

\item $\#\Supp \bigl((D_{(i)}^{ij})_{k*}\bigr)\le 1$ for all $i\ne j$ and $k\in J_i$,

\smallskip

\item If $d\in \Supp \bigl((D_{(i)}^{ij})_{k*}\bigr)$, then $c_d = c_k$, where $c_d\coloneqq c_d(A(l_0,l_0))$ and $c_k\coloneqq
c_k(A(l_0,l_0))$.

\end{enumerate}

\smallskip

\item Set

\begin{enumerate}[label=\emph{(}\alph*\emph{)}]

\smallskip

\item $D_{(j)}^{ij}\coloneqq -D_{(i)}^{ij}$ for all $i\ne j$ in $\digamma\setminus\{l_0\}$,

\smallskip

\item $D_{(i)}^{ii}\coloneqq \ide_{J_i}$ for all $i\in \digamma\setminus\{l_0\}$,

\smallskip

\item $D_{(i)}^{rj}\coloneqq 0$ for all $i,j,r\in \digamma\setminus \{l_0\}$ such that $i\notin\{j,r\}$,

\smallskip

\end{enumerate}

\smallskip

\item For each $i\in \digamma\setminus\{l_0\}$ define $W^{(i)}\in M_{J_{l_0}^c\times J_{l_0}^c}(K)$ by
$$
W^{(i)}_{kj} \coloneqq \bigl(D_{(i)}^{uv}\bigr)_{kj}\qquad\text{for $k\in J_u$ and $j\in J_v$ (Note that $u,v\ne l_0$).}
$$

\smallskip

\item Set $C\coloneqq A(l_0,l_0)|_{J_{l_0}^c\times J_{l_0}}$. For each $i\in \digamma\setminus\{l_0\}$, define $A(i,l_0)$ to be the unique matrix
satisfying
$$
\qquad \quad A(i,l_0)|_{J_{l_0}^c\times J_{l_0}^c}=W^{(i)},\quad A(i,l_0)|_{J_{l_0}^c\times J_{l_0}}=-W^{(i)} C\quad\text{and}\quad A(i,l_0)|_{J_{l_0}\times \mathds{N}_n^*}=0.
$$

\smallskip

\item For $i\notin \digamma$ set $A(i,l_0)\coloneqq 0$.

\end{enumerate}
\end{proposition}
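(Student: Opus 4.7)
The plan is to treat the construction as an unwinding organized by Proposition~\ref{reduccion al cuadrado inferior}: given a quasi-standard column $(A(i,l_0))_{i\in\mathds{N}_m^*}$ of a pre-twisting, that proposition reduces the column to the family of lower-right blocks $W^{(i)}\coloneqq A(i,l_0)|_{J_{l_0}^c\times J_{l_0}^c}$ together with $A(l_0,l_0)$, via $A(i,l_0)|_{J_{l_0}^c\times J_{l_0}}=-W^{(i)}C$ with $C\coloneqq A(l_0,l_0)|_{J_{l_0}^c\times J_{l_0}}$ and $A(i,l_0)|_{J_{l_0}\times *}=0$ (Remark~\ref{columnas que se anulan en l_0}). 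Hence I will simply read off the data in steps (1)--(8) from the given column and verify that each constraint in the procedure is satisfied.

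First, set $A(l_0,l_0)$ as in step (1): by Definition~\ref{quasi standard}(1) it is a $0,1$-matrix, and by Corollary~\ref{lema 01 matrix previo} it is equivalent via identical permutations in rows and columns to a standard idempotent $0,1$-matrix. Define $J_{l_0}$ as in step (2), and for $i\ne l_0$ set $J_i\coloneqq F(A(i,l_0))$. By Definition~\ref{quasi standard}(2) combined with Remark~\ref{suma en la diagonal}, exactly one of the matrices $A(i,l_0)$ contributes a $1$ to each diagonal entry, so the $J_i$'s cover $\mathds{N}_n^*$, while Remark~\ref{columnas que se anulan en l_0} ensures $J_i\cap J_{l_0}=\emptyset$ for $i\ne l_0$; this matches step~(3). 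Then let $\digamma$ be as in step~(4).

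Define the blocks $D^{ij}_{(i)}\coloneqq A(i,l_0)|_{J_i\times J_j}$ for $i\ne j$ in $\digamma\setminus\{l_0\}$. Conditions (4b) and (4c) are a direct restatement of Definition~\ref{quasi standard}(4), while for (4a) I restrict the orthogonality $A(r,l_0)A(i,l_0)=0$ to $J_r\times J_j$, obtaining $\sum_v D^{rv}_{(r)}D^{vj}_{(i)}=0$; since $j\notin\{i,l_0\}$, Definition~\ref{quasi standard}(3) forces $D^{vj}_{(i)}=0$ for $v\ne i$, so the sum collapses to the single term $D^{ri}_{(r)}D^{ij}_{(i)}$, giving (4a). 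The derived relations of step~(5) are then immediate: (5a) from Remark~\ref{cero fuera}, (5b) from Remark~\ref{identidad en la diagonal}, and (5c) from Definition~\ref{quasi standard}(3). The matrix $W^{(i)}$ in step~(6) agrees with $A(i,l_0)|_{J_{l_0}^c\times J_{l_0}^c}$ by construction, step~(7) follows from Proposition~\ref{reduccion al cuadrado inferior}(4), and step~(8) holds because for $i\notin\digamma\cup\{l_0\}$ we have $J_i=\emptyset$, so Definition~\ref{quasi standard}(3) forces $D^{uv}_{(i)}=0$ for all $u,v\in\digamma\setminus\{l_0\}$; this yields $W^{(i)}=0$ and hence, using Proposition~\ref{reduccion al cuadrado inferior} once more, $A(i,l_0)=0$.

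The main obstacle is the verification of (4a): it is the single place where one must genuinely combine the orthogonality of two distinct matrices $A(r,l_0)$ and $A(i,l_0)$ with the block-vanishing pattern of a quasi-standard column in order to collapse the double sum into a usable identity; every other item is bookkeeping around Proposition~\ref{reduccion al cuadrado inferior}. A secondary point worth recording is the converse — that any data satisfying steps (1)--(8) yields a genuine quasi-standard column — which boils down to a case analysis on the pairs $(u,v)$ showing $W^{(i)}W^{(j)}=\delta_{ij}W^{(i)}$ and $\sum_i W^{(i)}=\ide_{J_{l_0}^c}$ using exactly the constraints (4a), (5a) and (5c) run in the opposite direction, after which Proposition~\ref{reduccion al cuadrado inferior} and conditions (4b), (4c) assemble the full column and certify it as quasi-standard.
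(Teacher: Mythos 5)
Your proposal is correct and follows essentially the paper's own route: for this direction the paper merely invokes Proposition~\ref{reduccion al cuadrado inferior}, Definition~\ref{quasi standard} and Remarks~\ref{columnas que se anulan en l_0}, \ref{identidad en la diagonal} and~\ref{cero fuera} and calls the verification straightforward, and you have supplied exactly those details (including the correct collapse of $\sum_v D^{rv}_{(r)}D^{vj}_{(i)}$ to the single term needed for (4a)). Two minor points: for a pre-twisting the appropriate reference in step (1) is Proposition~\ref{01 matrix} rather than Corollary~\ref{lema 01 matrix previo} (which is stated for twisting maps), and the paper's proof additionally verifies that the recipe always produces a quasi-standard column, a converse you only sketch --- but that part is not demanded by the statement as phrased.
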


\begin{proof} We first prove that the construction yields a quasi-standard column of a pre-twisting. We begin by checking that conditions~(1), (2)
and~(3) of Corollary~\ref{caracterizacion en terminos de las A(i,j)s} are satisfied in the $l_0$-th column. By Remark~\ref{cuando son idempotentes
ortogonales} and Proposition~\ref{reduccion al cuadrado inferior} for this it suffices to prove that $\sum_{i\in \digamma\setminus\{l_0\}}
W^{(i)}=\ide_{J_{l_0}^c}$ and $\cramped{{W^{(i)}}^2=W^{(i)}}$ for all $i\in \digamma\setminus\{l_0\}$. But the first quality follows from item~(5),
while the second one, from items~(4)(a) and~(5). It remains to check that conditions~(1) and~(4) of Definition~\ref{quasi standard} and
conditions~(3') and~(4') of Remark~\ref{simplificacion} are satisfied. Condition~(1) is clear; condition~(2) follows from~(5)(b),~(5)(c) and~(7);
condition~(3'), from~(5)(c); and condition~(4'), from~(4)(b), (4)(c) and~(5)(b).

\smallskip

Now we are going to check that any quasi-standard column of idempotent matrices can be constructed as above. For this note that applying an
identical permutations in rows and columns we can assume that $A(l_0,l_0)$ is a standard idempotent $0,1$-matrix. Using Proposition~\ref{reduccion
al cuadrado inferior}, Def\-i\-ni\-tion~\ref{quasi standard} and Remarks~\ref{columnas que se anulan en l_0},~\ref{identidad en la diagonal}
and~\ref{cero fuera}, a straightforward verification shows that the $A(i,l_0)$'s can be constructed following the given receipt.
\end{proof}

\begin{remark}\label{construccion quasi} Suppose we have performed the steps indicated in items (1)--(3) of Proposition~\ref{construccion de
columnas cuasiestandars}. An algorithm for the construction of matrices $D_{(i)}^{ij}$ satisfying item~(4) of the previous proposition is the
following:
\begin{itemize}

\smallskip

\item[-] Set $\ov{\digamma}\coloneqq \digamma\setminus\{l_0\}$ and fix a total order in $\overline{\Delta}_{\ov{\digamma}} \coloneqq
(\ov{\digamma}\times \ov{\digamma})\setminus \{(x,x): x\in \ov{\digamma}\}$.

\smallskip

\item[-] For increasing $(i,j)\in \ov{\Delta}_{\ov{\digamma}}$ perform the following construction for all $k\in J_i$, which produce the matrix
$D_{(i)}^{ij}$:

\begin{enumerate}[label=(\alph*)]

\smallskip

\item If $k\in\Supp\bigl(\bigl(D^{ri}_{(r)}\bigr)_{t*}\bigr)$ for some $t\in J_r$ and $(r,i)<(i,j)$, then set
$\bigl(D_{(i)}^{ij}\bigr)_{k*}\coloneqq 0$.

\smallskip

\item Let $\mathscr{D}_i^j\coloneqq \bigl\{d\in J_j: c_d=c_k \text{\hspace{1pt} and \hspace{1pt}}
\bigl(D_{(j)}^{jr}\bigr)_{d*}=0\text{\hspace{1pt} for all \hspace{1pt}} (j,r)<(i,j)\bigr\}$. If $\mathscr{D}_i^j=\emptyset$, then set
$\bigl(D_{(i)}^{ij}\bigr)_{k*}\coloneqq 0$. Else choose $d\in \mathscr{D}_i^j$ and $\lambda\in K$ and set $\bigl(D_{(i)}^{ij}\bigr)_{kv}
\coloneqq \lambda \delta_{vd}$ for all $v\in J_j$.

\end{enumerate}

\smallskip

\end{itemize}
It is clear that the above construction guarantees that for a given $(i,j)$ we have $\cramped{D_{(r)}^{ri}D_{(i)}^{ij}=0}$ for all $(r,i)<(i,j)$
and $D_{(i)}^{ij}D_{(v)}^{jr}=0$ for $(j,r)<(i,j)$. Also it is clear that this construction, performed with $d$ and $\lambda$ arbitraries, gives
all the possible families $\cramped{\bigl(D_{(i)}^{ij}\bigr)_{(i,j)\in \ov{\Delta}_{\ov{\digamma}}}}$ that satisfy item~(4) of
Proposition~\ref{construccion de columnas cuasiestandars}.
\end{remark}

Let $\chi\colon K^m\ot K^n \longrightarrow K^n \ot K^m$ be a twisting map and let $r<m$. By Proposition~\ref{extension de twistings} we know that
there exists a twisting map
$$
\check{\chi}\colon K^r\ot K^n \longrightarrow K^n \ot K^r
$$
such that $\mathcal{A}_{\check{\chi}}=\bigl(A_{\chi}(i,l)\bigr)_{1\le i,l\le r}$ if and only if $A_{\chi}(i,l)=0$ for all $i>r$ and $l\le r$. Now
suppose that we have a twisting map
$$
\check{\chi}\colon K^r\ot K^n \longrightarrow K^n \ot K^r.
$$
Let $\mathcal{A}=(A(i,l))_{1\le i,l\le m}$ be a pre-twisting which is a extension of the family $\mathcal{A}_{\check{\chi}}=
(A_{\check{\chi}}(i,l))_{1\le i,l\le r}$ such that
\begin{itemize}

\smallskip

\item[-] $A(i,l)=0$ if $i>r$ and $l\le r$,

\smallskip

\item[-] for $l>r$, the $l$-th column of $\mathcal{A}$ is a quasi-standard column.

\smallskip

\end{itemize}
In the following theorem we give necessary and sufficient conditions in order that $\mathcal{A}$ defines a twisting map.

\begin{theorem}\label{teorema principal} Let $\mathcal{A}$ be as above. For all $u,v,l\in \mathds{N}_m^*$ with $l>r$, set $D_{(i,l)}^{uv}\coloneqq
A(i,l)|_{J_u\times J_v}$. The family $\mathcal{A}$ defines a twisting map if and only if
\begin{enumerate}

\smallskip

\item for all $i\in\mathds{N}^*_m$,
$$
\bigcup_{l>r} F(A(i,l)) \subseteq F_0(\mathcal{A},i).
$$

\smallskip

\item  If $\bigl(D_{(u,l)}^{uv}\bigr)_{kd}\ne 0$, with  $u\ne v\ne l$, then

\begin{enumerate}[label=\emph{(}\alph*{\emph{)}}]

\smallskip

\item $A(u,v)_{kj} = \delta_{kj}-\delta_{jd}$ for all $j$,

\smallskip

\item $A(v,v)_{kj} = \delta_{jd}$ for all $j$,

\smallskip

\item $A(i,v)_{kj}=0$ for $i\notin\{u,v\}$ and for all $j$.

\smallskip

\end{enumerate}
\end{enumerate}
Moreover there exist $u\ne v\ne l$ such that $D_{(u,l)}^{uv}\ne 0$ if and only if the $l$-th column is not a~stan\-dard column.
\end{theorem}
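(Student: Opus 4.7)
The plan is to reduce the statement to the characterization of twisting maps given in Corollary~\ref{caracterizacion en terminos de las A(i,j)s}. Since $\mathcal{A}$ is a pre-twisting, conditions~(1), (2) and~(3) of that corollary already hold, so the only point to check is condition~(4), which I will verify column by column in the index $l$.

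First I would dispose of the columns $l\le r$. For such an $l$, the hypothesis gives $A(h,l)=0$ for every $h>r$, so in the equation
$$
\sum_{h=1}^m A(i,h)_{kj}A(h,l)_{kj'}=\delta_{jj'}A(i,l)_{kj}
$$
only the terms $h\le r$ contribute. If $i\le r$ this is precisely the identity that holds for $\check{\chi}$, a twisting map; if $i>r$, then $A(i,h)=0$ for every $h\le r$ by the same hypothesis and $A(i,l)=0$, so both sides vanish. Thus the columns $l\le r$ impose no restriction at all.

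Next, for a column $l>r$, which is by assumption quasi-standard, I would invoke Theorem~\ref{proto quasi estandar} verbatim: condition~(4) restricted to this column is equivalent to the two conditions stated there, namely $F(A(i,l))\subseteq F_0(\mathcal{A},i)$ for every $i$, together with the structural constraint on the entries $D^{uv}_{(u,l)}$ when they are non-zero. Aggregating over $l>r$, the first family of inclusions becomes exactly condition~(1) of the statement---this aggregation is coherent because $F_0(\mathcal{A},i)$ is determined by the $i$-th column of $\mathcal{A}$ alone and does not depend on $l$---while the second family becomes condition~(2).

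For the closing assertion, I would argue directly from Definition~\ref{definicion columna estandar} and the recipe given in Proposition~\ref{construccion de columnas cuasiestandars}. If every $D^{uv}_{(u,l)}$ with $u\ne v\ne l$ vanishes, then all the matrices $W^{(i)}$ collapse to $\ide_{J_i}$, and each $A(i,l)$ has support contained in $\{(k,k):k\in J_i\}\cup\{(k,c_k):k\in J_i\}$, which lies in $\Supp(\ide)\cup\Supp(A(l,l))$, so the $l$-th column is standard. Conversely, a standard column cannot meet any block $J_u\times J_v$ with $u\ne v$ both distinct from $l$, forcing the corresponding $D^{uv}_{(u,l)}$ to vanish.

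The main---and only mildly delicate---point is the bookkeeping in the $l>r$ step: one must check that aggregating the per-column conclusions of Theorem~\ref{proto quasi estandar} really yields the conditions as stated, and in particular that the columns with $l\le r$ genuinely contribute no extra requirements (which is why the union in condition~(1) is only taken over $l>r$). Everything else is a matter of unwinding the already established characterizations.
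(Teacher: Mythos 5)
Your proposal is correct and follows essentially the same route as the paper: conditions (1)--(3) of Corollary~\ref{caracterizacion en terminos de las A(i,j)s} hold because $\mathcal{A}$ is a pre-twisting, condition~(4) for columns $l\le r$ is automatic from the extension structure (only the terms $h\le r$ survive and $\check{\chi}$ is a twisting map, with the $i>r$ case vanishing on both sides), condition~(4) for columns $l>r$ is exactly Theorem~\ref{proto quasi estandar} applied column by column, and the final assertion comes from the shape of a standard column. Your verification of the last assertion via Proposition~\ref{construccion de columnas cuasiestandars} is a bit more detailed than the paper's appeal to Definition~\ref{definicion columna estandar}, but it is the same idea.
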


\begin{proof} The last assertion follows immediately from the definition of standard column. Next we prove the main part of the theorem.

\smallskip

\noindent $\Leftarrow$) We only must show that condition~(4) of Corollary~\ref{caracterizacion en terminos de las A(i,j)s} is satisfied. For $l\le
r$ this is true since
$$
\sum_{h=1}^m A(i,h)_{kj}A(h,l)_{kj'}=\sum_{h=1}^r A(i,h)_{kj}A(h,l)_{kj'}=\delta_{jj'}A(i,l)_{kj},
$$
because $A(h,l)=0$ if $h>r$ and $\check{\chi}$ is a twisting map; while for $l>r$ this follows from Theorem~\ref{proto quasi estandar}

\smallskip

\noindent $\Rightarrow)$ This follows immediately from Theorem~\ref{proto quasi
estandar}.
\end{proof}

\begin{proposition}\label{casi todas id} Let $\mathcal{A}$ be a pre-twisting of $K^m$ with $K^n$. Assume that $A(i,i)=\ide$ for all $i\in
\mathds{N}^*_{m-1}$.  Then $\mathcal{A}$ is the family $\mathcal{A}_{\chi}$ of matrices associated with a twisting map~$\chi$ if and only if
$\bigl(A(l,m)\bigr)_{l\in \mathds{N}^*_m}$ is a standard column.
\end{proposition}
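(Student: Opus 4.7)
The plan is to observe that the hypothesis $A(i,i)=\ide$ for $i\in\mathds{N}_{m-1}^*$ forces all but the $m$-th column of $\mathcal{A}$ to be trivial, so the question reduces entirely to condition~(4) of Corollary~\ref{caracterizacion en terminos de las A(i,j)s} applied to the $m$-th column, which I will translate into the two defining properties of a standard column.

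First I would observe that since for each $l<m$ the matrices $A(1,l),\dots,A(m,l)$ are a family of orthogonal idempotents summing to $\ide$ and $A(l,l)=\ide$, we must have $A(i,l)=0$ for $i\ne l$. Hence $A(i,l)=\delta_{il}\ide$ for all $i\in \mathds{N}_m^*$ and $l\in \mathds{N}_{m-1}^*$. With these values condition~(4) of Corollary~\ref{caracterizacion en terminos de las A(i,j)s} is automatic whenever $l<m$, so the only remaining obstruction to $\mathcal{A}$ being a twisting map is
$$
\sum_{h=1}^m A(i,h)_{kj}A(h,m)_{kj'}=\delta_{jj'}A(i,m)_{kj}\qquad\text{for all $i,j,j',k$.}
$$
By Remark~\ref{Simplificacion Corolario 2.4(4)} it suffices to verify this for $j=j'$. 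Substituting $A(i,h)_{kj}=\delta_{ih}\delta_{kj}$ for $h<m$, the case $i=m$ reduces to $A(m,m)_{kj}^2=A(m,m)_{kj}$, and the case $i<m$ reduces to $A(i,m)_{kj}\bigl(\delta_{kj}+A(m,m)_{kj}-1\bigr)=0$.

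The first of these equations, quantified over all $k,j$, is equivalent to $A(m,m)$ being a $0,1$-matrix, that is, condition~(1) of Definition~\ref{definicion columna estandar}. Assuming $A(m,m)$ is a $0,1$-matrix, the second equation says exactly that whenever $A(i,m)_{kj}\ne 0$, either $j=k$ and $A(m,m)_{kk}=0$, or $j\ne k$ and $A(m,m)_{kj}=1$. Both alternatives are subsumed by $(k,j)\in \Supp(A(m,m))\cup \Supp(\ide)$, so this is precisely condition~(2) of Definition~\ref{definicion columna estandar}. Therefore condition~(4) for the $m$-th column holds if and only if the $m$-th column of $\mathcal{A}$ is standard, which is the claim.

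The argument is essentially a direct translation, so no step is genuinely hard; the only point requiring a little care is the support analysis in the second equation, where one must split into the diagonal case $j=k$ (which controls when the identity contributes to the support of $A(i,m)$) and the off-diagonal case $j\ne k$ (which forces the corresponding entry of $A(m,m)$ to be~$1$). The passage from $j=j'$ to arbitrary $j,j'$ is handled automatically by Remark~\ref{Simplificacion Corolario 2.4(4)}, so no further verification is needed for the converse implication.
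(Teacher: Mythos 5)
Your reduction is sound up to the last step: the elimination of the columns $l<m$, the restriction of Corollary~\ref{caracterizacion en terminos de las A(i,j)s}(4) to the $m$-th column, the passage to $j=j'$ via Remark~\ref{Simplificacion Corolario 2.4(4)}, and the two displayed equations ($A(m,m)_{kj}^2=A(m,m)_{kj}$ for $i=m$, and $A(i,m)_{kj}\bigl(\delta_{kj}+A(m,m)_{kj}-1\bigr)=0$ for $i<m$) are all correct. The gap is the assertion that the second equation ``is precisely condition~(2) of Definition~\ref{definicion columna estandar}''. It is not: your equation demands, in the diagonal case, that $A(i,m)_{kk}\ne 0$ for some $i<m$ forces $A(m,m)_{kk}=0$, whereas condition~(2) of the definition is automatically satisfied at a diagonal entry, since $(k,k)\in\Supp(\ide)$ regardless of the value of $A(m,m)_{kk}$. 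So the implication you actually proved is ``equations $\Rightarrow$ standard column'' (the forward direction of the proposition), while the converse direction of the proposition needs ``standard column $\Rightarrow$ equations'', and there Definition~\ref{definicion columna estandar} alone does not give you $A(m,m)_{kk}=0$; your closing claim that ``no further verification is needed for the converse implication'' is therefore unjustified as written.

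The missing step is short but uses the pre-twisting structure, not just the definition of a standard column: since $A(m,m)$ is an idempotent $0,1$-matrix with $A(m,m)\mathds{1}=\mathds{1}$, each row has exactly one nonzero entry, so $A(m,m)_{kk}=1$ makes the $k$-th row of $A(m,m)$ equal to $e_k^{\Trans}$; then orthogonality of the $m$-th column gives $A(i,m)_{k*}=\bigl(A(m,m)A(i,m)\bigr)_{k*}=0$ for $i\ne m$, so no $A(i,m)_{kk}$ with $i<m$ can be nonzero when $A(m,m)_{kk}=1$ (equivalently, invoke Remark~\ref{Como son las columnas estandar}(1)--(2) or Remark~\ref{columnas que se anulan en l_0}). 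With that line added your argument closes and is a legitimate alternative to the paper's proof, which handles the forward direction through the rank matrix and Proposition~\ref{forma general casi triangular} together with the identity $A(m,m)_{ij}=\rk\bigl(B_{\chi}(j,i)\bigr)$ for $i\ne j$, and the converse by specializing Theorem~\ref{teorema principal}; your route is more elementary and self-contained, at the price of exactly the orthogonality observation you omitted.
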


\begin{proof} $\Rightarrow$)\enspace By the assumptions it is clear that the rank matrix $\Gamma_{\chi}$ introduced in Definition~\ref{matriz de
rangos} has the form
\begin{equation}\label{matriz standard}
\Gamma_{\chi} = \begin{pmatrix} n\ide_{m-1} & *\\ 0 & *\end{pmatrix},
\end{equation}
Consequently, $\Gamma_\chi$ satisfies the hypothesis of Proposition~\ref{forma general casi triangular} for $l=m$, and so $A(m,m)$ is a
$0,1$-matrix. It remain to check that item~(2) of Definition~\ref{definicion columna estandar} is fulfilled for $l_0=m$, i.e., that
$$
A(k,m)_{ij}\ne 0 \Rightarrow A(m,m)_{ij}\ne 0\qquad\text{for $k<m$ and $i\ne j$;}
$$
but this follows immediately from the fact that
\begin{equation*}\label{rango de Bij}
A(m,m)_{ij}=\sum_t A(t,t)_{ij}=\sum_t B_{\chi}(j,i)_{tt}= \rk((B_{\chi}(j,i))\qquad\text{for all $i\ne j$,}
\end{equation*}
and $B_{\chi}(j,i)_{mk}=A(k,m)_{ij}$.

\smallskip

\noindent $\Leftarrow$)\enspace This follows from Theorem~\ref{teorema principal}, since by Remark~\ref{columnas que se anulan en l_0} we know that
$F(A(m,m)) = F_0(\mathcal{A},m)$ and $A(i,i)=\ide_n$ implies that $F_0(\mathcal{A},i)=\mathds{N}_n^*$ for all $i<m$.
\end{proof}

\section[Reduced rank~$1$]{Reduced rank~$\bm{1}$}\label{Seccion rango reducido 1}
In \cite{JLNS} the case of twisting maps $\chi$ in which all the columns of $\mathcal{A}_{\chi}$ have reduced rank less than or equal to $1$ (see
Definition~\ref{columna de rango reducido r}) is analysed. In this section we use our tools, that are completely different to the ones used in
\cite{JLNS}, in order to describe these twisting maps.

\begin{proposition}\label{rango reducido 1} Let $\chi\colon K^m\ot K^n\longrightarrow K^n\ot K^m$ be a twisting map. Assume that $\rrank_\chi(l)=1$ and $A_{\chi}(i,l)\ne 0$ where $i\ne l$.  The following facts hold:

\begin{enumerate}

\smallskip

\item If $A_{\chi}(l,i)=0$, then the $l$-th column of $\mathcal{A}_{\chi}$ is standard. Moreover, if $A_{\chi}(l,l)_{kk}=0$, then
$A_{\chi}(i,i)_{kj}=\delta_{kj}$ for all $j$.

\smallskip

\item If $A_{\chi}(l,i)\ne 0$ and $\rrank_\chi(i)=1$, then there is a twisting map $\psi\colon K^2\otimes K^n\longrightarrow K^n\otimes K^2$ with
$A_{\psi}(a,b)\coloneqq A_{\chi}(f(a),f(b))$, where $a,b\in \{1,2\}$, $f(1)\coloneqq i$ and $f(2)\coloneqq l$.

\end{enumerate}
\end{proposition}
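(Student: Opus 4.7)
The plan is to treat the two items separately, drawing on the structural results already established.

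For part~(1), the assumption $\rrank_\chi(l)=1$ together with $A_\chi(i,l)\ne 0$ forces $A_\chi(h,l)=0$ for every $h\notin\{i,l\}$, so $\rk(A_\chi(h,l))\rk(A_\chi(l,h))=0$ for each such $h$; for $h=i$ the same product vanishes because $A_\chi(l,i)=0$ by hypothesis. Hence Proposition~\ref{forma general casi triangular} applies and $A_\chi(l,l)$ is a $0,1$-matrix. From $\sum_h A_\chi(h,l)=\ide$ together with $A_\chi(h,l)=0$ for $h\notin\{i,l\}$, we obtain $A_\chi(i,l)=\ide-A_\chi(l,l)$, and so, for any off-diagonal entry $(k,j)$ with $A_\chi(i,l)_{kj}\ne 0$, we must have $A_\chi(l,l)_{kj}\ne 0$. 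This verifies Definition~\ref{definicion columna estandar}(2), and hence the $l$-th column is standard.

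For the \emph{moreover} clause, suppose $A_\chi(l,l)_{kk}=0$. The relation $A_\chi(i,l)+A_\chi(l,l)=\ide$ then yields $A_\chi(i,l)_{kk}=1$, which, combined with $A_\chi(h,l)=0$ for $h\notin\{i,l\}$, gives $A_\chi(h,l)_{kk}=\delta_{hi}$. Plugging this into Corollary~\ref{caracterizacion en terminos de las A(i,j)s}(4) with row~$k$, column~$l$, first index~$i$ and $j'=k$ collapses the sum to
\[
A_\chi(i,i)_{kj}=\delta_{jk}\,A_\chi(i,l)_{kj}\qquad\text{for all $j$.}
\]
When $j=k$ the right-hand side equals $A_\chi(i,l)_{kk}=1$, and when $j\ne k$ it vanishes; therefore $A_\chi(i,i)_{kj}=\delta_{kj}$, as required.

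For part~(2), the idea is to realize $\psi$ as the twisting map obtained by restricting $\chi$ to the subalgebra $B\coloneqq \langle e_i,e_l\rangle$ of $K^m$ via Proposition~\ref{extension de twistings}. Writing $K^m=B\times C$ with $C\coloneqq \langle e_h: h\notin\{i,l\}\rangle$, the vanishing condition $(K^n\otimes p_B)\circ \chi\circ (\iota_C\otimes K^n)=0$ translates into $A_\chi(h,s)=0$ for all $s\in\{i,l\}$ and $h\notin\{i,l\}$. The case $s=l$ is exactly $\rrank_\chi(l)=1$ combined with $A_\chi(i,l)\ne 0$; the case $s=i$ is the analogous combination of $\rrank_\chi(i)=1$ with $A_\chi(l,i)\ne 0$. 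So Proposition~\ref{extension de twistings} delivers a twisting map $\chi_B\colon B\otimes K^n\to K^n\otimes B$, and a direct calculation from $\chi_B=(K^n\otimes p_B)\circ \chi\circ (\iota_B\otimes K^n)$, using the identifications $b_1\coloneqq e_i$ and $b_2\coloneqq e_l$ of $B$ with $K^2$, shows that $A_{\chi_B}(a,b)=A_\chi(f(a),f(b))$ for all $a,b\in\{1,2\}$. Setting $\psi\coloneqq \chi_B$ finishes the proof. The only delicate point is in part~(1), where one has to verify both conditions of Definition~\ref{definicion columna estandar} and then exploit the orthogonality equation to pin down $A_\chi(i,i)$ on a single row; part~(2) is essentially a mechanical application of the extension criterion.
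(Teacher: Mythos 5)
Your proposal is correct. Part~(1) follows the paper's own route: you check the hypothesis of Proposition~\ref{forma general casi triangular} (using that $\rrank_\chi(l)=1$ kills $A_\chi(h,l)$ for $h\notin\{i,l\}$ and that $A_\chi(l,i)=0$ handles $h=i$), and then deduce the support condition from $A_\chi(i,l)=\ide-A_\chi(l,l)$; for the ``moreover'' clause the paper simply cites Proposition~\ref{influencia de columna estandar} (since $A_\chi(l,l)_{kk}=0$ gives $k\in F(A_\chi(i,l))\subseteq F_0(\mathcal{A}_\chi,i)$), whereas you unwind the same orthogonality identity, namely Corollary~\ref{caracterizacion en terminos de las A(i,j)s}(4) with $j'=k$, by hand --- an equivalent, slightly more self-contained computation. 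Part~(2) is where you genuinely diverge: the paper verifies directly that the four matrices $A_\chi(f(a),f(b))$ satisfy Corollary~\ref{caracterizacion en terminos de las A(i,j)s}, the only nontrivial point being that in condition~(4) the sum over $h$ collapses to the terms $h\in\{i,l\}$ because $A_\chi(h,i)=A_\chi(h,l)=0$ for $h\notin\{i,l\}$; you instead decompose $K^m=B\times C$ with $B=\langle e_i,e_l\rangle$ and invoke the ``if'' direction of Proposition~\ref{extension de twistings}, whose vanishing criterion $(K^n\ot p_B)\circ\chi\circ(\iota_C\ot K^n)=0$ translates into exactly the same vanishing of $A_\chi(h,i)$ and $A_\chi(h,l)$, and then read off $A_{\chi_B}(a,b)=A_\chi(f(a),f(b))$. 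Both arguments are valid and rest on the same vanishing facts; your version is more conceptual (it exhibits $\psi$ as the induced twisting map of the factor $B\cong K^2$, so functoriality comes for free), at the cost of relying on the converse implication in Proposition~\ref{extension de twistings}, which the paper states but leaves to the reader, while the paper's version is a two-line matrix check needing only Corollary~\ref{caracterizacion en terminos de las A(i,j)s}.
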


\begin{proof}(1)\enspace By Proposition~\ref{forma general casi triangular} we know that $A_{\chi}(l,l)$ is a $0,1$-matrix, and clearly
$$
A_{\chi}(l,l)+A_{\chi}(i,l)=\ide \Rightarrow \Supp\bigl(A_{\chi}(i,l)\bigr)\subseteq \Supp\bigl(A_{\chi}(l,l)\bigr)\cup \Supp(\ide).
$$
So the $l$-th column of $\mathcal{A}_{\chi}$ is standard. The last assertion follows from Proposition~\ref{influencia de columna estandar}, since
$A_{\chi}(l,l)_{kk}=0$ implies that $k\in F(A_{\chi}(i,l))$.

\smallskip

\noindent (2)\enspace The family of matrices $\left(A_{\psi}(a,b)\right)_{1\le a,b\le 2}$ satisfies the conditions of
Corollary~\ref{caracterizacion en terminos de las A(i,j)s}. In fact, this is clear for the three first conditions, whereas the last one follows
easily from the fact that
$$
\sum_{h=1}^m A_{\chi}(u,h)_{kj}A_{\chi}(h,v)_{kj'}=A_{\chi}(u,i)_{kj}A_{\chi}(i,v)_{kj'}+A_{\chi}(u,l)_{kq}A_{\chi}(l,v)_{kj'},
$$
if $v\in\{i,l\}$.
\end{proof}

\begin{proposition}\label{clasificacion rango reducido 1} Let $\mathcal{A}=\left(A(i,l)\right)_{1\le i,l\le m}$ be a pre-twisting of $K^m$ with
$K^n$. For each $l$ whose reduced rank is~$1$, let $i(l)$ denote the unique $i(l)\ne l$ such that $A(i(l),l)\ne 0$. If $\rrank_{\mathcal{A}}(l) \le
1$ for all $l$, the there exists a twisting map $\chi\colon K^m\ot K^n \longrightarrow K^n \ot K^m$ with $\mathcal{A}_{\chi}=\mathcal{A}$ if and
only if for each $l\in \mathds{N}^*_m$ such that $\rrank_{\mathcal{A}}(l)=1$ the following facts hold:

\begin{enumerate}

\smallskip

\item If $A(l,i(l))=0$, then:

\smallskip

\begin{enumerate}[label=\emph{(}\alph*\emph{)}]

\item $A(l,l)$ is equivalent to a standard idempotent $0,1$-matrix via identical permutations in rows and columns,

\smallskip

\item $A(i(l),i(l))_{kj}=\delta_{kj}$ for all $j$, whenever $A(j,j)_{kk}=0$.

\end{enumerate}

\smallskip

\item If $A(l,i(l))\ne 0$, then there is a twisting map $\psi \colon K^2\otimes K^n\longrightarrow K^n\otimes K^2$ with $A_{\psi}(a,b)\coloneqq
A_{\chi}(f(a),f(b))$, where $a,b\in\{1,2\}$, $f(1)\coloneqq i$ and $f(2)\coloneqq j$.

\end{enumerate}
\end{proposition}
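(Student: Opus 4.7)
The plan is to derive $(\Rightarrow)$ directly from Proposition~\ref{rango reducido 1}, and then for $(\Leftarrow)$ to define $\chi$ by $\lambda_{ij}^{kl}\coloneqq A(i,l)_{kj}$ and verify the conditions of Corollary~\ref{caracterizacion en terminos de las A(i,j)s} one column at a time.

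For $(\Rightarrow)$, I fix $l$ with $\rrank_{\mathcal{A}}(l)=1$ and write $i=i(l)$. If $A(l,i)=0$, then Proposition~\ref{rango reducido 1}(1) says the $l$-th column of $\mathcal{A}_\chi$ is standard, so Corollary~\ref{lema 01 matrix previo} gives~(1)(a) and the displayed vanishing condition is exactly~(1)(b). If instead $A(l,i)\ne 0$, then $\rrank_\chi(i)\ge 1$, which combined with the global hypothesis $\rrank_\chi\le 1$ forces $\rrank_\chi(i)=1$; Proposition~\ref{rango reducido 1}(2) then provides the $\psi$ needed in~(2).

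For $(\Leftarrow)$, conditions~(1), (2) and~(3) of Corollary~\ref{caracterizacion en terminos de las A(i,j)s} come free from the pre-twisting hypothesis, so only condition~(4) needs verification, column by column. Columns of reduced rank $0$ are trivial. For a column $l$ of reduced rank $1$ with $A(l,i)=0$, item~(1)(a) forces $A(l,l)$ to be a $0,1$-matrix, and since $A(l,l)+A(i,l)=\ide$ the $l$-th column is standard; Proposition~\ref{influencia de columna estandar} reduces condition~(4) to $F(A(v,l))\subseteq F_0(\mathcal{A},v)$ for every $v$. The inclusions for $v\notin\{l,i\}$ and for $v=l$ are vacuous or follow from Remark~\ref{columnas que se anulan en l_0}. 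For $v=i$, standardness identifies $F(A(i,l))$ with $\{k:A(l,l)_{kk}=0\}$: for such $k$, item~(1)(b) settles $A(i,i)_{kj}=\delta_{kj}$, and for $u\ne i$ the hypothesis $\rrank_{\mathcal{A}}(i)\le 1$ combined with $A(l,i)=0$ forces either $A(u,i)=0$ outright, or $A(u,i)=\ide-A(i,i)$ whose $k$-th row vanishes by~(1)(b). For a column $l$ of reduced rank $1$ with $A(l,i)\ne 0$, I unpack the twisting map $\psi$ given by~(2): it satisfies Corollary~\ref{caracterizacion en terminos de las A(i,j)s}(4) in its second column, and via $A_\psi(a,b)=A(f(a),f(b))$ this translates into condition~(4) for $\mathcal{A}$ in column $l$ when $i'\in\{i,l\}$; for $i'\notin\{i,l\}$, reduced rank gives $A(i',l)=0$ and the fact that $A(l,i)\ne 0$ together with $\rrank_{\mathcal{A}}(i)\le 1$ forces $i(i)=l$, so $A(i',i)=0$ and both sides of~(4) vanish.

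The main obstacle is the $v=i$ subcase inside the standard-column verification: condition~(1)(b) only controls the $k$-th row of $A(i,i)$, so to conclude $k\in F_0(\mathcal{A},i)$ I must carefully combine the identity $A(i,i)+A(i(i),i)=\ide$ with the exclusion $i(i)\ne l$ (forced by $A(l,i)=0$) in order to extract the vanishing of $A(u,i)_{k*}$ for all $u\ne i$. The remaining verifications amount to bookkeeping with Propositions~\ref{rango reducido 1} and~\ref{influencia de columna estandar}.
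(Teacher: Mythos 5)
Your proposal is correct and follows the paper's own route: necessity via Proposition~\ref{rango reducido 1} together with Corollary~\ref{lema 01 matrix previo}, and sufficiency by verifying condition~(4) of Corollary~\ref{caracterizacion en terminos de las A(i,j)s} column by column (the paper leaves this as ``straightforward''; your use of Proposition~\ref{influencia de columna estandar} for the standard columns and the translation through $\psi$ for the $2$-cycle columns is exactly the intended check). The ``obstacle'' you flag is in fact harmless: whoever $i(i)$ is, the relation $A(i(i),i)=\ide-A(i,i)$ plus~(1)(b) already kills the $k$-th row, so the exclusion $i(i)\ne l$ is automatic and not really needed.
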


\begin{proof} The conditions are necessary by Proposition~\ref{rango reducido 1} and Corollary~\ref{lema 01 matrix previo}. On the other hand, it
is straightforward to check that if $\mathcal{A}$ satisfies items~(1) and (2), then it also fulfills condition~(4) of
Corollary~\ref{caracterizacion en terminos de las A(i,j)s}.
\end{proof}

We associate a quiver $Q_{\chi}$ with a twisting map $\chi\colon K^m\ot K^n \longrightarrow K^n \ot K^m$ in the following way. The vertices are
$1,\dots, m$ and the adjacency matrix of $Q_{\chi}$ is the $0,1$-matrix with $1$ in the entry $(i,l)$ if and only if $i\ne l$ and $A_{\chi}(i,l)\ne
0$.

\begin{remark} Proposition~\ref{clasificacion rango reducido 1} allows to construct all the twisting maps $\chi\colon K^m\ot K^n \longrightarrow
K^n \ot K^m$ of reduced rank~$1$ (this means that each column of $\mathcal{A}_{\chi}$ has reduced rank lesser than or equal to~$1$, and at least
one of its columns has reduced rank~$1$). For this it suffices to consider twisting maps with connected quivers, since every twisting map is the
direct sum of the twisting maps restricted to the connected components. Each connected component of the quiver $Q_{\chi}$ has at most one proper
oriented cycle. This follows from the fact that each vertex of the quiver is the head of at most one arrow from another vertex, since the reduced
rank of $\chi$ is~$1$. So, in order to construct such a twisting map $\chi$ take a quiver~$Q$  fulfilling this condition and fix a connected
component. There are three possible cases: the connected component is a $2$-cycle, the connected component contains no $2$-cycle or the connected
component contains properly a $2$-cycle. The two first cases were treated in \cite{JLNS}, and in our setting are very easy to describe: In the
first one $\chi$ is obtained from a twisting map $\psi\colon K^2\otimes K^n\longrightarrow K^n\otimes K^2$, as in Proposition~\ref{clasificacion
rango reducido 1}(2). In the second one by Proposition~\ref{rango reducido 1} all columns are standard, so it suffices to  consider standard
twisting maps compatible with the chosen quiver in the sense that $A_{\chi}(i,l)\ne 0$ if and only if the adjacency matrix of $Q$ has $1$ in the
entry $(i,l)$.

In the third case assume that the $2$-cycle is at the vertices $i,j$. Suppose that there is a reduced rank~$1$ twisting map $\chi$ such that
$Q_{\chi}=Q$. By Proposition~\ref{rango reducido 1} we know that the $l$-th columns of $\mathcal{A}_{\chi}$ is standard for all $l\notin\{i,j\}$.
This implies that if $\chi$ has an arrow from $i$ to $l$, then $F_0(\mathcal{A}_{\chi},i)\ne\emptyset$ (and similarly for $j$). In fact, we have
$$
\emptyset \ne F(A(i,l))\subseteq F_0(\mathcal{A}_{\chi},i),
$$
where inequality holds since $A(i,l) = \ide- A(l,l)$ and $A(l,l)$ is an idempotent $(0,1)$-matrix, while the inclusion is true by
Proposition~\ref{influencia de columna estandar}. Thus, in order to obtain such a twisting map $\chi$ we first construct a twisting map
$$
\psi \colon K^2\otimes K^n\longrightarrow K^n\otimes K^2,
$$
such that

\begin{itemize}

\smallskip

\item[-] $A_{\psi}(1,2)\ne 0\ne A_{\psi}(2,1)$,

\smallskip

\item[-] $F_0(\mathcal{A}_{\chi},i)\ne\emptyset$ if $Q$ has an arrow that starts at $i$ and does not end at $j$,

\smallskip

\item[-] $F_0(\mathcal{A}_{\chi},j)\ne\emptyset$ if $Q$ has an arrow that starts at $j$ and does not end at $i$.

\smallskip

\end{itemize}
Then we set $A_{\chi}(h,i)\coloneqq 0$ and $A_{\chi}(h,j)\coloneqq 0$ for $h\notin\{i,j\}$, and $A_{\chi}(f(a),f(b))\coloneqq A_{\psi}(a,b)$, where
$f(1)\coloneqq i$ and $f(2)\coloneqq j$. After that, for each vertex $l\in Q_0\setminus\{i,j\}$, we take a standard column $(A_{\chi}(u,l))_{u\in
Q_0}$ such that

\begin{itemize}

\smallskip

\item[-] $A_{\chi}(u,l)\ne 0$ if and only $Q$ has an arrow from $u$ to $l$,

\smallskip

\item[-] $F(A(v,l))\subseteq F_0(\mathcal{A},v)$ for all $v\in Q_0$ and $l\in Q_0\setminus\{i,j\}$.

\smallskip

\end{itemize}
By Proposition~\ref{influencia de columna estandar}, Corollary ~(4) is satisfied for all $l\notin\{i,j\}$. Since an straightforward computation
shows that it is satisfied for also for $i$ and $j$, this method produces all the twisting maps of reduced rank~$1$ with quiver $Q$.
\end{remark}

\section{Quiver associated with standard and quasi-standard twisting maps} In this section we will construct quivers that characterize completely the standard twisting maps. Moreover, the quiver indicates how one could possibly generate quasi-standard twisting maps out of a standard one.

\subsection{Characterization of standard twisted tensor products}
The aim of this section is to completely characterize the standard twisted tensor products of $K^n$ with $K^m$.  In particular we will prove that
they are algebras with square zero Jacobson radical. Our main result gen\-er\-al\-izes~\cite{C}*{Theorem 4.2}. Let
$$
\chi \colon K^m \ot K^n \longrightarrow K^n \ot K^m
$$
be a standard twisting map. As in Remark~\ref{tabla e ideales}, for each $j\in \mathds{N}^*_n$ and $i\in \mathds{N}^*_m$ we let $x_{ji}$ denote
$f_j\ot e_i$ . In that remark we saw that
$$
x_{ki}x_{jl} = A_{\chi}(i,l)_{kj}x_{kl}.
$$

\begin{remark}\label{la matriz Ail en el caso estandar} By Remark~\ref{Como son las columnas estandar} we know that
$$
A_{\chi}(i,l)_{kj}=\begin{cases}
\phantom{-}1& \text{if $k\in J_l(l)$, $i=l$ and  $j=k$,}\\
\phantom{-}1& \text{if $k \notin J_l(l)$, $i=l$ and $j=c_k(A_{\chi}(l,l))$,}\\
\phantom{-}1& \text{if $k\notin J_l(l)$, $i=i(k,l,\mathcal{A}_{\chi})$ (which means that $k\in F(A_{\chi}(i,l))$) and $j=k$,}\\
-1& \text{if $k\notin J_l(l)$, $i=i(k,l,\mathcal{A}_{\chi})$  and  $j=c_k(A_{\chi}(l,l))$,}\\
\phantom{-}0& \text{otherwise,}
            \end{cases}
$$
which implies that
$$
x_{ki}x_{jl}=\begin{cases}
\phantom{-}x_{kl}& \text{if $k\in J_l(l)$, $i=l$ and  $j=k$,}\\
\phantom{-}x_{kl}& \text{if $k \notin J_l(l)$, $i=l$ and $j=c_k(A(l,l))$,}\\
\phantom{-}x_{kl}& \text{if $k\notin J_l(l)$, $i=i(k,l,\mathcal{A})$ and $j=k$,}\\
-x_{kl}& \text{if $k\notin J_l(l)$, $i=i(k,l,\mathcal{A})$  and  $j=c_k(A(l,l))$,}\\
\phantom{-}0& \text{otherwise.}
\end{cases}
$$
\end{remark}

\begin{remark}\label{calculo del radical} If $j\notin J_l(l)$, then
\begin{align*}
& x_{ki}x_{jl}=A_{\chi}(i,l)_{kj} x_{kl}=0\quad\text{for all $k\ne j$ and all $i$}
\shortintertext{and}
& x_{jl}x_{ki}=B_{\chi}(k,j)_{il} x_{ji}=0\quad\text{for all $i\ne l$ and all $k$,}
\end{align*}
since $A_{\chi}(i,l)_{kj}\ne 0$ for some $i$ if and only if $j=c_k\bigl(A_{\chi}(l,l)\bigr)$, $B(k,j)_{il}\ne 0$ for some $k$ if and only if
$l=c_i\bigl(B_{\chi}(j,j)\bigr)$, $c_k\bigl(A_{\chi}(l,l)\bigr)$ belongs to $J_l(l)$ and $c_i\bigl(B_{\chi}(j,j)\bigr)$ belongs to
$\tilde{J}_j(j)$. From these facts it follows that $I\coloneqq \bigoplus_{j\notin J_l(l)} K x_{jl}$ is a square zero two-sided ideal of
$K^n\ot_{\chi} K^m$. Furthermore, each two-sided ideal including properly $I$ has an idempotent element~$x_{jl}$, and therefore it is not a
nilpotent ideal. So, $I$ is the Jacobson ideal $\J(K^n\ot_{\chi} K^m)$ of $K^n\ot_{\chi} K^m$.
\end{remark}

Let $\prescript{\chi}{}{Q}^{}_{}$ be the quiver with set of vertices $\prescript{\chi}{}{Q}^{}_{0}\coloneqq \{(j,i)\in \mathds{N}^*_n \times
\mathds{N}^*_m: j\in J_i(i)\}$, set of arrows $\prescript{\chi}{}{Q}^{}_{1} \coloneqq \{\alpha_{jl}: l\in \mathds{N}^*_m \text{ and } j\in
\mathds{N}^*_n \setminus J_l(l)\}$, and source and target maps $s,t \colon \prescript{\chi}{}{Q}^{}_{1} \longrightarrow
\prescript{\chi}{}{Q}^{}_{0}$ given by
$$
s(\alpha_{jl})\coloneqq (j,i(j,l, \mathcal{A}_{\chi}))\quad \text{and}\quad t(\alpha_{jl})\coloneqq (i(l,j, \mathcal{B}_{\chi}),l)
=(c_j(A_{\chi}(l,l)),l).
$$
Note that $s$ and $t$ are well defined by~Proposition~\ref{influencia de columna estandar} and Remark~\ref{c sub k esta en J sub k}, respectively.

\begin{remark}\label{hay flecha} By Remarks~\ref{c sub k esta en J sub k} and~\ref{Como son las columnas estandar}(4), Proposition~\ref{influencia
de columna estandar} and the definition of $\prescript{\chi}{}{Q}^{}_{}$, in $\prescript{\chi}{}{Q}^{}_{}$ there is an arrow from $(k,i)$ to
$(j,l)$ if and only if $A_{\chi}(i,l)_{kj}=-1$.
\end{remark}

Now we compute the products $x_{ki}x_{jl}$ in terms of the maps $s$ and $t$. By the computations made in Remark~\ref{la matriz Ail en el caso
estandar}, the following facts hold:
\begin{enumerate}[label=(\alph*)]

\smallskip

\item If $k\in J_i(i)$ and $j\in J_l(l)$, then $x_{ki}x_{jl}=\begin{cases}
                                                                        \phantom{-}x_{jl} & \text{if $(k,i)=(j,l)$,}\\
                                                                        -x_{kl} & \text{if $(k,i)=s(\alpha_{kl})$ and $(j,l)=t(\alpha_{kl})$,}\\
                                                                        \phantom{-}0& \text{otherwise.}
                                                             \end{cases}$

\smallskip

\item If $k\notin J_i(i)$ and $j\in J_l(l)$, then $x_{ki}x_{jl}=\begin{cases}
                                                                            x_{ki} & \text{if $(j,l)=t(\alpha_{ki})$,}\\
                                                                            0& \text{otherwise.}
                                                                \end{cases}$

\smallskip

\item If $k\in J_i(i)$ and $j\notin J_l(l)$, then $x_{ki}x_{jl}=\begin{cases}
                                                                          x_{jl} & \text{if $(k,i)=s(\alpha_{jl})$,}\\
                                                                          0& \text{otherwise.}
                                                                \end{cases}$

\smallskip

\item If $k\notin J_i(i)$ and $j\notin J_l(l)$, then $x_{ki}x_{jl}=0$.

\smallskip

\end{enumerate}

\begin{theorem}\label{caracterizacion de los productos tensoriales torcidos estandars} The twisted tensor product~$K^n\ot_{\chi} K^m$ is isomorphic
to the radical square zero algebra~$K \prescript{\chi}{}{Q}^{}_{}/\langle \prescript{\chi}{}{Q}^{2}_{1} \rangle$.
\end{theorem}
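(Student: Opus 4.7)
The plan is to construct an explicit algebra isomorphism
\[
\varphi\colon K\prescript{\chi}{}{Q}^{}_{}/\langle \prescript{\chi}{}{Q}^{2}_{1}\rangle \longrightarrow K^n\ot_{\chi}K^m,
\]
sending vertices to lifted primitive idempotents and arrows to signed radical generators. The naive assignment $e_{(k,i)}\mapsto x_{ki}$ fails on account of case~(a) preceding the theorem: whenever an arrow $\alpha_{kl}\in\prescript{\chi}{}{Q}^{}_{1}$ satisfies $s(\alpha_{kl})=(k,i)$ and $t(\alpha_{kl})=(j,l)$, the product $x_{ki}x_{jl}$ equals $-x_{kl}$ instead of $0$, breaking the orthogonality of distinct vertex idempotents. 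To absorb the discrepancy I would set
\[
\varphi(e_{(k,i)})\coloneqq x_{ki} + \sum_{\{l\,:\,s(\alpha_{kl})=(k,i)\}} x_{kl}\qquad\text{and}\qquad \varphi(\alpha_{jl})\coloneqq -x_{jl},
\]
so each vertex image carries along the arrow-elements leaving that vertex.

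Next I would verify that $\varphi$ respects the relations of the path algebra modulo paths of length two. The crucial check is $\varphi(e_{(k,i)})\varphi(e_{(j,l)})=0$ for distinct vertices: expanding the product, three of the four groups of cross terms $x_{ki}x_{jl''}$, $x_{kl'}x_{jl''}$ and most of $x_{kl'}x_{jl}$ vanish by Remark~\ref{calculo del radical}, which eliminates any product of two radical elements with mismatched column or of a vertex element by a radical element at a non-matching row. What survives is precisely a $-x_{kl}$ from case~(a) against a $+x_{kl}$ from the term of the third group with $l'=l$ (evaluated using case~(b)); these cancel because the condition $s(\alpha_{kl})=(k,i)$ defining the correction summation is exactly the condition making the case~(a) term nonzero. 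The idempotency $\varphi(e_{(k,i)})^{2}=\varphi(e_{(k,i)})$ is obtained similarly, using case~(c) to compute $x_{ki}x_{kl'}=x_{kl'}$ on the correction terms, while Remark~\ref{calculo del radical} and the standard shape of $A_{\chi}(i,i)$ from Remark~\ref{la matriz Ail en el caso estandar} kill the remaining cross products. The source--target identities $\varphi(e_{s(\alpha)})\varphi(\alpha)=\varphi(\alpha)=\varphi(\alpha)\varphi(e_{t(\alpha)})$ and the vanishing of the remaining vertex--arrow products follow directly from cases~(b) and~(c); the relation $\varphi(\alpha)\varphi(\beta)=0$ is exactly case~(d).

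For bijectivity I would compare dimensions: both algebras have dimension $mn$, since
\[
|\prescript{\chi}{}{Q}^{}_{0}|+|\prescript{\chi}{}{Q}^{}_{1}|=\sum_{i}\rk(A_{\chi}(i,i))+\sum_{l}\bigl(n-\rk(A_{\chi}(l,l))\bigr)=mn.
\]
Surjectivity then suffices, and it is immediate: the arrow-images $-x_{jl}$ span the Jacobson radical identified in Remark~\ref{calculo del radical}, and then subtracting the correction from $\varphi(e_{(k,i)})$ recovers each remaining basis element $x_{ki}$. The main obstacle will be the careful bookkeeping in the verification $\varphi(e_{(k,i)})\varphi(e_{(j,l)})=0$: one must match the summation index $l$ in the correction of $(k,i)$ with the parametrization of arrows $\alpha_{kl}$ whose target equals $(j,l)$, and confirm via Remark~\ref{calculo del radical} that no spurious cross term survives to disturb the single $-x_{kl}$/$+x_{kl}$ cancellation.
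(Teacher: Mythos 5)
Your proposal is correct and takes essentially the same approach as the paper: an explicit algebra map from the path algebra that sends each vertex to the corresponding $x_{ki}$ corrected by arrow elements so as to restore orthogonal idempotents, checked case-by-case via the products (a)--(d), which kills paths of length two and is an isomorphism by surjectivity together with the dimension count $|\prescript{\chi}{}{Q}_0|+|\prescript{\chi}{}{Q}_1|=mn$. The only (immaterial) difference is that you correct each vertex by its \emph{outgoing} arrows and send $\alpha_{jl}\mapsto -x_{jl}$, while the paper corrects by the \emph{incoming} arrows $\In(j,l)$ and sends $\alpha_{jl}\mapsto x_{jl}$; both choices pass the same verifications, since rescaling arrows is an automorphism of the radical square zero quiver algebra.
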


\begin{proof} An algebra morphism from $K \prescript{\chi}{}{Q}^{}_{}$ to $K^n\ot_{\chi} K^m$ is determined by a coherent choice of images of the
vertices and the arrows of $\prescript{\chi}{}{Q}$, since $K\prescript{\chi}{}{Q}$ is a tensor algebra on the vertices set algebra of the arrows
bimodule. For each $(j,l)\in \prescript{\chi}{}{Q}^{}_{0}$ set $\In(j,l)\coloneqq \{\alpha_{ki}\in \prescript{\chi}{}{Q}^{}_{1}:
(j,l)=t(\alpha_{ki})\}$. A straight\-for\-ward computation using (a)--(d) shows that
$$
\phi((j,l))\coloneqq x_{jl}+\sum_{\In(j,l)} x_{ki}\quad\text{if $j\in J_l(l)$}\qquad\text{and}\qquad \phi(\alpha_{jl})\coloneqq x_{jl}\quad\text{if
$j\notin J_l(l)$}
$$
is a coherent choice and hence defines an algebra morphism $\phi\colon K \prescript{\chi}{}{Q}^{}_{} \longrightarrow K^n\ot_{\chi} K^m$. Since the
elements $x_{jl}$'s generate linearly $K^n\ot_\chi K^m$, the morphism $\phi$ is surjective. Clearly a path of length two of $\prescript{\chi}{}{Q}$
has zero image. Since both algebras $K \prescript{\chi}{}{Q}^{}_{}/\langle \prescript{\chi}{}{Q}^{2}_{1} \rangle$ and $K^n\ot_\chi K^m$ have the
same dimension, the induced map
$$
\ov{\phi}\colon K \prescript{\chi}{}{Q}^{}_{}/\langle \prescript{\chi}{}{Q}^{2}_{1} \rangle \longrightarrow  K^n\ot_{\chi} K^m
$$
is an algebra isomorphism, as desired.
\end{proof}

The following remark generalizes the correct version of \cite{C}*{Theorem~4.6}.

\begin{remark}\label{construccion de los twisting estandar} The quiver $\prescript{\chi}{}{Q} = (\prescript{\chi}{}{Q}_0,\prescript{\chi}{}{Q}_1)$
associated with a standard twisting map $\chi$ of $K^m$ with $K^n$ fulfill the following properties:

\begin{enumerate}

\smallskip

\item $\prescript{\chi}{}{Q}_0\subseteq \mathds{N}_n^*\times \mathds{N}_m^*$ and for all $l\in \mathds{N}_m^*$ there exists $j\in \mathds{N}_n^*$
such that $(j,l)\in \prescript{\chi}{}{Q}_0$,

\smallskip

\item $\prescript{\chi}{}{Q}^1 = \{\alpha_{jl}:(j,l)\in (\mathds{N}_n^*\times \mathds{N}_m^*)\setminus \prescript{\chi}{}{Q}_0\}$,

\smallskip

\item for all $(j,l)\in (\mathds{N}_n^*\times \mathds{N}_m^*)\setminus \prescript{\chi}{}{Q}_0$ there exist $i\in \mathds{N}_m^*$ and $k\in
\mathds{N}_n^*$ such that $s(j,l) = (j,i)$ and $t(j,l) = (k,l)$.
\end{enumerate}
Conversely if $Q = (Q_0,Q_1)$ is a quiver that satisfies conditions~(1), (2) and (3), then there exists a unique standard twisting map $\chi$ of
$K^m$ with $K^n$, such that $Q = \prescript{\chi}{}{Q}$. Indeed, by Theorem~\ref{A y B determina el stm} in order to construct $\chi$ out of $Q$ it
suffices to determine families $(A(l))_{l\in N^*_m}$ and $(B(j))_{j\in N^*_n}$ of idempotent $0,1$-matrices $A(l)\in M_n(K)$ and $B(j)\in M_m(K)$
satisfying   conditions~(1) and (2) of that theorem. For this we define the $j$th row of $A(l)$ and the $l$th row of $B(j)$ as follows:

\begin{enumerate}

\smallskip

\item If $(j,l)\in Q_0$, then we set $A(l)_{jh} \coloneqq  \delta_{jh}$,

\smallskip

\item if $(j,l)\notin Q_0$, then we set $A(l)_{jh} \coloneqq  \delta_{kh}$, where $k$ is defined by $t(j,l) = (k,l)$,

\smallskip

\item if $(j,l)\in Q_0$, then we set $B(j)_{lh} \coloneqq  \delta_{lh}$,

\smallskip

\item if $(j,l)\notin Q_0$, then we set $B(j)_{lh} \coloneqq  \delta_{ih}$, where $i$ is defined by $s(j,l) = (j,i)$.

\smallskip

\end{enumerate}

\end{remark}

\subsection{Iterative construction of quasi-standard twisted tensor products}\label{subseccion construccion de quasi estandars}
The aim of this subsection is to give a method to construct the quasi-standard twisting tensor products of $K^m$ with $K^n$.  Through it we use
the notations of the previous sections, specially those introduced in the fifth one.  By Theorem~\ref{A y B determina el stm} we can associate in
an evident way a standard twisting map $\hat{\chi}$ to each quasi-standard twisting map~$\chi$. This allow us to associate a quiver
$\prescript{\chi}{}{Q}\coloneqq \prescript{\hat{\chi}}{}{Q}$ with each quasi-standard twisting tensor product $\chi \colon K^m\ot K^n
\longrightarrow K^n \ot K^m$ (Actually it is clear that the definition of $\prescript{\chi}{}{Q}^{}_{}$ introduced below Remark~\ref{calculo del
radical} has perfect sense for a  quasi-standard twisting map $\chi$ and that $\prescript{\chi}{}{Q} = \prescript{\hat{\chi}}{}{Q}$).

\begin{proposition}\label{ver un parametro} Let $\chi \colon K^m\ot K^n \longrightarrow K^n \ot K^m$ be a quasi-standard twisting map and let $k,d
\in \mathds{N}^*_n$. Assume that $\lambda \coloneqq A_{\chi}(u,l)_{kd}\ne 0$, or, which is equivalent, that there exist $w,v\in \mathds{N}^*_m$
such that $d\in \Supp\bigl((D_{(u,l)}^{wv})_{k*}\bigr)$. If $A_{\chi}(u,l)_{kk}=1$ and $A_{\hat{\chi}}(u,l)_{kd}=0$, then
\begin{equation}\label{igualdades quasi}
A_{\chi}(u,l)_{kd}=-A_{\chi}(v,l)_{kd}=A_{\chi}(v,l)_{kc_k},
\end{equation}
where $c_k=c_k(A_{\chi}(l,l))$. Moreover, there are the following arrows in the quiver of $\hat{\chi}$:
\begin{itemize}

\smallskip

\item[-] $\alpha_{kv}$, from $(k,u)$ to $(d,v)$,

\smallskip

\item[-] $\alpha_{kl}$, from $(k,u)$ to $(c_k,l)$,

\smallskip

\item[-] $\alpha_{dl}$, from $(d,v)$ to $(c_d,l)$, (where $c_d=c_d(A_{\chi}(l,l))$).

\smallskip

\end{itemize}
\end{proposition}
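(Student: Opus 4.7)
The plan is to first turn the two numerical hypotheses into usable structural information about the indices. By the explicit description in Theorem~\ref{A y B determina el stm}, together with $B_\chi(k,k)_{lu}=A_\chi(u,l)_{kk}=1$, the entry $A_{\hat\chi}(u,l)_{kd}$ equals $1$ when $d=k$, equals $-1$ when $u\ne l$ and $d=c_k\coloneqq c_k(A_\chi(l,l))$, and vanishes otherwise; so the hypothesis $A_{\hat\chi}(u,l)_{kd}=0$ yields $u\ne l$, $d\ne k$ and $d\ne c_k$. I next identify the unique $v\in\mathds{N}_m^*$ with $d\in J_v(l)$. Definition~\ref{quasi standard}(4) forces $c_d=c_k$, so $d\ne c_d$ and therefore $v\ne l$; while Remark~\ref{identidad en la diagonal} gives $D^{uu}_{(u,l)}=\ide_{J_u}$ (using $u\ne l$), so $v=u$ would force $A_\chi(u,l)_{kd}=\delta_{kd}=0$, contradicting $\lambda\ne0$. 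With $u$, $v$, $l$ pairwise distinct, Theorem~\ref{proto quasi estandar}(2) applies and supplies the explicit shapes of the $k$-th rows of $A_\chi(u,v)$, $A_\chi(v,v)$ and of $A_\chi(i,v)$ for $i\notin\{u,v\}$ that I will reuse below.

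For the equalities in~\eqref{igualdades quasi} I proceed as follows. The first identity $A_\chi(u,l)_{kd}=-A_\chi(v,l)_{kd}$ drops out of $\sum_i A_\chi(i,l)_{kd}=\delta_{kd}=0$: Remark~\ref{simplificacion} kills the contribution of $i=l$ (since $D^{uv}_{(l)}=0$ when $v\ne l$), and Definition~\ref{quasi standard}(3) kills $D^{uv}_{(i)}$ for every $i\notin\{u,v,l\}$, leaving only the two surviving terms. For the second equality $-A_\chi(v,l)_{kd}=A_\chi(v,l)_{kc_k}$ I would expand the orthogonality $A_\chi(v,l)A_\chi(l,l)=0$ at the entry $(k,c_k)$. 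The rows of the $0,1$-matrix $A_\chi(l,l)$ are supported at the diagonal for $j\in J_l(l)$ and at $c_j$ for $j\notin J_l(l)$; Lemma~\ref{condicion} controls the support of $A_\chi(v,l)_{k*}$; and Definition~\ref{quasi standard}(3) applied to $i=v$ discards every $w\in\mathscr{X}_k$ different from $v$. Only the terms $j=c_k$ and $j=d$ survive, producing $A_\chi(v,l)_{k,c_k}+A_\chi(v,l)_{kd}=0$.

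The three arrows are then read off from the definition of $\prescript{\hat\chi}{}{Q}$. For $\alpha_{kl}$ one notes that $k\notin J_l(l)$ because $c_k$ is defined, $i(k,l)=u$ is the hypothesis, and $t(\alpha_{kl})=(c_k,l)$ by definition. For $\alpha_{kv}$, Theorem~\ref{proto quasi estandar}(2)(a)(b) yields $A_\chi(u,v)_{kk}=1$ (so $i(k,v)=u$) and shows that the only non-zero entry of $A_\chi(v,v)_{k*}$ is at column $d$ (so $k\notin J_v(v)$ and $c_k(A_\chi(v,v))=d$). For $\alpha_{dl}$, Remark~\ref{identidad en la diagonal} applied to the block $D^{vv}_{(v,l)}=\ide_{J_v}$ gives $A_\chi(v,l)_{dd}=1$, hence $i(d,l)=v$; the remaining conditions $d\notin J_l(l)$ and $c_d=c_k$ have already been established. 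The delicate piece of bookkeeping is the second equality in~\eqref{igualdades quasi}, where the potentially competing contributions from $d^{(\mathscr{X}_k)}$ produced by Lemma~\ref{condicion} must be pruned to the single term $j=d$ via Definition~\ref{quasi standard}(3); once that reduction is done, the rest of the proof is a routine verification against the formulas of Theorem~\ref{A y B determina el stm} and Remark~\ref{hay flecha}.
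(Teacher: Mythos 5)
Your route is essentially the paper's: you extract $u\ne l$ and $d\notin\{k,c_k\}$ from the explicit form of $A_{\hat{\chi}}(u,l)$, place $d$ in a block $J_v(l)$ with $v\notin\{u,l\}$, prove the first equality from the column sum $\sum_i A_{\chi}(i,l)_{kd}=0$ together with Remark~\ref{simplificacion} and Definition~\ref{quasi standard}(3), prove the second by expanding $A_{\chi}(v,l)A_{\chi}(l,l)=0$ at the entry $(k,c_k)$ (this is precisely the support analysis behind the paper's terse citation of Remark~\ref{cero fuera} and Definition~\ref{quasi standard}(3), and it mirrors the computation in Proposition~\ref{cuando es cuasiestandard}(a)), and you read off the three arrows exactly as the paper does, via Theorem~\ref{proto quasi estandar}, Remark~\ref{identidad en la diagonal} and the identities $A_{\hat{\chi}}(i,i)=A_{\chi}(i,i)$, $B_{\hat{\chi}}(j,j)=B_{\chi}(j,j)$.

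There is, however, one circular step. You invoke Definition~\ref{quasi standard}(4) to get $c_d=c_k$ and then deduce $v\ne l$ from it; but that item only concerns blocks $D^{uv}_{(i)}$ with $v\ne l$ (and $c_d$ is only defined when $d\notin J_l(l)$), so it cannot be applied before $v\ne l$ is known. The missing fact is exactly what Lemma~\ref{condicion} gives, and it is how the paper proceeds: since $A_{\chi}(u,l)_{kk}=1$ and $u\ne l$ force $k\notin J_l(l)$, the lemma yields $\Supp\bigl(A_{\chi}(u,l)_{k*}\bigr)\subseteq\{k,c_k\}\cup d^{(\mathscr{X}_k)}$ with $\mathscr{X}_k\subseteq \mathds{N}_m^*\setminus\{u,l\}$; as $\lambda\ne 0$ and $d\notin\{k,c_k\}$, necessarily $d=d^{(v)}\in J_v(l)$ for some $v\notin\{u,l\}$, and only then does Definition~\ref{quasi standard}(4) legitimately give $c_d=c_k$ (this also yields $v\ne u$ directly, making your separate argument via Remark~\ref{identidad en la diagonal} redundant). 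A second, minor omission: when pruning the expansion of $\bigl(A_{\chi}(v,l)A_{\chi}(l,l)\bigr)_{kc_k}=0$ you must also discard the term $j=k$, which requires $A_{\chi}(v,l)_{kk}=0$; this follows from $A_{\chi}(u,l)_{kk}=1$, $v\ne u$ and Remark~\ref{suma en la diagonal}. With these two repairs your argument is complete and coincides with the paper's proof.
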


\begin{proof} In order to prove this result it suffices to verify that~$u\ne l$, $w=u$, $d\notin \{k,c_k\}$ and $v\ne u$. In fact, by
Lemma~\ref{condicion}, from the fact that~$d\notin \{k,c_k\}$ it follows that~$v\ne l$, and hence equalities~\eqref{igualdades quasi} hold by
Remark~\ref{cero fuera} and Definition~\ref{quasi standard}(3). Moreover, $\alpha_{kv}$, $\alpha_{kl}$ and $\alpha_{dl}$ are arrows of
$\prescript{\chi}{}{Q}^{}_{}$ since  $k\notin J_v(v)$ by Theorem~\ref{proto quasi estandar}, $k\notin J_l(l)$ by Theorem~\ref{proto quasi estandar}
and the fact that $A_{\chi}(u,l)_{kd}$, and $d\notin J_l(l)$ by Lemma~\ref{condicion}, and the starting and target vertices of these arrow are those
ones given in the statement because:

\begin{itemize}

\smallskip

\item[-] $i(k,v, \mathcal{A}_{\hat{\chi}})=u$, since $A_{\hat{\chi}}(u,v)_{kk}= B_{\hat{\chi}}(k,k)_{vu}= A_{\chi}(u,v)_{kk}=1$,

\smallskip

\item[-] $c_k\bigl(A_{\hat{\chi}}(v,v)\bigr)=d$, since $A_{\hat{\chi}}(v,v)_{kd}=A_{\chi}(v,v)_{kd}=1$,

\smallskip

\item[-] $i(k,l, \mathcal{A}_{\hat{\chi}})=u$, since $A_{\hat{\chi}}(u,l)_{kk} = B_{\chi}(k,k)_{lu} = A_{\chi}(u,l)_{kk}=1$,

\smallskip

\item[-] $c_k\bigl(A_{\hat{\chi}}(l,l)\bigr)=c_k$, since $A_{\hat{\chi}}(l,l)=A_{\chi}(l,l)$,

\smallskip

\item[-] $i(d,l, \mathcal{A}_{\hat{\chi}})=v$, since $A_{\hat{\chi}}(v,l)_{dd} = B_{\chi}(d,d)_{lv} = A_{\chi}(v,l)_{dd}=1$,

\smallskip

\item[-] $c_d\bigl(A_{\hat{\chi}}(l,l)\bigr)=c_d$, since $A_{\hat{\chi}}(l,l)=A_{\chi}(l,l)$,

\smallskip

\end{itemize}
where in the first and second item the last equality hold by Theorem~\ref{proto quasi estandar}.

So, we are reduced to prove that the facts pointed out at the beginning of the proof are true. But $u\ne l$, because otherwise
$$
A_{\chi}(l,l)_{kd} = \delta_{kd} \Rightarrow d=k \Rightarrow A_{\hat{\chi}}(l,l)_{kd}=A_{\chi}(l,l)_{kk}=1,
$$
which contradicts $A_{\hat{\chi}}(u,l)_{kd}=0$;  the equality $A_{\chi}(u,l)_{kk}=1$ say that $w=u$; the equalities
$$
A_{\hat{\chi}}(u,l)_{kk} =1\quad\text{and}\quad A_{\hat{\chi}}(u,l)_{kc_k}=-1
$$
imply that $d\notin \{k,c_k\}$ because~$A_{\hat{\chi}}(u,l)_{kd}=0$; and by Remark~\ref{identidad en la diagonal} we have $u\ne v$.
\end{proof}

\begin{definition}\label{lambda}
Let $\chi \colon K^m\ot K^n \longrightarrow K^n \ot K^m$ be a quasi-standard twisting map, $u, v, l\in \mathds{N}^*_m$,  $k\in J_u(u)$ and $d\in
J_v(v)$. Assume that there are the following arrows in the quiver of~$\hat{\chi}$:
\begin{itemize}

\smallskip

\item[-] $\alpha_{kv}$, from $(k,u)$ to $(d,v)$,

\smallskip

\item[-] $\alpha_{kl}$, from $(k,u)$ to $(c_k,l)$ (where $c_k=c_k(A_{\chi}(l,l))$),

\smallskip

\item[-] $\alpha_{dl}$, from $(d,v)$ to $(c_d,l)$ (where $c_d=c_d(A_{\chi}(l,l))$).

\smallskip

\end{itemize}
If $\Supp\bigl(D_{(u,l)}^{uv}\bigr)=\emptyset$, then for each $\lambda\in K$ we define the map $\chi_{_1}\colon K^m \ot K^n \longrightarrow K^n
\ot K^m$, by
$$
\setlength{\tabcolsep}{0.5pt}
\begin{tabular}{ >{$}l<{$}  >{$}c<{$}  >{$}l<{$} r}
A_{\chi_{_1}}(u,l)_{kd}  & \coloneqq &\phantom{-}\lambda,\\
A_{\chi_{_1}}(v,l)_{kd}  & \coloneqq &-\lambda,\\
A_{\chi_{_1}}(v,l)_{kc_k}& \coloneqq &\phantom{-}\lambda,\\
A_{\chi_{_1}}(u,l)_{kc_k}& \coloneqq &\phantom{-} A_{\chi}(u,l)_{kc_k}-\lambda,\\
A_{\chi_{_1}}(i,t)_{js}  & \coloneqq &\phantom{-} A_{\chi}(i,t)_{js} & \text{\, if $(i,t,j,s)\notin\{(u,l,k,d),(v,l,k,d),(v,l,k,c_k),(u,l,k,c_k)\}$.}\\
\end{tabular}
$$
If necessary to be more precise the map $\chi_{_1}$ will be denoted by $\Lambda^{\lambda}_{(k,u),(d,v),(c_k,l)}(\chi)$.
\end{definition}

\begin{remark}\label{lambda=0}
Note that if $\lambda=0$, then $\chi_{_1}=\chi$.
\end{remark}

\begin{remark}\label{no es necesario pedir A(u,l)kk=1} By Remark~\ref{hay flecha}, if the twisting map $\chi$ satisfies the assumptions made in
Definition~\ref{lambda}, then $A_{\hat{\chi}}(u,l)_{kc_k}=-1$, which by Remark~\ref{condicion para twisting standard} implies that
$A_{\chi_{_1}}(u,l)_{kk}=A_{\hat{\chi}}(u,l)_{kk}=1$. Moreover,  by the very definition of $\prescript{\chi}{}{Q}^{}_{}$, the existence of the
arrows $\alpha_{kv}$, $\alpha_{kl}$ and $\alpha_{dl}$ implies that $u$, $v$ and $l$ are three different elements of $\mathds{N}^*_m$. Since $k \in
J_u(u)\setminus (J_v(v) \cup J_l(l))$, $c_k \in J_l(l)$ and $d \in J_v(v)\setminus J_l(l)$, from this fact it follows that $k$, $c_k$ and $d$ are
three different elements of $\mathds{N}^*_n$, which implies that $A_{\hat{\chi}}(u,l)_{kd}=0$, because $\Supp(A_{\hat{\chi}}(u,l)_{k*})=
\{k,c_k\}$.  So, the hypothesis of Proposition~\ref{ver un parametro} are satisfied by $\chi_{_1}$, provided that it is a quasi-standard twisting
map.
\end{remark}

\begin{remark}\label{conservacion de rangos} Assume that the twisting map $\chi$ satisfies the assumptions made in Definition~\ref{lambda}. If
$\chi_{_1}$ is a (quasi-standard) twisting map, then $\Gamma_{\chi_{_1}}=\Gamma_{\chi}$ and $\tilde{\Gamma}_{\chi_{_1}}=\tilde{\Gamma}_{\chi}$.
\end{remark}

\begin{proposition}\label{construccion de quasiestandar} Let $\chi$, $\chi_{_1}$, $u$, $v$, $l$, $k$, $d$, $\alpha_{kv}$, $\alpha_{kl}$,
$\alpha_{kd}$ and $\lambda$ be as in Definition~\ref{lambda}. Assume that $\lambda\ne 0$. If $\chi_{_1}$ is a quasi-standard twisting map, then
\begin{enumerate}

\smallskip

\item $A_{\chi_{_1}}(i,u)_{kj}=A_{\hat{\chi}}(i,u)_{kj}$ and $A_{\chi_{_1}}(i,v)_{kj}=A_{\hat{\chi}}(i,v)_{kj}$ for all $i,j$,

\smallskip

\item $A_{\chi_{_1}}(u,l)$, $A_{\chi_{_1}}(v,l)$, $B_{\chi_{_1}}(d,k)$ and $B_{\chi_{_1}}(c_k,k)$ are idempotent matrices.

\smallskip

\end{enumerate}
Moreover, condition~(2) implies that $\chi_{_1}$ is a (quasi-standard) twisting map.
\end{proposition}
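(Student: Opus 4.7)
The plan is to handle the three assertions in order: (1), (2), and the ``moreover'' clause, each leveraging a different tool developed earlier in the paper.

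First I would prove (1). Since by construction $A_{\chi_{_1}}(u,l)_{kd}=\lambda\ne 0$ with $u\ne v\ne l$, the hypothesis that $\chi_{_1}$ is a quasi-standard twisting map allows us to apply Theorem~\ref{proto quasi estandar}(2) to the $l$-th column of $\chi_{_1}$. This pins down the row-$k$ entries of $A_{\chi_{_1}}(u,v)$, $A_{\chi_{_1}}(v,v)$ and $A_{\chi_{_1}}(i,v)$ for $i\notin\{u,v\}$ to $\delta_{kj}-\delta_{jd}$, $\delta_{jd}$ and $0$ respectively. Because the construction of $\chi_{_1}$ only perturbs the $l$-th column, these same values hold for $\chi$. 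Next I would compare with $\hat{\chi}$: by the definition of $\hat{\chi}$ (Theorem~\ref{A y B determina el stm}) we have $A_{\hat{\chi}}(v,v)=A_{\chi}(v,v)$ and $B_{\hat{\chi}}(k,k)=B_{\chi}(k,k)$, so $c_k(A_{\hat{\chi}}(v,v))=d$ and $i(k,v,\mathcal{A}_{\hat{\chi}})=u$; substituting into the explicit formula of Remark~\ref{la matriz Ail en el caso estandar} yields exactly the same row-$k$ values in column $v$. For column $u$, the identity $A_{\chi_{_1}}(i,u)_{kj}=A_{\hat{\chi}}(i,u)_{kj}=\delta_{iu}\delta_{kj}$ is even more direct: $k\in J_u(u)$, hence Theorem~\ref{proto quasi estandar}(1) applied both to $\chi_{_1}$ and (the standard) $\hat{\chi}$ places $k$ in $F_0(\mathcal{A},u)$ in each case. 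Part (2) is then trivial: Proposition~\ref{caracterizacion}(1)--(2) says that every family $(A_{\chi_{_1}}(i,l))_i$ and $(B_{\chi_{_1}}(j,k))_j$ of a twisting map is a family of orthogonal idempotents, so the four listed matrices are idempotent.

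For the ``moreover'' direction I would verify conditions~(1)--(4) of Corollary~\ref{caracterizacion en terminos de las A(i,j)s} for $\chi_{_1}$. Conditions~(2) and~(3) are preserved automatically, because the four perturbations sum to zero in row $k$ of column $l$ and to zero in the column-sum as well. For condition~(1) at columns other than $l$ nothing changes; at column $l$, idempotency of $A_{\chi_{_1}}(u,l)$ and $A_{\chi_{_1}}(v,l)$ is exactly hypothesis~(2), the remaining $A_{\chi_{_1}}(i,l)=A_{\chi}(i,l)$ are idempotent since $\chi$ is a twisting map, and $\sum_{i}A_{\chi_{_1}}(i,l)=\ide$, so Remark~\ref{cuando son idempotentes ortogonales} delivers orthogonality.

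The main obstacle, as expected, is condition~(4) at column $l$; I would handle it by showing that the $l$-th column of $\chi_{_1}$ is quasi-standard and then invoking Theorem~\ref{proto quasi estandar}. Items~(1)--(3) of Definition~\ref{quasi standard} are immediate from the construction (the only perturbed entries lie in $J_u\times J_v$ and $J_u\times J_{l}$). Item~(4) reduces to verifying $c_d=c_k$, which is forced by the simultaneous existence of the arrows $\alpha_{kl}$ and $\alpha_{dl}$ in $\prescript{\chi}{}{Q}$ together with the prescription that both $A_{\chi_{_1}}(u,l)_{kc_k}$ and $A_{\chi_{_1}}(v,l)_{kc_k}$ are altered at the same column index $c_k$. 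Once quasi-standardness is in place, I would verify condition~(1) of Theorem~\ref{proto quasi estandar}: this copies over from $\chi$ for rows outside those affected by the perturbation, and at row $k$ it uses the explicit structure identified in the arrow data. Finally, condition~(2) of Theorem~\ref{proto quasi estandar} must hold for the unique new nonzero block entry: here one must show $A_{\chi_{_1}}(u,v)_{kj}=\delta_{kj}-\delta_{jd}$, $A_{\chi_{_1}}(v,v)_{kj}=\delta_{jd}$ and $A_{\chi_{_1}}(i,v)_{kj}=0$ for $i\notin\{u,v\}$. Since columns $u$ and $v$ of $\chi_{_1}$ coincide with those of $\chi$, this is a statement about $\chi$, to be proved without circularly invoking part~(1). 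I would extract it from the arrow $\alpha_{kv}$ (which tells us $A_{\chi}(u,v)_{kk}=1$ and $A_{\chi}(v,v)_{kd}=1$) combined with quasi-standardness of column $v$ of $\chi$ and Lemma~\ref{condicion}, which together constrain the support of row $k$ of each $A_{\chi}(i,v)$ tightly enough to force the claimed values. Once Theorem~\ref{proto quasi estandar} applies, condition~(4) for column $l$ of $\chi_{_1}$ holds, and $\chi_{_1}$ is a quasi-standard twisting map, completing the proof.
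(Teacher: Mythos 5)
Your handling of items (1) and (2) is essentially the paper's own argument (Theorem~\ref{proto quasi estandar}(1)--(2) applied to the $l$-th column of $\chi_{_1}$, comparison with $\hat{\chi}$ via Theorem~\ref{A y B determina el stm}, and Proposition~\ref{caracterizacion}(1)--(2) for idempotency), so that part is fine. The gap is in the ``moreover'' clause. First, verifying Corollary~\ref{caracterizacion en terminos de las A(i,j)s}(4) only ``at column $l$'' is not enough: the perturbed entries $A_{\chi_{_1}}(u,l)_{kd}$, $A_{\chi_{_1}}(u,l)_{kc_k}$, $A_{\chi_{_1}}(v,l)_{kd}$, $A_{\chi_{_1}}(v,l)_{kc_k}$ also appear as the factor $A(i,h)_{kj}$ with $h=l$ in the instances of condition~(4) whose \emph{target} column is any $l'\ne l$ (row $k$, $i\in\{u,v\}$, $j\in\{d,c_k\}$), and those instances change by a nonzero multiple of $A_{\chi}(l,l')_{kj'}$; nothing in your sketch shows these contributions vanish. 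Equivalently, condition~(4) for the fixed row index $k$ is precisely the statement $\delta_{jj'}B_{\chi_{_1}}(j,k)=B_{\chi_{_1}}(j,k)B_{\chi_{_1}}(j',k)$ for all $j,j'$, and your argument never uses the hypothesis that $B_{\chi_{_1}}(d,k)$ and $B_{\chi_{_1}}(c_k,k)$ are idempotent --- a hypothesis that cannot be dispensed with. Second, the input you feed into Theorem~\ref{proto quasi estandar}(2) is unsupported: the identities $A_{\chi}(u,v)_{kj}=\delta_{kj}-\delta_{jd}$, $A_{\chi}(v,v)_{kj}=\delta_{jd}$, $A_{\chi}(i,v)_{kj}=0$ for $i\notin\{u,v\}$ do not follow from the arrow $\alpha_{kv}$ plus quasi-standardness of the $v$-th column of $\chi$ plus Lemma~\ref{condicion}; those facts only bound $\Supp\bigl(A_{\chi}(i,v)_{k*}\bigr)$ by $\{k,c_k(A_{\chi}(v,v))\}\cup d^{(\mathscr{X}_k)}$ and leave room for nonzero entries in other blocks of column $v$, and likewise $c_d=c_k$ (needed for Definition~\ref{quasi standard}(4) in column $l$ of $\chi_{_1}$) is not forced by the mere existence of the three arrows. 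Both facts really do require condition~(2), so the circularity you flag is not actually resolved in your sketch.

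The paper's proof of the ``moreover'' part sidesteps all of this: only the $l$-th column of $\mathcal{A}$ and the $k$-th column of $\mathcal{B}$ are perturbed, and the four perturbations cancel in every row sum, so conditions~(3) and~(4) of Proposition~\ref{caracterizacion} are untouched; each perturbed family still sums to the identity, so hypothesis~(2) together with Remark~\ref{cuando son idempotentes ortogonales} makes $(A_{\chi_{_1}}(i,l))_i$ and $(B_{\chi_{_1}}(j,k))_j$ complete families of orthogonal idempotents, i.e.\ conditions~(1) and~(2) of Proposition~\ref{caracterizacion} hold --- and the latter already \emph{is} the cross-column condition you were trying to verify by hand. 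If you want to keep your route, you must (a) establish orthogonality of the whole $k$-th $\mathcal{B}$-column, using the idempotency of $B_{\chi_{_1}}(d,k)$ and $B_{\chi_{_1}}(c_k,k)$, rather than only condition~(4) with target column $l$, and (b) derive the column-$v$ identities and $c_d=c_k$ from condition~(2) rather than from the quiver data alone.
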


\begin{proof} By Remark~\ref{no es necesario pedir A(u,l)kk=1} we know that~$A_{\chi_{_1}}(u,l)_{kk}=A_{\chi}(u,l)_{kk}=1$, and so, by
Theorem~\ref{proto quasi estandar}(1),
$$
A_{\chi_{_1}}(i,u)_{kj} = \delta_{iu}\delta_{kj}= A_{\hat{\chi}}(i,u)_{kj}\quad\text{ for all $i,j$.}
$$
On the other hand, since $A_{\chi_{_1}}(u,l)_{kd}\ne 0$, by Theorem~\ref{proto quasi estandar}(2) we have
$$
A_{\chi_{_1}}(i,v)_{kj}=\begin{cases}
                                        \delta_{kj} - \delta_{jd} &\text{if $i=u$,}\\
                                        \delta_{jd}               &\text{if i=v,}\\
                                        0                         &\text{otherwise,}
                        \end{cases}
$$
and a direct computation using Theorem~\ref{A y B determina el stm} shows that $A_{\hat{\chi}}(i,v)_{kj}$ is given by the same formula (for this
computation is can be useful to see the proof of Proposition~\ref{ver un parametro}). This finishes the proof of condition~(1). By items~(1)
and~(2) of Proposition~\ref{caracterizacion}, condition~(2) is also satisfied. Finally, by Remark~\ref{cuando son idempotentes ortogonales}
condition~(2) is sufficient for $\chi_{_1}$ to be a twisting map, since $\sum_i A_{\chi_{_1}}(i,l)=\ide_{K^n}$ and $\sum_j
B_{\chi_{_1}}(j,k)=\ide_{K^m}$.
\end{proof}

\begin{corollary} Under the assumptions  made in Definition~\ref{lambda}, if $\chi$ is standard, then $\chi_{_1}$ is a
quasi-standard twisting map.
\end{corollary}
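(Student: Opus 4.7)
The plan is to invoke Proposition~\ref{construccion de quasiestandar}: it suffices to verify that the four matrices $A_{\chi_{_1}}(u,l)$, $A_{\chi_{_1}}(v,l)$, $B_{\chi_{_1}}(d,k)$ and $B_{\chi_{_1}}(c_k,k)$ are idempotent. I would write each as $X+Y$, where $X$ is the corresponding matrix of $\chi$ (idempotent by standardness) and $Y$ is the perturbation prescribed by Definition~\ref{lambda}; since $X^2=X$, idempotence of $X+Y$ reduces to the identity $XY+YX+Y^2=Y$.

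For $A_{\chi_{_1}}(u,l)$, set $M=A_\chi(u,l)$ and let $E$ be the perturbation, supported in row $k$ with $E_{kd}=\lambda$ and $E_{kc_k}=-\lambda$. Since $\chi$ is standard, Remark~\ref{la matriz Ail en el caso estandar} shows that the nonzero rows of $M$ all lie in $J_u(l)$; but $c_k\in J_l(l)$ and $d\in J_v(l)$ (the latter from the arrow $\alpha_{dl}$), both disjoint from $J_u(l)$. Thus rows $d$ and $c_k$ of $M$ vanish, so $EM=0$. The same description yields $M_{ik}=0$ for $i\ne k$ (a nonzero $M_{ik}$ would force $k=c_i(A_\chi(l,l))\in J_l(l)$, contradicting $k\in J_u(l)$) while $M_{kk}=1$; so $(ME)_{ij}=\delta_{ik}E_{kj}$. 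Finally $E^2=0$, because the nonzero entries of $E$ all sit at columns different from $k$. These combine to give $ME+EM+E^2=E$.

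The argument for $A_{\chi_{_1}}(v,l)$ is parallel. With $N=A_\chi(v,l)$ and $F$ the perturbation ($F_{kd}=-\lambda$, $F_{kc_k}=\lambda$), column $k$ of $N$ vanishes (since $k\notin J_v(l)$), giving $NF=0$, and $F^2=0$ by the same support argument. Computing $FN$ uses only rows $d$ and $c_k$ of $N$: row $c_k$ vanishes (as $c_k\in J_l(l)$ is disjoint from $J_v(l)$), while row $d$ has $N_{dd}=1$ and $N_{dc_d}=-1$. The crucial ingredient is the equality $c_d=c_k$, which expresses that the three arrows $\alpha_{kv},\alpha_{kl},\alpha_{dl}$ close up to a triangle in $\prescript{\chi}{}{Q}$ and which is forced on us by Definition~\ref{quasi standard}(4) applied to the new entry $A_{\chi_{_1}}(u,l)_{kd}=\lambda$. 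With $c_d=c_k$ we obtain $(FN)_{kd}=-\lambda=F_{kd}$, $(FN)_{kc_k}=\lambda=F_{kc_k}$, and $(FN)_{kj}=0$ otherwise, so $NF+FN+F^2=F$.

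For the two $B$-matrices I would appeal to duality. By Proposition~\ref{A standard equivale a B standard} the map $\tilde\chi$ is also standard, and by Remark~\ref{A sub Chi es B sub tau chi tau} we have $\mathcal{A}_{\tilde\chi}=\mathcal{B}_\chi$; moreover the perturbation $B_{\chi_{_1}}-B_\chi$ is supported exactly at $(d,k)_{lu}$, $(d,k)_{lv}$, $(c_k,k)_{lu}$, $(c_k,k)_{lv}$ and fits the pattern of Definition~\ref{lambda} applied to $\tilde\chi$ with indices suitably swapped. Consequently the two $A$-side computations above, transferred to $\tilde\chi$, yield idempotence of $B_{\chi_{_1}}(d,k)$ and $B_{\chi_{_1}}(c_k,k)$. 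The main obstacle throughout is the bookkeeping of the disjoint supports $J_u(l)$, $J_v(l)$, $J_l(l)$ and of the columns $k$, $c_k$, $d$; everything else reduces to the explicit form of standard idempotent matrices in Remark~\ref{la matriz Ail en el caso estandar} and to the triangle identity $c_d=c_k$.
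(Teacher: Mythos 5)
Your overall route is the same as the paper's: reduce everything, via the ``Moreover'' part of Proposition~\ref{construccion de quasiestandar}, to checking that the four matrices $A_{\chi_{_1}}(u,l)$, $A_{\chi_{_1}}(v,l)$, $B_{\chi_{_1}}(d,k)$, $B_{\chi_{_1}}(c_k,k)$ are idempotent, and then carry out the computation that the paper only calls ``straightforward''. Your computation for $A_{\chi_{_1}}(u,l)$ is correct, and the duality reduction for the two $B$-matrices can indeed be made to work (there the analogue of the triangle identity holds automatically, since the two arrows $\alpha_{kl}$ and $\alpha_{kv}$ have the common source $(k,u)$). You should also dispose of the case $\lambda=0$ separately (then $\chi_{_1}=\chi$ and Proposition~\ref{construccion de quasiestandar} is stated only for $\lambda\ne 0$), but that is cosmetic.

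The genuine gap is your justification of the equality $c_d=c_k$, which, as you yourself observe, is exactly what makes $A_{\chi_{_1}}(v,l)$ idempotent. You derive it ``by Definition~\ref{quasi standard}(4) applied to the new entry $A_{\chi_{_1}}(u,l)_{kd}=\lambda$'', but Definition~\ref{quasi standard}(4) is a property of quasi-standard columns, and quasi-standardness of $\chi_{_1}$ is precisely what you are trying to prove; the argument is circular. Nor does $c_d=c_k$ follow from the hypotheses of Definition~\ref{lambda} alone when $\chi$ is standard: using Remark~\ref{construccion de los twisting estandar} one can build a standard twisting map (already of $K^3$ with $K^4$, with vertices $(1,1)$, $(2,2)$, $(3,3)$, $(4,3)$) in which the three arrows $\alpha_{kv}$, $\alpha_{kl}$, $\alpha_{dl}$ exist but $\alpha_{kl}$ and $\alpha_{dl}$ have distinct targets, i.e.\ $c_d\ne c_k$; there your own formula $(FN)_{kc_d}=\lambda$, $(FN)_{kc_k}=0$ shows $A_{\chi_{_1}}(v,l)$ is not idempotent for $\lambda\ne 0$, so $\chi_{_1}$ is not even a twisting map. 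The situation the paper has in mind (compare Proposition~\ref{ver un parametro} and the closing remark of the section) is the one where the three arrows close up to a triangle, so $c_d=c_k$ must be taken as part of the hypotheses (or verified from some extra assumption), not deduced from the conclusion. With $c_d=c_k$ assumed, your computation is complete and agrees with the paper's; without an honest source for that equality, the proof of idempotency of $A_{\chi_{_1}}(v,l)$ --- and hence the corollary --- does not go through.
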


\begin{proof} When $\lambda =0$  this is evident, whereas when it is different from $0$ a straightforward computation shows that
$A_{\chi_{_1}}(u,l)$, $A_{\chi_{_1}}(v,l)$, $B_{\chi_{_1}}(d,k)$ and $B_{\chi_{_1}}(c_k,k)$ are idempotent matrices.
\end{proof}

\begin{remark}\label{deformacion} A straightforward computation shows that if $\chi_1$ of Definition 7.7 is a twisting map, then the construction
$\chi_{_1}$ out of $\chi$ corresponds to a formal deformation in the sense of Gerstenhaber. To be more precise, the multiplication map
$\mu_{\chi_{_1}}$ of $\chi_{_1}$ is given by
$$
\mu_{\chi_{_1}}(a\ot b)=\mu_0(a\ot b)+\lambda \mu_1(a\ot b),
$$
where $\mu_0$ is the multiplication in $D\coloneqq K^n\ot_{\chi} K^m$ and $\mu_1\colon D\ot D \longrightarrow D$ is the map defined by
\begin{align*}
&\mu_1( x_{ku}\ot \hspace{0.5pt} x_{dl}\hspace{0.5pt}) \hspace{0.5pt} = \phantom{-}1\\
&\mu_1(x_{kv}\ot \hspace{-0.8pt} x_{c_k l}\hspace{-0.8pt})\hspace{-0.6pt}=\phantom{-}1\\
&\mu_1( x_{kv} \ot \hspace{0.8pt} x_{dl}\hspace{0.8pt}) \hspace{0.7pt} =-1\\
&\mu_1(x_{ku}\ot\hspace{-0.5pt} x_{c_k l}\hspace{-0.5pt})\hspace{-0.5pt}=-1
\shortintertext{and}
&\mu_1(x_{pq}\ot \hspace{0.5pt} x_{rs}\hspace{0.5pt})\hspace{0.5pt} =\phantom{-}0 && \text{if $(x_{pq},x_{rs})\notin\{(x_{ku},x_{dl}),(x_{kv},x_{dl}),(x_{ku},x_{c_k l}),(x_{kv},x_{c_k l})\}.$}
\end{align*}
\end{remark}

\begin{remark} Each quasi-standard twisting map can be obtained from a standard twisting map by applying repeatedly the construction of
Definition~\ref{lambda}, thus adding parameters $\lambda_1,\lambda_2,\lambda_3,\dots$, obtaining quasi-standard twisting maps
$\chi_{_1},\chi_{_2},\chi_{_3},\dots$.
\end{remark}

\begin{remark} Let $\chi \colon K^m\ot K^n \longrightarrow K^n \ot K^m$ be a quasi-standard twisting map. For each $u,l,v\in \mathds{N}^*_m$ and
$k,d\in \mathds{N}^*_n$ such that $u$, $l$ and $v$ are three different elements of $\mathds{N}^*_m$, $k\in J_u(u)$, $d\in J_v(v)$ and
$A_{\chi}(u,l)_{kd}\ne 0$, the quiver of $\hat{\chi}$ has a triangle with vertices $(k,u)$, $(d,v)$ and $(c_k,l)$, and arrows $\alpha_{kv}$,
$\alpha_{kl}$ and $\alpha_{dl}$, from $(k,u)$ to $(d,v)$,  $(k,u)$ to $(c_k,l)$, and $(d,v)$ to $(c_k,l)$, respectively. In fact, this follows from
the previous results and the fact that $c_k=c_d$ by Definition~\ref{quasi standard}(4).
\end{remark}

\subsection{Jacobson radical of quasi-standard twisted tensor products}
Let $\chi\colon K^m\ot K^n \longrightarrow K^n \ot K^m$ be a quasi-standard twisting map. For each $j\in \mathds{N}^*_n$ and $l\in \mathds{N}^*_m$,
let $x_{jl}$ be as in Remark~\ref{tabla e ideales}. In this subsection we prove that, as in the case when $\chi$ is standard, the Jacobson ideal
$\J(C)$ of $C\coloneqq K^n\ot_{\chi} K^m$ is the ideal $I\coloneqq \bigoplus_{j\notin J_l(l)} K x_{jl}$ of $C$ (however unlike what happens in the
standard case, when $\chi$ is not standard $I$ can be not a square zero ideal). As a consequence there exists a subalgebra
$A\simeq \frac{C}{\J(C)}$ of $C$ such that $C= A\bigoplus \J(C)$.

\begin{theorem}\label{radical en el caso quasiestandar} Let $\chi$, $C$ and $I$ be as above. Then $I$ is the Jacobson ideal of $C$.
\end{theorem}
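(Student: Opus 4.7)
The plan is to prove $\J(C)=I$ in three steps, mimicking the blueprint of the standard case in Remark~\ref{calculo del radical} but contending with the fact that $I$ need not be square-zero when $\chi$ is quasi-standard but not standard. The first two steps go through almost identically to the standard case; the third is the novelty.

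First, I would check that $I$ is a two-sided ideal. The identity $x_{ki}x_{jl}=A_{\chi}(i,l)_{kj}\,x_{kl}$ combined with Remark~\ref{columnas que se anulan en l_0} applied to column $l$ (if $k\in J_l(l)$ then $A_{\chi}(s,l)_{k*}=\delta_{sl}\delta_{k*}$) forces the product to vanish whenever $k\in J_l(l)$ and $j\notin J_l(l)$, while for $k\notin J_l(l)$ the product lies trivially in $I$. The corresponding statement for right multiplication follows from the dual, using that $\tilde{\chi}$ is also quasi-standard by Proposition~\ref{A quasi standard equivale a B quasi standard}.

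Next I would identify $C/I$. For $j\in J_i(i)$ and $k\in J_l(l)$, the same column-$l$ remark (now applied to row $j\in J_l(l)$) gives $A_{\chi}(i,l)_{jk}=\delta_{il}\delta_{jk}$, so in $C/I$ one has
$$
[x_{ji}]\,[x_{kl}] = \delta_{il}\delta_{jk}\,[x_{jl}].
$$
Hence the family $\{[x_{jl}]:j\in J_l(l)\}$ consists of pairwise orthogonal idempotents summing to the class of $1_C=\sum_{j,l}x_{jl}$ (the omitted classes vanish modulo $I$). Consequently $C/I\cong K^{N}$ with $N=\sum_l|J_l(l)|$ is commutative semisimple, which gives $\J(C)\subseteq I$.

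The remaining step, which I expect to be the main obstacle, is showing $I\subseteq\J(C)$, equivalently that $I$ is nilpotent. Each generator $x_{jl}$ of $I$ already satisfies $x_{jl}^{2}=A_{\chi}(l,l)_{jj}x_{jl}=0$, but in the quasi-standard case $I^{2}$ can be nonzero: indeed $x_{kv}x_{dl}=-\lambda\,x_{kl}\in I$ whenever a triangle of the type described in Definition~\ref{lambda} is present. To control all iterated products I would use the telescoping identity
$$
x_{j_1l_1}\,x_{j_2l_2}\cdots x_{j_kl_k} = \Bigl(\prod_{s=1}^{k-1}A_{\chi}(l_s,l_{s+1})_{j_1,j_{s+1}}\Bigr)x_{j_1,l_k},
$$
combined with Lemma~\ref{condicion}, which restricts each row $j_1$ of $A_{\chi}(l_s,l_{s+1})$ to have support in $\{j_1,c_{j_1}\}\cup d^{(\mathscr{X}_{j_1})}$, and with the requirement $j_{s+1}\notin J_{l_{s+1}}(l_{s+1})$, which rules out the columns sitting in the diagonal block. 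The plan is to show that these constraints force any such chain of transitions to collapse after a bounded number of steps. As a complementary route, one can induct on the number of elementary deformations in the iterative construction of Subsection~\ref{subseccion construccion de quasi estandars}, starting from the standard base case where $I^{2}=0$ by Remark~\ref{calculo del radical} and using the explicit formula in Remark~\ref{deformacion}, where $\mu_{1}(C\otimes C)\subseteq I$ has small rank, to propagate nilpotency through each deformation step. Once nilpotency of $I$ is established we obtain $\J(C)=I$, and the Wedderburn--Malcev theorem yields the decomposition $C=A\oplus\J(C)$ with $A\simeq C/\J(C)$ mentioned in the paragraph preceding the theorem.
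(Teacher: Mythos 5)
Your first two steps are fine and agree with what the paper does: the ideal property follows from $F(A_{\chi}(l,l))=F_0(\mathcal{A}_{\chi},l)$ (Remark~\ref{columnas que se anulan en l_0}, or Theorem~\ref{proto quasi estandar}(1)), and the observation that $C/I$ is spanned by orthogonal idempotents, hence semisimple, gives $\J(C)\subseteq I$ (this is essentially the corollary the paper proves right after the theorem; note your identity $A_{\chi}(i,l)_{jk}=\delta_{il}\delta_{jk}$ is justified when $j\in J_l(l)$, and when $j\notin J_l(l)$ the product is already $0$ modulo $I$, so the displayed relation in $C/I$ still holds). The genuine gap is exactly the step you yourself flag as the main obstacle: the inclusion $I\subseteq\J(C)$, i.e.\ the nilpotency of $I$, is only announced as a ``plan''. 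Neither route is carried out, and the second route (induction over the elementary deformations of Definition~\ref{lambda}) is actually doubtful as stated: nilpotency is not automatically propagated by these deformations, since $I^2=0$ in the standard base case while $\J(C)^2\ne 0$ as soon as $\chi$ is quasi-standard but not standard (Remark~\ref{radical quasiestandar no estandar}); the fact that $\mu_1$ in Remark~\ref{deformacion} is supported on four pairs of monomials does not by itself bound the powers of the deformed ideal, so this induction would need precisely the quantitative argument that is missing.

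The paper closes this gap with a short pigeonhole argument giving $I^{\,n+1}=0$. Take $x_{j_1l_1},\dots,x_{j_{n+1}l_{n+1}}\in I$ and use your telescoping identity on consecutive subproducts, $x_{j_ul_u}\cdots x_{j_vl_v}=\bigl(\prod_{h=0}^{v-u-1}A_{\chi}(l_{u+h},l_{u+h+1})_{j_uj_{u+h+1}}\bigr)x_{j_ul_v}$. If $j_s\in J_{l_t}(l_t)$ for some $s<t$, then $j_s\in F_0(\mathcal{A}_{\chi},l_t)$ forces $A_{\chi}(l_{t-1},l_t)_{j_sj_t}=\delta_{l_{t-1}l_t}\delta_{j_sj_t}=0$ (because $j_t\notin J_{l_t}(l_t)$), so the subproduct from $s$ to $t$, and hence the whole product, vanishes. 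Otherwise $j_s\notin J_{l_t}(l_t)$ for all $s<t$, and since $j_1,\dots,j_{n+1}\in\mathds{N}^*_n$ two row indices coincide, say $j_u=j_v$ with $u<v$; the last scalar factor of the subproduct from $u$ to $v$ is the diagonal entry $A_{\chi}(l_{v-1},l_v)_{j_uj_u}\in\{0,1\}$, and if it were $1$ then $j_u\in F(A_{\chi}(l_{v-1},l_v))\subseteq F_0(\mathcal{A}_{\chi},l_{v-1})\subseteq J_{l_{v-1}}(l_{v-1})$ by Theorem~\ref{proto quasi estandar}(1), contradicting either the standing assumption (if $v-1>u$) or $x_{j_ul_u}\in I$ (if $v-1=u$); so this factor is $0$ and the product again vanishes. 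Note that the effective trick is to vary the starting row index $j_u$ over consecutive subproducts, rather than fixing $j_1$ and tracking row supports via Lemma~\ref{condicion}; that is what makes the bound $n+1$ immediate, whereas your support-chasing plan was left without a termination argument.
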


\begin{proof} For each $j,k\in \mathds{N}^*_n$ and $i,l\in \mathds{N}^*_m$. If $i\ne l$ or $k\ne j$, then
$$
x_{ki}x_{jl}=A_{\chi}(i,l)_{kj}x_{kl}\in I.
$$
In fact, if $k\notin J_l(l)$, then this is true by the very definition of $I$, and if $k\in J_l(l)$, then it is true because
$A_{\chi}(i,l)_{kj}=0$ by Theorem~\ref{proto quasi estandar}(1). So, $I$ is a two-sided ideal of $C$. To finish the proof it suffices to show that
$$
x_{j_1l_1}\cdots x_{j_{n+1}l_{n+1}} =0\qquad\text{for each $x_{j_1l_1},\dots, x_{j_{n+1}l_{n+1}}\in I$.}
$$
By the above argument this is true if there exist $s<t$ such that $j_s\in J_{l_t}(l_t)$. So, assume that this is not the case. Then, since
$j_1,\dots, j_{n+1}\in \mathds{N}^*_n$ there exist $u<v$ such that $j_u=j_v$, and so
$$
x_{j_ul_u}\cdots x_{j_vl_v}=\prod_{h=0}^{v-u-1} A_{\chi}(l_{u+h},l_{u+h+1})_{j_uj_{u+h+1}}x_{j_ul_v}=0,
$$
because  $A_{\chi}(l_{v-1},l_v)_{j_uj_v}=0$ by the fact that $j_u\notin J_{l_{v-1}}(l_{v-1})$ and Theorem~\ref{proto quasi estandar}(1).
\end{proof}

\begin{corollary}\label{caracterizacion quasiestandar} Under the hypothesis of Theorem\ref{radical en el caso quasiestandar}, the quotient algebra
$\frac{C}{\J(C)}$ is isomorphic to the direct product $\prod_{j\in J_l(l)} K x_{jl}$ of fields, and there exists a subalgebra $A\simeq
\frac{C}{\J(C)}$ of $C$ such that $C= A\bigoplus \J(C)$.
\end{corollary}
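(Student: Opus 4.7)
The plan is twofold: first I would identify the quotient $C/\J(C)$ explicitly as a product of copies of the ground field, and then invoke Wedderburn--Malcev to split the extension.

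For the first step, Theorem~\ref{radical en el caso quasiestandar} tells us $I = \bigoplus_{j\notin J_l(l)} K x_{jl}$ is the Jacobson ideal, so setting $\Omega \coloneqq \{(j,l) : j\in J_l(l)\}$, the classes $\{[x_{jl}] : (j,l)\in\Omega\}$ form a basis of $C/\J(C)$. To read off the multiplication on this basis I would use Remark~\ref{tabla e ideales}, which gives $x_{ji}\, x_{kl} = A_{\chi}(i,l)_{jk}\, x_{jl}$, and split into two cases. If $j\notin J_l(l)$ the right-hand side already lies in $\J(C)$ and vanishes in the quotient. If instead $j\in J_l(l)$, the inclusion $J_l(l) = F(A_{\chi}(l,l)) \subseteq F_0(\mathcal{A}_{\chi},l)$ from Theorem~\ref{proto quasi estandar}(1), applied to the quasi-standard $l$-th column, forces $A_{\chi}(i,l)_{jk} = \delta_{il}\delta_{jk}$. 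Combining the two cases,
$$
[x_{ji}]\,[x_{kl}] = \delta_{il}\,\delta_{jk}\,[x_{jl}] \qquad \text{for all } (j,i),(k,l)\in\Omega,
$$
so $\{[x_{jl}] : (j,l)\in\Omega\}$ is a complete family of orthogonal idempotents summing to the class of $1_C$, which exhibits $C/\J(C)$ as the product of fields $\prod_{(j,l)\in\Omega} K\,[x_{jl}]$.

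For the second step, since $C/\J(C)$ is a finite product of copies of the base field $K$ it is a separable commutative $K$-algebra, and the Wedderburn--Malcev theorem therefore provides an algebra-homomorphism section of the projection $C \twoheadrightarrow C/\J(C)$. Its image is the desired subalgebra $A \subseteq C$ with $A \simeq C/\J(C)$ and $C = A \oplus \J(C)$.

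The main technical point sits entirely in the first step: without the containment $J_l(l) \subseteq F_0(\mathcal{A}_{\chi},l)$ the entries $A_{\chi}(i,l)_{jk}$ with $j \in J_l(l)$ would be uncontrolled and the product structure modulo the radical would not simplify as required. Once Step~1 is in place, Step~2 is a black-box application of Wedderburn--Malcev; alternatively, one can construct $A$ concretely by iteratively lifting the orthogonal idempotents $[x_{jl}]$ across the nilpotent ideal $\J(C)$ using the standard idempotent-lifting technique in Artinian algebras, starting from the observation that each $x_{jl}$ with $(j,l)\in\Omega$ is already idempotent in $C$ because $x_{jl}^2 = A_{\chi}(l,l)_{jj}\,x_{jl} = x_{jl}$.
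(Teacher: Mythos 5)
Your proposal is correct and follows essentially the same route as the paper: the containment $J_l(l)=F(A_{\chi}(l,l))\subseteq F_0(\mathcal{A}_{\chi},l)$ (Theorem~\ref{proto quasi estandar}(1)) gives exactly the product computations the paper uses to exhibit the classes $[x_{jl}]$, $j\in J_l(l)$, as orthogonal idempotents spanning $C/\J(C)$, and the splitting is obtained, as in the paper, by Wedderburn--Malcev. Your concluding remark on lifting idempotents is an optional alternative to that citation, not a different argument.
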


\begin{proof} The first assertion follows from the fact that for each $i,l\in \mathds{N}^*_m$, $k\in J_i(i)$ and $j\in J_l(l)$, with $j\ne k$,
\begin{align*}
& x_{jl}x_{jl}=A(l,l)_{jj}x_{jl}= x_{jl},\\
&x_{ki}x_{jl}=A(i,l)_{kj}x_{kl}\in I \text{ if $k\notin J_l(l)$}
\shortintertext{and}
&x_{ki}x_{jl}= A(i,l)_{kj}x_{kl}=0 \text{ if $k\in J_l(l)$.}
\end{align*}
The second assertion follows now by a direct application of the Principal Theorem of Wedderburn-Malcev (\cite{Pierce}*{Chapter 11}).
\end{proof}

\begin{remark}\label{radical quasiestandar no estandar}
It is easy to check that if $\chi\colon K^m\ot K^n \longrightarrow K^n \ot K^m$ is a quasi-standard twisting map that it is not standard, then
$\J(C)^2\ne 0$.
\end{remark}

\section{Low dimensional cases}
In this section we determine the twisting maps of $K^3$ with $K^3$. To achieve this it is convenient first to describe in detail the twisting maps of $K^2$ with $K^2$ and the twisting map of $K^2$ with $K^3$.

\subsection[Twisting maps of $K^2$ with $K^2$]{Twisting maps of $\bm{K^2}$ with $\bm{K^2}$}\label{K2 por K2}
We first use our results to obtain a classification of all twisting maps $\chi\colon K^2\otimes K^2 \longrightarrow K^2\otimes K^2$ in a direct
way. This classification was already obtained by~\cite{LN}. By Corollary~\ref{propiedades de la matriz de rangos} and
Proposition~\ref{matrices de rango equivalentes} we can assume that the $\mathcal{A}_{\chi}$-rank matrix is one of the following:
$$
\Gamma_1=\begin{pmatrix}
2&0\\
0&2
\end{pmatrix},\quad
\Gamma_2=\begin{pmatrix}
2&1\\
0&1
\end{pmatrix}\quad\text{or}\quad
\Gamma_3=\begin{pmatrix}
1&1\\
1&1
\end{pmatrix}.
$$

\paragraph{First case}
If the $\mathcal{A}_{\chi}$-rank matrix is $\Gamma_1$, then $A(1,1)=A(2,2)=\ide$. Consequently $\chi$ is the flip and
$K^2\otimes_\chi K^2\cong K^4$.

\paragraph{Second case}
If the $\mathcal{A}_{\chi}$-rank matrix is $\Gamma_2$, then $\chi$ is a standard twisting map (use Proposition~\ref{forma general casi triangular}),
and one verifies readily that $\chi$ is equivalent via identical permutations
in rows and columns to the twisting map $\chi'$ with quiver
\begin{center}
\begin{tikzpicture}[scale=0.8]
\draw (-1.5,1) node[right,text width=1cm]{$\prescript{\chi'}{}{Q}^{}_{}=$};
\draw [-stealth] (0.1,0.1) -- (1.9,1.9);
\draw (0.9,0.8) node[right,text width=2cm]{$\alpha_{22}$};
\filldraw [black]  (0,0)    circle (2pt)
[black]  (0,2)    circle (2pt)
[black]  (2,2)    circle (2pt);
\draw (2,0) circle (3pt);
\draw (0,2) node[right,text width=0.8cm]{$(1,1)$};
\draw (2,2) node[right,text width=0.8cm]{$(1,2)$};
\draw (0,0) node[right,text width=0.8cm]{$(2,1)$};
\draw[gray] (2,0) node[right,text width=0.8cm, black]{$(2,2)$};
\draw (3,1) node[right,text width=0.1cm]{.};
\end{tikzpicture}
\end{center}
Here the bullets represent the vertices of $\prescript{\chi'}{}{Q}^{}_{}$, and the white circle in the coordinate $(2,2)$, indicates that the arrow
$\alpha_{22}$ starts at the $2$-th row and ends at the $2$-th column. It is easy to recover the matrices of $\mathcal{A}_{\chi'}$ from $\prescript{\chi'}{}{Q}^{}_{}$. We have:
$$
A(1,1)=\ide,\quad A(2,1)=0,\quad A(2,2)=\begin{pmatrix}
                        1&0\\
                        1&0
                       \end{pmatrix}\quad\text{and}\quad A(1,2)=\begin{pmatrix}
                        0&0\\
                        -1&1
                       \end{pmatrix}.
$$
In the sequel we will simply represent the quivers of this twisting map and of its equivalent twisting  maps as
$$
\begin{tikzpicture}
\draw [-stealth] (0.1,0.1) -- (0.7,0.7);
\filldraw [black]  (0,0)    circle (1pt)
[black]  (0,0.8)    circle (1pt)
[black]  (0.8,0.8)    circle (1pt);
\draw (0.8,0) circle (1.5pt);
\end{tikzpicture}\qquad \qquad\qquad\qquad
\begin{tikzpicture}
\draw [-stealth] (0.1,0.7) -- (0.7,0.1);
\filldraw [black]  (0,0)    circle (1pt)
[black]  (0,0.8)    circle (1pt)
[black]  (0.8,0)    circle (1pt);
\draw (0.8,0.8) circle (1.5pt);
\end{tikzpicture} \qquad\qquad\qquad\qquad
\begin{tikzpicture}
\draw [-stealth] (0.7,0.7) -- (0.1,0.1);
\filldraw [black]  (0,0)    circle (1pt)
[black]  (0.8,0.8)    circle (1pt)
[black]  (0.8,0)    circle (1pt);
\draw (0,0.8) circle (1.5pt);
\end{tikzpicture}\qquad\qquad\qquad\qquad
\begin{tikzpicture}
\draw [-stealth] (0.7,0.1) -- (0.1,0.7);
\filldraw [black]  (0.8,0)    circle (1pt)
[black]  (0,0.8)    circle (1pt)
[black]  (0.8,0.8)    circle (1pt);
\draw (0,0) circle (1.5pt);
\end{tikzpicture}
$$
where there is a bullet in the position $(j,i)$ if (j,i) is a vertex (thus $j\in J_i(i))$; and there is a white circle  in the position $(j,l)$ if
the quiver has an arrow $\alpha_{jl}$ that starts at the $j$-th row and ends at the $l$-th column (it is unique). The quivers associated with
standard twisting maps of $K^3$ with $K^2$ and of $K^3$ with $K^3$ will be represented by diagrams constructed following the same instructions.

\paragraph{Third case}
If the $\mathcal{A}_{\chi}$-rank matrix is $\Gamma_3$, then by Remark~\ref{rango uno}, there exist $a,a'\in K$, such that
\begin{alignat*}{2}
   & A_{\chi}(1,1)=\begin{pmatrix}
                             a&1-a\\
                             a&1-a
            \end{pmatrix}, && \qquad A_{\chi}(2,1)=\begin{pmatrix}
                                                            1-a&a-1\\
                                                            -a&a
                                           \end{pmatrix}, \\[3pt]
   & A_{\chi}(1,2)=\begin{pmatrix}  1-a'&a'-1\\
                             -a'&a'
            \end{pmatrix}, && \qquad   A_{\chi}(2,2)=\begin{pmatrix}
                                                           a'&1-a'\\
                                                           a'&1-a'
                                             \end{pmatrix}.
\end{alignat*}
Thus, by~\eqref{eq:rel entre A(i,l) y B(j,k)} we have $B_{\chi}(1,1)= \begin{psmallmatrix} a&1-a\\ 1-a'&a'\end{psmallmatrix}$. Therefore $a'=1-a$  by Proposition~\ref{diagonales uno} and Remark~\ref{rango uno}, and so
\begin{alignat*}{2}
   & A_{\chi}(1,1)=\begin{pmatrix}
                             a&1-a\\
                             a&1-a
            \end{pmatrix}, && \qquad A_{\chi}(2,1)=\begin{pmatrix}
                                                            1-a&a-1\\
                                                            -a&a
                                           \end{pmatrix}, \\[3pt]
   & A_{\chi}(1,2)=\begin{pmatrix}  a & -a\\
                             a-1 & 1-a
            \end{pmatrix}, && \qquad   A_{\chi}(2,2)=\begin{pmatrix}
                                                           1-a & a\\
                                                           1-a & a
                                             \end{pmatrix}.
\end{alignat*}
Now a direct computation using~\eqref{eq:rel entre A(i,l) y B(j,k)} shows that $B_{\chi}(i,j)=A_{\chi}(i,j)$ for $i,j=1,2$, which enables one
to check easily that the conditions of  Proposition~\ref{caracterizacion} are satisfied. Hence we have a family of twisting maps parameterized by
$a\in K$. Applying Proposition~\ref{matrices de rango equivalentes} we see that the twisting maps corresponding to $a$ and $1-a$ are isomorphic.
Moreover, using again the same proposition, we check that these are the only isomorphisms between these twisting maps. If $a\in\{0,1\}$, then the
twisting map is standard and the quiver is one of
$$
\begin{tikzpicture}
\begin{scope}[xshift=0,yshift=0]
\draw [-stealth] (0.1,0.05) .. controls (0.55,0.25) .. (0.75,0.7);
\draw[-stealth] (0.7,0.75) .. controls (0.25,0.55) .. (0.05,0.1);
\filldraw [black]  (0.8,0.8)    circle (1pt)
[black]  (0,0)    circle (1pt);
\draw (0,0.8) circle (1.5pt);
\draw (0.8,0) circle (1.5pt);
\end{scope}
\begin{scope}
\draw (2.2,0.37) node[text width=0.4cm]{or};
\end{scope}
\begin{scope}[xshift=3.6cm]
\draw [-stealth] (0.75,0.1) .. controls (0.55,0.55) .. (0.1,0.75);
\draw[-stealth] (0.05,0.7) .. controls (0.25,0.25) .. (0.7,0.05);
\filldraw [black]  (0,0.8)    circle (1pt)
[black]  (0.8,0)    circle (1pt);
\draw (0.8,0.8) circle (1.5pt);
\draw (0,0) circle (1.5pt);
\end{scope}
\begin{scope}
\draw (4.7,0.37) node[text width=0.1cm]{.};
\end{scope}
\end{tikzpicture}
$$
On the other hand, by Proposition~\ref{representacion} and
Remark~\ref{imagen de rho} we know that for $a\notin\{0,1\}$, the map  $\rho_1\colon K^2\ot_\chi K^2\longrightarrow M_2(K)$, given by
$$
\rho_1(f_j\otimes 1)\coloneqq E^{jj} \quad\text{and}\quad \rho_1 (1\otimes e_i)\coloneqq A(i,1),
$$
is an algebra isomorphism. So we obtain in this case, modulo isomorphism, four different algebras.

\subsection[Twisting maps of $K^3$ with $K^2$]{Twisting maps of $\bm{K^3}$ with $\bm{K^2}$}\label{de K3 con K2}
Now we use our results to classify all the twisting maps $\chi$ of $K^3$ with $K^2$ (By Remark~\ref{A sub Chi es B sub tau chi tau},
Proposition~\ref{A standard equivale a B standard}  and Proposition~\ref{A quasi standard equivale a B quasi standard}, this immediately gives a
similar classification for the twisting maps of $K^2$ with $K^3$). By Corollary~\ref{propiedades de la matriz de rangos} and
Proposition~\ref{matrices de rango equivalentes} we can assume that the $\mathcal{A}_{\chi}$-rank matrix is one of the following:
\begin{gather*}
\Gamma_1=\begin{pmatrix}
2&0&0\\
0&2&0\\
0&0&2
\end{pmatrix},\quad
\Gamma_2=\begin{pmatrix}
2&0&0\\
0&2&1\\
0&0&1
\end{pmatrix},\quad
\Gamma_3=\begin{pmatrix}
2&1&1\\
0&1&0\\
0&0&1
\end{pmatrix},
\quad
\Gamma_4=\begin{pmatrix}
2&1&0\\
0&1&1\\
0&0&1
\end{pmatrix},\\
\Gamma_5=\begin{pmatrix}
2&0&0\\
0&1&1\\
0&1&1
\end{pmatrix}, \quad
\Gamma_6=\begin{pmatrix}
1&0&0\\
0&1&1\\
1&1&1
\end{pmatrix},\quad
\Gamma_7=\begin{pmatrix}
1&0&1\\
1&1&0\\
0&1&1
\end{pmatrix}.
\end{gather*}
By Proposition~\ref{forma general casi triangular}, except perhaps in the cases $\Gamma_5$  and $\Gamma_6$, the matrices $A_{\chi}(l,l)$
are $0,1$-matrices, which (since the reduced rank of $\chi$ is less than or equal to~$1$) implies that $\chi$ is a standard twisting map.  So we list all the possible standard twisting maps (for this we use the method given in Remark~\ref{construccion de los twisting estandar}):

\begin{table}[H]
\caption{Standard twisting maps of $K^3$ with $K^2$}
\centering
\setlength{\tabcolsep}{8pt}
\ra{1.2}
\begin{tabu}{cccccc}
\toprule
$\#$ &$\sum \Tr$ & quiver& $\Gamma_{\chi}$ & $\tilde{\Gamma}_{\chi}$ & $\#$ equiv. \\
\midrule
1. & 6 & \raisebox{-3\height/8}{\begin{tikzpicture}[scale=0.65]
\filldraw [black]  (0,0)    circle (1pt)
[black]  (0,0.8)    circle (1pt)
[black]  (0.8,0)    circle (1pt)
[black]  (0.8,0.8)    circle (1pt)
[black]  (1.6,0)    circle (1pt)
[black]  (1.6,0.8)    circle (1pt);
\end{tikzpicture} }
& $\begin{psmallmatrix} 2&0&0\\ 0&2&0\\ 0&0&2 \end{psmallmatrix}$ & $\begin{psmallmatrix} 3&0\\ 0&3 \end{psmallmatrix}$& 1\\
&&\\
2. & 5 & \raisebox{-3\height/8}{\begin{tikzpicture}[scale=0.65]
\filldraw [black]  (0,0)    circle (1pt)
[black]  (0,0.8)    circle (1pt)
[black]  (0.8,0)    circle (1pt)
[black]  (0.8,0.8)    circle (1pt)
[black]  (1.6,0.8)    circle (1pt);
\draw  (1.6,0)    circle (1.5pt);
\draw[-stealth] (0.9,0.1)--(1.5,0.7);
\end{tikzpicture} }
& $\begin{psmallmatrix} 2&0&0\\ 0&2&1\\ 0&0&1 \end{psmallmatrix}$ & $\begin{psmallmatrix} 3&1\\ 0&2 \end{psmallmatrix}$& 12\\
&\\
3. & 4 & \raisebox{-3\height/8}{\begin{tikzpicture}[scale=0.65]
\filldraw [black]  (0,0)    circle (1pt)
[black]  (0,0.8)    circle (1pt)
[black]  (0.8,0.8)    circle (1pt)
[black]  (1.6,0.8)    circle (1pt);
\draw  (1.6,0)    circle (1.5pt);
\draw  (0.8,0)    circle (1.5pt);
\draw[-stealth] (0.1,0.05)--(1.5,0.75);
\draw[-stealth] (0.1,0.1)--(0.7,0.7);
\end{tikzpicture} }
& $\begin{psmallmatrix} 2&1&1\\ 0&1&0\\ 0&0&1 \end{psmallmatrix}$ & $\begin{psmallmatrix} 3&2\\ 0&1 \end{psmallmatrix}$& 6\\
&\\
4. & 4 & \raisebox{-3\height/8}{\begin{tikzpicture}[scale=0.65]
\filldraw [black]  (0,0)    circle (1pt)
[black]  (0,0.8)    circle (1pt)
[black]  (0.8,0.8)    circle (1pt)
[black]  (1.6,0)    circle (1pt);
\draw  (1.6,0.8)    circle (1.5pt);
\draw  (0.8,0)    circle (1.5pt);
\draw[-stealth] (0.9,0.7)--(1.5,0.1);
\draw[-stealth] (0.1,0.1)--(0.7,0.7);
\end{tikzpicture} }
& $\begin{psmallmatrix} 2&1&0\\ 0&1&1\\ 0&0&1 \end{psmallmatrix}$ & $\begin{psmallmatrix} 2&1\\ 1&2 \end{psmallmatrix}$& 12\\
&\\
5. & 4 & \raisebox{-3\height/8}{\begin{tikzpicture}[scale=0.65]
\filldraw [black]  (0,0)    circle (1pt)
[black]  (0,0.8)    circle (1pt)
[black]  (0.8,0.8)    circle (1pt)
[black]  (1.6,0)    circle (1pt);
\draw  (1.6,0.8)    circle (1.5pt);
\draw  (0.8,0)    circle (1.5pt);
\draw[-stealth] (0.9,0.75)..controls (1.35,0.55)..(1.55,0.1);
\draw[-stealth] (1.5,0.05)..controls (1.05,0.25)..(0.85,0.7);
\end{tikzpicture} }
& $\begin{psmallmatrix} 2&0&0\\ 0&1&1\\ 0&1&1 \end{psmallmatrix}$ & $\begin{psmallmatrix} 2&1\\ 1&2 \end{psmallmatrix}$& 6\\
&\\
6. & 3 & \raisebox{-3\height/8}{\begin{tikzpicture}[scale=0.65]
\filldraw [black]  (0,0.8)    circle (1pt)
[black]  (0.8,0.8)    circle (1pt)
[black]  (1.6,0)    circle (1pt);
\draw  (1.6,0.8)    circle (1.5pt);
\draw  (0.8,0)    circle (1.5pt);
\draw  (0,0)    circle (1.5pt);
\draw[-stealth] (0.9,0.75)..controls (1.35,0.55)..(1.55,0.1);
\draw[-stealth] (1.5,0.05)..controls (1.05,0.25)..(0.85,0.7);
\draw[-stealth] (1.5,-0.0)..controls (1.15,0.1)..(0.1,0.75);
\end{tikzpicture} }
& $\begin{psmallmatrix} 1&0&0\\ 0&1&1\\ 1&1&1 \end{psmallmatrix}$ & $\begin{psmallmatrix} 2&2\\ 1&1 \end{psmallmatrix}$& 12\\

\bottomrule
\end{tabu}
\end{table}
\noindent Here $\sum \Tr \coloneqq \sum_i \Tr(A(i,i))=\sum_j \Tr(B(j,j))$ and $\#$ equiv. indicates how many equivalent standard twisting maps there are (Here and in the sequel we say that two standard twisting maps $K^m$ with $K^n$ are equivalent if they are isomorphic).

\smallskip

If $\Gamma_{\chi}=\Gamma_5$, then $\chi$ is a direct sum of two twisting maps, and the twisted tensor product algebra is isomorphic
to $K^2\oplus A$, where $A$ is a twisted tensor product $K^2\otimes_{\chi'} K^2$ with $\Gamma_{\chi'}=\begin{psmallmatrix} 1 & 1 \\ 1 & 1 \end{psmallmatrix}$, so either it is standard (recovering the case $\#5$ in the list), or it corresponds to a value of $a\notin\{0,1\}$ in the third case of Subsection~\ref{K2 por K2}, and we obtain an algebra isomorphic to $K^2\oplus M_2(K)$.

If $\Gamma_{\chi}=\Gamma_6$, then by Proposition~\ref{forma general casi triangular} the first column of $\mathcal{A}_{\chi}$ is a
standard column, so that either $A_{\chi}(1,1)= \begin{psmallmatrix} 1 & 0\\ 1 & 0 \end{psmallmatrix}$ and
$A_{\chi}(3,1)= \begin{psmallmatrix*}[r] 0 & 0\\ -1 & 1 \end{psmallmatrix*}$, or
$A_{\chi}(1,1)= \begin{psmallmatrix} 0 & 1\\ 0 & 1 \end{psmallmatrix}$ and
$A_{\chi}(3,1)= \begin{psmallmatrix*}[r] 1 & -1\\ 0 & 0\end{psmallmatrix*}$  (by Proposition~\ref{matrices de rango equivalentes} we can assume,
and we do it, that $A_{\chi}(1,1)= \begin{psmallmatrix} 1 & 0\\ 1 & 0 \end{psmallmatrix}$).
Moreover, by Proposition~\ref{extension de twistings}
the matrices $A_{\chi}(i,j)$ for $i,j\in\{2,3\}$ define a $2$ times $2$ twisting map $\chi'$ with
$\Gamma_{\chi'}=\begin{psmallmatrix} 1&1\\ 1&1 \end{psmallmatrix}$, which is either standard, or has $1-a,a\notin\{0,1\}$ on the diagonal
of $A_{\chi'}(3,3)=A_{\chi}(3,3)$. But Theorem~\ref{teorema principal} shows that
$$
\{2\}=F(A(3,1))\subseteq F_0(\mathcal{A},3).
$$
So $A(3,3)_{22}=1$  and the twisting map is standard, corresponding to the sixth case  on the list.

If $\Gamma_{\chi}=\Gamma_7$, then the twisting map should be standard, but no standard twisting map $\chi$ yields $\Gamma_{\chi}=\Gamma_7$, so there is no twisting map in this case.

\subsection[Twisting maps of $K^3$ with $K^3$]{Twisting maps of $\bm{K^3}$ with $\bm{K^3}$}
We next aim is to construct (up to isomorphisms) all the twisting maps of $K^3$ with $K^3$.  Since in the appendix we list all standard and quasi-standard twisting maps of $K^3$ with $K^3$, for this purpose in this section we only need construct the twisting maps that are not quasi-standard.
In order to carry out this task in addition to the previous results, we will use the following ones:

\begin{remark}\label{en rango dos es standard}
Let $\mathcal{A}=\left(A(i,l)\right)_{i,l\in \mathds{N}^*_m}$ be a pre-twisting of $K^m$ with $K^n$. If the $l$-th column of $\mathcal{A}$ has reduced rank~$1$ and $A(l,l)$ is a $0,1$-matrix, then the $l$-th column of $\mathcal{A}$ is standard.
\end{remark}

\begin{proposition}\label{columna de unos no quasi-standard}
Let $\chi\colon K^m\ot K^3 \longrightarrow K^3\ot K^m$ be a twisting map and let $i_1$, $i_2$ and~$i_3$ be three different elements of $\mathds{N}^*_m$ such that $A_{\chi}(i_2,i_1)\ne 0\ne A_{\chi}(i_3,i_1)$ and $A_{\chi}(i_1,i_1)$ is equivalent to the matrix
$$
\begin{pmatrix} 1&0&0\\1&0&0\\1&0&0 \end{pmatrix}
$$
via identical permutations in rows and columns. If the $i_1$-th column of~$\mathcal{A}_{\chi}$ is not quasi-stan\-dard, then the following facts hold:

\begin{enumerate}

\smallskip

\item $A_{\chi}(i_2,i_3)\ne 0\ne A_{\chi}(i_3,i_2)$ and neither the $i_2$-th nor the $i_3$-th column of~$\mathcal{A}_{\chi}$ are quasi-standard columns.

\smallskip

\item If $A_{\chi}(i_1,i_1)=\begin{psmallmatrix} 1&0&0\\1&0&0\\1&0&0 \end{psmallmatrix}$, then
$$
\quad\qquad A_{\chi}(i_2,i_1)_{22}=A_{\chi}(i_2,i_2)_{22}=A_{\chi}(i_2,i_3)_{22}, \qquad A_{\chi}(i_3,i_1)_{22}=A_{\chi}(i_3,i_2)_{22}=A_{\chi}(i_3,i_3)_{22},
$$
and there exist $z\in K^{\times}$ and $\alpha\in K^{\times}\setminus\{1\}$ such that
\begin{align*}
&\quad\qquad A_{\chi}(i_2,i_1)=\begin{pmatrix}
                            0 & 0 & 0\\
                            -\alpha-z &\alpha &z \\
                            \alpha-1-\frac{\alpha(1-\alpha)}{z}&\frac{\alpha(1-\alpha)}{z}& 1-\alpha
             \end{pmatrix}
\shortintertext{and}
&\quad\qquad A_{\chi}(i_3,i_1)=\begin{pmatrix}
                            0 & 0 & 0\\
                            \alpha+z-1 &1-\alpha &-z \\
                            \frac{\alpha(1-\alpha)}{z}-\alpha& -\frac{\alpha(1-\alpha)}{z}& \alpha
             \end{pmatrix}.
\end{align*}
\end{enumerate}
\end{proposition}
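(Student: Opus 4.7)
The plan is to first normalize via Proposition~\ref{matrices de rango equivalentes} so that $A_{\chi}(i_1,i_1)$ equals the displayed matrix; the conclusions in~(1) are invariant under identical permutations, and~(2) is already stated for this normalization. Since $A_{\chi}(i_1,i_1)$ has rank $1$ and $\sum_i\rk(A_{\chi}(i,i_1))=3$ by Corollary~\ref{propiedades de la matriz de rangos}, the hypothesis $A_{\chi}(i_2,i_1)\ne 0\ne A_{\chi}(i_3,i_1)$ forces $\rk(A_{\chi}(i_2,i_1))=\rk(A_{\chi}(i_3,i_1))=1$ and $A_{\chi}(i,i_1)=0$ for every other $i$; in particular the reduced rank of the $i_1$-th column is exactly~$2$.

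Writing $A_1,A_2,A_3$ for $A_{\chi}(i_1,i_1)$, $A_{\chi}(i_2,i_1)$, $A_{\chi}(i_3,i_1)$, each row of $A_1$ equals $e_1^{\Trans}$, so the orthogonality relation $A_1A_2=0$ forces the first row of $A_2$ to vanish (every row of $A_1A_2$ reproduces the first row of $A_2$); the same holds for $A_3$. Decomposing $A_2=\begin{psmallmatrix}0&0\\ v&B\end{psmallmatrix}$ with $B\in M_2(K)$, the conditions $A_2^2=A_2$ and $A_2\mathds{1}=0$ become $B^2=B$ and $v=-B\mathds{1}_2$. Now the non-quasi-standard hypothesis is exploited via Proposition~\ref{cuando es cuasiestandard}(b): since $A_1$ is a $0,1$-matrix the column satisfies item~(1) of Definition~\ref{quasi standard}, and if it also satisfied item~(2) then the column would be quasi-standard; so item~(2) fails. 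Setting $\alpha:=B_{11}=A_2(2,2)$, the idempotent rank-one $B$ has $B_{22}=1-\alpha$, and the failure of item~(2) forces $\alpha\notin\{0,1\}$. Then $\det B=0$ gives $B_{12}B_{21}=\alpha(1-\alpha)\ne 0$, so $z:=B_{12}\in K^{\times}$ and $B_{21}=\alpha(1-\alpha)/z$. Expanding $v=-B\mathds{1}_2$ reproduces exactly the displayed form for $A_{\chi}(i_2,i_1)$, and $A_3=\ide-A_1-A_2$ gives the displayed form for $A_{\chi}(i_3,i_1)$.

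The remaining equalities in~(2) come from two instances of Corollary~\ref{caracterizacion en terminos de las A(i,j)s}(4) applied with $l=i_1$ and $k=2$ (and the sum restricted to $h\in\{i_1,i_2,i_3\}$ by the vanishing established above). Taking $i=i_2$, the choice $j=2,j'=3$ yields $z\bigl[A_{\chi}(i_2,i_2)_{22}-A_{\chi}(i_2,i_3)_{22}\bigr]=0$, and since $z\ne 0$ the two entries coincide; then $j=j'=2$ gives $\alpha A_{\chi}(i_2,i_2)_{22}+(1-\alpha)A_{\chi}(i_2,i_3)_{22}=\alpha$, forcing the common value to be $\alpha$. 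The identical argument with $i=i_3$ gives $A_{\chi}(i_3,i_2)_{22}=A_{\chi}(i_3,i_3)_{22}=1-\alpha$, which together with the already computed values $A_{\chi}(i_2,i_1)_{22}=\alpha$ and $A_{\chi}(i_3,i_1)_{22}=1-\alpha$ closes~(2).

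Part~(1) is then immediate: $A_{\chi}(i_2,i_3)_{22}=\alpha\ne 0$ and $A_{\chi}(i_3,i_2)_{22}=1-\alpha\ne 0$, so both matrices are nonzero; and since $A_{\chi}(i_2,i_2)_{22}=\alpha\notin\{0,1\}$, the matrix $A_{\chi}(i_2,i_2)$ is not a $0,1$-matrix, so item~(1) of Definition~\ref{quasi standard} fails for $l_0=i_2$ and the $i_2$-th column is not quasi-standard; analogously for $i_3$. The main delicate point is to secure $z\ne 0$ before invoking Corollary~\ref{caracterizacion en terminos de las A(i,j)s}(4), which depends on simultaneously extracting the parametric form of $A_2$ from idempotency and on using Proposition~\ref{cuando es cuasiestandard}(b) to translate the non-quasi-standard hypothesis into $\alpha\notin\{0,1\}$; once those are in hand, the subsequent computations are essentially mechanical.
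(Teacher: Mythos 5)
Your proof is correct and follows essentially the same route as the paper's: the same normalization and rank count, the same parametrization of $A_{\chi}(i_2,i_1)$ and $A_{\chi}(i_3,i_1)$ by $\alpha$ and $z$ with Proposition~\ref{cuando es cuasiestandard}(b) supplying $\alpha\notin\{0,1\}$, and the compatibility condition of Corollary~\ref{caracterizacion en terminos de las A(i,j)s}(4) (which the paper applies in its equivalent $B$-matrix form, computing $B_{\chi}(2,2)$) yielding the equalities of the $(2,2)$-entries. The only cosmetic deviations are your determinant argument for $z\ne 0$ in place of the paper's lower-triangular-idempotent observation, and your use of the failure of condition~(1) of Definition~\ref{quasi standard} rather than condition~(2) to rule out quasi-standardness of the $i_2$-th and $i_3$-th columns.
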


\begin{proof}
Without loss of generality we can assume that $A_{\chi}(i,1)=0$ for $i>3$ and that $i_1=1$, $i_2=2$ and $i_3=3$.
By items~(1) and (3) of Corollary~\ref{caracterizacion en terminos de las A(i,j)s} we know that $A_{\chi}(1,1)A_{\chi}(i,1)=0$ for all $i> 1$ and
that $A_{\chi}(1,1)+A_{\chi}(2,1)+A_{\chi}(3,1)=\ide_3$.  Hence there exists $\alpha\in K$ such that
$$
A_{\chi}(2,1)=\begin{pmatrix}0&0&0\\ * &\alpha &* \\ *&*& 1-\alpha\end{pmatrix}\quad\text{and}\quad
A_{\chi}(3,1)=\begin{pmatrix}0&0&0\\ * &1-\alpha &* \\ *&*& \alpha\end{pmatrix},
$$
where the $*$'s denote arbitrary elements of $K$. Moreover,  by Proposition~\ref{cuando es cuasiestandard}(2) we know that $\alpha\notin\{0,1\}$.
Let $z\coloneqq A_{\chi}(3,1)_{23}$. Since the sum of the entries of each row of $A_{\chi}(2,1)$ and~$A_{\chi}(3,1)$ is zero,
$$
A_{\chi}(2,1)=\begin{pmatrix}0&0&0\\ -\alpha-z &\alpha &z \\ *&*& 1-\alpha\end{pmatrix}\quad\text{and}\quad
A_{\chi}(3,1)=\begin{pmatrix}0&0&0\\ \alpha+z-1 &1-\alpha &-z \\ *&*& \alpha\end{pmatrix}.
$$
Furthermore, since lower triangular idempotent matrices have~$0$ or~$1$ in each diagonal entry, necessarily $z\ne 0$.
Now it is clear that, since $\rk\bigr(A_{\chi}(2,1)\bigr)=\rk\bigl(A_{\chi}(3,1)\bigr)=1$, both matrices have the desired form. But then the first
row of $B_{\chi}(2,2)$ is $(0,\alpha,1-\alpha,0,\dots,0)$, the first row of $B_{\chi}(1,2)$ is $(1,-(\alpha+z),(\alpha+z)-1,0,\dots,0)$ and the
first row of $B_{\chi}(3,2)$ is $(0,z,-z,0,\dots,0)$. An easy computation using these facts, that by Remark~\ref{corolario con las B} we have $B(1,2)+B(2,2)+B(3,2)=\ide_3$, and that Proposition~\ref{caracterizacion}(2)  the columns of $B(2,2)$ are orthogonal to the first rows of $B(1,2)$ and $B(3,2)$,  shows that
$$
B(2,2)=\begin{pmatrix} 0&\alpha&1-\alpha&0&\dots&0\\0&\alpha&1-\alpha&0&\dots&0\\0&\alpha&1-\alpha&0&\dots&0\\ *&*&*&*& &*\\ \vdots&&&\vdots&&\vdots\\
*&*&*&*& &*\\ \end{pmatrix},
$$
which finishes the proof of item~(2) via \eqref{eq:rel entre A(i,l) y B(j,k)}.

Item~(1) follows from the fact that $A(3,2)_{22}=1-\alpha\notin\{0,1\}$ and $A(2,3)_{22}=\alpha\notin\{0,1\}$.
\end{proof}

Our next aim is to determine up to isomorphisms all twisting maps $\chi\colon K^3\ot K^3\longrightarrow K^3\ot K^3$ which are not quasi-standard.
For this, we can and we will assume that the values of the diagonal of $\Gamma_{\chi}$ are non increasing. So in the rest of this subsection $\chi$
denotes an arbitrary twisting map satisfying this restriction and we look for conditions in order that $\chi$ be not quasi-standard. We organize
our search according to the values of $\sum \Tr\coloneqq \sum_i \Tr(A_{\chi}(i,i))=\sum_j\Tr(B_{\chi}(j,j))$.

\subsubsection[$\sum \Tr=9,8$ or $7$]{$\bm{\sum \Tr=9,8}$ or $\bm{7}$}
Here the values of the diagonal of $\Gamma_{\chi}$ may be $(3,3,3)$, $(3,3,2)$, $(3,3,1)$  or $(3,2,2)$. By Proposition~\ref{casi todas id}, in the first three cases necessarily $\chi$ is a standard twisting map. In the last case $\Gamma_{\chi}$ is  equivalent via identical permutations in rows
and columns to one of the following matrices:
$$
\begin{pmatrix}3&1&1\\ 0&2&0\\ 0&0&2\end{pmatrix},\quad \begin{pmatrix}3&1&0\\ 0&2&1\\ 0&0&2\end{pmatrix}\quad\text{or}\quad
\begin{pmatrix}3&0&0\\ 0&2&1\\ 0&1&2\end{pmatrix}.
$$
By Proposition~\ref{forma general casi triangular} in the two first cases the diagonal matrices are $0,1$-matrices,
and so by Remark~\ref{en rango dos es standard} the obtained twisting maps are standard. In the last one $\chi$ is a direct sum of the flip of $K$
with $K^3$ and a twisting map $\chi'\colon K^2\ot K^3 \longrightarrow K^3\ot K^2$. Moreover, the analysis made out in
Subsection~\ref{de K3 con K2} shows that if $\chi'$ is not quasi-standard, then $K^3\ot_{\chi'} K^2$  is isomorphic to $K^2\times M_2(K)$.
Thus, in this case $K^3\ot_{\chi} K^3 \simeq K^5\times M_2(K)$.

\subsubsection[$\sum \Tr=6$]{$\bm{\sum \Tr=6}$}
The diagonal of $\Gamma_{\chi}$ is either $(2,2,2)$ or $(3,2,1)$. We treat each case separately:

\medskip

\noindent $\pmb{\Diag\bigl(\Gamma_{\chi}\bigr)=(2,2,2)}$ \enspace By Proposition~\ref{matrices de rango equivalentes} we can assume that the
first column is $(2,1,0)^\bot$, or, in other words, that $\Gamma_{\chi}$ it is one of the following matrices:
$$
\begin{pmatrix}2&1&1\\ 1 &2&0\\  0&0&2\end{pmatrix},\quad \begin{pmatrix}2&1&0\\ 1 &2&1\\  0&0&2\end{pmatrix},
\quad \begin{pmatrix}2&0&0\\ 1 &2&1\\  0&1&2\end{pmatrix}\quad \text{or}\quad
 \begin{pmatrix}2&0&1\\ 1 &2&0\\  0&1&2\end{pmatrix}.
$$
Moreover, by Proposition~\ref{forma general casi triangular} and Remark~\ref{en rango dos es standard}, each twisting map whose rank matrix is the
last one is standard, and, again by Proposition~\ref{matrices de rango equivalentes}, each twisting map whose rank matrix is the first or the
second one is isomorphic to one twisting map whose rank matrix is the third one. So we only must consider the case
$$
\Gamma_{\chi}=\begin{pmatrix}2&0&0\\ 1 &2&1\\  0&1&2\end{pmatrix}.
$$
Since, by Proposition~\ref{forma general casi triangular} and Remark~\ref{en rango dos es standard}, the first column is standard, the hypothesis of
Theorem~\ref{teorema principal} are satisfied. By this theorem and Proposition~\ref{extension de twistings}, we know that $\chi$ is twisting map if
and only if the first column of $\mathcal{A}_{\chi}$ is standard and the matrices $A_{\chi}(2,2)$, $A_{\chi}(3,2)$, $A_{\chi}(3,3)$ and
$A_{\chi}(2,3)$ define a twisting map of $K^2$ with $K^3$ such that $F(A_{\chi}(2,1))\subseteq F_0(\mathcal{A}_{\chi},2)$ (In fact, we also need
that $F(A(i,1))\subseteq F_0(\mathcal{A},i)$ for $i\in \{1,3\}$, but for $i=3$ this is trivial and for $i=1$ its follows from
Remark~\ref{columnas que se anulan en l_0}). Since we are looking for non quasi-standard twisting maps, by the discussion in Subsection~\ref{de K3
con K2} we may assume that
\begin{alignat*}{2}
&A_{\chi}(2,2)=\begin{pmatrix}1&0&0\\ 0 &a&1-a\\  0&a&1-a\end{pmatrix},\qquad && A_{\chi}(3,2)=\begin{pmatrix}0&0&0\\ 0 &1-a&a-1\\
0&-a&a\end{pmatrix},\\[3pt]
&A_{\chi}(3,3)=\begin{pmatrix}1&0&0\\ 0 &1-a&a\\  0&1-a&a\end{pmatrix},\qquad && A_{\chi}(2,3)=\begin{pmatrix}0&0&0\\ 0 &a&-a\\
0&a-1&1-a\end{pmatrix}.
\end{alignat*}
But then $F_0(\mathcal{A}_{\chi},2)=\{1\}$, so necessarily
$$
A_{\chi}(2,1)=\begin{pmatrix}1&-1&0\\ 0 &0&0\\  0&0&0\end{pmatrix}\quad\text{or}\quad
A_{\chi}(2,1)=\begin{pmatrix}1&0&-1\\ 0 &0&0\\  0&0&0\end{pmatrix}.
$$
In both cases setting $A_{\chi}(1,1)\coloneqq \ide_3-A_{\chi}(2,1)$, $A_{\chi}(3,1)=0$, $A_{\chi}(1,2)=0$ and $A_{\chi}(1,3)=0$ (which is forced),
we obtain a twisting map which is not quasi-standard. In the first one
$$
\tilde{\Gamma}_{\chi}=\begin{pmatrix}2&0&0\\ 1 &2&1\\  0&1&2\end{pmatrix},
$$
whereas in the second one
$$
\tilde{\Gamma}_{\chi}=\begin{pmatrix}2&0&0\\ 0 &2&1\\  1&1&2\end{pmatrix}.
$$
Taking into account Proposition~\ref{A quasi standard equivale a B quasi standard}, and applying the same arguments to $\tilde{\chi}$, we conclude that $\chi$ is a non quasi-standard twisting map with $\mathrm{\Diag\bigl(\Gamma_{\chi}\bigr)=(2,2,2)}$ if and only if $\tilde{\chi}$ is.

\medskip

\noindent $\pmb{\Diag\bigl(\Gamma_{\chi}\bigr)=(3,2,1)}$ \enspace  Assume that $\chi$ is a not quasi-standard twisting map. Then, by the last assertion we know that $\Diag\bigl(\tilde{\Gamma}_{\chi}\bigr)=(3,2,1)$. The rank matrix $\Gamma_{\chi}$ is one of the following matrices:
$$
\begin{pmatrix}3&0&0\\ 0 &2&2\\  0&1&1\end{pmatrix}, \quad
\begin{pmatrix}3&0&2\\ 0 &2&0\\  0&1&1\end{pmatrix}, \quad
\begin{pmatrix}3&0&1\\ 0 &2&1\\  0&1&1\end{pmatrix}, \quad
\begin{pmatrix}3&1&0\\ 0 &2&2\\  0&0&1\end{pmatrix},\quad
\begin{pmatrix}3&1&1\\ 0 &2&1\\  0&0&1\end{pmatrix},\quad
\begin{pmatrix}3&1&2\\ 0 &2&0\\  0&0&1\end{pmatrix}.
$$
By Proposition~\ref{Gamma en caso de rango de Aii igual a 1}, both $\Gamma_{\chi}$ and $\tilde{\Gamma}_{\chi}=\Gamma_{\tilde{\chi}}$ must be one of
the last two matrices. But by Corollary~\ref{forma general triangular}, Proposition~\ref{A standard equivale a B standard} and Remark~\ref{en rango
dos es standard}, if $\Gamma_{\chi}$ or $\tilde{\Gamma}_{\chi}$ is the last matrix, then $\chi$ is a standard twisting map. So the only chance of
being not standard for the twisting map~$\chi$  is that both $\Gamma_{\chi}$ and $\tilde{\Gamma}_{\chi}$ be the second last matrix. In that case
by Propositions~\ref{forma general casi triangular} and~\ref{cuando es cuasiestandard}(2) we recover the family of
quasi-standard twisting maps listed in number~$20$ in the appendix.

\subsubsection[$\sum \Tr=5$]{$\bm{\sum \Tr=5}$}
The diagonal of $\Gamma$ is either $(2,2,1)$ or $(3,1,1)$.  We treat each case separately:

\medskip

\noindent $\pmb{\Diag\bigl(\Gamma_{\chi}\bigr)=(2,2,1)}$ \enspace By Proposition~\ref{matrices de rango equivalentes} we can assume that the rank matrix $\Gamma_{\chi}$ is one of the following matrices:
\begin{equation}\label{posibles matrices}
\begin{gathered}
\begin{pmatrix}2&1&1\\ 1 &2&1\\  0&0&1\end{pmatrix},\quad
\begin{pmatrix}2&1&1\\ 0 &2&1\\  1&0&1\end{pmatrix},\quad
\begin{pmatrix}2&0&1\\ 0 &2&1\\  1&1&1\end{pmatrix},\quad
\begin{pmatrix}2&1&0\\ 1 &2&2\\  0&0&1\end{pmatrix}\\[3pt]
\begin{pmatrix}2&1&0\\ 0 &2&2\\  1&0&1\end{pmatrix},\quad
\begin{pmatrix}2&0&0\\ 1 &2&2\\  0&1&1\end{pmatrix},\quad
\begin{pmatrix}2&0&0\\ 0 &2&2\\  1&1&1\end{pmatrix}.
\end{gathered}
\end{equation}
Since $\tilde{\Gamma}$ has at least one $1$ in the diagonal, By Proposition~\ref{Gamma en caso de rango de Aii igual a 1} the rank matrix
$\Gamma_{\chi}$ can not be the first of the second row. Assume first that
\begin{equation}\label{primera matriz}
\Gamma_{\chi}=\begin{pmatrix}2&1&1\\ 1 &2&1\\  0&0&1\end{pmatrix}
\end{equation}
and that the twisting map $\chi$ is not standard. So, $\chi$ is an extension of a twisting map $\chi'$ of $K^2$ with $K^3$. Clearly, if the third
column of $\mathcal{A}_{\chi}$ is quasi-standard, then $\chi'$ must be a non quasi-standard twisting map. But by
Proposition~\ref{columna de unos no quasi-standard}(1) this is also the case if the third column of $\mathcal{A}_{\chi}$ is quasi-standard. Thus,
by the analysis made out in subsection~\ref{de K3 con K2}, we can assume that there exists $a\in K\setminus \{0,1\}$, such that
\begin{alignat*}{2}
& A_{\chi'}(1,1)=\begin{pmatrix}1&0&0\\ 0 &a&1-a\\  0&a&1-a\end{pmatrix}, \qquad && A_{\chi'}(1,2)=\begin{pmatrix}0&0&0\\ 0 &a&-a\\
0&a-1&1-a\end{pmatrix}\\[3pt]
& A_{\chi'}(2,1)=\begin{pmatrix}0&0&0\\ 0 &1-a&a-1\\  0&-a&a\end{pmatrix},\qquad && A_{\chi'}(2,2)=\begin{pmatrix}1&0&0\\ 0 &1-a&a\\  0&1-a&a\end{pmatrix}.
\end{alignat*}
If the third column of $\mathcal{A}_{\chi}$ is quasi-standard, then by Theorem~\ref{teorema principal}, we have
$$
\{1\}=F_0(\mathcal{A}_{\chi},1)\supseteq F(A_{\chi}(1,3))\ne F(A_{\chi}(2,3))\subseteq F_0(\mathcal{A}_{\chi},2)=\{1\},
$$
a contradiction. Hence it is not quasi-standard. Moreover, by Proposition~\ref{forma general casi triangular} necessarily $A_{\chi}(3,3)$ is one of the following matrices:
\begin{equation}\label{posibilidades}
\begin{pmatrix}1&0&0\\ 1&0&0\\  1&0&0\end{pmatrix},\quad
\begin{pmatrix}0&1&0\\ 0 &1&0\\  0&1&0\end{pmatrix},\quad
\begin{pmatrix}0&0&1\\ 0 &0&1\\  0&0&1\end{pmatrix}.
\end{equation}
In the two last cases a straightforward computation using Propositions~\ref{matrices de rango equivalentes} and~\ref{columna de unos no
quasi-standard}(2) leads to the contradiction $A(1,2)_{11}\ne 0$. Hence $A_{\chi}(3,3)$ is the first matrix. Now applying Proposition~\ref{columna
de unos no quasi-standard}, we obtain a family of not quasi-standard twisting maps parameterized by $\alpha\in K\setminus\{0,1\}$ and $z\in
K^\times$. Moreover, we have
\begin{equation}\label{como es gammatilde 1er caso}
\tilde{\Gamma}_{\chi}=\begin{pmatrix}3&1&1\\ 0 &1&1\\  0&1&1\end{pmatrix}.
\end{equation}
If $\Gamma_{\chi}$ is not the matrix at the right side of equality~\eqref{primera matriz} , then $\tilde{\Gamma}_{\chi}$ can not be that matrix
because~$\Gamma_{\chi}$ would be the matrix at the right side of equality~\eqref{como es gammatilde 1er caso}. But all the other possible matrices
for $\tilde{\Gamma}_{\chi}$ (including those with diagonal $(3,1,1)$) have exactly one row without zeroes, and so, by
Propositions~\ref{Gamma en caso de rango de Aii igual a 1} and~\ref{matrices de rango equivalentes} we can assume that $A_{\chi}(3,3)$ is the first matrix in~\eqref{posibilidades}. By Proposition~\ref{columna de unos no quasi-standard}, if
$$
\Gamma_{\chi}=\begin{pmatrix}2&1&1\\ 0 &2&1\\  1&0&1\end{pmatrix}\quad\text{or}\quad
\Gamma_{\chi}=\begin{pmatrix}2&0&1\\ 0 &2&1\\  1&1&1\end{pmatrix},
$$
then $\chi$ is a quasi-standard twisting map. If
$$
\Gamma_{\chi}=\begin{pmatrix}2&1&0\\ 1 &2&2\\  0&0&1\end{pmatrix},
$$
then~$\chi$ must be quasi-standard. In fact, otherwise it is an extension of a not quasi-standard twisting map of $K^3$ with $K^2$, and we know
that in this case~$\# F_0(\mathcal{A}_{\chi},2)=1$, which contradict the fact that $\# F(A_{\chi}(2,3))=2$ and $F(A_{\chi}(2,3))\subseteq
F_0(\mathcal{A}_{\chi},2)$ by Theorem~\ref{teorema principal}. Finally, if
$$
\Gamma_{\chi}=\begin{pmatrix}2&0&0\\ 1 &2&2\\  0&1&1\end{pmatrix}\quad\text{or} \quad \Gamma_{\chi}=\begin{pmatrix}2&0&0\\ 1 &2&2\\  0&1&1\end{pmatrix},
$$
then $\chi$ must be a standard twisting map, since the first column is standard and it is the extension of a standard twisting map of $K^3$ with $K^2$ (which follows from Proposition~\ref{A standard equivale a B standard} and the analysis made out in Subsection~\ref{de K3 con K2}).

\medskip

\noindent $\pmb{\Diag\bigl(\Gamma_{\chi}\bigr)=(3,1,1)}$ \enspace By Proposition~\ref{matrices de rango equivalentes} we can assume that the rank matrix $\Gamma_{\chi}$ is one of the following matrices:
$$
\begin{pmatrix}3&2&2\\ 0 &1&0\\  0&0&1\end{pmatrix},\quad
\begin{pmatrix}3&2&0\\ 0 &1&2\\  0&0&1\end{pmatrix},\quad
\begin{pmatrix}3&0&0\\ 0 &1&2\\  0&2&1\end{pmatrix},\quad
\begin{pmatrix}3&0&1\\ 0 &1&1\\  0&2&1\end{pmatrix},\quad
\begin{pmatrix}3&2&1\\ 0 &1&1\\  0&0&1\end{pmatrix},\quad
\begin{pmatrix}3&1&1\\ 0 &1&1\\  0&1&1\end{pmatrix}.
$$
By Proposition~\ref{forma general casi triangular}, if $\Gamma_{\chi}$ is the first matrix, then $\chi$ is a standard twisting map. Assume that
$\Gamma_{\chi}$ is not the first matrix, which by Propositions~\ref{Gamma en caso de rango de Aii igual a 1} implies that $\tilde{\Gamma}_{\chi}$
has one row without zeroes. By the arguments given above, if $\Gamma_{\chi}$ is not the last matrix, then $\tilde{\Gamma}_{\chi}$ can not be
equivalent via identical permutations in rows and columns to the first matrix in the second row of~\eqref{posibles matrices}. Hence,
$\tilde{\Gamma}_{\chi}$ has exactly one row without zeroes, and by Propositions~\ref{Gamma en caso de rango de Aii igual a 1}
and~\ref{columna de unos no quasi-standard}, if $\chi$ is not quasi-standard, then~$A_{\chi}(2,1)\ne 0\ne A_{\chi}(1,2)$. So, necessarily $\chi$
is quasi-standard, and in fact there is quasi-standard twisting maps with $\Gamma_{\chi}$  the fifth matrix (see the appendix). But if
$\Gamma_{\chi}$ is the second, third or fourth matrix, then condition~(1) in Theorem~\ref{teorema principal} is not fulfilled, and thus
there is not twisting maps in these cases. Finally, there is a family of not quasi-standard twisting maps~$\chi$ with $\Gamma_{\chi}$ the last
matrix, dual to the family found above, when analyzing the case $\Diag\bigl(\Gamma_{\chi}\bigr)=(2,2,1)$.

\subsubsection[$\sum \Tr=4$]{$\bm{\sum \Tr=4}$} We claim that in this case all twisting maps are quasi-standard. By Proposition~\ref{matrices de rango equivalentes} in order to prove this it suffices to check that $\chi$ is  quasi-standard if its rank matrix $\Gamma_{\chi}$ is one of the following matrices:
\begin{alignat*}{3}
&\begin{pmatrix}2&2&2\\ 1 &1&0\\  0&0&1\end{pmatrix},\quad &&
\begin{pmatrix}2&0&2\\ 1 &1&0\\  0&2&1\end{pmatrix},\quad &&
\begin{pmatrix}2&2&0\\ 1 &1&2\\  0&0&1\end{pmatrix},\\[3pt]
&\begin{pmatrix}2&0&0\\ 1 &1&2\\  0&2&1\end{pmatrix},\quad &&
\begin{pmatrix}2&2&1\\ 1 &1&1\\  0&0&1\end{pmatrix},\quad &&
\begin{pmatrix}2&0&1\\ 1 &1&1\\  0&2&1\end{pmatrix},\\[3pt]
&\begin{pmatrix}2&1&0\\ 1 &1&2\\  0&1&1\end{pmatrix},\quad &&
\begin{pmatrix}2&1&2\\ 1 &1&0\\  0&1&1\end{pmatrix},\quad &&
\begin{pmatrix}2&1&1\\ 1 &1&1\\  0&1&1\end{pmatrix}.
\end{alignat*}
If $\Gamma_{\chi}$ is the first or the third matrix of the first row, then $\chi$ is a extension of a standard twisting map $\chi'$ of $K^2$ with
$K^3$, whose added column is standard, and so it is standard. If $\Gamma_{\chi}$ is the second matrix of the second row,
then $\chi$ is an extension of a standard twisting map of~$K^2$ with~$K^3$. Moreover,  by Proposition~\ref{forma general casi triangular} we know
that $A_{\chi}(3,3)$ is equivalent via identical permutations in rows and columns
to a standard idempotent $(0,1)$-matrix, and hence,  by proposition~\ref{columna de unos no quasi-standard}, the twisting map
$\chi$ is quasi-standard. By Proposition~\ref{Gamma en caso de rango de Aii igual a 1}, the rank matrix $\Gamma_{\chi}$ can not be the second
matrix in the first row. Also $\Gamma_{\chi}$ can not be the first matrix in the second row,  because otherwise it would be the extension of a
twisting map $\chi'$ of $K^2$ with $K^3$ with $\Gamma_{\chi'}=\begin{psmallmatrix}1&2\\ 2&1 \end{psmallmatrix}$, but $\sum\Tr=2$ is impossible. So,
we are left with the last four matrices. Assume first that $\Gamma_{\chi}$ is the last one. By Proposition~\ref{matrices de rango equivalentes}
we also can assume that $\Diag(\Gamma_{\tilde{\chi}})=(2,1,1)$. In this case $B_{\chi}(2,1)=0$ or $B_{\chi}(3,1)=0$, both cases being equivalent
via Proposition 2.4 with $\sigma=\ide$ and $\tau=(2,3)$. So, assume that $B_{\chi}(2,1)=0$, which by Remark~\ref{rango uno generalizado} implies
that
$$
A_{\chi}(2,2)=\begin{pmatrix} \ast &0 &\ast\\ \ast &0 &\ast\\ \ast &0 &\ast\end{pmatrix}.
$$
Moreover, again by Remark~\ref{rango uno generalizado}, $A_{\chi}(3,1)=0$ implies that
$$
B_{\chi}(3,3)=\begin{pmatrix} \ast&\ast &0\\ \ast&\ast &0\\ \ast&\ast &0\end{pmatrix}\quad\text{and}\quad
B_{\chi}(2,2)=\begin{pmatrix} \ast&\ast &0\\ \ast&\ast &0\\ \ast&\ast &0\end{pmatrix}.
$$
Hence $\Diag(A_{\chi}(3,2))=(*,0,0)$ and
$$
A_{\chi}(3,3)=\begin{pmatrix} 1&0 &0\\ 1&0 &0\\ 1&0 &0\end{pmatrix},
$$
where for the last equality we use once more Remark~\ref{rango uno generalizado}. Since $\rk(A_{\chi}(3,2))=1$ it follows that
$\Diag(A_{\chi}(3,2))=(1,0,0)$, and therefore
$$
A_{\chi}(3,2)=\begin{pmatrix} 1 &0 &-1\\ \ast &0 &\ast\\ 0 &0 &0\end{pmatrix}.
$$
Hence,
$$
A_{\chi}(1,2)=\ide - A_{\chi}(2,2) - A_{\chi}(3,2) = \begin{pmatrix} \ast &0 &\ast\\ \ast &1 &\ast\\ \ast &0 &\ast\end{pmatrix} = \begin{pmatrix} 0 &0 & 0\\ \ast &1 &\ast\\ 0 &0 & 0\end{pmatrix},
$$
where for the last equality we use that~$\rk\bigl(A_{\chi}(1,2)\bigr)=1$, and so
$$
A_{\chi}(2,2)=\begin{pmatrix} 0&0&1\\ 0&0&1\\ 0&0&1\end{pmatrix}.
$$
Now, from Proposition~\ref{columna de unos no quasi-standard}(1) it follows that $\chi$ is quasi-standard. For the remaining three cases, the
only way that $\chi$ can be not quasi-standard is that $\tilde{\Gamma}_{\chi}= \Gamma_{\tilde{\chi}}$ has exactly one row without zeroes.
But then Propositions~\ref{Gamma en caso de rango de Aii igual a 1} and~\ref{columna de unos no quasi-standard}
shows that the twisting map $\chi$ is quasi-standard.

\subsubsection[$\sum \Tr=3$]{$\bm{\sum \Tr=3}$}
By Proposition~\ref{diagonales uno} we know that $\Gamma_{\chi}=\tilde{\Gamma}_{\chi}=\mathfrak{J}_3$.  We have the following
possibilities for each matrix $A_{\chi}(i,i)$ and each $B_{\chi}(j,j)$. It is equivalent to a standard idempotent $0,1$-matrix via identical
permutations in rows and columns, it has all entries non-zero, or it has two non-zero columns and one zero column. If two of $A_{\chi}(1,1)$,
$A_{\chi}(2,2)$ and $A_{\chi}(3,3)$ are $0,1$-matrices, then all the matrices $B_{\chi}(j,k)$ have zeroes and ones in its diagonal entries, and,
moreover, by Remark~\ref{rango uno generalizado} each $B_{\chi}(j,j)$ is a $(0,1)$-matrix. Therefore the hypothesis of Proposition~\ref{cuando es
cuasiestandard} are fulfilled, and we have a quasi-standard twisting map. On the other hand, if one of the $A_{\chi}(i,i)$ (say $A_{\chi}(1,1)$)
has all its entries non-zero, then $\chi$ is a non quasi-standard twisting map which yields a tensor
product algebra isomorphic to $M_3(K)$. In fact, the existence follows from Proposition~\ref{proposicion determinante} and
Theorem~\ref{existencia, mas manejable} (with $\mathbf{v}_2$ and $\mathbf{v}_3$ vectors that generate the images of $A_{\chi}(2,1)$ and
$A_{\chi}(3,1)$, respectively), and the uniqueness follows from Remark~\ref{unicidad mejorada}).
So there are two cases left:
\begin{itemize}

\smallskip

\item[-] All three matrices $A_{\chi}(1,1)$, $A_{\chi}(2,2)$ and $A_{\chi}(3,3)$ have exactly one zero column.

\smallskip

\item[-] One of them (for example $A_{\chi}(1,1)$) is a $0,1$-matrix, the other two have exactly one zero column.

\smallskip

\end{itemize}
In the first case a straightforward computation shows that the resulting twisting map (up to an isomorphism) is given by
\begin{alignat*}{3}
& A(1,1)= \begin{pmatrix}
                        a & b & 0 \\
                        a & b & 0 \\
                        a & b & 0
          \end{pmatrix}, \quad &&
  A(1,2)= \begin{pmatrix}
                        a & -a & 0 \\
                        -b & b & 0 \\
                        -b & b & 0
          \end{pmatrix},\quad &&
 A(1,3)=  \begin{pmatrix}
                         a & -a & 0 \\
                        -b & b & 0 \\
                         a & -a & 0
          \end{pmatrix}\\[3pt]
& A(2,1)= \begin{pmatrix}
                         b & 0 & -b \\
                        -a & 0 & a \\
                        -a & 0 & a
          \end{pmatrix}, \quad &&
 A(2,2)=  \begin{pmatrix}
                         b & 0 & a \\
                         b & 0 & a \\
                         b & 0 & a
          \end{pmatrix},\quad &&
 A(2,3)=  \begin{pmatrix}
                         b & 0 & -b \\
                         b & 0 & -b \\
                        -a & 0 & a
          \end{pmatrix}\\[3pt]
& A(3,1)= \begin{pmatrix}
                         0 & -b & b \\
                         0 & a & -a \\
                         0 & -b & b
          \end{pmatrix}, \quad &&
  A(3,2)= \begin{pmatrix}
                         0 & a & -a \\
                         0 & a & -a \\
                         0 & -b & b
          \end{pmatrix},\quad &&
  A(3,3)= \begin{pmatrix}
                         0 & a & b \\
                         0 & a & b \\
                         0 & a & b
          \end{pmatrix},
\end{alignat*}
for some $a\notin\{0,1\}$ and $b\coloneqq 1-a$, which gives a family of twisting maps parameterised by $a\in K$. In the second case we can
assume that
$$
A(1,1)=\begin{pmatrix} 1&0&0\\ 1&0&0\\ 1&0&0\end{pmatrix}
$$
and that the first column is not quasi-standard. A  straightforward computation along the lines of the proof
of~Proposition~\ref{columna de unos no quasi-standard} show that there exists $a\notin\{0,1\}$, $b\coloneqq 1-a$ and $x,y\in k^{\times}$ such that
\begin{alignat*}{3}
& A(1,1)= \begin{pmatrix}
                         1 & 0 & 0 \\
                         1 & 0 & 0 \\
                         1 & 0 & 0
          \end{pmatrix}, \quad &&
  A(1,2)= \begin{pmatrix}
                         1 & p & q \\
                         0 & 0 & 0 \\
                         0 & 0 & 0 \\
          \end{pmatrix},\quad &&
 A(1,3)=  \begin{pmatrix}
                         1 & t & v \\
                         0 & 0 & 0 \\
                         0 & 0 & 0 \\
          \end{pmatrix}\\[3pt]
& A(2,1)= \begin{pmatrix}
                          0 & 0 & 0 \\
                          r & a & y \\
                          u & \frac{a b}{y} & b
          \end{pmatrix}, \quad &&
 A(2,2)=  \begin{pmatrix}
                         0 & a & b \\
                         0 & a & b \\
                         0 & a & b
          \end{pmatrix},\quad &&
 A(2,3)=  \begin{pmatrix}
                         0 & \phantom{-}\frac{a b}{x} & -\frac{a b}{x} \\
                         0 & \phantom{-}a & -a \\
                         0 & -b & \phantom{-}b
          \end{pmatrix}\\[3pt]
& A(3,1)= \begin{pmatrix}
                         0 & 0 & 0 \\
                         s & \phantom{-}b & -y \\
                         w & -\frac{a b}{y} & \phantom{-}a
          \end{pmatrix}, \quad &&
  A(3,2)= \begin{pmatrix}
                          0 & \phantom{-}x & -x \\
                          0 & \phantom{-}b & -b \\
                          0 & -a & \phantom{-}a
          \end{pmatrix},\quad &&
  A(3,3)= \begin{pmatrix}
                          0 & b & a \\
                          0 & b & a \\
                          0 & b & a
          \end{pmatrix},
\end{alignat*}
where $p\coloneqq -a-x$, $q\coloneqq x-b$, $r\coloneqq -a-y$, $s\coloneqq y-b$, $t\coloneqq -\frac{b (a+x)}{x}$, $u\coloneqq -\frac{b (a+y)}{y}$,
$v\coloneqq \frac{a b}{x}-a$ and $w\coloneqq \frac{a b}{y}-a$.

\setcounter{secnumdepth}{0}
\setcounter{section}{1}
\setcounter{theorem}{0}
\section[Appendix: Quasi-standard twisting maps of $K^3$ with $K^3$]{Appendix: Quasi-standard twisting maps of $\bm{K^3}$ with $\bm{K^3}$}
\renewcommand\thesection{\Alph{section}}
\renewcommand\sectionmark[1]{}

Next we list the quasi-standard twisting maps of $K^3$ with $K^3$. For this, first we construct the standard ones using the method given in Remark~\ref{construccion de los twisting estandar}, and then we construct the remaining quasi-standard twisting maps using the recursive method developed in Subsection~\ref{subseccion construccion de quasi estandars}.  Is not possible iterate arbitrarily the steps in this construction  because the conditions in item~(2) of Proposition~\ref{construccion de quasiestandar} would not be satisfied (see for instance the last item in following list).
\setlength{\tabcolsep}{8pt}
\ra{1.2}
\begin{longtabu}{ccccccc}
\caption{Quasi-standard twisting maps of $K^3$ with $K^3$} \\
\toprule
$\#$ &$\sum \Tr$ & quiver& $\Gamma_{\chi}$ & $\tilde{\Gamma}_{\chi}$ & $\#$ equiv. & quasi-st.\\
\midrule
\endfirsthead
$\#$ &$\sum \Tr$ & quiver& $\Gamma_{\chi}$ & $\tilde{\Gamma}_{\chi}$ & $\#$ equiv. & quasi-st. \\
\midrule
\endhead
\midrule
\multicolumn{7}{r}{\emph{continued on next page \ldots}}
\endfoot
\bottomrule
\endlastfoot
1.  &9 & \raisebox{-3.5\height/8}
{
$  \\ &\\
    \end{longtabu}

\begin{bibdiv}
\begin{biblist}

\bib{B-M1}{article}{
   author={Brzezi{\'n}ski, Tomasz},
   author={Majid, Shahn},
   title={Coalgebra bundles},
   journal={Comm. Math. Phys.},
   volume={191},
   date={1998},
   number={2},
   pages={467--492},
   issn={0010-3616},
   review={\MR{1604340}},
   doi={10.1007/s002200050274},
}

\bib{B-M2}{article}{
   author={Brzezi{\'n}ski, Tomasz},
   author={Majid, Shahn},
   title={Quantum geometry of algebra factorisations and coalgebra bundles},
   journal={Comm. Math. Phys.},
   volume={213},
   date={2000},
   number={3},
   pages={491--521},
   issn={0010-3616},
   review={\MR{1785427}},
   doi={10.1007/PL00005530},
}

\bib{C-S-V}{article}{
   author={Cap, Andreas},
   author={Schichl, Hermann},
   author={Van{\v{z}}ura, Ji{\v{r}}{\'{\i}}},
   title={On twisted tensor products of algebras},
   journal={Comm. Algebra},
   volume={23},
   date={1995},
   number={12},
   pages={4701--4735},
   issn={0092-7872},
   review={\MR{1352565}},
   doi={10.1080/00927879508825496},
}

\bib{Ca}{article}{
   author={Cartier, Pierre},
   title={Produits tensoriels tordus},
   journal={Expos\'{e} au S\'{e}minaire des groupes quantiques de l' \'{E}cole Normale
Sup\'{e}rieure, Paris},
   date={1991-1992},
}

\bib{C-I-M-Z}{article}{
   author={Caenepeel, S.},
   author={Ion, Bogdan},
   author={Militaru, G.},
   author={Zhu, Shenglin},
   title={The factorization problem and the smash biproduct of algebras and
   coalgebras},
   journal={Algebr. Represent. Theory},
   volume={3},
   date={2000},
   number={1},
   pages={19--42},
   issn={1386-923X},
   review={\MR{1755802}},
   doi={10.1023/A:1009917210863},
}

\bib{C}{article}{
   author={Cibils, Claude},
   title={Non-commutative duplicates of finite sets},
   journal={J. Algebra Appl.},
   volume={5},
   date={2006},
   number={3},
   pages={361--377},
   issn={0219-4988},
   review={\MR{2235816}},
   doi={10.1142/S0219498806001776},
}

\bib{G}{article}{
   author={Gerstenhaber, Murray},
   title={On the deformation of rings and algebras},
   journal={Ann. of Math. (2)},
   volume={79},
   date={1964},
   pages={59--103},
   issn={0003-486X},
   review={\MR{0171807}},
}

\bib{G-G}{article}{
   author={Guccione, Jorge A.},
   author={Guccione, Juan J.},
   title={Hochschild homology of twisted tensor products},
   journal={$K$-Theory},
   volume={18},
   date={1999},
   number={4},
   pages={363--400},
   issn={0920-3036},
   review={\MR{1738899}},
   doi={10.1023/A:1007890230081},
}

\bib{JLNS}{article}{
   author={Jara, P.},
   author={L{\'o}pez Pe{\~n}a, J.},
   author={Navarro, G.},
   author={{\c{S}}tefan, D.},
   title={On the classification of twisting maps between $K^n$ and $K^m$},
   journal={Algebr. Represent. Theory},
   volume={14},
   date={2011},
   number={5},
   pages={869--895},
   issn={1386-923X},
   review={\MR{2832263}},
   doi={10.1007/s10468-010-9222-x},
}

\bib{J-L-P-V}{article}{
   author={Jara Mart{\'{\i}}nez, Pascual},
   author={L{\'o}pez Pe{\~n}a, Javier},
   author={Panaite, Florin},
   author={van Oystaeyen, Freddy},
   title={On iterated twisted tensor products of algebras},
   journal={Internat. J. Math.},
   volume={19},
   date={2008},
   number={9},
   pages={1053--1101},
   issn={0129-167X},
   review={\MR{2458561}},
   doi={10.1142/S0129167X08004996},
}

\bib{Ka}{book}{
   author={Kassel, Christian},
   title={Quantum groups},
   series={Graduate Texts in Mathematics},
   volume={155},
   publisher={Springer-Verlag, New York},
   date={1995},
   pages={xii+531},
   isbn={0-387-94370-6},
   review={\MR{1321145}},
   doi={10.1007/978-1-4612-0783-2},
}

\bib{LN}{article}{
   author={L{\'o}pez Pe{\~n}a, Javier},
   author={Navarro, Gabriel},
   title={On the classification and properties of noncommutative duplicates},
   journal={$K$-Theory},
   volume={38},
   date={2008},
   number={2},
   pages={223--234},
   issn={0920-3036},
   review={\MR{2366562}},
   doi={10.1007/s10977-007-9017-y},
}

\bib{Ma1}{article}{
   author={Majid, Shahn},
   title={Physics for algebraists: noncommutative and noncocommutative Hopf
   algebras by a bicrossproduct construction},
   journal={J. Algebra},
   volume={130},
   date={1990},
   number={1},
   pages={17--64},
   issn={0021-8693},
   review={\MR{1045735}},
   doi={10.1016/0021-8693(90)90099-A},
}

\bib{Ma2}{article}{
   author={Majid, Shahn},
   title={Algebras and Hopf algebras in braided categories},
   conference={
      title={Advances in Hopf algebras},
      address={Chicago, IL},
      date={1992},
   },
   book={
      series={Lecture Notes in Pure and Appl. Math.},
      volume={158},
      publisher={Dekker, New York},
   },
   date={1994},
   pages={55--105},
   review={\MR{1289422}},
}

\bib{Mo}{book}{
   author={Montgomery, Susan},
   title={Hopf algebras and their actions on rings},
   series={CBMS Regional Conference Series in Mathematics},
   volume={82},
   publisher={Published for the Conference Board of the Mathematical
   Sciences, Washington, DC; by the American Mathematical Society,
   Providence, RI},
   date={1993},
   pages={xiv+238},
   isbn={0-8218-0738-2},
   review={\MR{1243637}},
   doi={10.1090/cbms/082},
}

\bib{Pierce}{book}{
   author={Pierce, Richard S.},
   title={Associative algebras},
   series={Graduate Texts in Mathematics},
   volume={88},
   note={Studies in the History of Modern Science, 9},
   publisher={Springer-Verlag, New York-Berlin},
   date={1982},
   pages={xii+436},
   isbn={0-387-90693-2},
   review={\MR{674652}},
}

\bib{Tam}{article}{
   author={Tambara, D.},
   title={The coendomorphism bialgebra of an algebra},
   journal={J. Fac. Sci. Univ. Tokyo Sect. IA Math.},
   volume={37},
   date={1990},
   number={2},
   pages={425--456},
   issn={0040-8980},
   review={\MR{1071429}},
}

\bib{VD-VK}{article}{
   author={Van Daele, A.},
   author={Van Keer, S.},
   title={The Yang-Baxter and pentagon equation},
   journal={Compositio Math.},
   volume={91},
   date={1994},
   number={2},
   pages={201--221},
   issn={0010-437X},
   review={\MR{1273649}},
}

\end{biblist}
\end{bibdiv}

\end{document}